\newtheorem{thm}{Theorem}[section]
\newtheorem{lema}{Lemma}[section]
\newtheorem{coro}{Corollary}[section]
\newtheorem{conj}{Conjecture}
\declaretheorem[style=definition,name=Definition,numberwithin=section]{Def}
\declaretheorem[style=definition,name=Example,numberwithin=section]{ex}
\newcommand{\C}{\mathbb{C}}
\newcommand{\R}{\mathbb{R}}
\newcommand{\Q}{\mathbb{Q}}
\newcommand{\Z}{\mathbb{Z}}
\newcommand{\Stk}{\mathrm{Stk}}
\newcommand{\Var}{\mathrm{Var}}
\newcommand{\Hom}[2]{\operatorname{Hom}(#1,#2)}
\NewDocumentCommand{\Gr}{O{k} O{n}}{\operatorname{Gr}(#1,#2)}
\newcommand{\PBord}{\mathrm{PBord}_2}
\newcommand{\Corr}[1][]{\mathrm{Corr}(\mathrm{#1}\Var)}
\newcommand{\PGrpd}{\mathrm{PGrpd}}
\newcommand{\PGrpdfg}{\PGrpd_{\text{fg}}}
\newcommand{\PGrpdheart}{\PGrpdfg^\heartsuit}
\newcommand{\TwTVarG}{T\Var_G^{\text{tw}}}
\newcommand{\TwBTVarG}{B\Var_G^{\text{tw}}}
\newcommand{\TwBTVarGw}[1][w]{B\Var_G^{#1}}
\newcommand{\GL}{\operatorname{GL}}
\newcommand{\SL}{\operatorname{SL}}
\newcommand{\Sp}{\operatorname{Sp}}
\newcommand{\g}{\mathfrak{g}}
\renewcommand{\t}{\mathfrak{t}}
\renewcommand{\sl}{\mathfrak{sl}}
\newcommand{\img}{\operatorname{Img }}
\newcommand{\codim}{\operatorname{codim }}
\newcommand{\ov}[1]{\overline{#1}}
\newcommand{\rk}{\operatorname{rk}}
\newcommand{\coker}{\operatorname{coker}}
\newcommand{\mcP}{\mathcal{P}}
\newcommand{\mcC}{\mathcal{C}}
\newcommand{\mcX}{\mathcal{X}}
\newcommand{\mcY}{\mathcal{Y}}
\newcommand{\mcM}{\mathcal{M}}
\newcommand{\mcF}{\mathcal{F}}
\newcommand{\mcV}{\mathcal{V}}
\newcommand{\mcU}{\mathcal{U}}
\newcommand{\mcE}{\mathcal{E}}
\newcommand{\mcT}{\mathcal{T}}
\newcommand{\mcQ}{\mathcal{Q}}
\newcommand{\mcZ}{\mathcal{Z}}
\newcommand{\mcW}{\mathcal{W}}
\NewDocumentCommand{\preBetti}{O{R} O{\mcC}}{X^{#2}(#1)}
\NewDocumentCommand{\betti}{O{\SL_n} O{R} O{\mcC}}{\mcM_B^{#3}(#1(#2))}
\NewDocumentCommand{\abbreviatedBetti}{O{\SL_n}}{\mcM_B^\mcC(#1)}
\NewDocumentCommand{\Dolbeault}{O{G} O{d}}{\mcM_{\text{Dol}}^{#2}(#1)}
\NewDocumentCommand{\Betti}{O{G} O{\mcC}}{\mcM^{#2}_{\text{B}}(#1)}
\newcommand{\tBetti}[1][G]{\widetilde{\mcM^{\mcC}_{\text{B}}(#1)}}
\title{Motivic Mirror Symmetry for Character Stacks}
\author{Lucas de Amorin}
\date{}
\begin{document}

\begin{abstract}
    We propose a motivic version of T. Hausel and M. Thaddeus' Topological Mirror Symmetry for character stacks associated with arbitrary semisimple groups, which is an analogue of F. Loeser and D. Wyss' result for Chow motives of moduli spaces of Higgs bundles. As first steps towards it, we generalize A. Mellit's cell decomposition to arbitrary connected and reductive groups. We use it to describe all automorphisms on the associated character stacks. Then we show that the Weil pairing induces a duality between cells that interchanges automorphisms by connected components. As a toy example, we show that these results imply our conjecture for the special linear group of rank two.
\end{abstract}

\maketitle

\setcounter{tocdepth}{1}
\tableofcontents

\section*{Introduction}

Let $G$ be a reductive algebraic group and $C$ be a genus $g$ Riemann surface with $k$ punctures. The character stack $\abbreviatedBetti[G]$ is a moduli space of $G$-local systems on $C$ with prescribed monodromies $\mcC=(\mcC_1,\ldots, \mcC_k)$ around each puncture. Concretely, it is the quotient stack of
\[\{(x,y,z)\in G^g\times G^g\times \mcC: [x_1,y_1]\cdots [x_g,y_g]z_1\cdots z_k = 1\}\]
by the simultaneous conjugation action of $G/Z(G)$, where $Z(G)$ is the center of $G$. In this space acts $Z(G)^{2g}$ by left translations on $(x,y)$. Given a subgroup $F\subset Z(G)$, the quotient $[\abbreviatedBetti[G]/F^{2g}]$ can be seen as a twisted $G/F$-character stack $\abbreviatedBetti[G/F]$. 

Assume that $G$ is simply connected and semisimple. Let ${}^L\tilde G$ be the universal covering of the group with dual root datum to that of $G$. We conjecture that there should be an equality
\[ [\abbreviatedBetti[G/F]]_{st} = [\mcM_B^{\check{\mcC}}({}^L\tilde G / (Z(G)/F)^\vee)]_{st}\]
in the Grothendieck ring of (nice) stacks, where $\check{\mcC}$ is ``dual'' to $\mcC$ and $[-]_{st}$ means the (naive) motive of the inertia stack suitable corrected to incorporate fermionic shifts. We make a precise statement in Section \ref{sec:statement} for generic $\mcC$. 

There are at least two reasons to believe in this conjecture. First of all, in all known cases \cite[etc]{GP, LGP, GPHV}, character stacks/varieties' motives are polynomial on the class of the affine line. If this is true in general for each connected component of their inertia stacks, then the previous equality is equivalent to an analogous one at the level of $E$-polynomials. The latter is nothing but the Betti Topological Mirror Symmetry \cite{HT-TMS} for arbitrary groups, which is known to hold generically for type A \cite{LM}. 

The second reason is based on what happens for Higgs bundles. The $P=W$ theorem \cite{MS-PW, HMMS-PW, MSY} relates the weight filtration on the cohomology of character varieties with another filtration, the perverse one, on the cohomology of moduli spaces of Higgs bundles. Under this identification, the Betti Topological Mirror Symmetry becomes the original Topological Mirror Symmetry \cite{HT-TMS}, which is known to hold in greater generality \cite{GWZ}. It also has stronger versions such as \cite{MS}, \cite{WL}, and \cite{HL}. The last ones are equalites at the level of Chow and Voevodsky motives. On another side, the perverse filtration as well as Maulik-Shen's techniques are known to be motivic \cite{MSY2}. This may suggest that $P=W$ holds beyond cohomology. If this is true, there should be an analogue of \cite{WL, HL} on the Betti side. Although the naive motive should not see filtrations, we believe that our conjecture is a good first step towards such an analogue.

Aiming to this conjecture, in the first part of this work, we generalize A. Mellit \cite{Mellit} cell decomposition for $\GL_n$ to arbitrary reductive groups. More precisely, we show the following. Assume that $\mcC_k$ is regular. Let $B$ be a Borel subgroup of $G$, $U$ its unipotent radical, $T\subset B$ a maximal torus, and $C_k\in T\cap \mcC_k$. Then
\[ \abbreviatedBetti[G] \simeq [\{(x,y,z)\in G^g\times G^g\times \mcC_1\times\cdots\times \mcC_{k-1}: [x_1,y_1]\cdots [x_g,y_g]z_1\cdots z_{k-1} C_k \in U\} / (B/Z(G))]\]
and let $\tBetti[G]$ be just the quotient by $T/Z(G)$. There is a map $\tBetti[G]\to \abbreviatedBetti[G]$.

\begin{thm}\label{thm:decomposition}
     Let $G$ be a connected and reductive complex algebraic group, $g\geq 0$ and $k\geq 1$ be integers, and $\mcC_1,\ldots,\mcC_k$ be a generic tuple of semisimple conjugacy classes of $G$. Assume that the centralizer of each $\mcC_i$ is a Levi subgroup of $G$ and, furthermore, that $\mcC_k$ is regular. Then $\abbreviatedBetti[G]$ has a cell decomposition such that for each cell its preimage to $\tBetti[G]$ is a finite quotient of $\C^{d -2i}\times (\C^\times)^{r+i}$ for some $i$, where $d=\dim \abbreviatedBetti[G]$ and $r=\frac{1}{2}(\dim G-\rk G)$.
\end{thm}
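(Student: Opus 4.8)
The plan is to follow Mellit's strategy for $\GL_n$ and adapt each step to an arbitrary connected reductive $G$. The starting point is the description of $\abbreviatedBetti[G]$ already recorded in the excerpt: after using regularity of $\mcC_k$ to conjugate $z_k$ into a fixed element $C_k \in T$ and absorbing the residual ambiguity, one is reduced to studying the variety
\[
Z = \{(x,y,z) \in G^g \times G^g \times \mcC_1 \times \cdots \times \mcC_{k-1} : [x_1,y_1]\cdots[x_g,y_g]\,z_1\cdots z_{k-1}\,C_k \in U\}
\]
with its $B/Z(G)$-action, and to lift everything to $\widetilde{\mcM}$, the quotient by $T/Z(G)$. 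So the real content is a cell decomposition of $Z$, equivariant in the appropriate sense, whose cells pull back to finite quotients of $\C^{d-2i}\times(\C^\times)^{r+i}$.

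First I would set up the combinatorial skeleton. Mellit's cells are indexed by certain ``nice paths'' / chord-diagram data encoding how the product of commutators and conjugacy-class elements is built up; the replacement here is to stratify according to the relative position (Bruhat-type data in $G/B$, or more precisely the attracting/repelling behaviour under a generic one-parameter subgroup $\lambda: \C^\times \to T$) of the partial products $[x_1,y_1]\cdots[x_j,y_j]z_1\cdots z_\ell$. Concretely, I would run the $\C^\times$-action induced by $\lambda$ on $Z$ (this is where genericity of $\mcC$ is used: it guarantees the fixed locus is as small as possible, and that the classes $\mcC_i$ meet the relevant Bruhat cells transversally) and take the Bialynicki-Birula decomposition; each BB-stratum is an affine bundle over a fixed-point component, and the fixed-point components should, by the genericity and regularity hypotheses, reduce to products of $T$-orbits and unipotent pieces. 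The key structural input from the reductive-group side is that the centralizer of each $\mcC_i$ is a Levi $L_i$, so the class $\mcC_i \cong G/L_i$ fibered over an interior semisimple element, and the Bruhat stratification of $G/L_i$ has cells that are products of a unipotent cell $\C^{m}$ with nothing else; combining these across all punctures and all commutator factors produces the exponent $d-2i$ for the ``affine'' directions, while the residual torus directions (one factor of $\C^\times$ per rank, i.e. $\rk G$ copies, plus the $i$ extra ones coming from the torus stabilizers that survive in each cell) give the $(\C^\times)^{r+i}$ part after passing to $\widetilde{\mcM}$. Here $r = \tfrac12(\dim G - \rk G)$ is the number of positive roots, which is exactly the generic dimension count of the unipotent contribution of a single commutator pair, so the bookkeeping of dimensions should match Mellit's $\GL_n$ computation term by term once one replaces ``number of boxes'' by ``number of positive roots'' and ``partition data'' by ``Weyl-group/Levi data''.

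The second half is the dimension and type-of-cell bookkeeping. I would prove by an inductive ``peeling'' argument — removing one commutator pair $(x_j,y_j)$ or one factor $z_\ell$ at a time, exactly as Mellit does — that each cell of $Z$ is an iterated affine-and-torus bundle, hence (non-canonically, after the finite cover $\widetilde{\mcM}\to \mcM$ trivializes the $T/Z(G)$-bundle up to a finite group) isomorphic to $\C^{a}\times(\C^\times)^{b}$ with $a+b = \dim \abbreviatedBetti[G] = d$ and $b = r + i$ for the cell's index $i$; substituting $a = d-b = d - r - i$ and then matching with the normalization in the statement (where the affine part is written $\C^{d-2i}\times(\C^\times)^{r+i}$, so that $a = d-2i$ forces the relation $i$-counts-both-a-torus-direction-and-a-lost-affine-pair) pins down the indexing. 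The finite quotient appears because $Z(G)$ need not be trivial and because the $T/Z(G)$-torsors over individual cells are Zariski-locally trivial only up to isogeny; I would track it as the component group of the relevant stabilizers.

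The main obstacle I expect is the genericity-and-transversality bookkeeping: in Mellit's $\GL_n$ argument the combinatorics of partitions makes the non-degeneracy of each stratum essentially automatic, whereas for general $G$ one must check that a generic tuple of semisimple classes with Levi centralizers really does force every stratum in the BB/Bruhat decomposition to be smooth of the expected dimension and to have the product form $\C^{d-2i}\times(\C^\times)^{r+i}$ with no unexpected extra components or non-reduced structure — in particular that the "moment-map-type" equation $[x_1,y_1]\cdots z_{k-1}C_k \in U$ cuts out a smooth complete intersection on each stratum. Establishing this transversality uniformly in $G$, presumably via a dimension count using the genericity of $\mcC$ together with the regularity of $\mcC_k$ (so that $C_k$ is a regular element of $T$ and the associated $\C^\times$-action has isolated-enough fixed behaviour), is where the bulk of the work will go; the rest is a faithful translation of Mellit's inductive argument with roots and Weyl groups in place of boxes and symmetric groups.
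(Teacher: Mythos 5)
Your proposal diverges from the paper at its core technical step: you propose to stratify $Z$ by the Bialynicki--Birula decomposition for a generic one-parameter subgroup $\lambda:\C^\times\to T$, whereas the paper never invokes BB at all. Instead it decomposes the commutator map $G\times G\to G$ and the conjugacy classes by Bruhat cells, rewrites the resulting pieces as convolutions $K_{\pi_1,\pi_2}*X$ and $L_{C,\pi}*X$ of a small number of model ``twisted $B$-varieties,'' and then refines each such piece by a combinatorial ``walk'' stratification on the Bruhat graph of $W$. This walk formalism is not an optional packaging: it is what gives the explicit formula for the map $C_p\to T$ (Lemma \ref{key-iso}), which is then used to identify the cell's stabilizer $S(p)$, its component group $\pi_1(p)$, and the lattices $Q(p)$, $T(p)$ that control the finite quotients. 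A BB decomposition would give affine-bundle strata over fixed components, but it is unclear how you would extract from it the precise lattice-theoretic data — and in particular the source of the \emph{finite} quotients, which the paper traces to the failure of the sublattice $Q(p)\subset Q$ generated by certain $W$-translates of simple roots to be saturated (a phenomenon that genuinely occurs for $\Sp_4$, see Example \ref{ex:bad-cell}, and is ruled out in type $A$ by Theorem \ref{thm:no-quotients-typeA-unipvar}). Your one-line attribution of the quotients to ``$Z(G)$ nontrivial'' and ``$T/Z(G)$-torsors trivial only up to isogeny'' misses this mechanism and would not reproduce the $(\Z/2)^2$ quotient that actually appears.

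There are also two concrete errors in the bookkeeping. First, you write that the cells of $Z$ are $\C^a\times(\C^\times)^b$ with $a+b=d$: but $Z$ is the source variety, with $\dim Z = d+\dim(B/Z(G))$, and the preimages in $\tBetti[G]$ have dimension $d+r-i$ (not $d$); plugging $b=r+i$ into $a=d-b$ gives $a=d-r-i$, which contradicts the claimed $a=d-2i$ unless $r=i$. The paper's dimension count (proof of Theorem \ref{thm:cell-decomposition}) is considerably more careful, separating the affine rank contributed by the $U_{\pi}^+$ factors from the $\C^{|U_p|}$ coming from the walk and the $(\C^\times)^{|S_p|}$ from the stays, with the identity $|S_p|=l(\bar\beta)-2|U_p|$ doing the work. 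Second, you describe $\tBetti[G]\to\abbreviatedBetti[G]$ as a finite cover; it is not — it is the quotient by $T/Z(G)$ versus $B/Z(G)$, so the fibers are copies of $U$ (and in type $A$ it is literally a Zariski-locally trivial $U$-bundle by Lemma \ref{lem:bundle-structure}). Getting this right matters, because the theorem asks for the \emph{preimage} of each cell under this $U$-fibration, which is where the extra $r$ affine directions come from.
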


For type A, the result is better; there are no non-trivial finite quotients, and $\tBetti[G]\to \abbreviatedBetti[G]$ is a vector bundle. However, both things fail for non-simply laced types. We do not know what happens for types D and E.

The strategy to prove this theorem follows \cite{Mellit} and should be attributed to him. There are three main novelties in our approach to it. One is the existence of non-trivial stabilizers that do not happen for $\GL_n$. Furthermore, the proof that they are finite in general is quite different. In \cite{Mellit}, Seifert surfaces are introduced to this end. We do not know an analogue for general reductive groups. Instead, we use a direct approach studying symmetries of certain tangent algebras, see Section \ref{sec:cupping}. The second one is that we prove that the cells come from $\abbreviatedBetti[G]$ and, in addition, essentially describe the bundle structure. This amounts to describing a $B$-action, and it is the main objective of Sections \ref{sec:bruhat-parametrizations}, \ref{sec:B-varieties}, \ref{sec:handle-operation}, \ref{sec:adding-punctures}, and \ref{sec:bundle-structure}. The third one is an interpretation in terms of Topological Quantum Field Theories, which we believe could be useful to build a core submodule in the sense of \cite{GPHV}.

These cell decompositions are indexed by combinatorial objects called walks. Let $W$ be the Weyl group of $G$ and $\Delta^+$ be a set of simple roots of $G$. Let $\mcP$ be the associated Bruhat graph; its set of vertices is $W$ and there is a directed edge between $v_1,v_2\in W$ if $l(v_1)<l(v_2)$ and $v_2 = s v_1$ for a symmetry $s$ associated with some root in $\Delta^+$. For A2, it is
\[ \xymatrix{
& s_\alpha s_\beta s_\alpha = s_\beta s_\alpha s_\beta&\\
s_\beta s_\alpha \ar[ur] & & s_\alpha s_\beta \ar[ul] \\
s_\alpha \ar[u] & & s_\beta \ar[u]\\
& 1\ar[ul]\ar[ur] &
}\]
where $s_\alpha$ and $s_\beta$ are the simple reflections. Over each vertex there is exactly one edge for each element of $\Delta^+$. Hence, we can think of a word with letters in $\Delta^+$ as a sequence of instructions in $\mcP$. Given such a sequence, a walk is a path on $\mcP$ such that at each step if the edge associated with the corresponding instruction goes up, we follow it, and if not, we choose between following it and going down or ignoring it and staying. For example, in A2, the possible walks associated with $\alpha \beta \beta \alpha$ and starting at $1$ are
\begin{align*}
    (1, s_\alpha, s_\beta s_\alpha  , s_\alpha, 1),\ (1, s_\alpha, s_\beta s_\alpha  , s_\alpha, s_\alpha), \text{ and } (1, s_\alpha, s_\beta s_\alpha  , s_\beta s_\alpha , s_\alpha s_\beta s_\alpha).
\end{align*}

To each walk on $G$ there is an associated walk on ${}^L\tilde G$ given by identifying each root with its coroot and staying at the same steps. As a consequence, to each cell of $\abbreviatedBetti[G]$ there is an associated cell of $\abbreviatedBetti[{}^L\tilde G]$. In the second part of this work, we study this duality. We show that automorphisms on a cell correspond to connected components on its dual cell. Using this, we reduce our conjecture to a statement about each cell. Roughly speaking, it says that taking fixed points and multiplying by the fermionic shift compensate, see Conjecture \ref{conjMotivic2} for a precise statement. We check this last conjecture for $\SL_2$ as a toy example.

\subsection*{Acknowledgments} The author wishes to thank G. Arnone, A. Barreto, C. di Fiore, A. González-Prieto, M. Hablicsek, and M. Mereb for very useful discussions, and T. Hausel for some corrections.

\part{Cell decomposition of character stacks}

\section{Strategy}

In this section, we will explain how to stratify character stacks. Let us start by defining them. Fix a connected and reductive Lie group $G$ for now on. Given a non negative integers $g$ and conjugacy classes $\mcC_1,\ldots, \mcC_k$ of $G$, the associated character stack $\abbreviatedBetti[G]$ is defined as the quotient stack
\[ [\{(x_1,y_1,\ldots,x_g,y_g,z_1,\ldots,z_k)\in G^{2g}\times \mcC_1\times \dots\times \mcC_k: [x_1,y_1]\cdots[x_g,y_g]z_1\cdots z_k= 1 \}/ (G/Z(G))]\]
where $G$ acts by simultaneous conjugation on every variable. 

We can encode the varieties $\abbreviatedBetti[G]$ when $g$ varies in a field theory in the sense of \cite{GPHV} and \cite{Jesse}. More pricesely, there exists a symmetric monoidal $2$-functor $\mcF:\PBord\to\Corr[G]$ form the $2$-category of bordisms with parabolic data to the $2$-category of correspondences between $G$-varieties such that for every genus $g$ compact surface $M$ with $k$ punctures and parabolic data $\mcC_1,\ldots, \mcC_k$, the quotient $[\mcF(M)/ (G/Z(G))]$ is $\abbreviatedBetti[G]$. We will prove this in Lemma \ref{lem:field-theory}. For the definition of the previous categories see Section \ref{sec:field-theory}.

Funtoriality of $\mcF$ means that we can build $\mcF(M)$ from simpler pieces. Namely, it is enough to know the values of $\mcF$ at the bordisms $D$, $L_\mcC$, $H$ and $D^\dagger$
(see Figure \ref{fig:bordisms}). Indeed, 
\[\mcF(M) = \mcF(D^\dagger)\circ \mcF(H)^g\circ \mcF(L_{\mcC_1})\circ\dots\circ \mcF(L_{\mcC_k})\circ \mcF(D)\]
if $M$ is compact, has genus $g$, and $k$ punctures with parabolic data $\mcC_1,\dots,\mcC_k$.

\begin{figure}[H]
    \centering
    \begin{tikzpicture}[tqft, 
    view from=incoming,
    cobordism edge/.style = {draw},
    every incoming lower boundary
    component/.style = {draw}, 
    every outgoing lower boundary
    component/.style = {draw,dashed},
    every incoming upper boundary component/.style = {draw},
    every outgoing upper boundary component/.style = {draw},
    rotate=90,transform shape
    ]
    
    \pic [tqft/cup, anchor=incoming-boundary, at={(0,0)}];
    
    \pic [tqft/cylinder, anchor=incoming-boundary, at={(0,3)}];
    \pic [
    tqft , 
    incoming boundary components=2,
    outgoing boundary components=2,
    genus =1, hole 1/.style = {draw, rotate = 90},
    cobordism edge/.style={},
    every incoming lower boundary
    component/.style = {}, 
    every outgoing lower boundary
    component/.style = {},
    every incoming upper boundary component/.style = {},
    every outgoing upper boundary component/.style = {},
    at = {(-1,1)}
    ];
    
    \pic[tqft/cylinder, anchor = incoming-boundary, at = {(0,6)}];
    \node[label = {[rotate=-90]below:$\mcC$}, circle, fill, minimum size=1pt, , inner sep=1pt] at (0,5) {};
    
    \pic [tqft/cap, anchor = incoming-boundary, at = {(0,9)}];
        
    \end{tikzpicture}
    
    \caption{From left to right: the cap $D$, a punctured cylinder $L_\mcC$, the handle $H$, and the cup $D^\dagger$.}
    \label{fig:bordisms}
\end{figure}
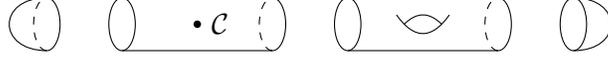

These pieces are hard to compute in general, see \cite{GP} and its reference for some computations. To overcome this, we assume $\mcC_k$ is the conjugacy class of a regular semisimple element. In this case, there is a simpler description of the character stack. Fix a maximal torus $T$ of $G$ and a representative $C_k$ of $\mcC_k$ in $T$. Being it regular and semisimple, its centralizer $Z(C_k)$ agrees with $T$. Then
\[\abbreviatedBetti[G]\simeq [\{(x,y,z)\in G^{2g}\times \mcC_1\times \dots\times \mcC_{k-1}: [x_1,y_1]\cdots[x_g,y_g]z_1\cdots z_{k-1}C_k= 1 \}/ (T/Z(G))].\]

Furthermore, fix a Borel subgroup $B$ of $G$ containing $T$ and let $U$ be its unipotent radical. The conjugacy action of $T$ fixes $U$ hence we can define
\[\tBetti[G]:= [\{(x,y,z,u)\in G^{2g}\times \mcC_1\times \dots\times \mcC_{k-1}: [x_1,y_1]\cdots[x_g,y_g]z_1\cdots z_{k-1}C_k\in U \}/ (T/Z(G))]\]
where $T$ acts by simultaneous conjugation on every variable. Equivalently,
\[\tBetti[G]\simeq [\{(x,y,z,u)\in G^{2g}\times \mcC_1\times \dots\times \mcC_{k-1}\times U: [x_1,y_1]\cdots[x_g,y_g]z_1\cdots z_{k-1}uC_k = 1 \}/ (T/Z(G))]\]
where $T$ acts on $U$ by conjugation. This stack turns out to be (almost) a fibration over the character stack, see Section \ref{sec:bundle-structure}. The point is that
\[\abbreviatedBetti[G] \simeq [\{(x,y,z,u)\in G^{2g}\times \mcC_1\times \dots\times \mcC_{k-1}\times U: [x_1,y_1]\cdots[x_g,y_g]z_1\cdots z_{k-1}uC_k = 1\}/ (B/Z(G))]\]
for the $B$-action $b\cdot (x,y,z,u)= (bxb^{-1},byb^{-1},bzb^{-1},buC_kb^{-1}C_k^{-1})$.

We can work with $\tBetti[G]$ using the field theory $\mcF$. Consider the quantization $2$-functor $\mcQ: \Corr[G]\to \mathrm{Cat}$ to the $2$-category of categories which sends a $G$-variety $X$ to category $B\Var_X$ of $B$-varieties over it and a correspondence $(Z,f,g):X\to Y$ to $g_!f^*:B\Var_X \to B\Var_Y$, where $f^*:B\Var_X\to B\Var_Z$ is given by base change and $g_!:B\Var_Z\to B\Var_Y$ by composition with $g$. Let $\mcZ$ be the composition of $\mcF$ and $\mcQ$. We will check that $\mcZ(S^1) = B\Var_G$ and therefore the handle and puncture cylinders have domain $B\Var_G$. Now, we can view $UC_k$ as a $B$-variety over $G$ via the inclusion $UC_k\subset G$ and the conjugation action of $B$. It turns out that $\tBetti[G]$ is 
\[  [(\mcZ(D^\dagger)\circ\mcZ(H)^{g}\circ\mcZ(L_{\mcC_{1}})\circ\dots\circ \mcZ(L_{\mcC_{k-1}}))(UC_k)/ (T/Z(G))]\]
while $\abbreviatedBetti[G]$ is the quotient by $B$. 

There is one more operation we want to consider; the convolution $*:B\Var_G\times B\Var_G \to B\Var_G$ defined by $(X,f)* (Y,g)=(X\times Y, m(f,g))$ where $m$ is the multiplication of $G$. If we apply $\mcZ$ to the reverse pair of pants (see Figure \ref{fig:reverse-pairs-of-pants}) we get $m$. Hence, we can think about the punctured cylinders and the handle operation as convolutions with fixed varieties: $\mcF(L_\mcC)\circ\mcF(D)$ and $\mcF(H)\circ \mcF(D)$ (see Figure \ref{fig:reiterpretations}). More concretely, the inclusion of $\mcC$ on $G$ and the commutator map $G\times G\to G$, $(g,h)\mapsto [g,h]$ respectively.

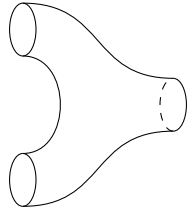
\begin{figure}[H]
    \centering
    \begin{tikzpicture}[tqft, 
    view from=incoming,
    cobordism edge/.style = {draw},
    every incoming lower boundary
    component/.style = {draw}, 
    every outgoing lower boundary
    component/.style = {draw,dashed},
    every incoming upper boundary component/.style = {draw},
    every outgoing upper boundary component/.style = {draw},
    rotate=90,transform shape
    ]
    
    \pic [tqft/reverse pair of pants, anchor=incoming-boundary, at={(0,0)}];
        
    \end{tikzpicture}
    
    \caption{The reverse pair of pants.}
    \label{fig:reverse-pairs-of-pants}
\end{figure}

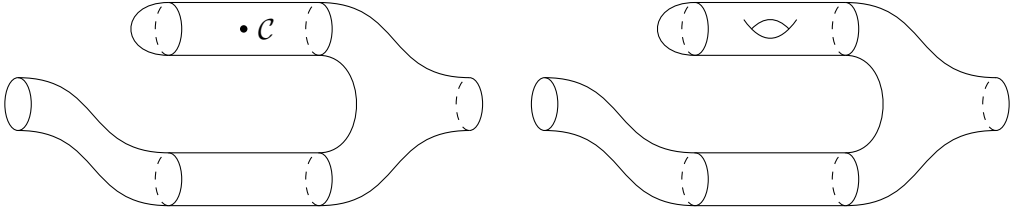
\begin{figure}[H]
    \centering
    \begin{tikzpicture}[tqft, 
    view from=incoming,
    cobordism edge/.style = {draw},
    every outgoing lower boundary
    component/.style = {draw,dashed},
    every incoming upper boundary component/.style = {},
    every outgoing upper boundary component/.style = {draw},
    rotate=90,transform shape
    ]
    \clip (-2,-6.5) rectangle (1.5,7.5);

    \pic[tqft/cylinder to prior, anchor = incoming-boundary, at={(0,0)}, name =a, every incoming boundary component/.style={draw}];
    
    \pic[tqft/cap, , at=(a-outgoing boundary), anchor = {(0,1)}, name = b];
    
    \pic [tqft/cylinder, anchor=incoming-boundary, at=(b-outgoing boundary), name = c];

    \pic[tqft/cylinder, anchor=incoming-boundary, at=(a-outgoing boundary), name =d];

    \pic[tqft/reverse pair of pants, anchor=incoming-boundary 1, at=(d-outgoing boundary),name =e];
    
    \pic [
    tqft , 
    incoming boundary components=2,
    outgoing boundary components=2,
    genus =1, hole 1/.style = {draw, rotate = 90},
    cobordism edge/.style={},
    every incoming lower boundary
    component/.style = {}, 
    every outgoing lower boundary
    component/.style = {},
    every incoming upper boundary component/.style = {},
    every outgoing upper boundary component/.style = {},
    at = {(0,-4)}
    ];

    \pic[tqft/cylinder to prior, anchor = incoming-boundary, at={(0,7)}, name =a, every incoming boundary component/.style={draw}];
    
    \pic[tqft/cap, , at=(a-outgoing boundary), anchor = {(0,1)}, name = b];
    
    \pic [tqft/cylinder, anchor=incoming-boundary, at=(b-outgoing boundary), name = c];

    \pic[tqft/cylinder, anchor=incoming-boundary, at=(a-outgoing boundary), name = d];

    \pic[tqft/reverse pair of pants, anchor=incoming-boundary 1, at=(d-outgoing boundary),name =e];

    \node[label = {[rotate=-90]below:$\mcC$}, circle, fill, minimum size=1pt, , inner sep=1pt] at (1,4) {};
        
    \end{tikzpicture}
    
    \caption{From left to right: Reinterpretations of the a punctured cylinder $L_\mcC$ and the handle $H$.}
    \label{fig:reiterpretations}
\end{figure}

Now, both $\mcC$ and the commutator map can be decomposed by the Bruhat decomposition of $G$. What we are going to show in Sections \ref{sec:handle-operation} and \ref{sec:adding-punctures} is that the convolution of these cells with $U$ can be described as convolutions of $U$ with a few simpler varieties we will introduce in Section \ref{sec:var-over-G}. A remarkable fact is that this process can be easily inducted because convolution is associative and we always get in the resulting varieties a convolution with $U$ at the very left. By this procedure we will get a cell decomposition of $(\mcZ(H)^{g}\circ\mcZ(L_{\mcC_{1}})\circ\dots\circ \mcZ(L_{\mcC_{k-1}}))(UC_k)$.

The next step is to apply $\mcZ(D^\dagger)$. To this end, in Section \ref{sec:walk-stratification}, we will further decompose the cells by means of the Bruhat decomposition again, but this time by looking at its preimages by the maps to $G$. This will allow us to completely describe the maps to $T$ of our cells. More precisely, given a cell $C$ over a Bruhat cell $BwB$, we will have explicit formulas for the composition $C\to BwB\to U\backslash BwB /U$ and, in particular, for the induced map $C\to T$. Then, in Section \ref{sec:cupping}, we will describe its generic preimages.

To deal with the quotient by $T$, we will explore the stabilizers of its action on each cell in Section \ref{sec:quotient-by-T}. We are going to show that there are trivial if and only if the image of $C\to T$ is as big as possible. Hence, for generic $\mcC$, the stabilizers will be trivial and we will be able to perform the quotient to get a nice decomposition of $\tBetti[G]$.

Finally, we will show that each cell is a preimage by $\tBetti[G]\to \abbreviatedBetti[G]$. This amounts to show that the cells are $B$-invariant. To define its $B$-actions, we will study some explicit Bruhat parametrizations in Section \ref{sec:bruhat-parametrizations} and introduce the notion of twisted $B$-varieties in Section \ref{sec:B-varieties}.

\section{The field theory}\label{sec:field-theory}

Here we are going to prove the following.

\begin{lema}\label{lem:field-theory}
    There exists a symmetric monoidal $2$-functor $\mcF:\PBord\to\Corr[G]$ such that for every genus $g$ compact surface $M$ with $k$ punctures with parabolic data $\mcC_1,\ldots, \mcC_k$, the quotient $[\mcF(M)/ (G/Z(G))]$ is $\abbreviatedBetti[G]$. 
\end{lema}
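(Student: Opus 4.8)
The plan is to build $\mcF$ piece by piece as a symmetric monoidal $2$-functor, using the standard fact that a surface bordism decomposes into elementary pieces, and then to verify the identification of $[\mcF(M)/(G/Z(G))]$ with $\abbreviatedBetti[G]$ on those pieces. Since $\PBord$ is generated as a symmetric monoidal $2$-category by the cap $D$, the cup $D^\dagger$, the handle $H$, and the punctured cylinders $L_\mcC$ (together with the structural $1$- and $2$-morphisms), it suffices to assign to each of these a correspondence of $G$-varieties and to check that the defining relations of $\PBord$ are respected. First I would set $\mcF(S^1) = G$ with the conjugation action of $G$, so that $\mcF$ of a disjoint union of $k$ circles is $G^k$; this fixes the values on objects. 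Then I would define $\mcF$ on the generating morphisms: $\mcF(D)$ is the correspondence $\{1\} \leftarrow \{1\} \to G$ picking out the identity; $\mcF(D^\dagger)$ is $G \leftarrow \{1\}\times_G \cdots$ the transpose, picking out the fiber over $1$; $\mcF$ of the reverse pair of pants is the multiplication correspondence $G\times G \xleftarrow{=} G\times G \xrightarrow{m} G$, which is where the group law enters; $\mcF(L_\mcC)$ is, as indicated in the excerpt, the composite of $\mcF(D)$ followed by multiplication by the inclusion $\mcC \hookrightarrow G$, i.e.\ the correspondence $G \xleftarrow{} G\times\mcC \xrightarrow{(g,z)\mapsto gz} G$; and $\mcF(H)$ is $\mcF(D)$ followed by the commutator, i.e.\ $G \leftarrow G\times G\times G \to G$ sending $(g,x,y)\mapsto g[x,y]$. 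All of these carry the evident diagonal conjugation $G$-actions, so they are genuinely morphisms in $\Corr[G]$, and the cobordism-to-correspondence assignment is symmetric monoidal by construction because disjoint union goes to product on both sides.

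The next step is to check that $\mcF$ respects the relations in $\PBord$ — associativity and unitality of the pair of pants, the handle relation, commutativity of disjoint punctures, and the Morse-move/ zig-zag identities between cap and cup. These reduce to elementary identities in $G$: associativity of $m$, that $1$ is a two-sided unit, that the order in which commutator factors and puncture factors are multiplied can be rearranged up to the correspondences being isomorphic (this uses only that we are recording the \emph{set} of tuples, not an ordered product on the nose — more precisely, the $2$-morphisms of $\Corr[G]$ are themselves correspondences, so a reordering is witnessed by an isomorphism of varieties over the source and target), and the snake identities, which hold because $\{1\}\times_G\{1\} = \{1\}$. I would organize this as a short lemma-by-lemma verification, citing \cite{GPHV, Jesse} for the presentation of $\PBord$ and for the general machinery that reduces checking a symmetric monoidal $2$-functor to checking it on generators and relations.

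Finally, with $\mcF$ in hand, the composition formula
\[\mcF(M) = \mcF(D^\dagger)\circ \mcF(H)^g\circ \mcF(L_{\mcC_1})\circ\dots\circ \mcF(L_{\mcC_k})\circ \mcF(D)\]
unwinds, by composing correspondences (fiber products), to the correspondence $\{1\} \leftarrow Z \to \{1\}$ where $Z$ is exactly the subvariety
\[\{(x,y,z)\in G^{2g}\times \mcC_1\times\cdots\times\mcC_k : [x_1,y_1]\cdots[x_g,y_g]z_1\cdots z_k = 1\},\]
carrying the simultaneous conjugation action of $G$. Taking the quotient by $G/Z(G)$ — noting that the center acts trivially by conjugation, so the stack quotient by $G$ and by $G/Z(G)$ record the same stack up to the central gerbe, and the statement is phrased with $G/Z(G)$ precisely to match the definition of $\abbreviatedBetti[G]$ — gives $[\mcF(M)/(G/Z(G))] = \abbreviatedBetti[G]$, as claimed.

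I expect the main obstacle to be the bookkeeping in the second step: verifying that the elementary $G$-identities really do assemble into the coherence data of a symmetric monoidal $2$-functor, in particular handling the $2$-morphisms of $\Corr[G]$ (which are correspondences-of-correspondences, hence themselves only defined up to isomorphism) carefully enough that the zig-zag and associativity axioms hold on the nose rather than merely up to unspecified higher coherence. A clean way to manage this is to factor $\mcF$ through a strictification, or to invoke a cobordism-hypothesis-style generators-and-relations presentation of $\PBord$ so that only finitely many $2$-cells need to be checked; the actual content on each is a one-line computation in $G$.
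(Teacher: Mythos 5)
Your route---assign correspondences to the generating bordisms $D$, $D^\dagger$, $H$, $L_\mcC$ and then verify the relations of a conjectured presentation of $\PBord$---is genuinely different from the paper's, and it contains a gap that you flag but do not close. The paper never attempts a generators-and-relations construction. Instead it factors $\mcF$ as $R_G\circ\Pi$, where $\Pi$ is the fundamental groupoid functor on manifolds with parabolic data, valued in groupoids with parabolic data, and $R_G$ is a representation functor on finitely generated such groupoids. The technical ingredients are that $R_G$ is well defined up to equivalence (Lemma~\ref{lem:well-definition-RG}), that it is representable by a $G$-variety, and, crucially, that it sends colimits of groupoids to limits (Lemma~\ref{lem:RG-maps-lim-into-colim}). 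Symmetric monoidality of $\mcF$ and compatibility with gluing of bordisms then follow from Seifert--van~Kampen on the groupoid side together with colimit preservation of $R_G$, so there is no coherence bookkeeping on generators to perform; the decomposition of $M$ into elementary pieces is used only at the very end, to \emph{compute} the value of an already-constructed functor, never to construct it.

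Concretely, your approach requires a finite presentation of $\PBord$ as a symmetric monoidal $2$-category, including its $2$-cells. You neither supply nor cite such a presentation, and it is not obviously available once $\Lambda$-labelled marked points enter the picture: the $2$-morphisms of $\PBord$ are diffeomorphisms rel boundary that must also respect the marked-point set and the labelling, so the relation set is more involved than for plain $\Bord_2$. You acknowledge this as ``the main obstacle,'' which is honest, but the proposal leaves it open rather than resolving it. There is also a small slip: in the paper's $\Corr[G]$, the $2$-morphisms are isomorphisms of the apex variety commuting with the two legs, not correspondences-of-correspondences, so the concern about $2$-cells being ``only defined up to isomorphism'' is misdiagnosed. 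Your identifications of the values of $\mcF$ on $D$, $D^\dagger$, $H$, $L_\mcC$, and the reverse pair of pants do match the paper's computation, so the end of your argument agrees with the paper once the existence of $\mcF$ is granted; it is the existence step that is missing.
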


This is probably well-known to experts. However, we do not know any written proof. We follow the approach of \cite{GPHV} but adding parabolic data. First, we review the definitions of the categories in the lemma.

\subsection{The category of bordisms}

Let $\Lambda$ be the set of conjugacy classes on $G$. A manifold with parabolic data in $\Lambda$ is a tuple $(M,S,f)$ where $M$ is a manifold, $S\subset M$ is a (possibly empty) finite set of points, and $f$ is a function from $S$ to $\Lambda$. We say that it is oriented, compact, or closed if $M$ is so. Its dimension is the dimension of $M$. Any manifold $M$ can be viewed as a manifold with parabolic data as $(M,\emptyset,\emptyset\to \Lambda)$. 

A bordism with parabolic data between oriented $(M,S,f)$ and $(M',S',f')$ is an oriented manifold with parabolic data $(N,T,h)$ and a diffeomorphism of its boundary with $\overline{M}\sqcup M'$, where $\overline{M}$ is $M$ with the opposite orientation, such that $T\cup \overline{M} = S$, $T\cap M' = S'$, and the restrictions of $h$ to $S$ and $S'$ are $f$ and $f'$ respectively. 

The $2$-category of $2$-bordisms with parabolic data $\PBord$ is defined as follows:
\begin{itemize}
    \item Its objects are oriented closed $1$-dimensional manifolds.
    \item Its $1$-morphism are compact ($2$-dimensional) bordisms with parabolic data.
    \item A $2$-morphism between $(N,T,h):M\to M'$ and $(N',T',h'):M\to M'$ is a orientation-preserving diffeomorphism $F:N\to N'$ whose restriction to the boundary is the identity and such that $F(T)=T'$ and $h = h'\circ F$.
    \item Composition of $1$-morphisms is given by gluing of bordisms along their common boundary and juxtaposition of basepoints and parabolic structures.
    \item Composition of $2$-morphisms is just composition of diffeomorphisms.
    \item The unit from $M$ to itself is the cylinder $M\times [0,1]$ with empty parabolic data.    
\end{itemize}
It is straightforward to check that the axioms of a $2$-category are satisfied. The disjoint union gives it a monoidal structure.

\subsection{The category of correspondences}

A $G$-variety is an algebraic variety endowed with an action of $G$ such that each point has a $G$-invariant open affine neighborhood. A morphism between $G$-varieties is an algebraic morphism which intertwines the $G$-actions. The (fiber) product between $G$-varieties is the (fiber) product between the underlying varieties with simultaneous action on the coordinates.

A correspondence between $G$-varieties $X$ and $Y$ is a diagram 
\[\xymatrix{ 
& Z \ar^{g}[rd]\ar_{f}[ld]& \\
X & & Y
}\]
where $Z$ is another $G$-variety. The $2$-category of correspondences between $G$-varieties $\Corr[G]$ is defined as follows:
\begin{itemize}
    \item Its objects are $G$-varieties.
    \item Its $1$-morphism are correspondences. 
    \item A $2$-morphism between $(Z,f,g):X\to Y$ and $(Z',f',g'):X\to Y$ is an isomorphism $h:Z\to Z'$ such that $f=f'\circ h$ and $g=g'\circ h$.
    \item The composition of $(Z,f,g):X\to Y$ and $(Z',f',g'):Y\to Y'$ is 
    \[\xymatrix{
    & & Z\times_Y Z'\ar[ld]\ar[rd] & & \\
    & Z \ar_{f}[ld]\ar^{g}[rd] & & Z' \ar_{f'}[ld]\ar^{g'}[rd]&\\
    X & & Y & & Y'
    }\]
    viewed as a correspondence between $X$ and $Y'$.
    \item Composition of $2$-morphisms is just the composition of functions.
    \item The unit from $X$ to itself is $(X,\operatorname{id}_X,\operatorname{id}_X)$.    
\end{itemize}
It is straightforward to check that the axioms of a $2$-category are satisfied. The product of varieties makes it monoidal.

\subsection{Groupoids with parabolic data} 

We work with small groupoids. Recall that a small groupoid is a category all of whose morphisms are invertible and whose objects form a set. A groupoid is finitely generated if it has finitely many objects, all of whose automorphism groups are finitely generated. Furthermore, it is essentially finitely generated if it is equivalent to a finitely
generated groupoid. Finally, we say it is totally disconnected if any two different objects are non-isomorphic.

Given a small groupoid $\Gamma$, its set of conjugacy classes $\mcC(\Gamma)$ is the disjoint union over its objects of its automorphisms groups quotient by the equivalence relation that identifies $s\in \operatorname{Aut}(x)$ with $fsf^{-1}\in \operatorname{Aut}(y)$ for any isomorphism $f:x\to y$. We note that any functor induces a function at the level of conjugacy classes and that any equivalence between groupoids induces an identification of its conjugacy classes. 

Recall $\Lambda$ is the set of conjugacy classes of $G$. A groupoid with parabolic data $(\Gamma, S,i,f)$ is a small groupoid $\Gamma$, a set $S$, and two functions $i:S\to \mcC(\Gamma)$ and $f:S\to \Lambda$. A morphism $(\Gamma,S,i,f)\to (\Gamma',S',i',f')$ is a functor $F:\Gamma\to \Gamma'$ and a function $j:S\to S'$ such that $\iota'\circ j=F\circ \iota$ and $f=f'\circ j$. A $2$-morphism is just an invertible natural transformation. We say that $(\Gamma,S)$ is (essentially) finitely generated if $\Gamma$ is so and $S$ is finite. Let $\PGrpd$ be the category of groupoids with parabolic data, and $\PGrpdfg$ be the full subcategory of essentially finitely generated groupoids with parabolic data. Furthermore, let $\PGrpdheart$ be the full subcategory of those that are finitely generated and totally disconnected. The inclusion $\PGrpdheart\to\PGrpdfg$ is an equivalence of categories.

Any group $H$ with a group homomorphism $\varphi:H\to G$ can be seen as a groupoid with parabolic data. As a groupoid, it is a category with one object whose automorphism group is $H$. Its parabolic data is the set of conjugacy classes of $H$ together with the induced function to $\Lambda$ by $f$. We note that
\[\hom_{\mathrm{PGrpd}}((\Gamma,S,i,f),H) = \{F\in \hom(\Gamma,H): \varphi(F(i(s)))=f(s)\text{ for any }s\in S\}\]
for any groupoid with parabolic data $(\Gamma,S,i,f)$.

\begin{Def}
    For any $(\Gamma,S,i,f)\in\PGrpdheart$, its representation functor $R_G(\Gamma,S,i,f)$ is the functor given by
    \[ R_G(\Gamma,S,i,f)(T) = \hom_{\mathrm{PGrpd}}((\Gamma,S,i,f),G(T))\]
    for a $\C$-algebra $T$.
\end{Def}

\begin{lema}\label{lem:RG-maps-lim-into-colim}
    $R_G$ maps colimits into limits.
\end{lema}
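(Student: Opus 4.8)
The plan is to reduce the statement to the behavior of the $\hom$-functor in $\PGrpd$ and then to standard facts about representability. Fix a colimit diagram $(\Gamma,S,i,f) = \operatorname{colim}_j (\Gamma_j,S_j,i_j,f_j)$ in $\PGrpdheart$; by the stated equivalence $\PGrpdheart \to \PGrpdfg$ we may as well compute the colimit in $\PGrpdfg$, or even in all of $\PGrpd$, replacing the result by an equivalent totally disconnected model at the end (equivalences induce isomorphisms of representation functors, since $\hom_{\PGrpd}(-,G(T))$ only sees the groupoid and its parabolic data up to equivalence). The first step is therefore to establish that $\hom_{\PGrpd}(-,(\Gamma',S',i',f'))$ turns colimits in the first variable into limits of sets — this is formal once one has an explicit description of colimits in $\PGrpd$, which are built from colimits of the underlying groupoids (themselves computed objectwise on objects and via generators-and-relations on morphisms) together with the evident colimit on the parabolic index sets $S_j$, subject to the compatibility constraints $\iota'\circ j = F\circ\iota$ and $f = f'\circ j$ recorded in the definition of a morphism. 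Because a morphism out of a colimit is exactly a compatible family of morphisms out of the pieces, and the parabolic constraint is checked componentwise, the map
\[
\hom_{\PGrpd}\!\left(\operatorname{colim}_j(\Gamma_j,S_j,i_j,f_j),\,(\Gamma',S',i',f')\right)\longrightarrow \lim_j \hom_{\PGrpd}\!\left((\Gamma_j,S_j,i_j,f_j),\,(\Gamma',S',i',f')\right)
\]
is a bijection.

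The second step is to apply this with $(\Gamma',S',i',f') = G(T)$, the groupoid-with-parabolic-data attached to the group $G(T)$ together with $\varphi = \operatorname{id}$, using the displayed formula
\[
\hom_{\PGrpd}((\Gamma,S,i,f),G(T)) = \{F\in\hom(\Gamma,G(T)) : F(i(s)) = f(s)\text{ for all }s\in S\}.
\]
By definition $R_G(\Gamma,S,i,f)(T)$ is the left-hand side, so Step 1 gives, for each $\C$-algebra $T$, a natural bijection $R_G(\operatorname{colim}_j(\Gamma_j,\dots))(T)\cong \lim_j R_G(\Gamma_j,\dots)(T)$. Naturality in $T$ is immediate since every map in sight is induced by postcomposition with $G(\phi)$ for an algebra map $\phi$, and these commute with restriction along the $\Gamma_j\to\Gamma$. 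Hence $R_G(\operatorname{colim}_j) \cong \lim_j R_G(\Gamma_j,\dots)$ as functors on $\C$-algebras, which is exactly the assertion that $R_G$ sends colimits to limits.

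I expect the only real subtlety to be bookkeeping around the parabolic data under colimits: one must check that the colimit in $\PGrpd$ really does have index set $\operatorname{colim}_j S_j$ with the induced $i,f$, and that a morphism out of it restricts correctly on each $S_j$ — in particular that the constraint $\varphi(F(i(s))) = f(s)$ is preserved, which is where the hypothesis that $\varphi$ is a fixed homomorphism (here an identity) matters. A minor related point is that the colimit should be taken so as to land in $\PGrpdheart$; since the inclusion $\PGrpdheart\to\PGrpdfg$ is an equivalence, the colimit computed in the larger category is equivalent to one in the smaller, and $R_G$ is invariant under such equivalences, so nothing is lost. Everything else is the formal Yoneda-style manipulation above, and no geometry or representation theory of $G$ enters beyond the functor-of-points description $T\mapsto G(T)$.
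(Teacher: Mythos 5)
Your argument is correct and is essentially the paper's proof spelled out in full: the paper's one-line justification ``limits in a functor category are computed pointwise'' is exactly the step you use to upgrade the pointwise bijections (which come from the standard fact that $\hom(-,G(T))$ turns colimits into limits) to an isomorphism of functors of points. One small remark: you don't actually need to describe how colimits in $\PGrpd$ are built on objects, morphisms, and parabolic sets — the universal property of the colimit already says a morphism out of it is a compatible family of morphisms out of the pieces, so the bijection in your displayed formula is formal.
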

\begin{proof}
    Limits in a functor category are computed pointwise.
\end{proof}

\begin{lema}\label{lem:well-definition-RG}
    Let $F:(\Gamma,S,i,f)\to (\Gamma',S',i',f')$ be an equivalence in $\PGrpdheart$. Then $F$ induces a natural isomorphism between $R_G(\Gamma',S',',f')$ and $R_G(\Gamma,S,i,f)$.
\end{lema}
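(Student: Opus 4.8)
The plan is to reduce the statement to the fact that every equivalence in $\PGrpdheart$ is in fact an isomorphism, after which the conclusion is formal. Write the two objects as $(\Gamma,S,i,f)$ and $(\Gamma',S',i',f')$ and let $(F,j)$ be the given equivalence, so $F\colon\Gamma\to\Gamma'$ is a functor, $j\colon S\to S'$ a function, and $\bar F\circ i=i'\circ j$ and $f=f'\circ j$, where $\bar F\colon\mcC(\Gamma)\to\mcC(\Gamma')$ is the map induced on conjugacy classes. The candidate natural transformation is precomposition $\rho'\mapsto\rho'\circ F$. First I would check it is well defined as a map $R_G(\Gamma',S',i',f')(T)\to R_G(\Gamma,S,i,f)(T)$ for each $\C$-algebra $T$: if $\rho'$ satisfies the parabolic constraints of $(\Gamma',S',i',f')$ then, for $s\in S$,
\[ \varphi\bigl((\rho'\circ F)(i(s))\bigr)=\varphi\bigl(\rho'(\bar F(i(s)))\bigr)=\varphi\bigl(\rho'(i'(j(s)))\bigr)=f'(j(s))=f(s), \]
using the two compatibility identities; naturality in $T$ is automatic, as the construction is precomposition with the fixed $(F,j)$ while $T$ enters only through $T\mapsto G(T)$.

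The heart of the argument is then to show $(F,j)$ is an isomorphism, which I would split in two. First, the underlying functor $F$ is an isomorphism of categories, and this is exactly where membership in $\PGrpdheart$ — in particular total disconnectedness — is used: essential surjectivity of $F$ together with total disconnectedness of $\Gamma'$ (isomorphic objects are equal) forces $F$ to be surjective on objects, while fullness together with total disconnectedness of $\Gamma$ forces it to be injective on objects, since an equality $F(x)=F(y)$ yields, via $\operatorname{id}_{F(x)}$, a morphism $x\to y$ in $\Gamma$ and hence $x=y$; a fully faithful functor that is bijective on objects is an isomorphism of categories. Second, $j$ is a bijection: taking a quasi-inverse $(F',j')$ of $(F,j)$, the structural $2$-isomorphisms $(F',j')\circ(F,j)\simeq\operatorname{id}$ and $(F,j)\circ(F',j')\simeq\operatorname{id}$ restrict on the marked-set component to $j'\circ j=\operatorname{id}_S$ and $j\circ j'=\operatorname{id}_{S'}$. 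It then remains to verify that $(F^{-1},j^{-1})$ is a morphism $(\Gamma',S',i',f')\to(\Gamma,S,i,f)$, i.e., that $\overline{F^{-1}}\circ i'=i\circ j^{-1}$ and $f'=f\circ j^{-1}$; both follow by rearranging $\bar F\circ i=i'\circ j$ and $f=f'\circ j$. Thus $(F,j)$ is an isomorphism with inverse $(F^{-1},j^{-1})$.

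Finally, precomposition with an isomorphism is a bijection on $\hom$-sets into every $G(T)$, with inverse precomposition along $(F^{-1},j^{-1})$, and the compatibility with parabolic data holds in both directions by applying the displayed computation to $(F,j)$ and to its inverse; this yields the required natural isomorphism. The step I expect to need genuine care is the claim that an equivalence in $\PGrpdheart$ is actually an isomorphism: the groupoid half is the total-disconnectedness observation above, but the marked-point half depends on unwinding precisely how a $2$-morphism in $\PGrpd$ behaves on the set component $S$. If one prefers to sidestep the bijectivity of $j$, the alternative is to argue directly that the two parabolic loci cut out the same subset of $\hom(\Gamma',G(T))\cong\hom(\Gamma,G(T))$: since $F$ is already an isomorphism the underlying $\hom$-sets are canonically identified, the identities $\bar F\circ i=i'\circ j$ and $f=f'\circ j$ show that $i$ and $i'$ have the same image up to $\bar F$, and indices $s\in S$ with the same value $i(s)$ impose the very same constraint — so the redundancy in the indexing by $S$ versus $S'$ is invisible to $R_G$.
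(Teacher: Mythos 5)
Your approach is genuinely different from the paper's, and it contains a real gap that you yourself flag but do not resolve. The paper does not try to upgrade the equivalence $(F,j)$ to an isomorphism of $\PGrpdheart$. Instead, fixing a quasi-inverse $(G,j')$ and a natural isomorphism $\eta:F\circ G\simeq\operatorname{id}_{\Gamma'}$, it builds by hand a two-sided inverse $\Psi$ to $G^*F^*$ on $\hom_{\PGrpd}((\Gamma',S',i',f'),G(T))$, via $\Psi(\alpha)(\gamma)=\alpha(\eta_y)\alpha(\gamma)\alpha(\eta_x^{-1})$. Total disconnectedness enters only to guarantee $F\circ G$ is the identity on objects, which makes $\Psi$ inverse to $G^*F^*$; from this one gets $F^*$ injective and $G^*$ surjective, and by symmetry $F^*$ is a bijection. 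This sidesteps any discussion of the combinatorics of the marked sets.

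Your first half — that total disconnectedness forces the underlying functor $F$ to be an isomorphism of categories — is correct and is a clean observation. The problem is the second half. A $2$-morphism in $\PGrpd$ is, by the paper's definition, \emph{only} an invertible natural transformation between the underlying functors; it carries no datum and imposes no condition on the set-level maps $j$. Consequently, the $2$-isomorphisms $(F',j')\circ(F,j)\simeq\operatorname{id}$ and $(F,j)\circ(F',j')\simeq\operatorname{id}$ do \emph{not} restrict to $j'\circ j=\operatorname{id}_S$ and $j\circ j'=\operatorname{id}_{S'}$. And indeed $j$ need not be bijective: take $\Gamma=\Gamma'$ with one object and automorphism group $\Z$, $F=\operatorname{id}$, $S=\{a\}$, $S'=\{a,b\}$, $i(a)=i'(a)=i'(b)$, $f(a)=f'(a)=f'(b)$, $j(a)=a$, $j'(a)=j'(b)=a$; then $(F,j)$ is an equivalence but $j$ is neither injective nor surjective. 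So the main argument cannot be repaired to literally produce an inverse morphism $(F^{-1},j^{-1})$.

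Your alternative is closer, but as written it also has a gap on the reverse inclusion. Identifying $\hom(\Gamma',G(T))\cong\hom(\Gamma,G(T))$ via $F$, the identities $\bar F\circ i=i'\circ j$ and $f=f'\circ j$ only give $\bar F(\operatorname{Im} i)=\operatorname{Im}(i'\circ j)\subset\operatorname{Im} i'$, so they show the $(\Gamma',S',i',f')$-constraints imply the $(\Gamma,S,i,f)$-constraints, not the converse. The converse requires invoking the quasi-inverse $(G,j')$ and its compatibility $i\circ j'=\bar G\circ i'$, $f'=f\circ j'$: given $s'\in S'$, the element $j'(s')\in S$ produces exactly the constraint attached to $s'$. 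This is the content your phrase ``the redundancy in the indexing\ldots is invisible to $R_G$'' is gesturing at, but it is precisely the point that needs the existence of $j'$ with its constraints, not just the data $(F,j)$. Once you add that, the alternative does give a correct (and arguably more transparent) proof; as is, however, it is incomplete, and the paper's $\Psi$ is what fills this hole in a single stroke.
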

\begin{proof}
    Precomposition with $F$ induces a natural transformation $F^*:R_G(\Gamma',S',i',f') \to R_G(\Gamma,S,i,f)$. It is enough to show that, for any $T$,
    \[ F^*: \hom_{\mathrm{PGrpd}}((\Gamma',S',i',f'),G(T)) \to \hom_{\mathrm{PGrpd}}((\Gamma,S,i,f),G(T)) \]
    is bijective. 

    Being $F$ an equivalence, there exists a morphism $G:(\Gamma',S',i',f')\to (\Gamma,S,i,f)$ and a natural isomorphism $\eta:F\circ G \simeq \operatorname{id}_\Gamma'$. Let us consider the function
    \[\Psi: \hom_{\mathrm{PGrpd}}((\Gamma',S',i',f'),G(T)) \to \hom_{\mathrm{PGrpd}}((\Gamma',S',i',f'),G(T))\]
    defined as follows. Let $\alpha:(\Gamma',S',i',f')\to G(T)$. Then $\Psi(\alpha)$ will send each object of $\Gamma'$ to the unique object of $G(T)$ and each arrow $\gamma:x\to y$ to $\alpha(\eta_y)\alpha(\gamma)\alpha(\eta_x^{-1})$. We note that if $\gamma\in S$, $x=y$ and $\Psi(\alpha)(\gamma)$ is conjugated to $\alpha(\gamma)$.
    
    Now, by the very definition of $\eta$, we have a commutative diagram 
    \[\xymatrix{
    (G^*F^*\alpha)(x) \ar^{\alpha(\eta_x)}[rr]\ar_{(G^*F^*\alpha)(\gamma)}[dd] & & \alpha(x)\ar^{\alpha(\gamma)}[dd]\\
    & & &\\
    (G^*F^*\alpha)(y)\ar_{\alpha(\eta_y)}[rr] & & \alpha(y)
    }\]
    for each morphism $\gamma:x\to y$. Equivalently, $\Psi(G^*F^*\alpha)=\alpha$.

    Similarly, for any $\gamma:x\to y$, 
    \[G^*F^*\Psi(\alpha) (\gamma) = \alpha(\eta_{F(G(y))})\alpha(F(G(\gamma)))\alpha(\eta_{F(G(x))}^{-1})\]
    which is $\alpha(\gamma)$ provided $F\circ G$ is the identity on objects. Being $\Gamma'$ totally disconnected, this is indeed true as $\eta_x$ is an isomorphism between $F(G(x))$ and $x$ for any $x$. It follows that $G^*F^*$ is bijective. Therefore, $G^*$ is surjective and $F^*$ is injective. Changing the role of $F$ and $G$, we conclude that $F^*$ is a bijection.
\end{proof}

This allows us to define a unique up to isomorphism representation functor for an arbitrary essentially finitely generated groupoid.

\begin{lema}
    Let $(\Gamma,S,i,f)$ be an essentially finitely generated groupoid with parabolic data. Then $R_G(\Gamma,S,i,f)$ is represented by a variety.
\end{lema}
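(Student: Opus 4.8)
The plan is to reduce to the case of a single finitely generated group and then exhibit the representation functor as a locally closed subscheme of a suitable power of $G$, cut out by word maps and by conjugacy-class conditions.

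First I would invoke Lemma~\ref{lem:well-definition-RG} together with the fact, recorded above, that the inclusion $\PGrpdheart\to\PGrpdfg$ is an equivalence: since ``being represented by a variety'' is invariant under natural isomorphism, this reduces us to $(\Gamma,S,i,f)\in\PGrpdheart$, i.e.\ $\Gamma$ totally disconnected with finitely many objects $x_1,\dots,x_m$ and finitely generated automorphism groups $H_j=\Aut{x_j}$. Since $G(T)$, viewed as a groupoid, has a single object, a functor $\Gamma\to G(T)$ is exactly a tuple of group homomorphisms $(H_j\to G(T))_{j}$; and, as $\mcC(\Gamma)=\bigsqcup_j\mcC(H_j)$, the parabolic constraints distribute among the $H_j$'s. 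So it suffices to show that, for a finitely generated group $H$ equipped with finitely many conjugacy-class constraints, the corresponding functor is represented by a variety, and then take the product over $j$ (finite products of varieties being varieties).

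Next, fix a finite generating set $g_1,\dots,g_n$ of $H$, so that $\hom_{\mathrm{gp}}(H,G(T))$ is identified with the set of $(a_1,\dots,a_n)\in G(T)^n=(G^n)(T)$ satisfying $w(a_1,\dots,a_n)=e$ for every relation $w$ in the kernel of the surjection $F_n\twoheadrightarrow H$ from the free group. Each relation $w$ defines a word morphism $G^n\to G$ of affine varieties (recall $G$ is affine, being reductive), and the locus where it equals $e$ is closed; hence $\hom_{\mathrm{gp}}(H,G(-))$ is represented by the closed subscheme of $G^n$ defined by all these conditions — only finitely many are needed, $G^n$ being Noetherian. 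To incorporate the parabolic data, for each constraint $s$ pick a representative $\gamma_s\in H$ of the relevant conjugacy class and a word $w_s$ in the $g_i$ representing it; evaluation gives a morphism $\ev{w_s}\colon G^n\to G$, and the condition imposed by $s$ is that $\ev{w_s}$ take values in the conjugacy class $\mcO_{f(s)}\subseteq G$ of $f(s)$, a locally closed subvariety by Chevalley's theorem. This is independent of the choice of $\gamma_s$ (and of $w_s$, on the subscheme where the relations hold), because $\mcO_{f(s)}$ is conjugation-invariant, and it cuts out a locally closed subscheme. Intersecting, we obtain that $R_G(\Gamma,S,i,f)$ is represented by a locally closed subscheme of $\prod_j G^{n_j}$ — a quasi-affine scheme of finite type over $\C$, hence (passing to the reduced structure if one insists that varieties be reduced) a variety.

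The step I expect to be the only real subtlety is the bookkeeping check that this scheme represents the functor $R_G$ on \emph{all} $\C$-algebras $T$ and naturally in $T$: that $\hom(F_n,G(T))=(G^n)(T)$; that each relation and each parabolic constraint cuts out the expected closed (resp.\ locally closed) subfunctor of $G^n$; and, in particular, that ``having conjugacy class $f(s)$'' is to be read as membership in the subfunctor $T\mapsto\mcO_{f(s)}(T)\subseteq G(T)$, so that the parabolic condition is literally the pullback of $\mcO_{f(s)}\hookrightarrow G$ along $\ev{w_s}$. Everything else is routine once unwound.
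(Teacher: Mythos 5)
Your argument is correct and follows essentially the same route as the paper: reduce to the totally disconnected, finitely generated case, split into connected components (one finitely generated group each), cut out a subscheme of $G^n$ by word‑map equations for the relations and by locally closed conjugacy‑class conditions for the parabolic data, and take products over components. The paper phrases the component‑wise reduction as ``$\Gamma$ is a finite colimit of one‑object groupoids'' and invokes Lemma~\ref{lem:RG-maps-lim-into-colim}, whereas you observe directly that a functor to a one‑object groupoid is a tuple of group homomorphisms; these are the same mechanism. You are somewhat more explicit than the paper about the scheme‑theoretic bookkeeping (Noetherianness to cut down to finitely many relations, Chevalley for the conjugacy class being locally closed, passing to the reduced structure), which the paper leaves implicit, but there is no substantive difference in method.
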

\begin{proof}
    By definition, we may assume that $\Gamma$ is finitely generated and totally disconnected. In this case, $\Gamma$ is a finite colimit of finitely generated groupoids with one object. Hence, by Lemma \ref{lem:RG-maps-lim-into-colim}, we may further assume that $\Gamma$ has only one object. Let $H$ be its automorphism group. The parabolic data consist of a set $S$ and two functions $i:S\to H/H$ and $f:S\to \Lambda$. 
    
    Choose a finite set of generators $\gamma_1,\ldots,\gamma_n$ of $H$. This induces a surjective morphism $\Z^{*n}\to H$. Let $R$ be its kernel. In addition, choose a representative $\hat h$ of each element $h$ of $H/H$ as a word in $\Z^{*n}$. We can view the elements of $\Z^{*n}$ as algebraic functions $G^n\to G$. Then,
    \[\hom_{\mathrm{PGrpd}}((\Gamma,S,i,f),G(T)) = \left\{x\in G(T)^n:r(x)=1\ \forall r\in R, \widehat{i(s)}(x)\in f(s)(T)\ \forall s\in S\right\}\]
    The left-hand side is nothing but the functor of points of the variety
    \[\left\{x\in G^n:r(x)=1\ \forall r\in R, \widehat{i(s)}(x)\in f(s)\ \forall s\in S\right\}.\]    
\end{proof}

Hence, for an essentially finitely generated groupoid with parabolic data $(\Gamma,S,i,f)$, we have a well-defined up to isomorphism representation variety $R_G(\Gamma,S,i,f)$. We note that the $G$-action induced by conjugation on $G(T)$ makes the representation variety a $G$-variety. This construction can be assembled into a $2$-functor $R_G:\PGrpdfg\to \Var_G$.

\subsection{Proof of Lemma \ref{lem:field-theory}}

Let $\mathrm{PMan}$ be the category of compact oriented manifolds with parabolic data. A morphism $(M,S,f)\to (M',S',f')$ is a smooth function $\phi:M\to M'$ such that $\phi(S)\subset S'$ and $f=f'\circ\varphi$.  

The fundamental groupoid defines a functor $\Pi:\mathrm{PMan}\to \PGrpdfg$. The underlying groupoid to $\Pi(M,S,i,f)$ is the fundamental groupoid of $M\setminus S$. Its parabolic data is defined as follows. The underlying set and function to $\Lambda$ are $S$ and $f$, respectively. We must define $i$. For each $s\in S$, choose a orientation-preserving embedding of a disk $\iota_s:D\to M$ such that $\iota_s(0)=s$ and $\iota_s(D)\cap S=\{s\}$. Then It induces $(\iota_s)_*:\Pi(D\setminus \{0\})\to \Pi(M)$. At level of conjugacy classes this is a function $\mcC(\Pi(D\setminus\{0\}))\to \mcC(\Pi(M))$. Now,  $\mcC(\Pi(D\setminus\{0\}))$ can be identified with $\Z$ by associating to each integer $n$ the homotopy class of the loop $t\in [0,1]\mapsto e^{2\pi i nt}$. We defined $i(s)\in \mcC(\Pi(M))$ as the image of $1$ under the previous identifications. This definition does not depend on the choice of the embedding.

The field theory $\mcF:\PBord\to \Corr[G]$ is defined as follows:
\begin{itemize}
    \item For a oriented closed $1$-manifold $M$, $\mcF(M)= R_G(\Pi(M))$.
    \item For a boridsm with parabolic data $(N,T,h):M\to M'$, $\mcF(N,T,h)$ is the span
    \[\xymatrix{
    & R_G(\Pi(N,T,h)) \ar[ld] \ar[rd]& \\
    R_G(\Pi(M)) & & R_G(\Pi(M'))
    }\]
    where the arrows are induced by the morphisms $\overline{M},M'\to N$.
    \item For a $2$-morphism $F:(N,T,h)\to (N',T',h')$, $\mcF(F)=R_G(\Pi(F))$.
\end{itemize}
Following the arguments of \cite[Proposition 4.8]{GPHV2} and applying Lemma \ref{lem:RG-maps-lim-into-colim}, we see that $\mcF$ is a symmetric monoidal functor.

To finish this section, we note that $\mcF(S^1)= G$, the reverse pair of pants is mapped to the span
\[\xymatrix{
& G\times G\ar_{\operatorname{id}}[ld]\ar^{m}[rd] & \\
G\times G & & G
}\]
where $m$ is the multiplication of $G$, $\mcF(L_\mcC)\circ \mcF(D)$ is 
\[\xymatrix{
& \mcC\ar[ld]\ar@{^{(}->}[rd] & \\
\{*\} & & G
}\]
and $\mcF(H)\circ \mcF(D)$ is
\[\xymatrix{
& G\times G\ar[ld]\ar[rd] & \\
\{*\} & & G
}\]
where the right arrow is the commutator. The cup $\mcF(D^\dagger)$ is taking fiber over the identity. All the $G$-actions are by simultaneous conjugation. It follows that for any genus $g$ compact surface $M$ with $k$ punctures, 
\[\mcF(M) = \{(x,y,z)\in G^{2g}\times \mcC_1\times \dots\times \mcC_{k}: [x_1,y_1]\cdots[x_g,y_g]z_1\cdots z_k = 1\}\]
and its quotient is $\abbreviatedBetti[G]$. Similarly, 
$(\mcZ(D^\dagger)\circ\mcZ(H)^{g}\circ\mcZ(L_{\mcC_{1}})\circ\dots\circ \mcZ(L_{\mcC_{k-1}}))(UC_k)$ is, as stated in the previous section,  
\[\{(x,y,z,u)\in G^{2g}\times \mcC_1\times \dots\times \mcC_{k-1}\times U: [x_1,y_1]\cdots[x_g,y_g]z_1\cdots z_{k-1}uC_k = 1\} \]
as a $B$-variety over $G$.  

\section{Twisted \texorpdfstring{$T$}{T}-varieties over \texorpdfstring{$G$}{G}}\label{sec:var-over-G}

\subsection{Twisted \texorpdfstring{$T$}{T}-equivariancy}

Recall that $G$ is a fixed connected reductive Lie group, $B\subset G$ is a Borel subgroup, $U\subset B$ is its unipotent radical, and $T\subset B$ is a maximal torus. Let $W$ be the Weyl group of $G$. We denote $^wt:=wtw^{-1}$ for $w\in W$ y $t\in T$. Given $w\in W$ and $f:X\to G$, let $T\times_w X$ be the variety over $G$ given by $T\times X\to G, (t,x)\mapsto ^wtf(x)t^{-1}$.

\begin{Def}
    Let $w\in W$. A variety $(X,f)$ over $G$ endowed with an action of $T$ is a $w$-twisted $T$-variety over $G$, or $w$-equivariant, if the action $T\times_w X\to X$ is a morphism over $G$, in other words, if 
    \[f(t\cdot x) = ^{w}tf(x)t^{-1}\]
    for any $t\in T$ y $x\in X$. We denote $\TwTVarG$ the category of varieties over $G$ with an action of $T$ $w$-equivariant for some $w$. Its morphisms are $T$-equivariant morphisms over $G$.
\end{Def}

\begin{lema}
    Let $(X,f)$ and $(X',f')$ two varieties over $G$. If $X$ is $w$-equivariant and $X'$ is $w'$-equivariant, then $X* X'$ is $ww'$-equivariant.
\end{lema}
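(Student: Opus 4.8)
The plan is to equip the underlying variety $X\times X'$ of $X*X'$ with an explicit $T$-action and to check by a direct computation that it is $ww'$-equivariant over $G$.

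Recall that, by definition, $X*X'$ is the variety over $G$ whose underlying variety is $X\times X'$ and whose structure map is $(x,x')\mapsto f(x)f'(x')$. First I would define a $T$-action on $X\times X'$ by
\[ t\cdot(x,x') := \big(({}^{w'}t)\cdot x,\ t\cdot x'\big), \]
where on the right we use the given $w$-equivariant action on $X$ and the given $w'$-equivariant action on $X'$, and ${}^{w'}t = w't(w')^{-1}\in T$. The first thing to verify is that this is genuinely an action: since conjugation by $w'$ is a group automorphism of $T$, we have $({}^{w'}t_1)({}^{w'}t_2) = {}^{w'}(t_1t_2)$, so applying the axioms of the two given actions gives $t_1\cdot(t_2\cdot(x,x')) = (t_1t_2)\cdot(x,x')$, and the identity of $T$ acts trivially.

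Then I would check $ww'$-equivariance. Writing $F(x,x') = f(x)f'(x')$ for the structure map of $X*X'$, one computes
\[ F\big(t\cdot(x,x')\big) = f\big(({}^{w'}t)\cdot x\big)\,f'(t\cdot x') = {}^{w}({}^{w'}t)\,f(x)\,({}^{w'}t)^{-1}\cdot {}^{w'}t\,f'(x')\,t^{-1}, \]
where the second equality uses $w$-equivariance of $(X,f)$ and $w'$-equivariance of $(X',f')$. Since $({}^{w'}t)^{-1}\cdot {}^{w'}t = 1$ and ${}^{w}({}^{w'}t) = {}^{ww'}t$, the right-hand side collapses to ${}^{ww'}t\,F(x,x')\,t^{-1}$, which is exactly the condition that $X*X'$ be $ww'$-equivariant.

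Finally, I would record that $X*X'$ is a variety over $G$ in the required sense — the product of two varieties is a variety, and the structure map is the composite of $f\times f'$ with the multiplication of $G$ — so together with the $T$-action just constructed it is an object of $\TwTVarG$ which is $ww'$-equivariant. The argument is essentially a bookkeeping computation; the only point requiring care is the choice of the action, namely that it is the \emph{first} factor that must be twisted by $w'$, so that the ``inner'' conjugations $({}^{w'}t)^{-1}$ and ${}^{w'}t$ produced by the two equivariances cancel. I expect no real obstacle beyond placing this twist on the correct side.
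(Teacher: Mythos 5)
Your proof is correct and is essentially identical to the paper's: you define the same action $t\cdot(x,x') = ({}^{w'}t\cdot x,\ t\cdot x')$ and verify $ww'$-equivariance by the same direct computation. The extra remarks (verifying the action axioms, noting that the twist must be on the first factor) are sound but just make explicit what the paper leaves implicit.
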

\begin{proof}
    We define $t\cdot(x,x')=(^{w'}t\cdot x,t\cdot x')$. Then
    \[^{ww'}tf(x)f'(x')t^{-1}= f(^{w'}tx) ^{w'}tf'(x')t^{-1}=f(^{w'}tx)f'(tx')=(ff')(t(x,x'))\]
    for any $t\in T$ and $(x,x')\in X\times X'$.
\end{proof}

Let $f:X\to G$ be a variety over $G$. We call $X_0=f^{-1}(B)$ and $g:X_0\to T$ the composition of $f$ with the quotient morphism $B\to T\simeq B/[B,B]$. More generally, we set $X_{w,w'}=\{z\in X: f(z)wB\subset Bw'B\}$ for $w,w'\in W$. We note that $X_{e,e}=X_0$. In addition, we denote $X(t)$ for $g^{-1}(t)$.

\begin{lema}
    Let $X$ be a $w$-equivariant variety over $G$. Then $X_{w',w''}$ is $T$-equivariant for any $w',w''\in W$ and $g$ is $T$-equivariant if we endow $T$ with the $T$-action given by $t\cdot t'=^wtt^{-1}t'$.
\end{lema}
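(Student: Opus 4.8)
The plan is to deduce both statements directly from the defining identity $f(t\cdot x) = {}^{w}t\, f(x)\, t^{-1}$, using only that $T \subset B$ and that $T$ normalizes $U$; no machinery beyond this is needed.

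\textbf{Step 1: $X_{w',w''}$ is $T$-stable, hence again a $w$-equivariant variety over $G$.} Fix $z \in X_{w',w''}$, an element $t \in T$, and a representative $n_{w'} \in N_G(T)$ of $w'$. From the twisted equivariance, $f(t\cdot z)\, n_{w'} B = {}^{w}t\, f(z)\, t^{-1} n_{w'} B$. Writing $t^{-1} n_{w'} = n_{w'} \cdot \bigl({}^{(w')^{-1}} t^{-1}\bigr)$ with ${}^{(w')^{-1}}t^{-1} \in T \subset B$, the right-hand side equals ${}^{w}t\, f(z)\, n_{w'} B$. Since ${}^{w}t \in T \subset B$ and $f(z)\, n_{w'} B \subset B w'' B$ by hypothesis, we conclude $f(t\cdot z)\, n_{w'} B \subset {}^{w}t \cdot B w'' B = B w'' B$, i.e.\ $t\cdot z \in X_{w',w''}$. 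Thus the $T$-action restricts to $X_{w',w''}$; as the restriction of $f$ still satisfies the defining identity, $X_{w',w''}$ is again $w$-equivariant over $G$.

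\textbf{Step 2: equivariance of $g$.} Because $X_0 = X_{e,e}$, Step 1 shows the $T$-action restricts to $X_0$, so $g\colon X_0 \to T$ is a morphism from a $T$-variety. For $z \in X_0$ write $f(z) = t_z u_z$ with $t_z \in T$ and $u_z \in U$, so $g(z) = t_z$. For $t \in T$,
\[ f(t\cdot z) = {}^{w}t\,(t_z u_z)\,t^{-1} = \bigl({}^{w}t\, t_z\, t^{-1}\bigr)\bigl(t u_z t^{-1}\bigr), \]
and since $T$ normalizes $U$ we have $t u_z t^{-1} \in U$, while ${}^{w}t\, t_z\, t^{-1} \in T$. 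Reading off the $T$-component gives $g(t\cdot z) = {}^{w}t\, t_z\, t^{-1} = {}^{w}t\, t^{-1}\, g(z)$, using that $T$ is abelian; this is exactly the prescribed action $t\cdot t' = {}^{w}t\, t^{-1}\, t'$. (The same commutativity shows this formula defines a genuine $T$-action on $T$, since $t_1\cdot(t_2\cdot t') = {}^{w}(t_1 t_2)\,(t_1 t_2)^{-1}\, t'$.)

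The argument is entirely routine, and I do not expect any real obstacle. The one point that deserves attention is the coset bookkeeping in Step 1: conjugating a torus element past the Weyl representative $n_{w'}$ keeps it inside $T \subset B$, which is precisely why the incidence condition with the Bruhat cell $Bw''B$ is unaffected by the $T$-action.
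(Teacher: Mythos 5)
Your proof is correct and follows the same approach as the paper: the first claim follows because left multiplication by $T$ preserves both $w'B$ (via conjugating the torus element past the Weyl representative) and $Bw''B$, and the second is the same direct computation $f(t\cdot x_0) = {}^wt\,t'\,u\,t^{-1} = ({}^wt\,t^{-1}t')(tut^{-1})$. You simply spell out the coset bookkeeping that the paper leaves implicit.
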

\begin{proof}
    $X_{w',w''}$ is equivariant because $w'B$ and $Bw''B$ are so. Moreover, if $f(x_0)=t'u$ for some $t'\in T$ and $u\in U=[B,B]$, $f(tx_0)=^wtf(x_0)t^{-1}=^wtt'ut^{-1}=^wtt^{-1}t'\tilde u$ for $\tilde u=tut^{-1}\in U$. This shows that $g$ is equivariant.
\end{proof}

\subsection{Unipotent varieties}

Let $\g$ and $\t$ be the Lie algebras of $G$ y $T$ respectively. Let $\Delta^+$ and $\Phi^+$ be the set of simple and positive roots, respectively. To each root $\alpha$ there is an associated symmetry $s_\alpha\in W$. 

Given a root $\alpha$ of $G$, we can consider its associated $\sl_2$-subalgebra of $\g$. It induces a morphism $\iota_\alpha:\SL_2(\C)\to G$. Let $f_\alpha:\C\to G$ be the composition of $\iota_\alpha$ with $z\mapsto \left(\begin{smallmatrix}0& 1\\ -1& -z\end{smallmatrix}\right)$. We note that $\iota_\alpha\left(\begin{smallmatrix}    0 & 1 \\
    -1 & 0
\end{smallmatrix}\right)$ is a lifting of $s_\alpha\in W$. In addition, $f_\alpha$ is injective because $\ker \iota_\alpha\subset Z(\SL_2(\C))=\{\pm\mathrm{Id}_2\}$. By means of this construction and convolution, we get a morphism $\rho:F(\Delta^+)\to \Var_G$ from the free monoid on simple roots to varieties over $G$. There are also morphisms $F(\Delta^+)\to W, \beta\mapsto s_\beta$, and $F(\Delta^+)\to G$ induced by $\alpha\mapsto s_\alpha$ and $\alpha \mapsto \iota_\alpha\left(\begin{smallmatrix}
    0 & 1\\ -1 & 0
\end{smallmatrix}\right)$.

For each $\pi\in W$ we define $\Phi_\pi=\Phi^+\cap\pi^{-1}(-\Phi^+)$ and let $U^-_\pi$ and $U^+_\pi$ be the unipotent subgroups of $G$ associated with the subalgebras $\Phi_\pi$ and its complement $\Phi^+\setminus\Phi_\pi$. Let $\widehat W\subset F(\Delta^+)$ be the subset of those elements $\beta$ whose length as a word agrees with the length of $s_\beta$, in other words, is the set of minimal lifts of the elements of $W$. 

\begin{lema}
    Let $\alpha\in \Phi^+$ and $\beta\in \widehat W$ such that $s_\beta\cdot \alpha$ is positive. Let $U^-\subset \SL_2(\C)$ be the subgroup of strictly upper triangular matrices. Then $\beta\iota_\alpha(U^-) \beta^{-1}=\iota_{s_\beta\cdot\alpha}(U^-)$.
\end{lema}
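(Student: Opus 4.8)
The plan is to reduce everything to a computation inside the rank-one subgroup $\iota_\alpha(\SL_2(\C))$ together with a conjugation bookkeeping inside $G$. Write $\beta = \alpha_{i_\ell}\cdots\alpha_{i_1}$ as a reduced word, so that the element $\beta\in G$ is the product of the lifts $n_{i_j} := \iota_{\alpha_{i_j}}\left(\begin{smallmatrix}0&1\\-1&0\end{smallmatrix}\right)$, and the element $s_\beta = s_{i_\ell}\cdots s_{i_1}\in W$ is reduced. First I would record the elementary rank-one identity: for any root $\gamma$, if $n_\gamma = \iota_\gamma\left(\begin{smallmatrix}0&1\\-1&0\end{smallmatrix}\right)$ and $U^-\subset\SL_2(\C)$ denotes the strictly upper triangular matrices, then $\iota_\gamma(U^-)$ is exactly the root subgroup $U_\gamma$ attached to $\gamma$ (this is how $\iota_\gamma$ was normalized). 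So the claim is equivalent to the statement $\beta\, U_\alpha\, \beta^{-1} = U_{s_\beta\cdot\alpha}$ as subgroups of $G$, under the hypothesis that $s_\beta\cdot\alpha\in\Phi^+$.

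Next I would prove the following by induction on $\ell = \ell(s_\beta)$: writing $w = s_\beta$ and $n_w = n_{i_\ell}\cdots n_{i_1}$ for the corresponding lift, one has $n_w\, U_\alpha\, n_w^{-1} = U_{w\cdot\alpha}$ whenever $w\cdot\alpha$ is positive (more precisely, whenever the length doesn't drop, so that no $U_{-\gamma}$ appears; the positivity of $w\cdot\alpha$ combined with $\alpha$ positive is what we need). The base case $\ell=0$ is trivial. For the inductive step write $\beta = \alpha_{i_\ell}\beta'$ with $\beta'\in\widehat W$ and $s_{\beta'} = s_{i_\ell}w$ shorter, so $w = s_{i_\ell}s_{\beta'}$. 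Since $w\cdot\alpha>0$ and $s_{i_\ell}$ only changes the sign of $\alpha_{i_\ell}$ among positive roots, $s_{\beta'}\cdot\alpha$ is either $w\cdot\alpha>0$ (if $s_{i_\ell}$ fixes it) or equals $\alpha_{i_\ell}>0$; in both cases it is positive, so the inductive hypothesis gives $n_{\beta'}\,U_\alpha\,n_{\beta'}^{-1} = U_{s_{\beta'}\cdot\alpha}$. It then remains to check $n_{i_\ell}\,U_{s_{\beta'}\cdot\alpha}\,n_{i_\ell}^{-1} = U_{s_{i_\ell}s_{\beta'}\cdot\alpha}$, i.e.\ the rank-one case for the simple reflection $s_{i_\ell}$ applied to the positive root $\mu := s_{\beta'}\cdot\alpha$ with $s_{i_\ell}\cdot\mu$ positive. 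This last step is a computation inside $\iota_{\alpha_{i_\ell}}(\SL_2(\C))$ when $\mu$ is proportional to $\alpha_{i_\ell}$ (then $\mu = \alpha_{i_\ell}$ and $s_{i_\ell}\cdot\mu = -\alpha_{i_\ell}$, which is excluded by the positivity hypothesis), and otherwise follows from the standard fact that $s_{i_\ell}$ normalizes the subgroup generated by the root subgroups it permutes, conjugating $U_\mu$ onto $U_{s_{i_\ell}\mu}$ up to the sign choices absorbed into the definition of $n_{i_\ell}$; here one uses that $n_{i_\ell}^2\in T$ so sign ambiguities land in the torus and do not affect the unipotent subgroup.

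I expect the main obstacle to be the careful sign bookkeeping in the rank-one step: $\iota_\gamma$ is a homomorphism out of $\SL_2$, not $\PSL_2$, so $n_\gamma^2 = \iota_\gamma(-\mathrm{Id}) $ is a nontrivial central element of order dividing $2$, and conjugation by $n_\gamma$ on a root subgroup $U_\mu$ with $\mu\neq\pm\gamma$ sends $u_\mu(t)$ to $u_{s_\gamma\mu}(\pm c\, t)$ for a structure constant $c$ and a sign depending on the chosen pinning. The point to make rigorous is that this sign and constant are irrelevant because we only claim equality of the \emph{subgroups} $\iota_{s_\beta\cdot\alpha}(U^-) = U_{s_\beta\cdot\alpha}$, and a one-parameter unipotent subgroup is determined by the root alone. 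Once that is granted, the induction closes. I would also double-check that the hypothesis $\beta\in\widehat W$ (reducedness of the word) is used exactly to guarantee that at every stage the partial products $n_{\beta'}$ are the minimal lifts, so that the inductive hypothesis applies with the same normalization of the $\iota$'s, and that $s_\beta\cdot\alpha>0$ propagates down the word as shown above.
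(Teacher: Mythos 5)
Your overall strategy matches the paper's: reduce to the case of a single simple reflection by peeling off letters of the reduced word, then invoke the standard rank-one fact about conjugation by a lift of $s_\gamma$ (the paper asserts the reduction in one sentence and cites Theorem~5, Section~4.1.6 of Onishchik--Vinberg for the simple case). The flaw is in how you justify that the hypotheses descend along the induction. Writing $\beta=\alpha_{i_\ell}\beta'$ and $w=s_\beta$, you need $s_{\beta'}\cdot\alpha = s_{i_\ell}(w\cdot\alpha)$ to remain a positive root in order to apply the inductive hypothesis to $\beta'$. You argue that since $s_{i_\ell}$ only changes the sign of $\alpha_{i_\ell}$, $s_{\beta'}\cdot\alpha$ ``is either $w\cdot\alpha>0$ (if $s_{i_\ell}$ fixes it) or equals $\alpha_{i_\ell}>0$.'' This is not correct. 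A simple reflection $s_{i_\ell}$ does not generically fix $w\cdot\alpha$, and the single exceptional positive root $\alpha_{i_\ell}$ is sent to $-\alpha_{i_\ell}$, which is \emph{negative}. So your case analysis neither covers the generic case nor rules out the dangerous one: if $w\cdot\alpha=\alpha_{i_\ell}$ then $s_{\beta'}\cdot\alpha=-\alpha_{i_\ell}<0$ and the induction breaks.

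The needed claim --- that $w\cdot\alpha\neq\alpha_{i_\ell}$ --- is true, but it uses the reducedness of $\beta$ together with $\alpha\in\Phi^+$ and is not an automatic feature of $s_{i_\ell}$ permuting positive roots. Concretely: if $s_\beta\cdot\alpha=\alpha_{i_\ell}$ then
\[
\alpha = s_\beta^{-1}\alpha_{i_\ell} = s_{i_1}\cdots s_{i_\ell}\alpha_{i_\ell} = -\,s_{i_1}\cdots s_{i_{\ell-1}}\alpha_{i_\ell}.
\]
Since the reversed word $s_{i_1}\cdots s_{i_\ell}$ is reduced, the length criterion for appending a simple reflection gives $s_{i_1}\cdots s_{i_{\ell-1}}\alpha_{i_\ell}\in\Phi^+$, so $\alpha$ would be a negative root, contradicting $\alpha\in\Phi^+$. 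This inversion-set argument is exactly where the hypothesis $\beta\in\widehat W$ enters. With that correction your induction closes, and the rest of your proposal --- the rank-one step and the observation that one only needs equality of unipotent subgroups, so sign ambiguities in the structure constants are irrelevant --- is fine.
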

\begin{proof}
    It is enough to show it for a root $\beta$. This case follows from Theorem 5 at section 4.1.6 of \cite{OV}.
\end{proof}

\begin{lema}
    Let $\beta\in \widehat W$ be an element of length $l$. Then $\rho(\beta)=(\C^l,f)$ where $f$ is an isomorphism with $\beta U^-_{s_\beta}\subset G$. 
\end{lema}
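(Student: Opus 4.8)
The plan is to argue by induction on $l$, peeling off the last letter of $\beta$. For a simple root $\alpha$ write $\dot s_\alpha=\iota_\alpha\left(\begin{smallmatrix}0&1\\-1&0\end{smallmatrix}\right)$ for the chosen lift of $s_\alpha$ and $u_\alpha(z)=\iota_\alpha\left(\begin{smallmatrix}1&z\\0&1\end{smallmatrix}\right)$, so that $U_\alpha:=\iota_\alpha(U^-)$ is the root subgroup of $\alpha$ and $z\mapsto u_\alpha(z)$ is an isomorphism $\C\xrightarrow{\sim}U_\alpha$ (injective by the same computation that shows $f_\alpha$ is, using $\ker\iota_\alpha\subseteq\{\pm\mathrm{Id}\}$). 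The factorization $\left(\begin{smallmatrix}0&1\\-1&-z\end{smallmatrix}\right)=\left(\begin{smallmatrix}0&1\\-1&0\end{smallmatrix}\right)\left(\begin{smallmatrix}1&z\\0&1\end{smallmatrix}\right)$ yields the key identity $f_\alpha(z)=\dot s_\alpha\,u_\alpha(z)$. For $l=1$ we have $\rho(\alpha)=(\C,f_\alpha)$; since $s_\alpha$ sends only the positive root $\alpha$ to a negative one, $\Phi_{s_\alpha}=\{\alpha\}$ and $U^-_{s_\alpha}=U_\alpha$, so $f_\alpha$ is an isomorphism onto $\dot s_\alpha U^-_{s_\alpha}=\alpha\,U^-_{s_\alpha}$ (the empty word being trivial).

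For the inductive step, write $\beta=\beta'\alpha$ with $\alpha\in\Delta^+$. A prefix of a reduced word is reduced, so $\beta'\in\widehat W$ has length $l-1$, and $\pi:=s_\beta=\pi's_\alpha$ with $\pi':=s_{\beta'}$ satisfies $\ell(\pi)=\ell(\pi')+1$, equivalently $\pi'(\alpha)\in\Phi^+$. By associativity of convolution, $\rho(\beta)=\rho(\beta')\ast\rho(\alpha)=\bigl(\C^{l-1}\times\C,\ (w,z)\mapsto f'(w)f_\alpha(z)\bigr)$, where $f'$ is the map of $\rho(\beta')$; by induction $f'$ is an isomorphism $\C^{l-1}\xrightarrow{\sim}\dot\beta'\,U^-_{\pi'}$ with $\dot\beta'$ the lift of $\beta'$. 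Set $v(w):=(\dot\beta')^{-1}f'(w)$, an isomorphism $\C^{l-1}\xrightarrow{\sim}U^-_{\pi'}$. Using $f_\alpha(z)=\dot s_\alpha u_\alpha(z)$,
\[ f'(w)f_\alpha(z)=\dot\beta'\dot s_\alpha\cdot\bigl(\dot s_\alpha^{-1}v(w)\dot s_\alpha\bigr)\cdot u_\alpha(z)=\dot\beta\cdot\widetilde v(w)\cdot u_\alpha(z), \]
where $\dot\beta=\dot\beta'\dot s_\alpha$ is the lift of $\beta$ and $\widetilde v(w):=\dot s_\alpha^{-1}v(w)\dot s_\alpha$. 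Conjugation by $\dot s_\alpha$ normalizes $T$ and carries $U_\gamma$ to $U_{s_\alpha\gamma}$ for every $\gamma\in\Phi_{\pi'}$ (which follows from the preceding lemma applied to the simple root $\alpha$ and to the positive roots $s_\alpha\gamma\in\Phi^+\setminus\{\alpha\}$), so $\widetilde v$ is an isomorphism of $\C^{l-1}$ onto the unipotent subgroup $V:=\dot s_\alpha^{-1}U^-_{\pi'}\dot s_\alpha$, whose root set is $s_\alpha(\Phi_{\pi'})$.

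The combinatorial core is the identity $\Phi_\pi=\{\alpha\}\sqcup s_\alpha(\Phi_{\pi'})$. Since $\pi'(\alpha)>0$ we have $\alpha\notin\Phi_{\pi'}$, hence $s_\alpha$ maps $\Phi_{\pi'}$ bijectively into $\Phi^+\setminus\{\alpha\}$; a direct computation from $\pi=\pi's_\alpha$ then shows $\gamma\mapsto s_\alpha\gamma$ is a bijection $\Phi_{\pi'}\to\Phi_\pi\setminus\{\alpha\}$ while $\alpha\in\Phi_\pi$. Thus $V\subseteq U$ has root set $\Phi_\pi\setminus\{\alpha\}$ and $V,U_\alpha$ are subgroups of $U^-_\pi$. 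Moreover $V$ is normal in $U^-_\pi$: by the Chevalley commutator relations normality could fail only if some root $i\gamma+j\delta$ with $i,j\ge1$, $\gamma\in\Phi_\pi\setminus\{\alpha\}$, $\delta\in\Phi_\pi$ equalled $\alpha$, which is impossible for a simple root. Since also $V\cap U_\alpha=\{1\}$ (disjoint $T$-weights) and $\dim V+\dim U_\alpha=|\Phi_\pi|=\dim U^-_\pi$, the multiplication $V\times U_\alpha\to U^-_\pi$ is a bijective morphism, hence an isomorphism of varieties (characteristic zero). Composing with $\widetilde v\times u_\alpha$ and then left translation by $\dot\beta$ shows that $(w,z)\mapsto f'(w)f_\alpha(z)$ is an isomorphism $\C^l\xrightarrow{\sim}\dot\beta\,U^-_{s_\beta}$, which closes the induction.

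The main obstacle is precisely this combinatorial step — the decomposition $\Phi_\pi=\{\alpha\}\sqcup s_\alpha(\Phi_{\pi'})$ together with the normality of $V$ in $U^-_\pi$: both genuinely use that $\beta$ is reduced (through $\pi'(\alpha)>0$) and that the appended letter is a simple root, and these are what produce the semidirect-product picture $U^-_{s_\beta}=V\rtimes U_\alpha$ underlying the bundle structure on $\tBetti[G]$. The remaining ingredients — conjugation permuting root subgroups according to the Weyl action, a unipotent group being the ordered product of its root subgroups, and a bijective morphism onto a smooth variety in characteristic zero being an isomorphism — are standard, the first also being exactly the preceding lemma.
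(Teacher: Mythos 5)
Your proof is correct and follows essentially the same inductive route as the paper's: peel off one simple letter, conjugate the inductively obtained root subgroup through a Weyl lift, and identify the resulting root set with $\Phi_{s_\beta}$. The only cosmetic differences are that you strip the last letter rather than the first (so you conjugate by $\dot s_\alpha$ instead of through the full lift $\beta$, and use $\Phi_{\pi's_\alpha}=\{\alpha\}\sqcup s_\alpha(\Phi_{\pi'})$ where the paper uses $\Phi_{s_\alpha\pi}=\{\pi^{-1}\alpha\}\cup\Phi_\pi$), and that you spell out the factorization $U^-_\pi\cong V\times U_\alpha$ via normality and the Chevalley relations, which the paper leaves implicit.
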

\begin{proof}
    We prove this by induction on $l$. It is clear for $l=1$. We assume that the result is true for a given $\beta\in \widehat W$. Call $\pi=s_\beta$ and let $\alpha$ be a simple root such that $l(s_\alpha\pi)=l(\pi)+1$. In particular, $\Phi_{s_\alpha\pi}=\{\pi^{-1}(\alpha)\}\cup \Phi_\pi$. Let $f:\C^{l(\pi)}\to \beta U_\pi^-$ be the associated isomorphism with $\beta$. Write $f(z)=\beta u(z)$. Then
    \[f_\alpha(z) f(z) = \alpha \iota_\alpha\left(\begin{matrix}
        1 & z\\
        0 & 1
    \end{matrix}\right) \beta u(z) = \alpha \beta \iota_{\pi^{-1}\alpha}\left(\begin{matrix}
        1 & \tilde z\\
        0 & 1
    \end{matrix}\right) u(z)\]
    and the result follows.
\end{proof}

\begin{lema}
    For any $\beta\in F(\Delta^+)$, $\rho(\beta)$ is $s_\beta$-equivariant.
\end{lema}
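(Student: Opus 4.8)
The plan is to induct on the length of $\beta$ as a word in $F(\Delta^+)$, exploiting that both $\rho\colon F(\Delta^+)\to\Var_G$ and $\beta\mapsto s_\beta\colon F(\Delta^+)\to W$ are monoid homomorphisms and that, by the previous lemma, convolution sends a $w$-equivariant variety and a $w'$-equivariant variety to a $ww'$-equivariant one. For the empty word, $\rho(\beta)$ is the convolution unit $(\mathrm{pt},1_G)$, which is trivially $e$-equivariant for the trivial $T$-action. If $\beta=\alpha\gamma$ with $\alpha\in\Delta^+$ and $\gamma$ shorter, then $\rho(\beta)=\rho(\alpha)*\rho(\gamma)$; granting the base case that $\rho(\alpha)$ is $s_\alpha$-equivariant, the inductive hypothesis makes $\rho(\gamma)$ be $s_\gamma$-equivariant, and the convolution lemma then yields that $\rho(\beta)$ is $s_\alpha s_\gamma=s_\beta$-equivariant. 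So everything reduces to the base case.

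For the base case I would endow $\rho(\alpha)=\C$ with the $T$-action $t\cdot z=\alpha(t)z$ and check the defining identity directly. Writing $n=\left(\begin{smallmatrix}0&1\\-1&0\end{smallmatrix}\right)$, so that $\dot s_\alpha:=\iota_\alpha(n)$ is a lift of $s_\alpha$, and using $\left(\begin{smallmatrix}0&1\\-1&-z\end{smallmatrix}\right)=n\left(\begin{smallmatrix}1&z\\0&1\end{smallmatrix}\right)$, one has $f_\alpha(z)=\dot s_\alpha\,\iota_\alpha\!\left(\begin{smallmatrix}1&z\\0&1\end{smallmatrix}\right)$. Since $z\mapsto\iota_\alpha\!\left(\begin{smallmatrix}1&z\\0&1\end{smallmatrix}\right)$ parametrizes the root subgroup $U_\alpha$, the torus normalizes it through the character $\alpha$, i.e. $t\,\iota_\alpha\!\left(\begin{smallmatrix}1&z\\0&1\end{smallmatrix}\right)t^{-1}=\iota_\alpha\!\left(\begin{smallmatrix}1&\alpha(t)z\\0&1\end{smallmatrix}\right)$, and therefore
\[
{}^{s_\alpha}t\cdot f_\alpha(z)\cdot t^{-1}=\dot s_\alpha t\dot s_\alpha^{-1}\cdot\dot s_\alpha\,\iota_\alpha\!\left(\begin{smallmatrix}1&z\\0&1\end{smallmatrix}\right)\cdot t^{-1}=\dot s_\alpha\,\iota_\alpha\!\left(\begin{smallmatrix}1&\alpha(t)z\\0&1\end{smallmatrix}\right)=f_\alpha(\alpha(t)z)=f_\alpha(t\cdot z),
\]
which is exactly $s_\alpha$-equivariance of $(\C,f_\alpha)$, so the induction runs.

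I expect essentially no serious obstacle here: the inductive step is purely formal, being nothing but multiplicativity of $\rho$ and of $\beta\mapsto s_\beta$ fed into the convolution lemma, and the base case is a short $\SL_2$ computation. The only point that needs care is matching conventions — making sure that the unipotent one-parameter subgroup appearing in $f_\alpha$ is genuinely the positive root subgroup $U_\alpha$, so that conjugation by $t\in T$ scales the coordinate by $\alpha(t)$ rather than $\alpha(t)^{-1}$; if the sign conventions fixed earlier force the opposite choice, one simply takes $t\cdot z=\alpha(t)^{-1}z$. One should also observe that the $T$-action obtained on a general $\rho(\beta)$ by iterating the convolution lemma's formula $t\cdot(x,x')=({}^{w'}t\cdot x,\,t\cdot x')$ is unambiguous thanks to associativity of $*$, so the statement is well posed.
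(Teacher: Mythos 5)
Your proof is correct and essentially the same as the paper's: the paper also reduces to the case of a single simple root via the convolution lemma (saying just "it suffices to show..." where you spell out the induction) and then does the same $\SL_2$ computation $ {}^{s_\alpha}t\,f_\alpha(z)\,t^{-1}=f_\alpha(\alpha(t)z)$. Your extra step of writing $f_\alpha(z)=\dot s_\alpha\,\iota_\alpha\!\left(\begin{smallmatrix}1&z\\0&1\end{smallmatrix}\right)$ just makes explicit what the paper's manipulation uses implicitly, and your remark about sign conventions is a reasonable caution but does not change anything here.
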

\begin{proof}
    It suffices to show that for a simple root $\alpha$, $\rho(\alpha)$ is $s_\alpha$-equivariant. We recall that $\alpha$ is the differential of a morphism $\alpha:T\to \C^\times$. By its very definition,
    \[t\iota_\alpha\left(\begin{matrix}
        1 & z \\ 0 & 1
    \end{matrix}\right)t^{-1}=\iota_\alpha\left(\begin{matrix}
        1 & \alpha(t)z \\ 0 & 1
    \end{matrix}\right)\]
    Thus,
    \[^{s_\alpha}tf_\alpha(z)t^{-1}=f_\alpha(\alpha(t)z)\]
    and $\rho(\alpha)$ is $s_\alpha$-equivariant for the action $t\cdot z=\alpha(t)z$.
\end{proof}

\subsection{Commutator varieties} 

Let $\pi_1,\pi_2\in W$. The commutator variety $\rho(\pi_1,\pi_2)$ associated with them is the variety over $G$ given by $T\times T \to G$, $(t_1,t_2)\mapsto {}^{\pi_1}t_1{}^{\pi_1\pi_2}t_2{}^{\pi_1\pi_2}t_1^{-1}{}^{\pi_1\pi_2\pi_1^{-1}}t_2^{-1}$.

\begin{lema}
    Let $\pi_1,\pi_2\in W$. Then $\rho(\pi_1,\pi_2)$ is $\pi_2\pi_1\pi_2^{-1}\pi_1^{-1}$-equivariant.
\end{lema}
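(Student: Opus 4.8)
The plan is to recognize $\rho(\pi_1,\pi_2)$, up to right translation by a fixed lift of $w:=\pi_2\pi_1\pi_2^{-1}\pi_1^{-1}$, as the restriction of the commutator map $G\times G\to G$ to a product of Weyl-translated tori, and then to read off the twisted equivariance from the ordinary conjugation-equivariance of the commutator.

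First I would fix lifts $n_i\in N(T)$ of $\pi_i$ and check, by a short telescoping computation (using that ${}^{\pi}t=n_\pi t n_\pi^{-1}$ for $t\in T$ and any lift $n_\pi$), that
$$ {}^{\pi_1}t_1\,{}^{\pi_1\pi_2}t_2\,{}^{\pi_1\pi_2}t_1^{-1}\,{}^{\pi_1\pi_2\pi_1^{-1}}t_2^{-1} \;=\; [\,n_1t_1,\;n_2t_2\,]\cdot\hat w, \qquad \hat w:=n_2n_1n_2^{-1}n_1^{-1}. $$
Thus, writing $c(g,h)=[g,h]$ and $f$ for the defining map of $\rho(\pi_1,\pi_2)$, we have $f(t_1,t_2)=c(n_1t_1,n_2t_2)\,\hat w$, with $\hat w$ a fixed element of $N(T)$ lifting $w$.

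Next I would equip $T\times T$ with the $T$-action obtained by transporting, through the parametrization $(t_1,t_2)\mapsto(n_1t_1,n_2t_2)$ of $n_1T\times n_2T$, the conjugation of the pair $(n_1t_1,n_2t_2)$ by the torus element ${}^w s$. Since conjugation by $r\in T$ sends $n_it_i$ to $n_i\cdot\bigl({}^{\pi_i^{-1}}r\,r^{-1}\bigr)\cdot t_i$, this is the well-defined action $s\cdot(t_1,t_2)=\bigl(\,{}^{\pi_1^{-1}}({}^w s)\,({}^w s)^{-1}\,t_1,\;{}^{\pi_2^{-1}}({}^w s)\,({}^w s)^{-1}\,t_2\,\bigr)$. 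Then, using conjugation-equivariance $c(rgr^{-1},rhr^{-1})=r\,c(g,h)\,r^{-1}$ together with $({}^w s)^{-1}\hat w=\hat w\,s^{-1}$ (which holds because $\hat w$ normalizes $T$ and lifts $w$), one computes
$$ f(s\cdot(t_1,t_2)) = {}^w s\cdot c(n_1t_1,n_2t_2)\cdot({}^w s)^{-1}\hat w = {}^w s\cdot\bigl(c(n_1t_1,n_2t_2)\hat w\bigr)\cdot s^{-1} = {}^w s\cdot f(t_1,t_2)\cdot s^{-1}, $$
which is precisely the assertion that $\rho(\pi_1,\pi_2)$ is $w$-equivariant.

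The only delicate point is the opening identity expressing the defining map as a right-translated commutator; everything after it is a formal manipulation with conjugations and the relation $\pi_1\pi_2\pi_1^{-1}\pi_2^{-1}=w^{-1}$, and this is exactly what pins the twist down to the group commutator $\pi_2\pi_1\pi_2^{-1}\pi_1^{-1}$ rather than some other word in the $\pi_i$. Equivalently, since $f$ takes values in the abelian group $T$ one can avoid lifts altogether: the action above is translation by the cocharacter-endomorphisms $\sigma_i=(\pi_i^{-1}-1)w\in\Z[W]$, and a one-line expansion gives $(\pi_1-\pi_1\pi_2)\sigma_1+(\pi_1\pi_2-\pi_1\pi_2\pi_1^{-1})\sigma_2=(1-w^{-1})w=w-1$, which is the required identity written additively in $T$.
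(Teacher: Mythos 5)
Your proposal is correct, and the action you construct by transporting conjugation through $(t_1,t_2)\mapsto (n_1t_1,n_2t_2)$ is exactly the action written down in the paper's proof (after substituting $\pi_1^{-1}w=\pi_1^{-1}\pi_2\pi_1\pi_2^{-1}\pi_1^{-1}$ and $\pi_2^{-1}w=\pi_1\pi_2^{-1}\pi_1^{-1}$, and using commutativity of $T$). The paper's proof simply asserts the formula for the $T$-action and calls the verification ``a straightforward computation,'' so in terms of content you land in the same place. What you add is the conceptual explanation of \emph{where} the formula comes from: the identity $f(t_1,t_2)=[n_1t_1,n_2t_2]\,\hat w$ with $\hat w$ a lift of $w$ makes the map a right-translate of the ordinary commutator on $n_1T\times n_2T$, so the twisted equivariance is inherited from conjugation-equivariance of $c(g,h)=[g,h]$ together with $({}^ws)^{-1}\hat w=\hat w s^{-1}$. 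This both derives the action (instead of guessing it) and makes the verification a one-liner; the closing additive remark, checking $(\pi_1-\pi_1\pi_2)\sigma_1+(\pi_1\pi_2-\pi_1\pi_2\pi_1^{-1})\sigma_2=w-1$ in $\Z[W]$, is also correct and is the most economical way to pin the twist down to $\pi_2\pi_1\pi_2^{-1}\pi_1^{-1}$.
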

\begin{proof}
    It is a straightforward computation. The $T$-action is given by
    \[t\cdot (t_1,t_2)=(^{\pi_1^{-1}\pi_2\pi_1\pi_2^{-1}\pi_1^{-1}}tt_1{}^{\pi_2\pi_1\pi_2^{-1}\pi_1^{-1}}t^{-1},^{\pi_1\pi_2^{-1}\pi_1^{-1}}tt_2{}^{\pi_2\pi_1\pi_2^{-1}\pi_1^{-1}}t^{-1}).\]
\end{proof}

By convolution, we get an extension $\rho:F(\Delta^+,W\times W)\to \TwTVarG$ of the construction of the previous section.

\section{Bruhat (re)parametrizations}\label{sec:bruhat-parametrizations}

For the basics on the Bruhat decomposition $G=\bigsqcup_{\pi \in W} B\pi B$ we refer the reader to \cite[chapter X.28]{Humphreys}. We just recall that each Bruhat cell $B\pi B$, given $\hat \pi\in \pi T$, has parametrizations
\[B\pi B \simeq U^-_{\pi^{-1}}\times T\times U\simeq U\times T\times U_\pi^-\]
given both by $(u,t,u')\mapsto u\hat wtu'$. We remark that $U=U^-_w U^+_w=U^+_w U^-_w$. In addition, if $w$ is simple, $U^+_w$ is normal on $U$.

Let $\beta\in \widehat W$ and $\pi=s_\beta$. We consider the $U$-action on $B\pi B$ given by $(u_1,u_2)\cdot x = u_1xu_2^{-1}$. Now, the cell can be parametrized by $U\times T\times \C^l$. Indeed, such parametrization is $(v,t,z)\mapsto v{}^{\pi} tf_\beta(z)$. Therefore, it must exists an action of $U\times U$ on $U\times T\times \C^l$ such that
\[u_1 v {}^{\pi}t f_{\pi}(z) u_2^{-1} = v' {}^{\pi}t' f_\pi(z')\]
if $(u_1,u_2)\cdot(v,t,z)=(v',t',z')$. 

For a simple root $\alpha\in \Delta^+$, $U_{s_\alpha}^-=U_{s_\alpha^{-1}}^-$ has dimension one, meanwhile $U_{s_\alpha}^+$ has codimension one $U$. Hence, $U_{s_\alpha}^+$ is normal in $U$ \cite[17.4]{Humphreys}. Moreover, $U$ is a semidirect product of $U_{s_\alpha}^+$ by $U_{s_\alpha}^-$. In consequence, we can think about the quotient by $U_{s_\alpha}^+$ as $U \to U_{s_\alpha}^-$. Composing it with the inverse of $\alpha^{-1}f_\alpha$ we get a group homomorphism $g_\alpha^-:U\to \C$ such that
\[\alpha^{-1}f_\alpha(-g_\alpha^-(u))u \in U_{s_\alpha}^+\]
for any $u$. We define, given $z\in \C$, $g_{\alpha,z}^+:U\to U_{s_\alpha}^+$ as
\begin{align*}
    g_{\alpha,z}^+(u) &= f_\alpha(z)\alpha^{-1}f_\alpha(-g_\alpha^-(u))uf_\alpha(z)^{-1}\\
    &=f_\alpha(z-g_\alpha^-(u))uf_\alpha(z)^{-1}
\end{align*}
for $u\in U$. Given an element $\beta=\alpha_1\dots\alpha_l \in F(\Delta^+)$ of length $l$, $z\in \C^l$ and $k=1,\ldots, l$, let
\[g_{\beta,k}^+(u,z)= g_{\alpha_{l-k+1},z_{l-k+1}}^+(g_{\alpha_{l-k+2},z_{l-k+2}}^+(\dots(g_{\alpha_l,z_l}^+(u))))\]
for $u\in U$. We will write $g_\beta^+$ for $g_{\beta,l}^+$. In addition, let
\[g_{\beta}^-(u,z) =  (z_{1}-g_{\alpha_{1}}^-(g_{\beta,l-1}^+(u,z)), \dots,z_{l-1}-g_{\alpha_{l-1}}^-(g_{\beta,1}^+(u,z)),z_l-g_{\alpha_l}^-(u))\]
for $u\in U$ and $z\in \C^{l}$. Finally, we call $g_\beta^-(u)=g_\beta^-(u,0)$ and $g_\beta^+(u)=g_\beta^+(u,0)$.

\begin{lema}
    Let $\beta\in F(\Delta^+)$. Then 
    \[t\cdot g_\beta^-(u,z)=g_\pi^-(t\cdot u,t\cdot z)\]
    and 
    \[{}^\beta t\cdot g_\beta^+(u,z)=g_\beta^+(t\cdot u,t\cdot z)\]
    for any $t\in T$, $u\in U$ and $z\in \C^{l(\beta)}$.
\end{lema}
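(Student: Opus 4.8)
The plan is to isolate two single-root facts and then induct on the length $l(\beta)$.

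First I would pin down the behaviour of the elementary pieces. Write $x_\alpha:=\alpha^{-1}f_\alpha\colon\C\to G$; this is a homomorphism with image $U_{s_\alpha}^-$ and $f_\alpha(z)=\alpha\,x_\alpha(z)$, so that $f_\alpha(z)x_\alpha(w)=f_\alpha(z+w)$. From the already-proven identity ${}^{s_\alpha}t\,f_\alpha(z)\,t^{-1}=f_\alpha(\alpha(t)z)$, together with the fact that conjugation by the lift $\alpha$ realizes $s_\alpha$ on $T$ (so ${}^{s_\alpha}t\cdot\alpha=\alpha\cdot t$), cancelling $\alpha$ gives $t\,x_\alpha(w)\,t^{-1}=x_\alpha(\alpha(t)w)$. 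Conjugating the defining relation $x_\alpha(-g_\alpha^-(u))u\in U^+_{s_\alpha}$ by $t$ and using that $U^+_{s_\alpha}$ is $T$-stable while $U=U^+_{s_\alpha}\rtimes U^-_{s_\alpha}$ (so the $U^-_{s_\alpha}$-factor of an element of $U$ is unique) then yields the equivariance of the homomorphism $g_\alpha^-$,
\[ g_\alpha^-(tut^{-1})=\alpha(t)\,g_\alpha^-(u)\qquad(t\in T,\ u\in U). \]
Substituting this into $g_{\alpha,z}^+(u)=f_\alpha(z-g_\alpha^-(u))\,u\,f_\alpha(z)^{-1}$, conjugating by ${}^{s_\alpha}t$, and applying ${}^{s_\alpha}t\,f_\alpha(w)\,t^{-1}=f_\alpha(\alpha(t)w)$ to the two $f_\alpha$-factors, one obtains ${}^{s_\alpha}t\cdot g_{\alpha,z}^+(u)\cdot({}^{s_\alpha}t)^{-1}=g_{\alpha,\alpha(t)z}^+(tut^{-1})$, which is exactly the $l=1$ case of the statement (the single $\C$ carrying the action $z\mapsto\alpha(t)z$).

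Next I would unwind the iterated definitions by peeling off the first letter. For $\beta=\alpha_1\beta'$ and $z=(z_1,z')$ with $z'\in\C^{l(\beta')}$, directly from the formulas for $g_{\beta,k}^+$ and $g_\beta^-$ one checks
\[ g_\beta^+(u,z)=g_{\alpha_1,z_1}^+\!\bigl(g_{\beta'}^+(u,z')\bigr),\qquad g_\beta^-(u,z)=\bigl(z_1-g_{\alpha_1}^-(g_{\beta'}^+(u,z')),\ g_{\beta'}^-(u,z')\bigr). \]
Moreover $\rho(\beta)=\rho(\alpha_1)*\rho(\beta')$, so $s_\beta=s_{\alpha_1}s_{\beta'}$ (hence ${}^\beta t={}^{s_{\alpha_1}}({}^{\beta'}t)$), and by the description of the twisted $T$-action on a convolution the action on $\C^{l(\beta)}=\C\times\C^{l(\beta')}$ is $t\cdot(z_1,z')=(\alpha_1({}^{\beta'}t)z_1,\,t\cdot z')$, while $T$ acts on $U$ by conjugation. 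Then I would induct on $l(\beta)$, the empty word being trivial. For the step, apply the $l=1$ identity with torus element ${}^{\beta'}t$ and point $g_{\beta'}^+(u,z')$, and rewrite ${}^{\beta'}t\cdot g_{\beta'}^+(u,z')\cdot({}^{\beta'}t)^{-1}=g_{\beta'}^+(tut^{-1},t\cdot z')$ and $t\cdot g_{\beta'}^-(u,z')=g_{\beta'}^-(tut^{-1},t\cdot z')$ by the inductive hypothesis; combining these with the equivariance of $g_{\alpha_1}^-$ (for the first coordinate of $g_\beta^-$) and matching coordinates gives both identities for $\beta$.

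I expect the only delicate point to be the bookkeeping in the third paragraph: correctly unwinding the nested definitions of $g_{\beta,k}^+$ and $g_\beta^-$, and — above all — pinning down the Weyl twist $\alpha_1({}^{\beta'}t)$ in the induced action on $\C^{l(\beta)}$, since placing it on the wrong side would be incompatible with ${}^\beta t={}^{s_{\alpha_1}}({}^{\beta'}t)$ and break the induction. Everything else is a direct substitution.
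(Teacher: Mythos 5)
Your proof is correct and follows essentially the same strategy as the paper: establish the $T$-equivariance of $g_\alpha^-$ and the $s_\alpha$-twisted equivariance of $g_{\alpha,z}^+$ at length one, then propagate to a general word through the recursive, convolution-compatible structure of $g_\beta^\pm$. You make two steps explicit that the paper only sketches --- the conjugation argument showing $g_\alpha^-(tut^{-1})=\alpha(t)g_\alpha^-(u)$ (which the paper merely asserts), and the peel-off-the-first-letter induction behind the paper's one-line ``it follows that $g_\beta^+$ is $\beta$-equivariant'' --- but this is filling in detail rather than a genuinely different route.
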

\begin{proof}
    First, we note that $g_\alpha^-$ is $T$-equivariant for $\alpha\in \Delta^+$. Thus,
    \begin{align*}
        g_{\alpha}^+(t\cdot u, t\cdot z) &=f_\alpha(t\cdot z-t\cdot g_\alpha^-(u))tut^{-1}f_\alpha(t\cdot z)^{-1}\\
        &= {}^{s_\alpha}t f(z-) g_\alpha^-(u)) u f_\alpha(z)^{-1}{}^{s_\alpha}t^{-1}\\
        &= {}^{s_\alpha}t\cdot g_\alpha^+(u,z)
    \end{align*}
    because $f_\alpha$ is $s_\alpha$-equivariant. It follows that $g_\beta^+$ is $\beta$-equivariant. For any $k=2,\ldots,l(\beta)$, we have
    \begin{align*}
        (t\cdot g_\beta^-(u,z))_k &= \alpha_k\left({}^{\beta_{k-1}}t\right)(z_k-g_{\alpha_k}^-(g_{\beta_{k-1}}^+(u,z)))\\
        &= (t\cdot z)_k - g_{\alpha_k}^-({}^{\beta_{k-1}}t\cdot g_{\beta_{k-1}}^+(u,z)))\\
        &=(t\cdot z)_k - g_{\alpha_k}^-(g_{\beta_{k-1}}^+(t\cdot u,t\cdot z)))\\
        &= g_\beta^-(t\cdot u,t\cdot z)_k
    \end{align*}
    where $\beta_{k-1}$ is the product of the last $k-1$ letters of $\beta$. 
\end{proof}

\begin{lema}
    Let $\beta\in F(\Delta^+)$. Then 
    \[g_\beta^-(u_1u_2,z) = g_\beta^-(u_1,g_\beta^-(u_2,z))\]
    and
    \[g_\beta^+(u_1u_2,z) = g_\beta^+(u_1,g_\beta^-(u_2,z))g_\beta^+(u_2,z)\]
    for any $u_1,u_2\in U$ and $z\in \C^{l(\beta)}$.
\end{lema}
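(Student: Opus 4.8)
The plan is to run a single induction on the word length $l=l(\beta)$, proving both identities simultaneously (the inductive step for $g^-_\beta$ uses the hypothesis for $g^+_\beta$, so they must be handled together).

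The base case $l=1$, say $\beta=\alpha\in\Delta^+$, is a direct computation from the explicit formula $g^+_{\alpha,z}(u)=f_\alpha(z-g_\alpha^-(u))\,u\,f_\alpha(z)^{-1}$ and the fact (already established) that $g_\alpha^-\colon U\to\C$ is a group homomorphism. The identity for $g^-$ is then just additivity of $g_\alpha^-$. For $g^+$, writing $z'=g_\alpha^-(u_2,z)=z-g_\alpha^-(u_2)$, the product $g^+_{\alpha,z'}(u_1)\,g^+_{\alpha,z}(u_2)$ telescopes: the two inner occurrences of $f_\alpha$ cancel exactly because $z'=z-g_\alpha^-(u_2)$, and $z'-g_\alpha^-(u_1)=z-g_\alpha^-(u_1u_2)$, leaving $f_\alpha(z-g_\alpha^-(u_1u_2))\,u_1u_2\,f_\alpha(z)^{-1}=g^+_{\alpha,z}(u_1u_2)$.

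For the inductive step I would first extract two reduction formulas by unwinding the definitions. Writing $\beta=\alpha_1\beta'$ with $\beta'=\alpha_2\cdots\alpha_l$ and $z=(z_1,\underline z)$, $\underline z=(z_2,\dots,z_l)$, one checks the reindexing $g^+_{\beta,k}(u,z)=g^+_{\beta',k}(u,\underline z)$ for $k\le l-1$; in particular $g^+_{\beta,l-1}(u,z)=g^+_{\beta'}(u,\underline z)$, whence $g^+_\beta(u,z)=g^+_{\alpha_1,z_1}(g^+_{\beta'}(u,\underline z))$ and $g_\beta^-(u,z)=\big(z_1-g_{\alpha_1}^-(g^+_{\beta'}(u,\underline z)),\ g^-_{\beta'}(u,\underline z)\big)$. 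With these in hand the step is bookkeeping: set $b=g^+_{\beta'}(u_2,\underline z)$ and $a=g^+_{\beta'}(u_1,g^-_{\beta'}(u_2,\underline z))$, so the hypothesis for $\beta'$ gives $g^+_{\beta'}(u_1u_2,\underline z)=ab$; the one-letter case applied to $g^+_{\alpha_1,z_1}(ab)$ yields $g^+_{\alpha_1,\,z_1-g_{\alpha_1}^-(b)}(a)\cdot g^+_{\alpha_1,z_1}(b)$; and the reduction formulas identify the second factor with $g^+_\beta(u_2,z)$ and, since $z_1-g_{\alpha_1}^-(b)$ and $g^-_{\beta'}(u_2,\underline z)$ are precisely the two blocks of $g_\beta^-(u_2,z)$, the first factor with $g^+_\beta(u_1,g_\beta^-(u_2,z))$. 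The $g^-$ identity is handled the same way: its last $l-1$ coordinates reduce to the hypothesis for $\beta'$, and its first coordinate is $z_1-g_{\alpha_1}^-(ab)=z_1-g_{\alpha_1}^-(a)-g_{\alpha_1}^-(b)$ by the homomorphism property, which matches the first coordinate of $g_\beta^-(u_1,g_\beta^-(u_2,z))$.

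The main obstacle is not any individual computation but the combinatorial bookkeeping behind the reduction formulas: the superscript $k$ in $g^+_{\beta,k}$ records the \emph{last} $k$ letters of $\beta$, whereas the $j$-th coordinate of $g^-_\beta$ is built from the \emph{first} $j$ letters, so one has to track carefully how these opposite enumerations fit together under $\beta=\alpha_1\beta'$. Once that is pinned down, both identities follow formally from the one-letter case and the additivity of $g_\alpha^-$.
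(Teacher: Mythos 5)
Your proof is correct and takes essentially the same approach as the paper: induction on the word length, with the base case being a single simple root (using that $g_\alpha^-$ is a homomorphism and the telescoping of the two $f_\alpha$ factors), and the inductive step prepending one letter via the same reduction formulas $g_\beta^+(u,z)=g_{\alpha_1,z_1}^+(g_{\beta'}^+(u,\underline z))$ and $g_\beta^-(u,z)=(z_1-g_{\alpha_1}^-(g_{\beta'}^+(u,\underline z)),\,g_{\beta'}^-(u,\underline z))$ that the paper uses implicitly.
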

\begin{proof}
    We prove this by induction on the length of $\beta$. For simple root $\alpha$, we know that $g_\alpha^-$ is a group homomorphism. Hence,
    \begin{align*}
        g_\alpha^-(u_1u_2,z) &= z-  g_\alpha^-(u_1u_2) \\
        &= z-g_\alpha^-(u_2)-g_\alpha^-(u_1)\\
        &= g_\alpha^-(u_1g_\alpha^-(u_2,z))
    \end{align*}
    and
    \begin{align*}
        g_\alpha^+(u_1u_2,z) &= f_\alpha(z-g_\alpha^-(u_1u_2))u_1u_2f_\alpha(z)^{-1}\\
        &= f_\alpha(z-g_\alpha^-(u_2)-g_\alpha^-(u_1))u_1 f_\alpha(z-g_\alpha^-(u_2))^{-1}f_\alpha(z-g_\alpha^-(u_2)) u_2f_\alpha(z)^{-1}\\
        &= g_\alpha^+(u_1,g_\alpha^-(u_2,z))g_\alpha^+(u_1,z). 
    \end{align*}
    Now, we assume the result is true for $\beta\in F(\Delta^+)$ of length $l$. We will prove it for $\alpha\beta$. Let $z_0\in \C, z\in \C^l$ and $u_1,u_2\in U$. Then
    \begin{align*}
        g_{\alpha\beta}^+(u_1u_2,(z_0,z)) &= g_\alpha^+(g_\beta^+(u_1u_2,z),z_0)\\
        &= g_\alpha^+( g_\beta^+(u_1,g_\beta^-(u_2,z))g_\beta^+(u_2,z),z_0)\\
        &=g_\alpha^+( g_\beta^+(u_1,g_\beta^-(u_2,z)),g_\alpha^-(g_\beta^+(u_2,z),z_0)) g_\alpha^+(g_\beta^+(u_2,z),z_0)\\
        &=g_{\alpha\beta}^+(u_1, ( g_\alpha^-(g_\beta^+(u_2,z),z_0), g_\beta^-(u_2,z)))g_{\alpha\beta}^+(u_2,(z_0,z))\\
        &=g_{\alpha\beta}^+(u_1, g_{\alpha\beta}^-(u_2,(z,z_0)))g_{\alpha\beta}^+(u_2,(z_0,z))
    \end{align*}
    and
    \begin{align*}
        g_{\alpha\beta}^-(u_1u_2,(z_0,z))_l &= z_0-g_\alpha^-(g_\beta^+(u_1u_2,z))\\
        &= z_0 - g_\alpha^-(g_\beta^+(u_1,g_\beta^-(u_2,z))g_\beta^+(u_2,z) )\\
        &= z_0 - g_\alpha^-(g_\beta^+(u_2,z)) - g_\alpha^-(g_\beta^+(u_1,g_\beta^-(u_2,z)))\\
        &= g_{\alpha\beta}^-(u_2,(z_0,z))_l - g_\alpha^-(g_\beta^+(u_1,g_\beta^-(u_2,z)))\\
        &= g_{\alpha\beta}^-(u_1, g_{\alpha\beta}^-(u_2,(z_0,z)))
    \end{align*}
    because $g_\beta^-(u_2,z)$ are the last $l-1$ coordinates of $g_{\alpha\beta}^-(u_2,(z_0,z))$.
\end{proof}

\begin{lema}
    Let $\beta\in \widehat W$ of length $l$ and let $\pi=s_\beta$. Then $U\times T\times \C^l\to B\pi B$, $(v,t,z)\mapsto v{}^\pi tf_\beta(z)$, is an $U\times U$-equivariant isomorphism if we endow the domain with the action
    \begin{align*}
        (u_1,u_2)\cdot(v,t,z) = (& u_1vg_{\beta}^+(t\cdot u_2,t\cdot z)^{-1},t,g_{\beta}^-(u_2,z))
    \end{align*}
    for $u_1,u_2\in U$ and $(v,t,z)\in U\times V\times \C^l$.
\end{lema}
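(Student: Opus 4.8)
The plan is to factor the map through the standard Bruhat parametrization of $B\pi B$ — which immediately gives that it is an isomorphism of varieties — and then to deduce $U\times U$-equivariance from a single untwisted identity, using the $T$-equivariance of the maps $g_\beta^\pm$ established above, with that identity proved by induction on $l=l(\beta)$. For the isomorphism, regard $\beta$ also as the group element $\iota_{\alpha_1}\!\left(\begin{smallmatrix}0&1\\-1&0\end{smallmatrix}\right)\cdots\iota_{\alpha_l}\!\left(\begin{smallmatrix}0&1\\-1&0\end{smallmatrix}\right)$, which is a lift of $\pi=s_\beta$, so that ${}^\pi t\,\beta=\beta t$ and hence ${}^\pi t\,f_\beta(z)=\beta\,t\,(\beta^{-1}f_\beta(z))$. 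Then the map in the statement is the composite
\[
U\times T\times\C^l\xrightarrow{\ \mathrm{id}\times\mathrm{id}\times(\beta^{-1}f_\beta)\ }U\times T\times U^-_\pi\xrightarrow{\ (v,t,u')\,\mapsto\, v\beta tu'\ }B\pi B .
\]
By the lemma computing $\rho(\beta)$, the map $\beta^{-1}f_\beta\colon\C^l\to U^-_{s_\beta}=U^-_\pi$ is an isomorphism; the second arrow is the Bruhat parametrization attached to the lift $\beta$; hence the composite is an isomorphism of varieties.

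For equivariance, write out $(u_1,u_2)\cdot(v,t,z)$, apply the map, and cancel the common left factor $u_1v$: the assertion becomes
\[
{}^\pi t\,f_\beta(z)\,u_2^{-1}=g_\beta^+(t\cdot u_2,\,t\cdot z)^{-1}\ {}^\pi t\,f_\beta\!\bigl(g_\beta^-(u_2,z)\bigr)
\]
for all $u_2\in U$, $t\in T$, $z\in\C^l$. By the $T$-equivariance lemma for the $g_\beta^\pm$ one has $g_\beta^+(t\cdot u_2,t\cdot z)={}^\beta t\cdot g_\beta^+(u_2,z)={}^\pi t\,g_\beta^+(u_2,z)\,{}^\pi t^{-1}$ (as $\beta$ lifts $\pi$); substituting this into the right-hand side and cancelling a leading ${}^\pi t$ reduces the claim to the twist-free identity
\[
g_\beta^+(u_2,z)\,f_\beta(z)=f_\beta\!\bigl(g_\beta^-(u_2,z)\bigr)\,u_2\qquad(\star)
\]
for all $u_2\in U$ and $z\in\C^l$.

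I would prove $(\star)$ by induction on $l$. If $\beta=\alpha$ is a simple root, it is immediate from $g_\alpha^+(u,z)=f_\alpha(z-g_\alpha^-(u))\,u\,f_\alpha(z)^{-1}$ and $g_\alpha^-(u,z)=z-g_\alpha^-(u)$. For the inductive step, write $\beta=\alpha\beta'$ with $\alpha$ the first letter; since $\beta\in\widehat W$, also $\beta'\in\widehat W$ and it has length $l-1$. Unwinding the definitions on the concatenation $\alpha\beta'$ gives
\[
f_\beta(z_0,z')=f_\alpha(z_0)\,f_{\beta'}(z'),\qquad g_\beta^+(u,(z_0,z'))=g_\alpha^+\!\bigl(g_{\beta'}^+(u,z'),z_0\bigr),
\]
and $g_\beta^-(u,(z_0,z'))=\bigl(z_0-g_\alpha^-(g_{\beta'}^+(u,z')),\,g_{\beta'}^-(u,z')\bigr)$. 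Applying $(\star)$ for $\alpha$ to the pair $\bigl(g_{\beta'}^+(u,z'),z_0\bigr)$ and then the inductive hypothesis for $\beta'$,
\[
g_\beta^+(u,(z_0,z'))\,f_\beta(z_0,z')=f_\alpha\!\bigl(z_0-g_\alpha^-(g_{\beta'}^+(u,z'))\bigr)\,f_{\beta'}\!\bigl(g_{\beta'}^-(u,z')\bigr)\,u,
\]
which equals $f_\beta\!\bigl(g_\beta^-(u,(z_0,z'))\bigr)\,u$ by the displayed concatenation formulas. This closes the induction, and with it the lemma.

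I expect the only delicate point to be the bookkeeping needed to verify those three concatenation formulas for $f_\beta$, $g_\beta^+$, $g_\beta^-$: they are unwindings of the nested definitions (and are already implicit in the proof of the multiplicativity lemma), but one has to track carefully which coordinate of $\C^l$ is attached to which letter of $\beta$. Everything else is formal, once one has the isomorphism $\beta^{-1}f_\beta\colon\C^l\simeq U^-_\pi$, the Bruhat parametrization of $B\pi B$, and the $T$-equivariance of the $g_\beta^\pm$.
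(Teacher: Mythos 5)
Your proposal is correct and follows essentially the same route as the paper's proof: the paper states ``it is enough to show'' the single-root identity $f_\alpha(z)u^{-1}=g^+_{\alpha,z}(u)^{-1}f_\alpha(z-g^-_\alpha(u))$ and verifies exactly that, leaving the reduction to the reader, while you spell out that reduction — the factorization through the Bruhat parametrization $U\times T\times U^-_\pi\to B\pi B$ attached to the lift $\beta$ for the isomorphism, the $T$-equivariance of $g_\beta^\pm$ to strip off the ${}^\pi t$, and the induction on length via the concatenation formulas to pass from the simple-root identity $(\star)$ to the general one. The paper's base-case computation is the same as your length-one case of $(\star)$ after rearranging, so the two arguments match.
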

\begin{proof}
    It is enough to show that
    \[f_{s_\alpha}(z)u^{-1} = g^+_{\alpha,z}(u)^{-1}f_{s_\alpha}(z-g^-_{\alpha}(u))\]
    for any $\alpha\in \Delta^+$, $z\in \C$ and $u\in U$. Now,
    \begin{align*}
        f_{\alpha}(z)u^{-1} &= f_{\alpha}(z)(\alpha^{-1}f_\alpha(-g_\alpha^-(u)) u)^{-1} \alpha^{-1}f_\alpha(-g_\alpha^-(u))\\
        &= g_{\alpha,z}^+(u)^{-1}f_{\alpha}(z)  \alpha^{-1}f_\alpha(-g_\alpha^-(u))\\
        &=  g^+_{\alpha,z}(u)^{-1}f_{\alpha}(z-g^-_{\alpha}(u))
    \end{align*}
    as wanted.
\end{proof}

In a similar fashion, the following holds. For $\beta\in F(\Delta^+)$, let $\beta^t$ be the word obtained by reversing the order of its letters. We note that $s_{\beta^t}=s_\beta^{-1}$.

\begin{lema}
    Let $\beta\in\widehat W$ of length $l$ and $\pi=s_\beta$. Then $\C^l\times T\times U\to B\pi B$, $(z,t,v)\mapsto f_{\beta^{t}}(z)^{-1} t  v$, is an $U\times U$-equivariant isomorphism if we endow the domain with the action
    \begin{align*}
        (u_1,u_2)\cdot(z,t,v) = (g_{\beta^{t}}^-(u_1,z), t, (t^{-1}\cdot g_{\beta^{t}}^+(u_1,z))vu_2^{-1})
    \end{align*}
    for $u_1,u_2\in U$ and $(z,t,v)\in \C^l\times V\times U$.
\end{lema}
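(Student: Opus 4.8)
The plan is to deduce the statement from the previous lemma by composing with the inversion map of $G$; equivalently, one can rerun the same induction with $\beta^t$ in place of $\beta$ and the left and right $U$-actions exchanged. First I would record that $\beta^t$ again lies in $\widehat W$, since reversing a word preserves its length and $s_{\beta^t}=s_\beta^{-1}=\pi^{-1}$ has the same length as $\pi$. Hence $f_{\beta^t}$ is an isomorphism onto $\hat\sigma\,U^-_{\pi^{-1}}\subset G$ for the lift $\hat\sigma\in\pi^{-1}T$ it determines, so $f_{\beta^t}(z)^{-1}$ runs bijectively over $U^-_{\pi^{-1}}\hat\pi$ with $\hat\pi:=\hat\sigma^{-1}\in\pi T$. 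It follows that $(z,t,v)\mapsto f_{\beta^t}(z)^{-1}tv$ is the composite of a bijection $\C^l\to U^-_{\pi^{-1}}$ with the Bruhat parametrization $U^-_{\pi^{-1}}\times T\times U\to B\pi B$, $(u,t,u')\mapsto u\hat\pi tu'$, recalled at the start of this section; in particular it is an isomorphism onto $B\pi B$.

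For $U\times U$-equivariancy I would note that this map factors as
\[\C^l\times T\times U \xrightarrow{\Phi} U\times T\times\C^l \xrightarrow{\mathrm{P}} B\pi^{-1}B \xrightarrow{\ (-)^{-1}\ } B\pi B,\]
where $\mathrm{P}$ is the isomorphism of the previous lemma applied to the word $\beta^t$ (so $s_{\beta^t}=\pi^{-1}$) and $\Phi(z,t,v)=(v^{-1},{}^{\pi}(t^{-1}),z)$; indeed $\mathrm{P}(\Phi(z,t,v))=v^{-1}\,{}^{\pi^{-1}}({}^{\pi}(t^{-1}))\,f_{\beta^t}(z)=v^{-1}t^{-1}f_{\beta^t}(z)$, whose inverse is $f_{\beta^t}(z)^{-1}tv$. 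Since $(-)^{-1}$ carries the bi-translation action $(u_1,u_2)\cdot g=u_1gu_2^{-1}$ of $U\times U$ on $B\pi B$ to the same action with the two factors interchanged, it suffices to transport the action from the previous lemma through $\Phi$ with $u_1$ and $u_2$ swapped. This is a direct computation: the first coordinate becomes $g_{\beta^t}^-(u_1,z)$, the middle stays $t$, and the third becomes $g_{\beta^t}^+({}^{\pi}(t^{-1})\cdot u_1,\ {}^{\pi}(t^{-1})\cdot z)\,v\,u_2^{-1}$.

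The one step that is not pure bookkeeping is matching the torus conjugation. Using the $T$-equivariancy of $g_{\beta^t}^+$ together with ${}^{\beta^t}(-)={}^{\pi^{-1}}(-)$, one gets $g_{\beta^t}^+({}^{\pi}(t^{-1})\cdot u_1,\ {}^{\pi}(t^{-1})\cdot z)={}^{\pi^{-1}}({}^{\pi}(t^{-1}))\cdot g_{\beta^t}^+(u_1,z)=t^{-1}\cdot g_{\beta^t}^+(u_1,z)$, which is exactly the factor in the claimed action. Alternatively, and more in the spirit of ``in a similar fashion'': inverting the identity $f_{s_\alpha}(z)u^{-1}=g^+_{\alpha,z}(u)^{-1}f_{s_\alpha}(z-g^-_\alpha(u))$ from the proof of the previous lemma gives $u\,f_{s_\alpha}(z)^{-1}=f_{s_\alpha}(z-g^-_\alpha(u))^{-1}g^+_{\alpha,z}(u)$; propagating this to all of $F(\Delta^+)$ via the cocycle identities for $g_\beta^{\pm}$ and then commuting the unipotent factor $g_{\beta^t}^+(u_1,z)$ past $t$ produces the $t^{-1}\cdot(-)$. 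The main obstacle is purely clerical — keeping track of inverses, conjugations by $\pi$, and which $U$-factor acts on which side — with no conceptual difficulty beyond what the previous lemma already required.
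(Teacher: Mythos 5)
Your proof is correct and follows the same route as the paper, which simply remarks that taking inverses turns the map into the parametrization $v^{-1}t^{-1}f_{\beta^t}(z)$ of $B\pi^{-1}B$, to which the previous lemma (applied to $\beta^t$) is applicable. Your factorization through $\Phi$, the swap of the two $U$-factors under inversion, and the use of $T$-equivariancy to rewrite $g_{\beta^t}^+({}^\pi(t^{-1})\cdot u_1,{}^\pi(t^{-1})\cdot z)$ as $t^{-1}\cdot g_{\beta^t}^+(u_1,z)$ are exactly the bookkeeping the paper leaves implicit.
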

\begin{proof}
    Taking inverses, we obtain the parametrization $v^{-1}t^{-1} f_{\beta^{t}}(z)$ of $B\pi^{-1}B$ where we can apply the previous result. 
\end{proof}

\begin{lema}
    For any $\beta\in F(\Delta^+)$, $\beta^*:= (\beta \beta ^t)^{-1}\in T$ and ${}^{s_\beta}(\beta^t)^*=\beta^*$.
\end{lema}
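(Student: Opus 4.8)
The plan is to recognize that both $\beta$ and $\beta^t$, regarded as elements of $G$ via the morphism $\alpha\mapsto \iota_\alpha\left(\begin{smallmatrix}0 & 1\\ -1 & 0\end{smallmatrix}\right)$, are products of the lifts $\iota_\alpha\left(\begin{smallmatrix}0 & 1\\ -1 & 0\end{smallmatrix}\right)$ of simple reflections, hence lie in $N_G(T)$. Since the morphism $F(\Delta^+)\to W$ is induced by $\alpha\mapsto s_\alpha$, the images of $\beta$ and $\beta^t$ in $W = N_G(T)/T$ are $s_\beta$ and $s_{\beta^t} = s_\beta^{-1}$ (the latter equality having been noted just above the statement). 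Consequently $\beta\beta^t\in N_G(T)$ maps to $s_\beta s_\beta^{-1} = e$ in $W$, which means precisely that $\beta\beta^t\in T$; this is the first assertion, and $\beta^*:=(\beta\beta^t)^{-1}\in T$.

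For the second assertion I would apply the first part to $\beta^t$ in place of $\beta$: since $(\beta^t)^t = \beta$ as words, this gives $\beta^t\beta = \big((\beta^t)^*\big)^{-1}\in T$. Then, computing inside $G$,
\[\beta\cdot\big((\beta^t)^*\big)^{-1} = \beta\cdot(\beta^t\beta) = (\beta\beta^t)\cdot\beta = (\beta^*)^{-1}\cdot\beta,\]
so $\beta\cdot\big((\beta^t)^*\big)^{-1}\cdot\beta^{-1} = (\beta^*)^{-1}$. Since $\beta\in N_G(T)$ represents $s_\beta$ and $T$ is abelian, conjugation by $\beta$ on $T$ is exactly the Weyl-group action ${}^{s_\beta}(-)$, independently of the chosen lift; hence ${}^{s_\beta}\big(((\beta^t)^*)^{-1}\big) = (\beta^*)^{-1}$, and inverting both sides — using that ${}^{s_\beta}(-)$ is an automorphism of $T$ — yields ${}^{s_\beta}(\beta^t)^* = \beta^*$.

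There is essentially no obstacle here: everything beyond the displayed algebra is the single fact, already recorded in the excerpt, that $\iota_\alpha\left(\begin{smallmatrix}0 & 1\\ -1 & 0\end{smallmatrix}\right)$ is a lifting of $s_\alpha$, i.e.\ lies in $N_G(T)$ and represents $s_\alpha$. If one prefers a hands-on argument for the first claim, it can be done by induction on the length of $\beta$: writing $\beta = \alpha\gamma$ one gets $\beta\beta^t = \iota_\alpha\left(\begin{smallmatrix}0 & 1\\ -1 & 0\end{smallmatrix}\right)(\gamma\gamma^t)\iota_\alpha\left(\begin{smallmatrix}0 & 1\\ -1 & 0\end{smallmatrix}\right) = {}^{s_\alpha}\!\big((\gamma^*)^{-1}\big)\cdot \iota_\alpha\left(\begin{smallmatrix}-1 & 0\\ 0 & -1\end{smallmatrix}\right)$, and the last factor lies in $T$ because $\iota_\alpha$ maps the diagonal torus of $\SL_2(\C)$ into $T$; the second claim then follows exactly as above. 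I would present the $N_G(T)$ argument, since it is shorter and makes the role of $s_\beta$ transparent.
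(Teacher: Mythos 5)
Your proposal is correct. For the first claim your route differs from the paper's: you invoke the exact sequence $1\to T\to N_G(T)\to W\to 1$ directly, observing that $\beta$ and $\beta^t$ land in $N_G(T)$ with images $s_\beta$ and $s_\beta^{-1}$ in $W$, so their product maps to $e$ and hence lies in $T$. The paper instead does an explicit induction on the length of $\beta$, computing $\alpha^*=\iota_\alpha\!\left(\begin{smallmatrix}-1&0\\0&-1\end{smallmatrix}\right)$ for the base case and using the identity $\iota_\alpha\!\left(\begin{smallmatrix}0&1\\-1&0\end{smallmatrix}\right)^{-1} t\, \iota_\alpha\!\left(\begin{smallmatrix}0&1\\-1&0\end{smallmatrix}\right)^{-1}=\iota_\alpha\!\left(\begin{smallmatrix}-1&0\\0&-1\end{smallmatrix}\right){}^{s_\alpha}t$ for the inductive step. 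Your argument is shorter and more conceptual; the paper's explicit computation has the side benefit of recording the precise value of $\alpha^*$ and the conjugation identity, which are convenient in the surrounding sections. For the second claim your computation — conjugating $((\beta^t)^*)^{-1}$ by $\beta$, identifying that conjugation with ${}^{s_\beta}$ since $T$ is abelian and $\beta$ lifts $s_\beta$, then inverting — is essentially identical to the paper's one-line ${}^{s_\beta}(\beta^t)^*=\beta\,\beta^{-1}(\beta^t)^{-1}\beta^{-1}=\beta^*$; you have simply made explicit the lift-independence that the paper leaves implicit. No gaps.
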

\begin{proof}
    We prove the first claim by induction on the length of $\beta$. For a simple root $\alpha$ we have
    \[\alpha^* = \iota_\alpha\left(\begin{matrix}
        0 & 1\\
        -1 & 0
    \end{matrix}\right)^{-2} = \iota_\alpha\left(\begin{matrix}
        -1 & 0\\
        0 & -1
    \end{matrix}\right)\]
    which lives in $T$. Now, for arbitrary $t\in T$, we have
    \begin{align*}
        \iota_\alpha\left(\begin{matrix}
        0 & 1\\
        -1 & 0
    \end{matrix}\right)^{-1}t\iota_\alpha\left(\begin{matrix}
        0 & 1\\
        -1 & 0
    \end{matrix}\right)^{-1} = \iota_\alpha\left(\begin{matrix}
        -1 & 0\\
        0 & -1
    \end{matrix}\right){}^{s_\alpha}t
    \end{align*}
    which is still in $T$.

    For the last claim, we note that
    \[{}^{s_\beta}(\beta^t)^* = \beta \beta^{-1} (\beta^t)^{-1}\beta^{-1} = \beta^*\]
    by its very definitions.
\end{proof}

\begin{lema}
    Let $\beta\in \widehat W$ of length $l$ and $\pi=s_\beta$. Consider the parametrizations $U\times T\times \C^l\to B\pi B$, $(v,t,z)\mapsto v{}^\pi tf_\beta(z)$, and $\C^l\times T\times U\to B\pi B$, $(z,t,v)\mapsto f_{\beta^{t}}(z)^{-1} t  v$, of the Bruhat cell $B\pi B$. Then \[(z,t,v)\mapsto (f_{\beta^{t}}(z)^{-1}\beta^{t} g_\beta^+(((\beta^t)^*t)\cdot v^{-1})^{-1},(\beta^t)^*t,g_\beta^-(v^{-1}))\] is the reparametrization $\C^l\times T\times U\to U\times T\times \C^l$.
\end{lema}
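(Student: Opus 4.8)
The plan is to recognise the displayed map as $\varphi_1^{-1}\circ\varphi_2$, where $\varphi_1\colon U\times T\times\C^l\to B\pi B$ sends $(v,t,z)\mapsto v\,{}^{\pi}t\,f_\beta(z)$ and $\varphi_2\colon\C^l\times T\times U\to B\pi B$ sends $(z,t,v)\mapsto f_{\beta^t}(z)^{-1}\,t\,v$, the two $U\times U$-equivariant isomorphisms of the previous two lemmas. Writing $(v',t',z')$ for the output of the formula, it is then enough to check that $(v',t',z')\in U\times T\times\C^l$ and that $\varphi_1(v',t',z')=\varphi_2(z,t,v)$; since $\varphi_1$ is an isomorphism, this forces $(v',t',z')=\varphi_1^{-1}(\varphi_2(z,t,v))$.

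The one missing ingredient is a global form of the identity already used for a single simple root. First I would prove, by induction on $l(\beta)$, that
\[f_\beta(z)\,u^{-1}=g_\beta^+(u,z)^{-1}\,f_\beta\!\left(g_\beta^-(u,z)\right)\qquad(u\in U,\ z\in\C^{l}),\]
the base case $l=1$ being exactly the identity $f_\alpha(z)u^{-1}=g_{\alpha,z}^+(u)^{-1}f_\alpha(z-g_\alpha^-(u))$ established in the proof of the first of those lemmas, and the inductive step (splitting $\beta=\alpha_1\beta'$, so $f_\beta=f_{\alpha_1}\cdot f_{\beta'}$) being a direct unwinding of the recursive definitions of $g^\pm_{\beta}$. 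Setting $z=0$ and using $f_\beta(0)=\beta$, the group element lifting $\pi$, this specialises to $\beta\,u^{-1}=g_\beta^+(u)^{-1}f_\beta(g_\beta^-(u))$ for $u\in U$, which is the version I will use.

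Now set $s:=(\beta^t)^*t$, so that in the formula $u=((\beta^t)^*t)\cdot v^{-1}=s\cdot v^{-1}$, $t'=s$, and $z'=g_\beta^-(v^{-1})$. Expanding, $\varphi_1(v',t',z')=f_{\beta^t}(z)^{-1}\,\beta^t\,g_\beta^+(u)^{-1}\cdot{}^{\pi}s\cdot f_\beta(z')$, and I then apply, in order: (i) equivariance of $g_\beta^-$ for the $T$-action (which fixes $0$), giving $g_\beta^-(u)=g_\beta^-(s\cdot v^{-1})=s\cdot z'$; (ii) $s_\beta$-equivariance of $\rho(\beta)$, which rewrites $f_\beta(z')=({}^{\pi}s)^{-1}f_\beta(g_\beta^-(u))\,s$; (iii) the global relation above, which collapses $g_\beta^+(u)^{-1}f_\beta(g_\beta^-(u))=\beta\,u^{-1}$; and (iv) $u^{-1}=svs^{-1}$ together with $\beta^t\beta=((\beta^t)^*)^{-1}$ and $(\beta^t)^*\in T$. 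The cancellations ${}^{\pi}s\cdot({}^{\pi}s)^{-1}=1$, $s^{-1}s=1$ and $((\beta^t)^*)^{-1}s=((\beta^t)^*)^{-1}(\beta^t)^*t=t$ then leave $f_{\beta^t}(z)^{-1}\,t\,v=\varphi_2(z,t,v)$. Finally $v'\in U$ because $f_{\beta^t}(z)^{-1}\beta^t\in U^-_{\pi^{-1}}\subset U$ (as $f_{\beta^t}$ is an isomorphism onto $\beta^t U^-_{\pi^{-1}}$) and $g_\beta^+(u)^{-1}\in U$, while $t'\in T$ and $z'\in\C^l$ are immediate.

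The only step needing real care is the induction for $f_\beta(z)u^{-1}=g_\beta^+(u,z)^{-1}f_\beta(g_\beta^-(u,z))$: matching the nested index conventions of $g^\pm_{\beta,k}$ against the splitting $f_\beta=f_{\alpha_1}\cdot f_{\alpha_2\cdots\alpha_l}$ is routine but bookkeeping-heavy; everything after it is a short computation. Alternatively one can bypass much of this by using that $\varphi_1$ and $\varphi_2$ are already known to be $U\times U$-equivariant: $\varphi_1^{-1}\circ\varphi_2$ then intertwines the two actions, and since $U\times U$ acts transitively on the fibres of the invariant ``$T$-coordinate'' the verification reduces to the points $(0,t,1)$, where the claimed formula collapses to $g_\beta^+(1)=1$, $g_\beta^-(1)=0$ and $(\beta^t)^{-1}t={}^{\pi}(t(\beta^t)^*)\,\beta$ — at the cost of separately checking that the stated map is itself $U\times U$-equivariant.
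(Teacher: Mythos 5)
Your main proof is correct, but it takes a more computational route than the paper. Two remarks. First, the identity $f_\beta(z)u^{-1}=g_\beta^+(u,z)^{-1}f_\beta(g_\beta^-(u,z))$ that you propose to establish by a fresh induction does not need one: it is exactly the specialization $u_1=v=e$, $t=e$ of the $U\times U$-equivariance statement of the first parametrization lemma, which is already in hand, so you can simply cite it. Second, the paper's own proof sidesteps the entire (i)--(iv) calculation. It writes $\varphi_2(z,t,v)=f_{\beta^t}(z)^{-1}tv=(e,v^{-1})\cdot\bigl(f_{\beta^t}(z)^{-1}t\bigr)$ in $B\pi B$, identifies the base point $f_{\beta^t}(z)^{-1}t$ with $\varphi_1\bigl(f_{\beta^t}(z)^{-1}\beta^t,\,(\beta^t)^*t,\,0\bigr)$ (using $f_{\beta^t}(z)^{-1}\beta^t\in U^-_{\pi^{-1}}\subset U$, $f_\beta(0)=\beta$, and $\beta^t\beta=((\beta^t)^*)^{-1}$), and then reads off $(e,v^{-1})\cdot\bigl(f_{\beta^t}(z)^{-1}\beta^t,(\beta^t)^*t,0\bigr)$ directly from the explicit $\varphi_1$-action formula, which is precisely the displayed expression. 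This is close in spirit to the "alternative" you sketch at the end, but strictly more efficient: since one is pushing forward along $\varphi_1^{-1}$ using only the already-proved equivariance of $\varphi_1$, there is no need for the extra step of separately checking that the stated formula is itself $U\times U$-equivariant, which your alternative would require. Your direct verification has the merit of being self-contained and explicit, at the cost of redoing algebra the equivariance lemma already encodes.
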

\begin{proof}
    We shall think $f_{\beta^{t}}(z)^{-1}tv$ as $(e,v^{-1})\cdot (f_{\beta^{t}}(z)^{-1}\beta^{t}, (\beta^t)^*t,0)$.
\end{proof}

\begin{lema}
    Let $\beta\in \widehat W$ and $\pi=s_\beta$. Then $g_\beta^-(u)= 0$ for any $u\in U_\pi^+$ and $g_\beta^+(u)=e$ if $u\in U_\pi^-$.
\end{lema}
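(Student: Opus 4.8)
The plan is to deduce both statements from the uniqueness of the Bruhat decomposition $B\pi B\simeq U\times T\times U^-_\pi$, after extracting a single group identity from the $U\times U$-equivariance lemma. Set $\hat\pi:=f_\beta(0)=\beta$ (the image of the word $\beta$ under $F(\Delta^+)\to G$, $\alpha\mapsto\iota_\alpha\left(\begin{smallmatrix}0&1\\-1&0\end{smallmatrix}\right)$), which lies in $N_G(T)$ and represents $\pi$, hence is an admissible choice of $\hat\pi\in\pi T$; since $\beta\in\widehat W$, the map $f_\beta$ is an isomorphism onto $\beta U^-_\pi$, so $\beta^{-1}f_\beta(z)\in U^-_\pi$ for every $z$ and $f_\beta$ is injective.

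First I would specialize the $U\times U$-equivariance of $(v,t,z)\mapsto v\,{}^\pi t f_\beta(z)$ at the point $(v,t,z)=(e,e,0)$ and the group element $(u_1,u_2)=(e,u)$. Since ${}^\pi e=e$, $f_\beta(0)=\beta$, and the relevant $T$-actions fix $u$ and $0$, the orbit reads $(e,u)\cdot(e,e,0)=(g_\beta^+(u)^{-1},e,g_\beta^-(u))$, and equivariance (against the action $(u_1,u_2)\cdot x=u_1xu_2^{-1}$ on $B\pi B$) turns this into
\[ \beta u^{-1}\;=\;g_\beta^+(u)^{-1}\,f_\beta\!\big(g_\beta^-(u)\big)\qquad\text{for all }u\in U. \]
Here $g_\beta^+(u)=g_\beta^+(u,0)\in U$, since otherwise the action quoted in that lemma would not land in $U\times T\times\C^l$.

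Then I would write $\beta u^{-1}\in B\pi B$ in two Bruhat forms and compare. Using $U=U^+_\pi U^-_\pi$, write $u^{-1}=u^+u^-$ with $u^+\in U^+_\pi$, $u^-\in U^-_\pi$; the inclusion $\beta U^+_\pi\beta^{-1}\subset U$ (immediate from $\Phi_\pi=\Phi^+\cap\pi^{-1}(-\Phi^+)$, since $\gamma\in\Phi^+\setminus\Phi_\pi$ means $\pi(\gamma)\in\Phi^+$, so $\beta$ sends each root subgroup $U_\gamma$ with $\gamma\in\Phi^+\setminus\Phi_\pi$ into $U$) gives
\[ \beta u^{-1}\;=\;\big(\beta u^+\beta^{-1}\big)\,\hat\pi\,u^-,\qquad \beta u^+\beta^{-1}\in U,\ \ u^-\in U^-_\pi, \]
while the displayed identity gives $\beta u^{-1}=g_\beta^+(u)^{-1}\,\hat\pi\,\big(\beta^{-1}f_\beta(g_\beta^-(u))\big)$ with $g_\beta^+(u)^{-1}\in U$ and $\beta^{-1}f_\beta(g_\beta^-(u))\in U^-_\pi$. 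Both expressions have the form $v\,\hat\pi\,(\text{element of }U^-_\pi)$ with trivial torus part, so injectivity of $U\times T\times U^-_\pi\to B\pi B$ forces $\beta u^+\beta^{-1}=g_\beta^+(u)^{-1}$ and $u^-=\beta^{-1}f_\beta(g_\beta^-(u))$. Now conclude: if $u\in U^-_\pi$ then uniqueness of $U=U^+_\pi U^-_\pi$ gives $u^+=e$, whence $g_\beta^+(u)=\beta e\beta^{-1}=e$; if $u\in U^+_\pi$ then $u^-=e$, whence $f_\beta(g_\beta^-(u))=\beta=f_\beta(0)$ and injectivity of $f_\beta$ yields $g_\beta^-(u)=0$.

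The routine points to verify are that $U^\pm_\pi$ are closed subgroups with $U^+_\pi\cap U^-_\pi=\{e\}$ (so both $U=U^+_\pi U^-_\pi$ and the Bruhat parametrization are genuinely unique), that $\hat\pi=\beta$ represents $\pi$, and the root-combinatorial inclusion $\beta U^+_\pi\beta^{-1}\subset U$. The only real obstacle would arise if one insisted on bypassing the $U\times U$-equivariance lemma: then the identity $\beta u^{-1}=g_\beta^+(u)^{-1}f_\beta(g_\beta^-(u))$ would have to be proved directly, by induction on $l(\beta)$ unwinding the nested definitions of $g_\beta^{\pm}$, which is notation-heavy (one must track the sign conventions and the nesting order of the factors $g^+_{\alpha_i,z_i}$) but presents no conceptual difficulty.
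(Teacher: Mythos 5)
Your proof is correct, but it takes a genuinely different route from the paper. The paper proves this lemma by a direct induction on $l(\beta)$, unwinding the recursive definitions of $g_\beta^{\pm}$: for the inductive step it uses $\Phi_{\pi s_\alpha}=s_\alpha(\Phi_\pi)\cup\{\alpha\}$ to show that if $u\in U_{\pi s_\alpha}^+$ then $g_\alpha^-(u)=0$ and $g_\alpha^+(u)=\alpha u\alpha^{-1}\in U_\pi^+$, then invokes the inductive hypothesis. You instead deduce both claims at once from the group identity $\beta u^{-1}=g_\beta^+(u)^{-1}f_\beta(g_\beta^-(u))$ (which the $U\times U$-equivariance lemma already supplies) together with uniqueness in the Bruhat parametrization $U\times T\times U_\pi^-\simeq B\pi B$ and the inclusion $\beta U_\pi^+\beta^{-1}\subset U$. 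In effect you establish the paper's \emph{subsequent} lemma --- that $u=(f_\beta(g_\beta^-(u))^{-1}\beta)\cdot(\beta^{-1}g_\beta^+(u)\beta)$ is the decomposition of $u$ under $U=U_\pi^-U_\pi^+$ --- first, and read off the present statement as a corollary, whereas the paper proves the present statement by induction and then uses it to obtain that decomposition. Both arguments are valid and of comparable length; yours is more structural (it never unwinds the nested definitions of $g_\beta^{\pm}$ and handles both claims symmetrically), while the paper's is more elementary and self-contained, relying only on the recursion and the combinatorics of $\Phi_\pi$.
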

\begin{proof}
    Both claims are proven in a similar way. We shall prove only the first one. We apply induction on the length of $\beta$. For length one, the result is clear by the very definition of $g_\alpha^-$. Assume the statement holds for $\beta$ and let $\alpha\in \Delta^+$ such that $s_\beta s_\alpha > s_\beta$ in the Bruhat order. Take $u\in U_{\pi s_\alpha}^+$. Now, $\Phi_{\pi s_\alpha} = s_\alpha(\Phi_\pi)\cup \{\alpha\}$. Therefore, $u\in U_{s_\alpha}^+$ and $g_\alpha^-(u)=0$. Hence, $g^+_\alpha(u)= \alpha u \alpha^{-1}$. But, $u\in \exp\left(\bigoplus_{\eta\not\in s_\alpha(\Phi_\pi)} \C e_\eta\right)$, where $e_\eta=\iota_\eta\left(\begin{smallmatrix}
        0 & 1 \\ 0 & 1
    \end{smallmatrix}\right)$, and, in consequence, $\alpha u \alpha^{-1}\in \exp\left(\bigoplus_{\eta\not\in \Phi_\pi} \C e_\beta\right)$. Thus, applying the inductive hypothesis, $g_{\beta\alpha}^-(u)=0$.
\end{proof}

\begin{lema}
    Let $\beta\in \widehat W$, $\pi=s_\beta$, and $u\in U$. Then 
    \[ u = (f_\beta(g_\beta^-(u))^{-1}\beta)\cdot (\beta^{-1} g^+_\beta(u) \beta)\]
    is the decomposition of $u$ under $U= U_\pi^-U_\pi^+$.
\end{lema}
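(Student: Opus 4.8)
The plan is to reduce the statement to the reparametrization lemmas just established. We want to show that, for $u \in U$, the decomposition of $u$ along $U = U_\pi^- U_\pi^+$ is given by the displayed formula; equivalently, that $f_\beta(g_\beta^-(u))^{-1}\beta \in U_\pi^-\beta$ and $\beta^{-1}g_\beta^+(u)\beta \in U_\pi^+$, and that their product is $u$. Since $\rho(\beta) = (\C^l, f_\beta)$ is, by the earlier lemma, an isomorphism onto $\beta U_{s_\beta}^- = \beta U_\pi^-$, the first factor $f_\beta(g_\beta^-(u))^{-1}\beta$ indeed lies in $\beta U_\pi^- = U_{\pi^{-1}}^-\beta$; one must be a little careful about which of $U_\pi^{\pm}$ and $U_{\pi^{-1}}^{\pm}$ appears, but this is just bookkeeping with $\Phi_\pi$ versus $\Phi_{\pi^{-1}}$ that is forced by the convention $B\pi B \simeq U_{\pi^{-1}}^- \times T \times U$.

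First I would take the reparametrization lemma for $(v,t,z) \mapsto v\,{}^\pi t f_\beta(z)$ and specialize it: the identity
\[
(u_1,u_2)\cdot(v,t,z) = \bigl(u_1 v\, g_\beta^+(t\cdot u_2, t\cdot z)^{-1},\, t,\, g_\beta^-(u_2,z)\bigr)
\]
encodes precisely how left and right multiplication by $U$ act in this parametrization. Applying it to $(v,t,z) = (e, e, 0)$ — so that $v\,{}^\pi t f_\beta(z) = e$ — with $(u_1,u_2) = (u, e)$ on one side and reading off what $(e,u)$-action does, one gets that multiplying $e = v\,{}^\pi t f_\beta(z)$ on the right by $u^{-1}$ produces the point with parameters $(g_\beta^+(u)^{-1},\, e,\, g_\beta^-(u))$; that is,
\[
u^{-1} = g_\beta^+(u)^{-1}\cdot {}^\pi e \cdot f_\beta(g_\beta^-(u)) = g_\beta^+(u)^{-1} \beta f_\beta(g_\beta^-(u))\beta^{-1}\cdot \beta \cdot \beta^{-1},
\]
and inverting gives $u = f_\beta(g_\beta^-(u))^{-1}\beta \cdot \beta^{-1} g_\beta^+(u)\beta$ after rearranging the central/torus part, which cancels because $\pi$ is realized by $\beta$ and the torus parameter is trivial. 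The key intermediate computational fact, already isolated in the previous lemma's proof, is $f_\alpha(z)u^{-1} = g_{\alpha,z}^+(u)^{-1} f_\alpha(z - g_\alpha^-(u))$ for simple $\alpha$; the general statement follows from it by the same induction on $l(\beta)$ used to define $g_\beta^{\pm}$.

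It then remains to check that $\beta^{-1} g_\beta^+(u)\beta$ actually lands in $U_\pi^+$. For this I would argue: $g_\beta^+(u) \in U_{s_{\alpha_1}}^+$ by construction of $g_{\alpha_1, z}^+$ (its image is $U_{s_{\alpha_1}}^+$), and then track, using the lemma $\beta \iota_\alpha(U^-)\beta^{-1} = \iota_{s_\beta \cdot \alpha}(U^-)$ and the decomposition $\Phi_{\pi} = \{\text{something}\} \cup s_{\alpha_1}(\Phi_{s_{\alpha_2\cdots\alpha_l}})$ that appears in the inductive step of the earlier lemmas, that conjugation by $\beta^{-1}$ carries the relevant root subgroup into the span of roots in $\Phi^+ \setminus \Phi_\pi$, i.e.\ into $U_\pi^+$. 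Finally, uniqueness of the decomposition $U = U_\pi^- U_\pi^+$ (it is a semidirect product decomposition of unipotent groups) guarantees that, having produced \emph{some} element of $U_\pi^-\beta$ times \emph{some} element of $\beta^{-1}U_\pi^+\beta$ whose product is $u$, this is \emph{the} decomposition. I expect the main obstacle to be none of the algebra but the index gymnastics: keeping straight $U_\pi^{\pm}$ versus $U_{\pi^{-1}}^{\pm}$, $\beta$ versus $\beta^t$, and verifying that the conventions in the Bruhat parametrization $U \times T \times \C^l \to B\pi B$ line up so that the torus/central contribution genuinely cancels rather than leaving a leftover element of $T$. Once those conventions are pinned down, the proof is a one-line specialization of the reparametrization lemma plus the observation about where $g_\beta^+(u)$ lives.
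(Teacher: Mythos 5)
Your proposal hits the paper's key step: specialize the $(U\times U)$-equivariant parametrization at $(v,t,z)=(e,e,0)$, $(u_1,u_2)=(e,u)$ to extract the identity $\beta u^{-1}=g_\beta^+(u)^{-1}f_\beta(g_\beta^-(u))$, and then rearrange. (Note one slip: at $(e,e,0)$ the image $v\,{}^\pi t f_\beta(z)=f_\beta(0)=\beta$, not $e$ — your compensating string of $\beta\cdots\beta^{-1}$ factors seems to be quietly absorbing this, but as written the intermediate line $u^{-1}=g_\beta^+(u)^{-1}\cdots$ is off by a left factor of $\beta$.)

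Where you genuinely diverge from the paper is in the verification that the factorization is \emph{the} $U_\pi^-U_\pi^+$ decomposition. You try to show each factor lands in the correct subgroup directly. For the first factor this works: $f_\beta(g_\beta^-(u))^{-1}\beta\in U_\pi^-$ since $f_\beta$ maps onto $\beta U_\pi^-$ (though your written chain "$\beta U_\pi^-=U_{\pi^{-1}}^-\beta$" is not correct — what you actually need, and what does hold, is $(\beta U_\pi^-)^{-1}\beta=U_\pi^-$). For the second factor there is a real gap: knowing $g_\beta^+(u)\in U_{s_{\alpha_1}}^+$ is not enough, because $\beta^{-1}U_{s_{\alpha_1}}^+\beta$ is \emph{not} contained in $U$, let alone in $U_\pi^+$; what one needs is the sharper containment $g_\beta^+(u)\in U_{\pi^{-1}}^+$ (equivalently $\beta^{-1}g_\beta^+(u)\beta\in U_\pi^+$), and your sketch of "tracking root subgroups through the induction" does not identify this as the target. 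The paper avoids the issue entirely by arguing the other way around: it writes $u=u^-u^+$, uses the previously established facts that $g_\beta^-$ kills $U_\pi^+$ and $g_\beta^+$ kills $U_\pi^-$ to get $g_\beta^-(u)=g_\beta^-(u^-)$ and $g_\beta^+(u^-)=e$, and then the identity specializes on $u^-$ to give $f_\beta(g_\beta^-(u))^{-1}\beta=u^-$ directly; once the first factor is literally $u^-$, the second is automatically $u^+$. I would recommend adopting that route — it eliminates the root-subgroup gymnastics and closes the gap in your second-factor argument.
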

\begin{proof}
    We already proved that
    \[ \beta u^{-1} =  g^+_\beta(u)^{-1} f_\beta(g_\beta^-(u)) \]
    and, therefore, the claimed equality holds. We write $u=u^-u^+$ with $u^-\in U^-_\pi$ and $u^+\in U^+_\pi$. Then, $g^-_\beta(u) = g^-_\beta(u^-)$. It suffices to show that $\beta^{-1}f_\beta(g^-_\beta(u))=u^{-1}$ if $u\in U_\pi^-$. Equivalently, $g^+_\beta(u)=e$ if $u\in U_\pi^-$. 
\end{proof}

\begin{coro}\label{cor:laposta}
    Let $\beta\in\widehat W$ and $\pi=s_\beta$. Then $g_\beta^-(u_1u_2)=g_\beta^-(u_1)$ and $g_\beta^+(u_1u_2)=g_\beta^+(u_1)\beta u_2\beta^{-1}$ if $u_2\in U_\pi^+$.
\end{coro}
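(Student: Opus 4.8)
The plan is to derive this corollary purely from results already established: the composition identities of the preceding lemma,
$g_\beta^-(u_1u_2,z) = g_\beta^-(u_1,g_\beta^-(u_2,z))$ and $g_\beta^+(u_1u_2,z) = g_\beta^+(u_1,g_\beta^-(u_2,z))\,g_\beta^+(u_2,z)$;
the vanishing statement $g_\beta^-(u)=0$ for $u\in U_\pi^+$ (together with $g_\beta^+(u)=e$ for $u\in U_\pi^-$); and the explicit $U_\pi^-U_\pi^+$-decomposition $u=(f_\beta(g_\beta^-(u))^{-1}\beta)\cdot(\beta^{-1}g_\beta^+(u)\beta)$.

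First I would specialize the composition identities to $z=0$, recalling that $g_\beta^\pm(u)=g_\beta^\pm(u,0)$ by definition. This gives $g_\beta^-(u_1u_2)=g_\beta^-(u_1,g_\beta^-(u_2))$ and $g_\beta^+(u_1u_2)=g_\beta^+(u_1,g_\beta^-(u_2))\,g_\beta^+(u_2)$. Since $u_2\in U_\pi^+$, the vanishing lemma yields $g_\beta^-(u_2)=g_\beta^-(u_2,0)=0$, so the first identity collapses to $g_\beta^-(u_1u_2)=g_\beta^-(u_1,0)=g_\beta^-(u_1)$, which is the first claim, and the second collapses to $g_\beta^+(u_1u_2)=g_\beta^+(u_1)\,g_\beta^+(u_2)$.

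It then remains to identify $g_\beta^+(u_2)$ for $u_2\in U_\pi^+$ with the conjugate $\beta u_2\beta^{-1}$. For this I would feed $u=u_2$ into the decomposition lemma. Since $g_\beta^-(u_2)=0$ and $f_\beta(0)$ is precisely the chosen lift of $s_\beta=\pi$ in $G$ --- because $\rho(\beta)$ is built by convolving the maps $f_\alpha$, each of which sends $0$ to $\iota_\alpha\left(\begin{smallmatrix}0&1\\-1&0\end{smallmatrix}\right)$ --- the $U_\pi^-$-factor $f_\beta(g_\beta^-(u_2))^{-1}\beta$ is trivial, whence $u_2=\beta^{-1}g_\beta^+(u_2)\beta$, i.e. $g_\beta^+(u_2)=\beta u_2\beta^{-1}$. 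Substituting this into $g_\beta^+(u_1u_2)=g_\beta^+(u_1)\,g_\beta^+(u_2)$ finishes the proof.

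I do not expect a genuine obstacle here: the statement is a bookkeeping consequence of the preceding lemmas. The only point worth a line of justification is the identification $f_\beta(0)=\beta$ in $G$, which is exactly what forces the $U_\pi^-$-part of the decomposition of $u_2$ to vanish; one should also make sure the order of the factors in the $U=U_\pi^-U_\pi^+$ decomposition matches the convention of the decomposition lemma, but this is immediate from its displayed formula.
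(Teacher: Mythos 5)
Your proof is correct. Since the paper states this as a corollary with no explicit argument, there is no authorial proof to compare against; your reconstruction is exactly the natural one, combining the $z=0$ specialization of the composition identities with the vanishing $g_\beta^-(u_2)=0$ for $u_2\in U_\pi^+$, and then reading off $g_\beta^+(u_2)=\beta u_2\beta^{-1}$ from the preceding $U_\pi^-U_\pi^+$-decomposition lemma together with the observation that $f_\beta(0)$ equals the lift $\beta\in G$ (so that the $U_\pi^-$-factor collapses to the identity). The only point you flag as needing care --- that the decomposition lemma uses the order $U_\pi^-U_\pi^+$ --- is indeed the right thing to check, and it works out because $u_2\in U_\pi^+$ then has trivial $U_\pi^-$-part, forcing $g_\beta^-(u_2)=0$ consistently with the vanishing lemma.
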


\section{Twisted \texorpdfstring{$B$}{B}-varieties over \texorpdfstring{$G$}{G}}\label{sec:B-varieties}

\begin{Def}
    Let $w\in W$. A $w$-twisted $B$-variety over $G$ is a variety $(X,f)$ over $G$ with a $B$-action such that its induced $T$-action is $w$-equivariant and there exists an algebraic morphism $\phi:U\times X\to U$, $(u,x)\mapsto \phi_x(u)$, with
    \[f(u\cdot x) = \phi_x(u)f(x)u^{-1}\]
    and
    \[{}^wt\cdot \phi_x(u)=\phi_{t\cdot x}(t\cdot u)\]
    for any $u\in U$. We call $\phi$ the intertwining function. Let $\TwBTVarGw$ be the category of $w$-twisted $B$-varieties over $G$. Its morphisms are $B$-equivariant algebraic functions over $G$. Let $\TwBTVarG$ be the disjoint union of $\TwBTVarGw$ over $w\in W$.
\end{Def} 

\begin{ex}
    For any $\beta\in F(\Delta^+)$, $\rho(\beta)$ is a $s_\beta$-twisted $B$-variety over $G$. Indeed, the action of $u\in U$ is given by $g_\beta^-(u,-)$ and the intertwining function is $g_\beta^+$. The $T$-equivariancy of $g_\beta^-$ implies that together with the $T$-action with discussed in Section \ref{sec:var-over-G}, it has a $B$-action. 
\end{ex}

\begin{ex}
    Commutator varieties are also twisted $B$-varieties over $G$. They have trivial $U$-action and $\phi_{(t_1,t_2)}(u) = f(t_1,t_2)\cdot u$. More generally, any $T$-variety over $T$ can be seen as a twisted $B$-variety over $G$.
\end{ex}

\begin{lema}
    Let $(X,f)$ be a $w$-twisted $B$-variety over $G$. Then its intertwining function $\phi$ satisfies
    \[\phi_x(uu') = \phi_{u'\cdot x}(u)\phi_x(u')\]
    for any $x\in X$ and $u,u'\in U$.
\end{lema}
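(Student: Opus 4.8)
The plan is to exploit only the defining relation $f(u\cdot x) = \phi_x(u)f(x)u^{-1}$ together with the fact that the $U$-action is a group action, evaluating $f$ along the two ways of decomposing the action of the product $uu'$.

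First I would write $f\bigl((uu')\cdot x\bigr)$ using the defining relation directly with the element $uu'\in U$, obtaining
\[
f\bigl((uu')\cdot x\bigr) = \phi_x(uu')\,f(x)\,(uu')^{-1} = \phi_x(uu')\,f(x)\,u'^{-1}u^{-1}.
\]
Next I would compute the same quantity by writing $(uu')\cdot x = u\cdot(u'\cdot x)$ and applying the defining relation twice, first to the action of $u$ on the point $u'\cdot x$ and then to the action of $u'$ on $x$:
\[
f\bigl(u\cdot(u'\cdot x)\bigr) = \phi_{u'\cdot x}(u)\,f(u'\cdot x)\,u^{-1} = \phi_{u'\cdot x}(u)\,\phi_x(u')\,f(x)\,u'^{-1}u^{-1}.
\]
Comparing the two expressions and cancelling the common right factor $f(x)\,u'^{-1}u^{-1}$, which is an invertible element of $G$, yields $\phi_x(uu') = \phi_{u'\cdot x}(u)\,\phi_x(u')$, which is the claimed cocycle identity; one notes both sides indeed lie in $U$ since $\phi$ takes values there.

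There is essentially no obstacle here: the statement is a formal consequence of the axioms of a twisted $B$-variety, and the only point requiring a word of justification is that the cancellation takes place inside the ambient group $G$ (where $f(x)$ and the unipotent elements live), not merely inside $U$. The $T$-equivariance condition ${}^wt\cdot\phi_x(u)=\phi_{t\cdot x}(t\cdot u)$ plays no role in this particular identity and can be ignored for the proof.
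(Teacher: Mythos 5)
Your proof is correct and follows exactly the same route as the paper: compute $f\bigl((uu')\cdot x\bigr)$ both directly and via $u\cdot(u'\cdot x)$, then cancel the common right factor in $G$. Your added remarks about cancellation in $G$ and the irrelevance of the $T$-equivariance condition are accurate but not essential.
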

\begin{proof}
    Indeed,
    \[\phi_x(uu')f(x)(uu')^{-1}=f((uu'\cdot x) = f(u\cdot (u'\cdot x))) = \phi_{u'\cdot x}(u)\phi_x(u')f(x)u'^{-1}u^{-1}.\]
\end{proof}

\begin{lema}
    If $(X,f)$ is a $w$-twisted $B$-variety over $G$ and $(X',f')$ is a $w'$-twisted $B$-variety over $G$, $X*Y$ is a $ww'$-twisted $B$-variety over $G$.
\end{lema}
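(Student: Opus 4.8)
The plan is to run the same construction as in the earlier lemma for twisted $T$-varieties, now also keeping track of the intertwining functions. Write $X\ast X'=(X\times X',\,m(f,f'))$, and let $\phi,\phi'$ be the intertwining functions of $X$ and $X'$. I would equip $X\times X'$ with the $T$-action $t\cdot(x,x')=({}^{w'}t\cdot x,\, t\cdot x')$ coming from the underlying twisted $T$-variety structure — which is already known to be $ww'$-equivariant by that lemma — and with the $U$-action
\[ u\cdot(x,x') := \big(\phi'_{x'}(u)\cdot x,\; u\cdot x'\big). \]
The candidate intertwining function is $\Phi_{(x,x')}(u):=\phi_x\!\big(\phi'_{x'}(u)\big)$, which is algebraic, being a composition of the algebraic maps $\phi'$ and $\phi$. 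The asymmetric appearance of $\phi'_{x'}(u)$ in the first factor is forced by $f$ sitting to the left in the product $m(f,f')=f(x)f'(x')$: the factor $\phi'_{x'}(u)^{-1}$ produced to the right of $f'(u\cdot x')$ has to be absorbed by moving $x$.

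I would then verify the two defining properties of $\Phi$. For the first, applying the intertwining relations of $f$ and of $f'$ in turn,
\[ m(f,f')\big(u\cdot(x,x')\big)=f\big(\phi'_{x'}(u)\cdot x\big)\,f'(u\cdot x')=\Phi_{(x,x')}(u)\,f(x)\,\phi'_{x'}(u)^{-1}\phi'_{x'}(u)\,f'(x')\,u^{-1}=\Phi_{(x,x')}(u)\,m(f,f')(x,x')\,u^{-1}. \]
For the equivariance $\,{}^{ww'}t\cdot\Phi_{(x,x')}(u)=\Phi_{t\cdot(x,x')}(t\cdot u)$, I would first use the $w'$-equivariance of $\phi'$ to rewrite $\phi'_{t\cdot x'}(t\cdot u)={}^{w'}t\cdot\phi'_{x'}(u)$, and then the $w$-equivariance of $\phi$ applied with the torus element ${}^{w'}t$, together with the identity ${}^{w}({}^{w'}t)={}^{ww'}t$, to conclude $\phi_{{}^{w'}t\cdot x}({}^{w'}t\cdot\phi'_{x'}(u))={}^{ww'}t\cdot\phi_x(\phi'_{x'}(u))$.

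It remains to check that $u\cdot(-)$ is a genuine $U$-action and that it assembles with the $T$-action into a $B$-action. The first is a direct consequence of the cocycle identity $\phi'_{x'}(u_1u_2)=\phi'_{u_2\cdot x'}(u_1)\,\phi'_{x'}(u_2)$ established in the preceding lemma, which yields $u_1\cdot\big(u_2\cdot(x,x')\big)=(u_1u_2)\cdot(x,x')$. For the second, realizing $B=U\rtimes T$ with $T$ acting on $U$ by conjugation, the only nontrivial relation is the compatibility $({}^{t}u)\cdot(t\cdot p)=t\cdot(u\cdot p)$; it splits over the two factors of $X\times X'$: on the $X'$-factor it is precisely the $B$-action compatibility already present in $X'$, and on the $X$-factor it reduces — after using that $X$ is itself a $B$-variety — to the identity ${}^{w'}t\cdot\phi'_{x'}(u)=\phi'_{t\cdot x'}({}^{t}u)$, which is again the $w'$-equivariance of $\phi'$. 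None of these steps is genuinely hard; the main obstacle is keeping the bookkeeping of the two twists $w$ and $w'$ straight, and this is handled uniformly by the single relation ${}^{w}({}^{w'}t)={}^{ww'}t$. \qed
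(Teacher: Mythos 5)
Your proposal is correct and follows essentially the same approach as the paper: you define the same $T$-action, the same $U$-action $u\cdot(x,x')=(\phi'_{x'}(u)\cdot x,\,u\cdot x')$, and the same intertwining function $\phi_x(\phi'_{x'}(u))$, then verify the same relations (the paper checks the $B$-action compatibility in a single nested computation where you split it into the $U$-action, $T$-action, and semidirect-product compatibility, but the content is identical). The only difference is cosmetic: the paper's formulas have a small notational slip writing $\phi_{x'}$ where your $\phi'_{x'}$ is intended, which you correctly disambiguate.
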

\begin{proof}
    Let $\phi:U\times X\to U$ and $\phi':U\times X'\to U$ be its interwining functions. We define 
    \[(tu)\cdot(x,x')=((^{w'}t\phi_{x'}(u))\cdot x,(tu)\cdot x')\]
    for $t\in T$ and $u\in U$. The $T$-equivariancy of $\phi'$ implies it is a $B$-action. Indeed,
    \begin{align*}
        (t'u')\cdot((tu)\cdot(x,x')) &=(t'u')\cdot((^{w'}t\phi_{x'}(u))\cdot x,(tu)\cdot x')\\
        &= ((^{w'}t'\phi_{(tu)\cdot x'}(u')^{w'}t\phi_{x'}(u))\cdot x,(t'u'tu)\cdot x')\\
        &= ((^{w'}(t't)\phi_{u\cdot x'}(t^{-1}\cdot u')\phi_{x'}(u))\cdot x,(t't(t^{-1}\cdot u')u)\cdot x')\\
        &= ((^{w'}(t't)\phi_{x'}((t^{-1}\cdot u')u))\cdot x,(t't(t^{-1}\cdot u')u)\cdot x')\\
        &= (t'u'tu)\cdot(x,x') 
    \end{align*}
    for any $t,t'\in T$ and $u,u'\in U$. We know that the induced $T$-action is $ww'$-twisted. For the $U$-action we have
    \[\phi_x(\phi_{x'}(u))f(x)f'(x')u^{-1}= f(\phi_{x'}(u)\cdot x)\phi_{x'}(u) f'(x')u^{-1}=f(\phi_{x'}(u)\cdot x)f'(u'\cdot x')=(ff')(u\cdot (x,x'))\]
    for any $u\in U$ and $(x,x')\in X\times X'$. We note that $\phi_x(\phi_{x'}(u))$ has the desired $T$-equivariancy property.   
\end{proof}

We upgraded the morphism $\rho$ from Section \ref{sec:var-over-G} to $F(\Delta^+,W^2)\to \TwBTVarG$. There is a final construction we need.

\subsection{The untwisting functor}

Let $(X,f)$ be a $e$-twisted $B$-variety over $G$ with intertwining function $\phi$. We define its untwist $\mcU(X)\in B\Var_G$ as follows. Its underlying $G$-variety is the convolution $U*X$. Its $B$ action is 
\[(tu')\cdot (u,x) = (tu'u (t\phi_x(u'))^{-1},(tu')\cdot x)\]
for $t\in T$, $u,u'\in B$ and $x\in X$. It is indeed an action because
\begin{align*}
    t'\phi_{(tu')\cdot x}(u'')t\phi_x(u') &= (t't)\phi_{u'\cdot x}(t^{-1}\cdot u'')\phi_x(u')\\
    &= (t't)\phi_{u'\cdot x}((t^{-1}\cdot u'')u'))
\end{align*}
for any $t,t'\in T$ and $u,u'\in U$.

\section{Handle operation}\label{sec:handle-operation}

\begin{Def}
    Let $\pi_1,\pi_2\in \widehat W$. Consider $\overline{K}_{\pi_1,\pi_2}\in \TwBTVarGw[e]$ given by convolution between the one-point variety over ${}^{\pi_1}(\pi_1^t)^* {}^{\pi_1\pi_2\pi_1^{-1}}(\pi_2^t)^*$ and  $\rho((\pi_1,\pi_2)\pi_1\pi_2\pi_1^t\pi_2^t)$. We define a $e$-twisted $B$-variety $K_{\pi_1,\pi_2}$ over $G$ as follows. As a variety is $\overline{K}_{\pi_1,\pi_2}\times U_{\pi_1}^+\times U_{\pi_2}^+$. Its function to $G$ is the composition of the projection onto $\overline{K}_{\pi_1,\pi_2}$ and $\overline{K}_{\pi_1,\pi_2}\to G$. Its $B$-action is
    \begin{align*}
        (tu)\cdot (z_1,z_2',t_1,t_2,z_1',z_2,u_1^+,u_2^+) =& ((tu)\cdot (z_1,z_2',t_1,t_2,z_1',z_2),\\
        &t((\pi_2^t)^*\cdot h_2(u))u_1^+({}^{\pi_1^{-1}}(\pi_2^t)^*t_1^{-1}\cdot (t\pi_1^{-1}h_4(u)\pi_1))^{-1}, \\
        &(t\pi_2^{t}({}^{\pi_1^{-1}}(\pi_2^t)^*\cdot h_3(u))(\pi_2^t)^{-1})u_2^+(t((\pi_2^t)^*t_2\cdot h_1(u)))^{-1}))
    \end{align*}
    for $t,t_1,t_2\in T$, $u\in U$, $u_1^+\in U_{\pi_1}^+$, $u_2^+\in U_{\pi_2}^+$, $z_1,z_1'\in \C^{l(\pi_1)}$ and $z_2,z_2'\in \C^{l(\pi_2)}$, where
        \begin{align*}
        h_1(u) &= t_2^{-1}\cdot g_{\pi_2^{t}}^+(u,z_2),\\
        h_2(u) &= t_1^{-1}{}^{\pi_1^{t}}t_2^{-1}\cdot g_{\pi_1^{t}\pi_2^{t}}^+(u,(z_1',z_2)),\\
        h_3(u) &= {}^{\pi_2}t_2{}^{\pi_2}t_1^{-1}{}^{\pi_2\pi_1^{t}}t_2^{-1}\cdot g_{\pi_2\pi_1^{t}\pi_2^{t}}^+(u,(z_2',z_1',z_2)),\text{ and}\\
        h_4(u) &= {}^{\pi_1}t_1{}^{\pi_1\pi_2}t_2{}^{\pi_1\pi_2}t_1^{-1}{}^{\pi_1\pi_2\pi_1^{t}}t_2^{-1}\cdot g_{\pi_1\pi_2\pi_1^{t}\pi_2^{t}}^+(u,(z_1,z_2',z_1',z_2)).
    \end{align*}
\end{Def}

\begin{lema}
    Let $\pi_1,\pi_2\in\widehat W$ and $X\in \TwBTVarGw[e]$. Endow $B\pi_1B\times B\pi_2B$ with the $B$-action given by simultaneous conjugation and with the map to $G$ given by $(x,y)\mapsto [x,y]$. Then $(B\pi_1B\times B\pi_2B)*\mcU(X)$ is isomorphic as a $B$-variety over $G$ to $\mcU( K_{\pi_1,\pi_2}*X)$.
\end{lema}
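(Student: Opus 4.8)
The plan is to compute both $B$-varieties in Bruhat coordinates and to read off the isomorphism; the genuine content is to recognise the functions $h_1,\dots,h_4$ in the definition of $K_{\pi_1,\pi_2}$ as the unipotent error terms produced by that computation. Write $f_X\colon X\to G$ and $\phi$ for the structure map and the intertwining function of $X$. Unwinding the definitions, the left-hand side is $B\pi_1B\times B\pi_2B\times U\times X$ with structure map $((x,y),u,\xi)\mapsto [x,y]\,u\,f_X(\xi)$ and with the (diagonal) $B$-action that conjugates $(x,y)$ simultaneously and acts on $(u,\xi)$ by the untwist action of Section~\ref{sec:B-varieties}; here one uses that $(B\pi_1B\times B\pi_2B,[\cdot,\cdot])$ is an $e$-twisted $B$-variety with intertwining function $\phi_{(x,y)}(u)=u$. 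The right-hand side $\mcU(K_{\pi_1,\pi_2}*X)$ is $U\times\overline{K}_{\pi_1,\pi_2}\times U_{\pi_1}^+\times U_{\pi_2}^+\times X$; its structure map is obtained from the map of $\overline{K}_{\pi_1,\pi_2}$ by multiplying on the left by the leftmost $U$-coordinate and on the right by $f_X$ (the $U_{\pi_i}^+$-coordinates do not enter it), and its $B$-action is the one displayed for $K_{\pi_1,\pi_2}$, then untwisted and convolved with $X$. So we must build an isomorphism over $G$ between these two, compatible with the $B$-actions; the $X$-factor is carried along by functoriality of convolution, the only care being that the action on $X$ is twisted by $\phi$.

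For the core computation, parametrise $x=v_1\,{}^{\pi_1}t_1 f_{\pi_1}(z_1)\in B\pi_1B$ and $y=v_2\,{}^{\pi_2}t_2 f_{\pi_2}(z_2')\in B\pi_2B$ by the parametrisation lemma of Section~\ref{sec:bruhat-parametrizations}, so that $x^{-1}=f_{\pi_1}(z_1)^{-1}\,{}^{\pi_1}t_1^{-1}v_1^{-1}$ and likewise for $y^{-1}$. Expand $[x,y]=xyx^{-1}y^{-1}$ and move all unipotent factors to the outside, using repeatedly: the $s_{\pi_i}$-equivariance of $f_{\pi_i}$ to slide torus elements past the $f$-blocks; the reparametrisation lemma to rewrite each $f_\beta(\cdot)^{-1}$ immediately followed by a torus element in the form $f_{\beta^t}(\cdot)^{-1}(\beta^t)^*(\cdot)$ --- this is how $(\pi_1^t)^*$ and $(\pi_2^t)^*$ enter, in precisely the conjugated positions ${}^{\pi_1}(\pi_1^t)^*{}^{\pi_1\pi_2\pi_1^{-1}}(\pi_2^t)^*$ occurring in $\overline{K}_{\pi_1,\pi_2}$; and the decomposition $U=U_{\pi_i}^-U_{\pi_i}^+$ together with Corollary~\ref{cor:laposta} and the cocycle identities for $g_\beta^\pm$ to split the conjugators $v_1,v_2$, their $U_{\pi_i}^-$-parts being absorbed (via $g_\beta^-$) into the unipotent blocks and their $U_{\pi_i}^+$-parts surviving as the factors $U_{\pi_1}^+\times U_{\pi_2}^+$ of $K_{\pi_1,\pi_2}$. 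The four successive unipotent blocks assemble into the single convolution $f_{\pi_1}f_{\pi_2}f_{\pi_1^t}f_{\pi_2^t}$, i.e.\ the unipotent part of $\rho((\pi_1,\pi_2)\pi_1\pi_2\pi_1^t\pi_2^t)$, while the torus twists produced by commuting the $t_i$ past the blocks reassemble into $\rho(\pi_1,\pi_2)$ evaluated at $(t_1,t_2)$, preceded by the fixed element ${}^{\pi_1}(\pi_1^t)^*{}^{\pi_1\pi_2\pi_1^{-1}}(\pi_2^t)^*$. This yields an explicit morphism $(B\pi_1B\times B\pi_2B)*\mcU(X)\to\mcU(K_{\pi_1,\pi_2}*X)$ over $G$, whose inverse comes from running the same lemmas backwards; it is therefore an isomorphism of varieties over $G$ (one may also verify that both sides have dimension $3\dim U+2\dim T+l(\pi_1)+l(\pi_2)+\dim X$).

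It remains to check $B$-equivariance. Feed a general $b=tu\in B$ into the left-hand action and transport it through the isomorphism just constructed: conjugating $(x,y)$ by $b$ changes the conjugators $v_1,v_2$ by left multiplication by $u$ (up to torus rescaling, which also rescales the $t_i$ and $z_i$), and after splitting $U=U_{\pi_i}^-U_{\pi_i}^+$ and running $u$ through the successive $f$-blocks this shifts the coordinates $U_{\pi_1}^+,U_{\pi_2}^+$ by exactly the expressions $h_1(u),\dots,h_4(u)$ of the definition. Here the composition identities for $g_\beta^+$, Corollary~\ref{cor:laposta} (which says that a $U_{\pi_i}^+$-factor crosses an $f$-block by honest conjugation), the identity ${}^{s_\beta}(\beta^t)^*=\beta^*$, and the intertwining cocycle $\phi_x(uu')=\phi_{u'\cdot x}(u)\phi_x(u')$ for $X$ are what make the bookkeeping close up. Since the $h_i$ were built into the $B$-action on $K_{\pi_1,\pi_2}$ precisely for this purpose, this last step is a verification rather than a discovery.

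The only real obstacle is the bookkeeping of the middle two paragraphs: there are four unipotent ``legs'' $x,y,x^{-1},y^{-1}$, each contributing a $U$-factor that must be transported past up to three $f$-blocks, and one must keep straight which $g_\beta^+$-correction lands in which of $h_1,\dots,h_4$, with which torus conjugation, and in which order. I would organise the manipulation inductively, peeling off the rightmost block $f_{\pi_2^t}$ first (whence $h_1$, involving $g_{\pi_2^t}^+$), then $f_{\pi_1^t}$ (whence $h_2$, involving $g_{\pi_1^t\pi_2^t}^+$), then $f_{\pi_2}$ (whence $h_3$), then $f_{\pi_1}$ (whence $h_4$); this is also what keeps the argument compatible with the associativity of convolution and with the ``$U$ at the far left'' phenomenon exploited throughout this part of the paper. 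Everything else reduces to identities already established in Section~\ref{sec:bruhat-parametrizations}.
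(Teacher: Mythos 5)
Your strategy is essentially the paper's (and Mellit's): parametrise the Bruhat cells explicitly, expand $[x,y]\,u\,f_X(\xi)$, push unipotent factors through the $f$-blocks from right to left using the reparametrisation lemmas and Corollary~\ref{cor:laposta}, identify the $h_i$ as the resulting $U_{\pi_i}^+$-valued corrections, and note that the $B$-equivariance step is a verification once $K_{\pi_1,\pi_2}$ has been defined to make it so; your peeling order (first $f_{\pi_2^t}$ giving $h_1$, then $f_{\pi_1^t}$ giving $h_2$, then $f_{\pi_2}$, then $f_{\pi_1}$) and your dimension count both agree with the paper. The only departure worth mentioning is the initial parametrisation: the paper deliberately uses the mixed choice $x=f_{\pi_1^t}(z_1)^{-1}t_1u_1$ and $y=u_2\,{}^{\pi_2}t_2\,f_{\pi_2}(z_2)$ followed by a preparatory change of variables $(u_1,u_2,u)\mapsto(u_1f_{\pi_2}(z_2)^{-1}\pi_2,\,u_1u_2,\,u_2^{-1}u)$, precisely so that each peel leaves a tail of the exact shape $f_\beta(\cdot)^{-1}t'\,u'$ required by the reparametrisation lemma; with your symmetric $U\times T\times\mathbb{C}^l$ choice for both factors the first tail still has that shape, so the argument should close, but the intermediate bookkeeping of the factors sitting between $v_1$ and $v_2$ will look different --- a cosmetic rather than a conceptual difference.
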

\begin{proof}
    This proof is essentially \cite[subsection 7.4]{Mellit} but keeping track of the $U$-action.

    Let $\phi$ be the intertwining function of $X$. Our isomorphism $B\pi_1B\times B\pi_2B\times U\times X \to U\times \hat K_{\pi_1,\pi_2}\times X$ we will be the identity over $X$. Hence, we shall ignore its contribution to the function to $G$ or the $T$-action. But it has a non-trivial contribution to the $U$-action. 
    
    We use the parametrizations
    \[x = f_{\pi_1^{t}}(z_1)^{-1}t_1 u_1\]
    and
    \[y = u_2{}^{\pi_2}t_2 f_{\pi_2}(z_2)\]
    so that the total function to $G$ is
    \[f_{\pi_1^{t}}(z_1)^{-1}t_1 u_1 u_2{}^{\pi_2}t_2 f_{\pi_2}(z_2)  u_1^{-1}t_1^{-1} f_{\pi_1^{t}}(z_1)f_{\pi_2}(z_2)^{-1}{}^{\pi_2}t_2^{-1}u_2^{-1}u. \]
    We first apply the $T$-equivariant automorphism of $\C^{l(\pi_1)}\times \C^{l(\pi_2)}\times U^3$ given by
    \[(z_1,z_2,u_1,u_2,u)\mapsto (z_1,z_2, u_1f_{\pi_2}(z_2)^{-1}\pi_2,u_1u_2,u_2^{-1}u)\]
    that changes the function to $G$ to 
    \[ f_{\pi_1^{t}}(z_1)^{-1}t_1 u_2 \pi_2t_2u_1^{-1}t_1^{-1} f_{\pi_1^{t}}(z_1)f_{\pi_2}(z_2)^{-1}{}^{\pi_2}t_2^{-1}u\]
    and the $U$-action on $U^3$ to
    \begin{align*}
        u'\cdot (u_1,u_2,u) = &((t_1^{-1}\cdot g_{\pi_1^{t}}^+(u',z_1)) u_1 \pi_2^{-1} f_{\pi_2}(z_2)u'^{-1}f_{\pi_2}(u'\cdot z_2)^{-1}\pi_2 
        ,\\&(t_1^{-1}\cdot g_{\pi_1^{t}}^+(u',z_1)) u_2({}^{\pi_2}t_2\cdot g^+_{\pi_2}(u',z_2))^{-1} , ({}^{\pi_2}t_2\cdot g^+_{\pi_2}(u',z_2))u\phi(u')^{-1}).
    \end{align*}

    Now, we recall that
    \begin{align*}
        f_{\pi_2}(z_2)u'^{-1}f_{\pi_2}(u'\cdot z_2)^{-1} &=  g^+_{\pi_2}(u',z_2)^{-1}.
    \end{align*}
    Thus, the action on $u_1$ simplifies to 
    \[(t_1^{-1}\cdot g_{\pi_1^{t}}^+(u',z_1)) u_1 \pi_2^{-1} g^+_{\pi_2}(u',z_2)^{-1}\pi_2 .\]
    Call $\omega:=\pi_2^{-1}g^+_{\pi_2}(u',z_2)\pi_2\in U_{\pi_2}^+$ and $\eta = t_1^{-1}\cdot g^+_{\pi_1^{t}}(u', z_1)\in U_{\pi_1}^+$ so that the action on $u_1$ and $u_2$ are written as $\eta u_1\omega^{-1}$ and $\eta u_2  \pi_2(t_2\cdot\omega^{-1})\pi_2^{-1}$. 
    
    The function to $G$ now ends on a parametrization of the Bruhat cell $B\pi_2^{-1}B$. We change to the other parametrization. In other words, we apply
    \[ (z_2,u)\in \C^{l(\pi_2)}\times U\mapsto (f_{\pi_2}(z_2)^{-1}\pi_2g^+_{\pi_2^{t}}((\pi_2^*{}^{\pi_2}t_2^{-1})\cdot u^{-1})^{-1},g^-_{\pi_2^{t}}(u^{-1}))\in U\times \C^{l(\pi_2)}\]
    In addition, we compose with the automorphism $u\mapsto (\pi_1^t)^{-1}f_{\pi_1^{t}}(z_1)u$ of $U$. We keep calling $u$ and $z_2$ the variables of $U$ and $\C^{l(\pi_2)}$ respectively. The morphism to $G$ simplifies to 
    \[f_{\pi_1^{t}}(z_1)^{-1}t_1 u_2 \pi_2t_2u_1^{-1}t_1^{-1} \pi_1^tu (\pi_2^t)^*t_2^{-1}f_{\pi_2^{t}}(z_2).\]
    Meanwhile, the action of $U$ changes as follows. Applying Lemma \ref{cor:laposta}, we obtain
    \begin{align*}
        g^-_{\pi_2^{t}}((u'\cdot u)^{-1})=g^-_{\pi_2^{t}}(\phi(u')u^{-1}({}^{\pi_2}t_2\cdot g^+_{\pi_2}(u',z_2))^{-1}) &= g_{\pi_2^{t}}(\phi(u'),g_{\pi_2^{t}}^-(u^{-1}))
    \end{align*}
    and, therefore, the action on  $z_2$ is $z_2\mapsto g_{\pi_2^{-1}}^-(\phi(u'),z_2)$. For the action on $u$, we also apply Lemma \ref{cor:laposta}:
    \begin{align*}
        f&_{\pi_2}(u'\cdot z_2)^{-1}\pi_2 g^+_{\pi_2^{t}}((\pi_2^*{}^{\pi_2}t_2^{-1})\cdot(u'\cdot u)^{-1})^{-1} = \\
        &= f_{\pi_2}(g_{\pi_2}^-(u', z_2))^{-1}\pi_2 g^+_{\pi_2^{t}}((\pi_2^*{}^{\pi_2}t_2^{-1})\cdot(\phi(u')u^{-1} ({}^{\pi_2}t_2\cdot g^+_{\pi_2}(u',z_2))^{-1}))^{-1} \\
        &=f_{\pi_2}(g_{\pi_2}^-(u', z_2))^{-1}g_{\pi_2}^+(u',z_2)\pi_2 g_{\pi_2^{t}}^+((\pi_2^*{}^{\pi_2}t_2^{-1})\cdot u^{-1})^{-1} g_{\pi_2^{t}}^+((\pi_2^*{}^{\pi_2}t_2^{-1})\cdot \phi(u'),g_{\pi_2^{t}}^-(\pi_2^*{}^{\pi_2}t_2^{-1}\cdot u^{-1}))^{-1}\\
        &= u' f_{\pi_2}(z_2)^{-1}\pi_2 g_{\pi_2^{t}}^+((\pi_2^*{}^{\pi_2}t_2^{-1})\cdot u^{-1})^{-1}((\pi_2^t)^*t_2^{-1}\cdot g_{\pi_2^{t}}^+(\phi(u'),g_{\pi_2^{t}}^-(u^{-1})))^{-1}
    \end{align*}
    Hence, the action on $u$, written in terms of the new variables $z_2,u$, is given by
    \begin{align*}
         u\mapsto&\ (\pi_1^t)^{-1}f_{\pi_1^{t}}(u'\cdot z_1)u' f_{\pi_1^{t}}(z_1)^{-1}\pi_1^{t}u((\pi_2^t)^*t_2^{-1}\cdot g_{\pi_2^{t}}^+(\phi(u'),z_2))^{-1}\\
         &=(\pi_1^t)^{-1} (t_1\cdot \eta)\pi_1^{t}u((\pi_2^t)^*\cdot h_1(\phi(u')))^{-1}
    \end{align*}
    In addition, we can recover the old variable $u$ as
    \[u_0:= ({}^{\pi_2}t_2(\pi_2^*)^{-1}\cdot g_{\pi_2}^+(f_{\pi_1^{t}}(z_1)^{-1}\pi_1^{t}u))(\pi_2^t)^{-1}f_{\pi_2^{t}}(z_2).\]
    
    We continue by decomposing $u_1^{-1}$ as $u_1^-u_1^+$ with $u_1^\pm\in U_{\pi_1}^\pm$. Furthermore, write $u_1^-=f_{\pi_1}(z_1')^{-1}\pi_1$. equivalently, $z_1'=g_{\pi_1}^-(u_1^{-1})$. We can compute the $U$-action on $u_1^+$ as follows
    \begin{align*}
        \pi_1^{-1} g_{\pi_1}^{+}( (\omega u_1^{-1}\eta^{-1} )\pi_1   &=   \pi_1^{-1}g_{\pi_1}^+(\omega,g_{\pi_1}^-(u_1^{-1}))\pi_1 \pi_1^{-1} g_{\pi_1}^+(u_1^{-1})\pi_1 \eta^{-1}
    \end{align*}
    and, so, it is given by $u_1^+\mapsto  \pi_1^{-1}g_{\pi_1}^+(\omega,z_1')\pi_1 u_1^+\eta^{-1}$. On the other hand, the action on $z_1'$ is given by 
    \begin{align*}
        g_{\pi_1}^-(u'\cdot u_1^{-1})&= g_{\pi_1}^-(\omega u_1^{-1}\eta^{-1} )=g_{\pi_1}^-(\omega u_1^{-1}) = g_{\pi_1}^-(\omega, g_{\pi_1}^{-}( u_1^{-1}))=g_{\pi_1}^-(\omega,z_1').
    \end{align*}
    
    We replace $u$ by $(\pi_1^t)^{-1}(t_1\cdot u_1^+)\pi_1^{t}u$. In this way, we obtain, in the function to $G$, a term $f_{\pi_1}(z_1')^{-1}\pi_1 t_1^{-1}\pi_1^{t}u$ which is a parametrization of $B\pi_1^{-1}B$. We reparametrize by means of
    \[(z_1',u)\mapsto ( f_{\pi_1}(z_1')^{-1}\pi_1g_{\pi_1^{t}}^+({}^{\pi_1}t_1^{-1}\cdot u^{-1})^{-1} ,g_{\pi_1^{t}}^-(u^{-1})).\]
    We note that $\pi_1(t_1\cdot u_1^+)\pi_1^{-1}$ and $\pi_1(t_1\cdot (u'\cdot u_1^+))\pi_1^{-1}$ belong to $\pi_1 U_{\pi_1}^+\pi_1^{-1}=U_{\pi_1^{-1}}^+$ and, hence, the first replacement does not change either the value of nor the action on the new $z_1'$. For the lat one, we also note $(\pi_1^t)^{-1}g_{\pi_1^{t}}^+(u', z_1)^{-1}\pi_1^{t}\in U_{\pi_1^{t}}^+$ and, therefore,
    \begin{align*}
        g_{\pi_1^{t}}^-((u'\cdot u)^{-1}) &= g_{\pi_1^{t}}^-(((\pi_2^t)^*t_2^{-1}\cdot g_{\pi_2^{t}}^+(\phi(u'),z_2))u^{-1}) =g_{\pi_1^{t}}^-((\pi_2^t)^*t_2^{-1}\cdot g_{\pi_2^{t}}^+(\phi(u'),z_2),g_{\pi_1^{t}}^-(u^{-1}))
    \end{align*}
    and the action on $z_1'$ is $z_1'\mapsto g_{\pi_1^{t}}^-((\pi_2^t)^*t_2^{-1}\cdot g_{\pi_2^{t}}^+(\phi(u'),z_2),z_1')$. We also change $z_1'$ by $t_2((\pi_2^t)^*)^{-1}\cdot z_1'$, so that its $U$-action becomes
     \begin{align*}
         u'\cdot z_1' &= t_2((\pi_2^t)^*)^{-1}\cdot g_{\pi_1^{t}}^-((\pi_2^t)^*t_2^{-1}\cdot g_{\pi_2^{t}}^+(\phi(u'),z_2),(\pi_2^t)^*t_2^{-1}\cdot z_1')= g^-_{\pi_1^{t}}(g_{\pi_2^{t}}^+(\phi(u'),z_2),z_1')
     \end{align*}
    and the total function to $G$ becomes
    \[  f_{\pi_1^{t}}(z_1)^{-1}t_1 u_2 \pi_2t_2u t_1^{-1} {}^{\pi_1^{-1}}(\pi_2^t)^*{}^{\pi_1^{-1}}t_2^{-1} f_{\pi_1^t}( z_1') f_{\pi_2^{t}}(z_2). \]
    For the action on $u$, we apply Corollary \ref{cor:laposta}. Calling $\omega'$ to $\pi_1^{-1}g_{\pi_1}^+(\omega, z_1')^{-1}\pi_1$ for the old variable $z_1'$, 
    \begin{align*}
        g_{\pi_1^{t}}^+(((\pi_1^t)^{-1}(t_1\cdot &(u'\cdot u_1^+))\pi_1^{t}(u'\cdot u))^{-1})=\\
        &=g_{\pi_1^{t}}^+(((\pi_2^t)^{*}\cdot h_1(\phi(u')))u^{-1} (\pi_1^t)^{-1} (t_1\cdot ((u_1^+)^{-1}\omega'))\pi_1^{t} ) \\
        &=g_{\pi_1^{t}}^+(((\pi_2^t)^{*}\cdot h_1(\phi(u'))) ,g_{\pi_1^{t}}^-(u^{-1}) )g_{\pi_1^{t}}^+(u^{-1})  (t_1\cdot u_1^+)^{-1} (t_1\cdot \omega')\\
        &=g_{\pi_1^{t}}^+(((\pi_2^t)^{*}\cdot h_1(\phi(u'))) ,g_{\pi_1^{t}}^-(u^{-1}) )g_{\pi_1^{t}}^+(((\pi_1^t)^{-1}(t_1\cdot u_1^+)\pi_1^t u)^{-1})   (t_1\cdot \omega')
    \end{align*}
    and
    \begin{align*}
         f_{\pi_1}(u'\cdot z_1')^{-1}\pi_1 (t_1^{-1}\cdot (t_1\cdot \omega')^{-1}) &=  f_{\pi_1}(g_{\pi_1}^-(\omega,z_1'))^{-1}g_{\pi_1}^+(\omega,z_1')\pi_1 = \omega f_{\pi_1}(z_1')^{-1}\pi_1.
    \end{align*}
    Hence, the action on $u$ is given by
    \[u\mapsto \omega u (t_1^{-1}{}^{\pi_1^{-1}}(\pi_2^t)^* {}^{\pi_1^{-1}}t_2^{-1}\cdot g_{\pi_1^{t}\pi_2^{t}}^+(\phi(u'),(z_1' ,z_2)))^{-1} = \omega u ( {}^{\pi_1^{-1}}(\pi_2^t)^*\cdot h_2(\phi(u')))^{-1}.\]
    We note that $u_0$ is given now by
    \begin{align*}
        &(x{}^{\pi_2}t_2(\pi_2^*)^{-1}\cdot g_{\pi_2}^+(f_{\pi_1^{t}}(z_1)^{-1}(t_1\cdot u_1^+)^{-1}\pi_1^{t}( {}^{\pi_1}t_1\pi_1^*\cdot g_{\pi_1}^+(u) )(\pi_1^t)^{-1}f_{\pi_1^{t}}((\pi_2^t)^*t_2^{-1}\cdot z_1')))(\pi_2^t)^{-1}f_{\pi_2^{t}}(z_2)=\\
        &(x{}^{\pi_2}t_2(\pi_2^*)^{-1}\cdot g_{\pi_2}^+(f_{\pi_1^{t}}(z_1)^{-1}(t_1\cdot( \pi_1^{-1} g_{\pi_1}^+(u^{-1},g_{\pi_1}^-(u)) \pi_1 u_1^+))^{-1}f_{\pi_1^{t}}((\pi_2^t)^*t_2^{-1}\cdot z_1')))(\pi_2^t)^{-1}f_{\pi_2^{t}}(z_2)
    \end{align*}
    where we used that $g_{\pi_1}^+(u^{-1},g_{\pi_1}^-(u))g_{\pi_1}^+(u) = g_{\pi_1}^+(u^{-1}u)=e$.

    We make the change of variables $u_1^+\in U_{\pi_1}^+\mapsto \pi_1^{-1}g_{\pi_1}^+(u^{-1},\tilde z_1')\pi_1 u_1^+\in U_{\pi_1}^+$, where $\tilde z_1' = g_{\pi_1}^-(u)$ is the previous variable $z_1'$. In this way, $u_0$ no longer depends on $u$ or $u_2$. For the $U$-action on $u_1^+$, we compute
    \begin{align*}
        \pi_1^{t}g_{\pi_1}^+(u'\cdot u^{-1},u'\cdot& \tilde z_1')(\pi_1^t)^{-1} (u'\cdot u_1^+) =\\ &=\pi_1^{-1}g_{\pi_1}^+( ( {}^{\pi_1^{-1}}(\pi_2^t)^*\cdot h_2(\phi(u')))u^{-1}\omega^{-1},g_{\pi_1}^-(\omega, \tilde z_1')) g_{\pi_1}^+(\omega,\tilde z_1')\pi_1u_1^+\eta^{-1}  \\
        &=\pi_1^{-1}g_{\pi_1}^+( ( {}^{\pi_1^{-1}}(\pi_2^t)^*\cdot h_2(\phi(u')))u^{-1},\tilde z_1')\pi_1u_1^+\eta^{-1}\\
        &=\pi_1^{-1}g_{\pi_1}^+(  {}^{\pi_1^{-1}}(\pi_2^t)^*\cdot h_2(\phi(u')),g_{\pi_1}^-(u^{-1},\tilde z_1'))\pi_1 \pi_1^{-1}g_{\pi_1}^+(u^{-1},\tilde z_1')\pi_1u_1^+\eta^{-1}
    \end{align*}
    Now $g_{\pi_1}^-(u^{-1},\tilde z_1')=0$ as $\tilde z_1'=g_{\pi_1}^-(u)$. In addition, we note that $h_2$ has image on $U_{\pi_1}^+$. Thus,
    \begin{align*}
        \pi_1^{-1}g_{\pi_1}^+(  {}^{\pi_1^{-1}}(\pi_2^t)^*\cdot h_2(\phi(u')),g_{\pi_1}^-(u^{-1},\tilde z_1'))\pi_1 &= (\pi_2^t)^*\cdot h_2(\phi(u')) 
    \end{align*}
    and the action on $u_1^+$ is given by $u'\cdot u_1^+ = ((\pi_2^t)^*\cdot h_2(\phi(u'))) u_1^+\eta^{-1}$.
    
    We repeat now the previous argument with $u_2$. We write $u_2=u_2^-u_2^+$ with $u_2^\pm\in U_{\pi_2^{-1}}^\pm$ and $u_2^-=f_{\pi_2^{t}}(z_2')^{-1}(\pi_2^t)^{-1}$. The action of $U$ on $u_2^+$ and $z_2'$ are given by
    \begin{align*}
        u'\cdot z_2' &= g_{\pi_2^{t}}^-(u'\cdot u_2) = g_{\pi_2^{t}}^-(\eta u_2\pi_2(t_2\cdot\omega^{-1})\pi_2^{-1}) = g_{\pi_2^t}(\eta u_2) = g_{\pi_2^{t}}^-(\eta, g_{\pi_2^t}^-(u_2)) =g_{\pi_2^{t}}^-(\eta, z_2')
    \end{align*}
    and
    \begin{align*}
        u'\cdot u_2^+&=(\pi_2^t)^{-1} g_{\pi_2^{t}}^+(u'\cdot u_2)\pi_2^{t}\\
        &= (\pi_2^t)^{-1} g_{\pi_2^{t}}^+(\eta u_2\pi_2(t_2\cdot\omega^{-1})\pi_2^{-1})\pi_2^t\\
        &=(\pi_2^t)^{-1} g_{\pi_2^{t}}^+(\eta,z_2')\pi_2^{t}u_2^+\pi_2(t_2\cdot \omega^{-1})\pi_2^{-1}.
    \end{align*}
    We apply $u\mapsto (t_2^{-1}\cdot (\pi_2^{-1}u_2^+\pi_2))u$ and reparametrize $f_{\pi_2^{t}}(z_2')^{-1}\pi_2^t\pi_2t_2u$, $B\pi_2B$, using
    \[(z_2',u)\mapsto (f_{\pi_2^{t}}(z_2')^{-1}\pi_2^t g^+_{\pi_2}( t_2 \cdot u^{-1})^{-1}, g^-_{\pi_2}(u^{-1})).\]
    As $\pi_2^{-1}u_2\pi_2,\omega\in U_{\pi_2}^+$, 
    \begin{align*}
        u'\cdot z_2' &=
        g^-_{\pi_2}({}^{\pi_1^{-1}}(\pi_2^t)^*\cdot h_2(\phi(u')),z_2') = g^-_{\pi_2}({}^{\pi_1^{-1}}(\pi_2^t)^*t_1^{-1}{}^{\pi_1^{-1}}t_2^{-1}\cdot g_{\pi_1^{-1}\pi_2^{-1}}^+(\phi(u'),(z_1' ,z_2)), z_2').   
    \end{align*}
    Rescaling $z_2'$ to $({}^{\pi_1^{-1}}(\pi_2^t)^*t_1^{-1}{}^{\pi_1^{-1}}t_2^{-1})^{-1}\cdot z_2'$, the action becomes $u'\cdot z_2' = g^-_{\pi_2}(g_{\pi_1^{-1}\pi_2^{-1}}^+(\phi(u'),(z_1' ,z_2)), z_2')$ and the function to $G$ becomes
    \[ f_{\pi_1^{t}}(z_1)^{-1}t_1 u t_2 {}^{\pi_2}t_1^{-1} {}^{\pi_2\pi_1^{-1}}(\pi_2^t)^*{}^{\pi_2\pi_1^{-1}}t_2^{-1} f_{\pi_2}(z_2')   f_{\pi_1^t}( z_1') f_{\pi_2^{t}}(z_2).\]
    On the other hand, to compute the action on $u$, we denote $\tilde u$ and $\tilde z_2'$ the old variables $u$ and $z_2'$. Let $\eta' = (\pi_2^t)^{-1} g_{\pi_2^{t}}^+(\eta,\tilde z_2')\pi_2^{t}$. We compute
    \begin{align*}
        &g_{\pi_2}^+((u'\cdot \tilde u)^{-1} (t_2^{-1}\cdot (\pi_2^{-1} u_2^+ \pi_2)^{-1}))=\\
        &=g_{\pi_2}^+(( {}^{\pi_1^{-1}}(\pi_2^t)^*\cdot h_2(\phi(u'))) \tilde u^{-1}  (t_2^{-1}\cdot (\pi_2^{-1} \eta' u_2^+ \pi_2)^{-1}))\\
        &=  g_{\pi_2}^+(( {}^{\pi_1^{-1}}(\pi_2^t)^*\cdot h_2(\phi(u'))) (t_2^{-1}\cdot(\pi_2^{-1}u_2^+\pi_2)\tilde u)^{-1}) ({}^{\pi_2}t_2^{-1}\cdot \eta' )^{-1}\\
        &=g_{\pi_2}^+(( {}^{\pi_1^{-1}}(\pi_2^t)^*\cdot h_2(\phi(u'))), {}^{\pi_1^{-1}}(\pi_2^t)^*t_1^{-1}{}^{\pi_1^{-1}}t_2^{-1}\cdot z_2' ) g^+_{\pi_2}((t_2^{-1}\cdot(\pi_2^{-1}u_2^+\pi_2)\tilde u)^{-1}) ({}^{\pi_2}t_2^{-1}\cdot \eta' )^{-1}\\
        &= ( {}^{\pi_1^{-1}}(\pi_2^t)^*{}^{\pi_2}t_2^{-1}\cdot h_3(\phi(u'))) g^+_{\pi_2}((t_2^{-1}\cdot(\pi_2^{-1}u_2^+\pi_2)\tilde u)^{-1}) ({}^{\pi_2}t_2^{-1}\cdot \eta' )^{-1}
    \end{align*}
    and 
    \begin{align*}
        f_{\pi_2^{t}}(u'\cdot \tilde z_2')^{-1} \pi_2^t\eta' =   f_{\pi_2^{t}}(g_{\pi_2^{t}}^-(\eta, \tilde z_2'))^{-1}  g_{\pi_2^{t}}^+(\eta,\tilde z_2')\pi_2^t
        = \eta f_{\pi_2^{t}}( \tilde z_2')^{-1}\pi_2^t.
    \end{align*}
    Hence, $u'\cdot u = \eta u ( {}^{\pi_1^{-1}}(\pi_2^t)^*\cdot h_3(\phi(u')))^{-1}$.
    
    We note that $u_0$ has not changed and is still independent of $u_2^+$. We consider the automorphism $u_1^+ \mapsto u_1^+ \pi_1^{-1}g_{\pi_1}^{+}(u^{-1})^{-1}\pi_1$ of $U_{\pi_1}^+$ and the isomorphism $u_2^+\mapsto  g_{\pi_2^{t}}^+(u^{-1},g_{\pi_2^{t}}^-(u))\pi_2^{t}u_2^+\pi_2g_{\pi_2^{t}}^+(\pi_2^*\cdot u_0^{-1})^{-1}$ between $U_{\pi_2^{-1}}^+$ and $U_{\pi_2}^+$.The action on $u_1^+$ becomes
    \[((\pi_2^t)^*\cdot h_2(\phi(u'))) u_1^+ \pi_1^{-1}g_{\pi_1}^{+}( {}^{\pi_1^{-1}}(\pi_2^t)^*\cdot h_3(\phi(u')),g_{\pi_1}^-(u^{-1}))^{-1}\pi_1\]
    since $\eta\in U_{\pi_1}^+$. For the action on $u_2^+$, we note that $g_{\pi_2^t}^-(u)$ is the old variable $z_2'$ and calculate
    \begin{align*}
        g_{\pi_2^{t}}^+((u'\cdot u)^{-1},g_{\pi_2^{t}}^-(u'\cdot u)) g_{\pi_2^t}^+&(\eta, g_{\pi_2^t}^-(u))  =\\
        &=g_{\pi_2^{t}}^+(( {}^{\pi_1^{-1}}(\pi_2^t)^*\cdot h_3(\phi(u')))u^{-1}\eta^{-1},g_{\pi_2^{t}}^-(\eta,g_{\pi_2^{t}}^-(u)))g_{\pi_2^{-1}}^+(\eta,g_{\pi_2^{t}}^-(u)) \\
        &=g_{\pi_2^{-1}}^+(( {}^{\pi_1^{-1}}(\pi_2^t)^*\cdot h_3(\phi(u')))u^{-1},g_{\pi_2^{t}}^-(u))) \\
        &=g_{\pi_2^{t}}^+(( {}^{\pi_1^{-1}}(\pi_2^t)^*\cdot h_3(\phi(u'))),g_{\pi_2^{t}}^-(u^{-1},g_{\pi_2^{t}}^-(u))) g_{\pi_2^{t}}^+(u^{-1},g_{\pi_2^{t}}^-(u)))\\
        &=g_{\pi_2^{t}}^+(( {}^{\pi_1^{-1}}(\pi_2^t)^*\cdot h_3(\phi(u')))) g_{\pi_2^{t}}^+(u^{-1},g_{\pi_2^{t}}^-(u)))\\
        &=\pi_2^{t}( {}^{\pi_1^{-1}}(\pi_2^t)^*\cdot h_3(\phi(u'))) (\pi_2^{t})^{-1} g_{\pi_2^{t}}^+(u^{-1},g_{\pi_2^{t}}^-(u)))
    \end{align*}
    and
    \begin{align*}
        (t_2\cdot \omega)^{-1} g_{\pi_2^{t}}^+(\pi_2^*(u'\cdot u_0)^{-1})^{-1} &= 
        (t_2\cdot \omega)^{-1} g_{\pi_2^{t}}^+(\pi_2^*\cdot (\phi(u')u_0^{-1}(({}^{\pi_2}t_2\cdot \pi_2\omega\pi_2^{-1}))))^{-1} \\
        &=g_{\pi_2^{t}}^+((\pi_2^*\cdot\phi(u'))(\pi_2^*\cdot u_0^{-1}))^{-1}\\
        &= g_{\pi_2^{t}}^+(\pi_2^*\cdot u_0^{-1})^{-1}g_{\pi_2^{t}}^+(\pi_2^*\cdot \phi(u'),g_{\pi_2^{t}}^-(\pi_2^*\cdot u_0^{-1}))^{-1}\\
        &=g_{\pi_2^{t}}^+(\pi_2^* \cdot u_0^{-1})^{-1}((\pi_2^t)^*\cdot g_{\pi_2^{t}}^+(\phi(u'),z_2)^{-1})
    \end{align*}
    where we used that $\pi_2\omega\pi_2^{-1}\in U_{\pi_2^{-1}}^+$ and $g_{\pi_2^{-1}}(u_0^{-1})=z_2$. The action turns out to be
    \[u_2^+\mapsto \pi_2^{t}( {}^{\pi_1^{-1}}(\pi_2^t)^*\cdot h_3(\phi(u'))) (\pi_2^{t})^{-1} u_2^+ ((\pi_2^t)^*t_2\cdot h_1(\phi(u')))^{-1}.\]
    
    Finally, we reparametrize $f_{\pi_1^{t}}(z_1)^{-1}t_1u$. We consider
    \[(z_1,u)\mapsto (f_{\pi_1^{t}}(z_1)^{-1}\pi_1^{t}g_{\pi_1}^+((\pi_1^t)^*t_1\cdot u^{-1})^{-1}, g_{\pi_1}^-(u^{-1}))\]
    The action on $z_1$ is computed as
    \begin{align*}
        g_{\pi_1}^-((u'\cdot u)^{-1})&=g_{\pi_1}^-( ( {}^{\pi_1^{-1}}(\pi_2^t)^*\cdot h_3(\phi(u'))) u^{-1}\eta)=g_{\pi_1}^-( ( {}^{\pi_1^{-1}}(\pi_2^t)^*\cdot h_3(\phi(u'))), g_{\pi_1}^-(u^{-1}) )
    \end{align*}
    because $\eta\in U_{\pi_1}^+$. Changing $z_1$ by $( {}^{\pi_1^{-1}}(\pi_2^t)^*{}^{\pi_2}t_2{}^{\pi_2}t_1^{-1}{}^{\pi_2\pi_1^{-1}}t_2^{-1})^{-1}\cdot z_1$, the action becomes
    \[u'\cdot z_1 = g_{\pi_1}^-(g^+_{\pi_2\pi_1^{-1}\pi_2^{-1}}(\phi(u'),(z_2',z_1' ,z_2, z_2')),z_1).\]
    Meanwhile, the action on $u_1^+$ is now
    \[((\pi_2^t)^*\cdot h_2(\phi(u'))) u_1^+ ({}^{\pi_1^{-1}}(\pi_2^t)^*t_1^{-1}\cdot \pi_1^{-1} h_4(\phi(u'))\pi_1)^{-1}\]
    and the function to $G$ is
    \[u{}^{\pi_1}(\pi_1^t)^* {}^{\pi_1\pi_2\pi_1^{-1}}(\pi_2^t)^* {}^{\pi_1}t_1{}^{\pi_1\pi_2}t_2{}^{\pi_1\pi_2}t_1^{-1}{}^{\pi_1\pi_2\pi_1^{-1}}t_2^{-1} f_{\pi_1}(z_1)f_{\pi_2}(z_2')f_{\pi_1^{-1}}(z_1')f_{\pi_2^{-1}}(z_2').\]
\end{proof}

Choose once and for all a section of $\widehat W\to W$.

\begin{coro}
    For any integer $g\geq 0$ and $X\in \TwBTVarGw[e]$, $\mcZ(H)^g( \mcU(X))$ has a $B$-equivariant cell decomposition indexed by $W^{2g}$ whose cells are $\mcU( K_{\pi^{(g)}}*\cdots* K_{\pi^{(1)}}*X)$.
\end{coro}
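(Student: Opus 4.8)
The plan is to induct on $g$, with the single-handle case supplied by the preceding lemma and everything else reduced to formal manipulations with the Bruhat decomposition, the untwisting functor, and associativity of convolution. For $g=0$ there is nothing to do: $\mcZ(H)^0(\mcU(X))=\mcU(X)$ is the unique cell, indexed by $W^0$. For the inductive step, recall (see the reinterpretation of the handle in Figure~\ref{fig:reiterpretations}) that $\mcZ(H)$ is left convolution with the $B$-variety $(G\times G,[\cdot,\cdot])$, where $B$ acts by simultaneous conjugation and the map to $G$ is the commutator. Assuming the cell decomposition for $g-1$, each of its cells has the form $\mcU(X_c)$ with $X_c=K_{\pi^{(g-1)}}*\cdots*K_{\pi^{(1)}}*X$; moreover $X_c\in\TwBTVarGw[e]$, since every $K_{\pi^{(j)}}$ lies in $\TwBTVarGw[e]$ by construction, $X$ does by hypothesis, and $\TwBTVarGw[e]$ is closed under convolution. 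Hence it is enough to produce a $B$-equivariant cell decomposition of $\mcZ(H)(\mcU(X_c))=(G\times G)*\mcU(X_c)$.

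The key step is the Bruhat stratification $G\times G=\bigsqcup_{(\pi_1,\pi_2)\in W^2}(B\pi_1B\times B\pi_2B)$ into locally closed subvarieties; it is stable under simultaneous conjugation because each double coset $B\pi B$ is conjugation invariant, so it is a genuine $B$-equivariant cell decomposition, compatible with the restriction of the commutator map. Left convolution with a fixed $B$-variety over $G$ commutes with disjoint unions — one has $(\bigsqcup_i A_i)*Z=\bigsqcup_i(A_i*Z)$, the structure maps to $G$ and the $B$-actions being obtained by restriction — so we obtain the $B$-equivariant decomposition
\[\mcZ(H)(\mcU(X_c))=\bigsqcup_{(\pi_1,\pi_2)\in W^2}(B\pi_1B\times B\pi_2B)*\mcU(X_c).\]
Each summand, equipped with the conjugation action and the restricted commutator map, is precisely the input of the preceding lemma, which (using the fixed section $\widehat W\to W$ to lift $\pi_1,\pi_2$ to $\widehat W$) identifies it with $\mcU(K_{\pi_1,\pi_2}*X_c)$. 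By associativity of convolution, $K_{\pi_1,\pi_2}*X_c=K_{\pi_1,\pi_2}*K_{\pi^{(g-1)}}*\cdots*K_{\pi^{(1)}}*X$; writing $\pi^{(g)}:=(\pi_1,\pi_2)$ and letting $(\pi^{(g)},\dots,\pi^{(1)})$ range over $W^2\times W^{2(g-1)}=W^{2g}$ produces exactly the asserted cells $\mcU(K_{\pi^{(g)}}*\cdots*K_{\pi^{(1)}}*X)$.

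I do not expect a genuine obstacle: the only real computation, describing the contribution of a single handle, is the content of the preceding lemma, and this corollary merely iterates it. The small points to verify are that the Bruhat pieces are locally closed and conjugation stable (so the stratification is an honest $B$-equivariant cell decomposition), that left convolution is exact for disjoint unions of $B$-varieties over $G$, and that $\TwBTVarGw[e]$ is closed under $*$ so the untwist $\mcU$ can be applied at each stage. The one thing to keep straight is the order of the convolution factors: the new handle variety $K_{\pi^{(g)}}$ must be prepended on the left, which is forced by the left-convolution description of $\mcZ(H)$ and is exactly what makes the induction self-sustaining.
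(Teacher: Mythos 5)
Your proof is correct and takes essentially the approach the paper intends: the corollary is left without explicit proof as an immediate consequence of the preceding lemma, and the inductive argument you give — Bruhat stratification of $G\times G$ into conjugation-stable locally closed pieces, compatibility of $*$ with disjoint unions, identification of each piece with $\mcU(K_{\pi_1,\pi_2}*X_c)$ via the lemma, closure of $\TwBTVarGw[e]$ under convolution so the induction can proceed, and associativity of $*$ to reassemble — fills the gap cleanly with no missing steps.
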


\section{Adding good punctures}\label{sec:adding-punctures}

In this section, we will add punctures of a special type. Let $\mcC\subset G$ be the conjugacy class of a semisimple element $C$ whose centralizer $L$ is a Levi subgroup of $G$. Let $P=LB$ be its associated parabolic subgroup and $N$ be the unipotent radical of $P$. We can always choose $C\in \mcC \cap T$ such that $P$ is the standard parabolic subgroup associated with a subset $I\subset \Delta^+$. We call such $C$ and $\mcC$ nice. Let $W_I\subset W$ be the subgroup generated by the symmetries associated to $I$ and $\widehat{W/W_I}\subset F(\Delta^+)$ be the set of lifts with minimal length of the classes on $W/W_I$.

\begin{Def}
    Let $\pi\in \widehat{W/W_I}$ and $C\in T$ be nice. Consider the $\overline{L}_{C,\pi}\in \TwBTVarGw[e]$ defined as the convolution of the one point variety over $(\pi^t)^*$, $\rho(\pi)$, the one point variety over $C$, and $\rho(\pi^t)$. The $e$-twisted $B$-variety $L_{C,\pi}$ over $G$ is defined as follows. As a variety is the product $\overline{L}_{C,\pi}\times (U_\pi^+\cap N)$. Its function to $G$ is the composition of the projection onto $\overline{L}_{C,\pi}$ with $\overline{L}_{C,\pi}\to G$. Its $B$-action is
    \[(tu)\cdot (z,z',u^+) = ((tu)\cdot(z,z'), {}^{\pi^{-1}}tg_{\pi^t}^+(u,z')u^+ ({}^{\pi^{-1}}t \pi^{-1} g_{\pi}^+( C\cdot g_{\pi^t}^+(u,z'), z )^{-1}\pi)^{-1})\]
\end{Def}

\begin{lema}
    Let $C\in T$ be nice, $\pi\in \widehat{W/W_I}$, and $X\in \TwBTVarGw[e]$. Endow $(B\pi P)/Z(C)$ with the $B$-action given by left translation and with the map to $G$ given by $x\mapsto xCx^{-1}$. Then $B\pi N *\mcU(X)$ is isomorphic as a $B$-variety over $G$ to $\mcU(L_{C,\pi}*X)$.    
\end{lema}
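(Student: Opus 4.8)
The plan is to adapt the proof of the handle-operation lemma above --- itself a variant of \cite{Mellit} that keeps track of the $U$-action --- to the case of a single Bruhat cell. As there, one builds an explicit isomorphism of the underlying varieties which is the identity on the $X$-factor, so that it suffices to match the maps to $G$ and the $U$-actions; the $T$-actions are then forced by $e$-equivariance. Writing $\mcU(X)=U*X$ with structure map $(u,\xi)\mapsto u\,f_X(\xi)$ and intertwining function $\phi$, the left-hand side $(B\pi P/Z(C))*\mcU(X)$ has underlying variety $(B\pi P/Z(C))\times U\times X$ and structure map $(\bar x,u,\xi)\mapsto (\bar xC\bar x^{-1})\,u\,f_X(\xi)$, while $\mcU(L_{C,\pi}*X)=U*\overline{L}_{C,\pi}*(U_\pi^+\cap N)*X$ has structure map $(u,z,z',u^+,\xi)\mapsto u\,(\pi^t)^*f_\pi(z)\,C\,f_{\pi^t}(z')\,f_X(\xi)$, independent of $u^+$. (One also checks that $B\pi P/Z(C)$ is a well-defined $B$-variety over $G$: left translation descends since $B\pi P$ is right $Z(C)$-stable, and $x\mapsto xCx^{-1}$ descends because $C$ centralizes $Z(C)=L$.)

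The first genuine step is a good parametrization of $B\pi P/Z(C)$. Since $\pi\in\widehat{W/W_I}$ is a minimal coset representative, $\pi$ conjugates the unipotent radical $U\cap L$ of $B\cap L$ into $U$, whence $B\pi B=B\pi N$; moreover the roots of $N$ split, via conjugation by $\pi$, into the $l(\pi)$ roots carried into $U_\pi^-$ by $\pi^{-1}$ and the roots of $U_\pi^+\cap N$, giving a product decomposition $N\simeq\C^{l(\pi)}\times(U_\pi^+\cap N)$ compatible with conjugation by $\pi$. Combining this with the Bruhat parametrizations of Section \ref{sec:bruhat-parametrizations}, one writes a point of $B\pi P/Z(C)$ in a form like $f_{\pi^t}(z)^{-1}t\,u$ --- the extra $U\cap L$-freedom being killed by passing to $Z(C)$-cosets --- and computes $xCx^{-1}$. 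Niceness of $C$ enters decisively here: $U\cap L$ centralizes $C$ and $C$ acts trivially on the root subgroups of $L$, which is exactly what lets the $U_\pi^+\cap N$-directions pass through $xCx^{-1}\,u\,f_X(\xi)$ as a free factor.

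One then performs the chain of reparametrizations, in the same spirit as for the handle operation but now involving a single Bruhat cell, which appears both on the left and on the right of $C$ in $xCx^{-1}\,u\,f_X(\xi)$. Using the $T$-equivariant automorphisms provided by the reparametrization lemmas of Section \ref{sec:bruhat-parametrizations} and by Corollary \ref{cor:laposta}, one brings the $\rho(\pi)$- and $\rho(\pi^t)$-factors into place, each time recording the induced change of the map to $G$ and of the $U$-action in terms of $g_\pi^\pm$, $g_{\pi^t}^\pm$, $g_{\pi\pi^t}^\pm$ and $\phi$, and repeatedly invoking the functional and $T$-equivariance identities for $g_\beta^\pm$ from Section \ref{sec:bruhat-parametrizations} together with the vanishing facts $g_\beta^-(u)=0$ for $u\in U_{s_\beta}^+$ and $g_\beta^+(u)=e$ for $u\in U_{s_\beta}^-$. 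When the process terminates, the map to $G$ should read $u\,(\pi^t)^*f_\pi(z_1)\,C\,f_{\pi^t}(z_2)\,f_X(\xi)$, the leftover unipotent coordinate lands precisely in $U_\pi^+\cap N$, and comparing the accumulated $U$-action with the formula
\[(tu)\cdot (z,z',u^+) = \Bigl((tu)\cdot(z,z'),\ {}^{\pi^{-1}}t\,g_{\pi^t}^+(u,z')\,u^+\,\bigl({}^{\pi^{-1}}t\,\pi^{-1} g_{\pi}^+(C\cdot g_{\pi^t}^+(u,z'),z)^{-1}\pi\bigr)^{-1}\Bigr)\]
from the definition of $L_{C,\pi}$ identifies the two $B$-varieties over $G$.

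The main obstacle is the bookkeeping in this last step, exactly as in the handle lemma: one must order the reparametrizations correctly and verify, through the cocycle and equivariance identities for $g_\beta^\pm$ and Corollary \ref{cor:laposta}, that the transformed $U$-action is literally the prescribed one --- in particular that the argument $C\cdot g_{\pi^t}^+(u,z')$ appears inside $g_\pi^+$ with $C$ in the right place, which is again where one uses that $C$ acts trivially on the root subgroups of $L$. The feature genuinely new relative to the handle case is getting the interplay between $C$, the Levi $L$ and the minimality of $\pi$ right; once the splitting $N\simeq\C^{l(\pi)}\times(U_\pi^+\cap N)$ is set up, the remainder is a streamlined, single-Bruhat-cell version of the computation carried out above.
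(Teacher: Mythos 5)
Your overall strategy matches the paper's: follow the handle-operation computation for a single Bruhat cell, parametrize $(B\pi P)/Z(C)$ using the splitting $N=U^-_\pi(U^+_\pi\cap N)$ induced by minimality of $\pi$, and track the $U$-action via the $g^\pm$-identities and Corollary \ref{cor:laposta}. The one essential move you omit is the paper's pivotal first step: it precomposes the parametrization $(z,n)\mapsto f_{\pi^t}(z)^{-1}n$ of $(B\pi P)/Z(C)$ with the \emph{inverse} of the automorphism $n\mapsto C^{-1}nCn^{-1}$ of $N$, so that the structure map to $G$ changes from $f_{\pi^t}(z)^{-1}\,nCn^{-1}\,f_{\pi^t}(z)$ to $f_{\pi^t}(z)^{-1}\,Cn\,f_{\pi^t}(z)$. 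That substitution is what gives a single unipotent factor next to $C$ (rather than a conjugation by $n$), which is the concrete entry point for the chain of Bruhat reparametrizations you describe; without it your plan of ``bringing the $\rho(\pi)$- and $\rho(\pi^t)$-factors into place'' does not get started.

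Relatedly, your account of where niceness enters is misplaced. $U_\pi^+\cap N$ lies in $N$, not in $L$, so $C$ does \emph{not} commute with it, and the fact that $C$ centralizes $U\cap L$ does not by itself make the $U_\pi^+\cap N$-direction a free factor in $xCx^{-1}\,u\,f_X(\xi)$. Niceness ($Z(C)=L$) is used because it makes $\mathrm{Ad}(C^{-1})-\mathrm{id}$ invertible on $\mathrm{Lie}(N)$, hence $n\mapsto C^{-1}nCn^{-1}$ an algebraic automorphism of $N$; the $U_\pi^+\cap N$-coordinate becomes a free factor only after this trick, when $n^+$ is absorbed into the untwisting variable by $u\mapsto ((\pi^t)^{-1}n^+\pi^t)(\pi^t)^{-1}f_{\pi^t}(z)u$. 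The rest of your sketch is in the right spirit, and the bookkeeping you anticipate is indeed where the bulk of the effort lies.
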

\begin{proof}
    We follow \cite[subsection 7.3]{Mellit}. There is a parametrization of $(B\pi P)/Z(C)$ given by $\C^{l(\pi)}\times N$, $(z,n)\mapsto f_{\pi^t}(z)^{-1} n$. Precompose with the inverse to the algebraic automorphism $n\mapsto C^{-1}nCn^{-1}$ of $N$ so that the function to $G$ is
    \[ f_{\pi^t}(z)^{-1} Cnf_{\pi^t}(z).\]
    The $T$-action is $t\cdot (z,n)=(t\cdot z, {}^{\pi^{-1}} t \cdot n)$ and the $U$-action is
    \[u\cdot (z,n) = (g_{\pi^t}^-(u,z),(C\cdot g_{\pi^t}^+(u,z) )n g_{\pi^t}^+(u,z)^{-1}).\]
    
    Let us analyze the convolution with $\mcU(X)$. As in the previous section, we may ignore $X$. Let $\phi$ be its intertwining function. Being $\pi$ a shortest lift of $s_\pi W_I$, $U_\pi^-\subset N$ and $N=U_\pi^-(U_\pi ^+\cap N)$. Write $n$ as $n^-n^+$ with respect to this decomposition and $n^-=f_\pi(z')^{-1}\pi$. The $U$-action on $z'$ and $n^+$ are given as follows
    \[u'\cdot z' = g_{\pi}^-(u'\cdot n)= g_\pi^-(C\cdot g^+_{\pi^t}(u',z),z')\]
    and
    \[u'\cdot n^+ = \pi^{-1}g_\pi^+(u'\cdot n)\pi =\pi^{-1} g_\pi^+(C\cdot g_{\pi^t}^+(u',z), z')\pi n^+ g_{\pi^t}^+(u',z)^{-1}.\]
    
    We apply $u\mapsto ((\pi^t)^{-1} n^+ \pi^t) (\pi^t)^{-1}f_{\pi^t}(z)u$. The map to $G$ simplifies to
    \[f_{\pi^t}(z)^{-1}C f_\pi(z')^{-1}(\pi^*)^{-1} u\]
    and the $U$-action on $u$ becomes
    \[u'\cdot u = (\pi^*\cdot g_\pi^+(C\cdot g_{\pi^t}^+(u',z),z')) u \phi(u')^{-1}.\]

    We change $n^+$ by $g_{\pi^t}^+(u^{-1})n^+$. This changes its $U$-action to
    \[u'\cdot n^+ =  g_{\pi^t}^+(\phi(u'), g_{\pi^t}^-(u^{-1}))n^+ g_{\pi^t}^+(u',z)^{-1}.\]

    Now we reparametrize $f_\pi(z')^{-1}(\pi^*)^{-1}u$ by means of
    \[(z',u) \mapsto (f_\pi(z')^{-1}\pi g_{\pi^t}^+( u^{-1})^{-1}, g_{\pi^t}^-(u^{-1})).\]
    The new function to $G$ is 
    \[f_{\pi^t}(z)^{-1}Cuf_{\pi^t}(z')\]
    and the $U$-actions are
    \[ u'\cdot z' = g_{\pi^t}^-(\phi(u'),z')\]
    and
    \[ u'\cdot u = (C\cdot g_{\pi^t}^+(u',z))u g_{\pi^t}^+(\phi(u'),z')^{-1}.\]

    We change $u$ by $C\cdot u$. This changes its $U$-action to $g_{\pi^t}^+(u',z)u (C\cdot g_{\pi^t}^+(\phi(u'),z'))^{-1}$. Then, we apply $n^+\mapsto n^+ \pi^{-1}g_\pi(u^{-1})^{-1}\pi$. This transforms its $U$-action to
    \[u'\cdot n^+ =  g_{\pi^t}^+(\phi(u'),z')n^+ \pi^{-1} g_{\pi}^+( C\cdot g_{\pi^t}^+(\phi(u'),z'), g_\pi^-(u^{-1}) )^{-1}\pi.\]
    
    Finally, we reparametrize $f_{\pi^t}(z)^{-1}u$ using
    \[(z,u)\mapsto (f_{\pi^t}(z)^{-1} (\pi^t)^{-1}g_{\pi}^+((\pi^t)^*\cdot u^{-1})^{-1} , g_{\pi}^-(u^{-1}))\]
    to get 
    \[u (\pi^t)^* f_\pi(z) C f_{\pi^t}(z')\]
    as function to $G$ and 
    \[ u\cdot z = g_{\pi}^-(C\cdot g_{\pi^t}^+(\phi(u'),z'),z)\]
    as $U$-action on $z$.
\end{proof}

Choose once and for all a section of $\widehat{W/W_I}\to W$ for each possible $I$.

\begin{coro}
    Let $g,k$ be non-negative integers, $\mcC_1,\ldots,\mcC_{k}$ be nice conjugacy classes and $X$ be a $e$-twisted $B$-variety over $G$. Then $(\mcZ(H)^g\circ \mcZ(L_{\mcC_1})\circ \cdots \mcZ(L_{\mcC_k}))(\mcU(X))$ has a $B$-equivariant cell decomposition indexed by $W^{2g+k}$ and whose cells are $\mcU( K_{\pi^{(g)}}*\cdots* K_{\pi^{(1)}}*L_{C_1,\pi_1} *\cdots *L_{C_k,\pi_k}* X)$ where $C_i\in \mcC_i$ is nice.
\end{coro}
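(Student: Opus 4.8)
The plan is to assemble this corollary from the two transfer lemmas of Sections~\ref{sec:handle-operation} and~\ref{sec:adding-punctures}, the Bruhat decompositions of the relevant fixed varieties, and the associativity of convolution. First I would record, using the reinterpretations in Figures~\ref{fig:reverse-pairs-of-pants} and~\ref{fig:reiterpretations} together with the functoriality of $\mcZ=\mcQ\circ\mcF$, that the handle and good-puncture operations are left convolutions with fixed $B$-varieties over $G$: $\mcZ(H)(Y)\simeq V_H*Y$, where $V_H$ is $G\times G$ with the diagonal conjugation action and the commutator map to $G$, and $\mcZ(L_{\mcC_i})(Y)\simeq V_{\mcC_i}*Y$, where $V_{\mcC_i}$ is $\mcC_i$ with the conjugation action and the inclusion into $G$. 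Since $*$ is associative, the object in question is, up to $B$-equivariant isomorphism over $G$,
\[ V_H^{*g}*V_{\mcC_1}*\cdots*V_{\mcC_k}*\mcU(X).\]

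Next I would give each fixed variety a $B$-equivariant cell decomposition coming from Bruhat. For the handle this is the ordinary Bruhat decomposition $V_H=\bigsqcup_{\pi_1,\pi_2\in W}(B\pi_1B\times B\pi_2B)$, whose pieces are $B$-stable because conjugation preserves Bruhat cells. For a good puncture one identifies $\mcC_i\simeq G/Z(C_i)$ $B$-equivariantly (conjugation on $\mcC_i$ corresponding to left translation on $G/Z(C_i)$) and pushes the parabolic Bruhat decomposition $G=\bigsqcup_{\pi\in W/W_{I_i}}B\pi P_i$ forward to obtain the $B$-equivariant cell decomposition $V_{\mcC_i}=\bigsqcup_{\pi\in\widehat{W/W_{I_i}}}(B\pi P_i)/Z(C_i)$, whose pieces are precisely the varieties ``$B\pi N_i$'' (with structure map $x\mapsto xC_ix^{-1}$) appearing in Section~\ref{sec:adding-punctures}. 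I would also use the elementary facts that convolving a fixed $B$-variety $Z$ over $G$ with a $B$-equivariant cell decomposition $\bigsqcup_\alpha Y_\alpha$ yields the $B$-equivariant cell decompositions $\bigsqcup_\alpha(Z*Y_\alpha)$ and $\bigsqcup_\alpha(Y_\alpha*Z)$ (the new cells being the preimages of the $Y_\alpha$ under the relevant $B$-equivariant projection), and that $B$-equivariant isomorphisms over $G$ carry cell decompositions to cell decompositions.

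The argument then proceeds by induction. Starting from $\mcU(X)$ with $X\in\TwBTVarGw[e]$, apply $\mcZ(L_{\mcC_k})$, distribute the convolution over the cell decomposition of $V_{\mcC_k}$, and apply the preceding Lemma cell by cell to get $\bigsqcup_{\pi_k}\mcU(L_{C_k,\pi_k}*X)$; since $L_{C_k,\pi_k}$ and $X$ are $e$-twisted and the convolution of $e$-twisted $B$-varieties over $G$ is $e$-twisted, each cell is again the untwist of an $e$-twisted $B$-variety, so the step can be iterated. Doing this for $\mcC_{k-1},\dots,\mcC_1$ (associativity makes the new factors pile up on the left) gives the $B$-equivariant cell decomposition $\bigsqcup_{\pi_1,\dots,\pi_k}\mcU(L_{C_1,\pi_1}*\cdots*L_{C_k,\pi_k}*X)$, each cell the untwist of an $e$-twisted $B$-variety. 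Finally, applying $\mcZ(H)^g$, which again distributes over a disjoint union of cells, and invoking the Corollary at the end of Section~\ref{sec:handle-operation} for each cell $\mcU(X')$ with $X'=L_{C_1,\pi_1}*\cdots*L_{C_k,\pi_k}*X$, one gets on each such cell a $B$-equivariant cell decomposition with cells $\mcU(K_{\pi^{(g)}}*\cdots*K_{\pi^{(1)}}*X')$. Amalgamating the two families of indices produces a $B$-equivariant cell decomposition indexed by $W^{2g}\times\prod_{i}\widehat{W/W_{I_i}}$ (abbreviated $W^{2g+k}$) with cells $\mcU(K_{\pi^{(g)}}*\cdots*K_{\pi^{(1)}}*L_{C_1,\pi_1}*\cdots*L_{C_k,\pi_k}*X)$, as claimed.

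Most of this is bookkeeping; I expect the genuine work to be in two places. First, justifying that $\mcZ(H)$ and $\mcZ(L_\mcC)$ really are left convolutions with the fixed varieties above, which requires unwinding $\mcQ$ and the cobordism reinterpretations carefully enough to track the $B$-action and not merely the underlying $G$-variety. Second, the identification of $\mcC$ with $\bigsqcup_\pi B\pi N$ as a $B$-variety over $G$ with the twisted structure map $x\mapsto xCx^{-1}$, that is, checking that the parabolic Bruhat decomposition descends to $G/Z(C)$ as a $B$-equivariant cell decomposition with exactly the pieces used in Section~\ref{sec:adding-punctures}. Both are standard in spirit but need to be set up with some care; once they are, the associativity of convolution is what lets the induction run uniformly through handles and punctures alike.
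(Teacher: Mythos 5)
Your proposal is correct and spells out, with somewhat more care, exactly the argument the paper intends: the paper gives no explicit proof of this corollary, treating it as an immediate iteration of the Lemma in Section~\ref{sec:adding-punctures} together with the Corollary at the end of Section~\ref{sec:handle-operation}, which is precisely the induction you carry out. Your identification of the operations as left convolutions with the Bruhat-stratified fixed varieties, the observation that the convolution of $e$-twisted $B$-varieties is again $e$-twisted so the transfer lemmas can be applied repeatedly, and the bookkeeping with associativity of $*$ are all the right ingredients and match the paper's setup.
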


\section{Walk stratification}\label{sec:walk-stratification}

Let $\beta=\alpha_l\cdots \alpha_1\in F(\Delta^+)$ be a word of length $l$ and $w,w'\in W$. In this section we will stratify $\rho(\beta)_{w,w'}$. We define
\[\pi_k = s_{\alpha_k}\cdots s_{\alpha_1}\in W\]
for each $1\leq k\leq l$. We note that $s_\beta=\pi_l$. Let $p:\rho(\beta)_{w,w'}\to W^{l+1}$ be the function given by $(z_1,\ldots,z_l)\mapsto (p_0,\ldots,p_l)$ where $p_i$ is the unique element of $W$ such that
\[f_k(z):=f_{\alpha_k}(z_k)\cdots f_{\alpha_1}(z_1)wB\in Bp_kB\]
for each $i=0,\ldots,l$. In particular, $p_0=w$ and $p_l=w'$. We will simultaneously prove the following two lemmas. Endow $W$ with the Bruhat order.

\begin{lema}\label{valid-walk}
    For each point of $\rho(\beta)_{w,w'}$ and $0\leq k \leq l-1$,
    \[p_{k+1}=\left\{\begin{array}{cc}
        s_{\alpha_{k+1}}p_k &  \text{if }s_{\alpha_{k+1}}p_k>p_k\\
        s_{\alpha_{k+1}}p_k\text{ or }p_k & \text{if not.} 
    \end{array}\right.\]
\end{lema}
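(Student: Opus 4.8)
The plan is to deduce the dichotomy from the elementary multiplication rule for Bruhat cells against a simple reflection. Fix a point $z=(z_1,\dots,z_l)\in\rho(\beta)_{w,w'}$ and set $g_k:=f_{\alpha_k}(z_k)\cdots f_{\alpha_1}(z_1)\,w\in G$, so that, unwinding the definition of $X_{w,w'}$, the element $p_k$ is characterized by $g_k\in Bp_kB$, and one has $g_{k+1}=f_{\alpha_{k+1}}(z_{k+1})\,g_k$.

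The first step is the trivial factorization of the generators $f_\alpha$. From the matrix identity $\left(\begin{smallmatrix}0&1\\-1&-z\end{smallmatrix}\right)=\left(\begin{smallmatrix}0&1\\-1&0\end{smallmatrix}\right)\left(\begin{smallmatrix}1&z\\0&1\end{smallmatrix}\right)$ and the definition of $f_\alpha$ we obtain $f_\alpha(z)=\dot{s}_\alpha\,u_\alpha(z)$, where $\dot{s}_\alpha:=\iota_\alpha\!\left(\begin{smallmatrix}0&1\\-1&0\end{smallmatrix}\right)$ is the chosen lift of $s_\alpha$ and $u_\alpha(z):=\iota_\alpha\!\left(\begin{smallmatrix}1&z\\0&1\end{smallmatrix}\right)$ lies in the root subgroup $U_\alpha\subseteq U$ (recall $\alpha\in\Delta^+$ is positive; this is the $l=1$ instance of the parametrization of Section~\ref{sec:bruhat-parametrizations}). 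In particular $f_\alpha(z)\in\dot{s}_\alpha U\subseteq Bs_\alpha B$ for every $z$.

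Now the one-step computation. Since $u_{\alpha_{k+1}}(z_{k+1})\in U\subseteq B$ and $g_k\in Bp_kB$, we have $u_{\alpha_{k+1}}(z_{k+1})\,g_k\in Bp_kB$, and hence
\[
 g_{k+1}=\dot{s}_{\alpha_{k+1}}\bigl(u_{\alpha_{k+1}}(z_{k+1})\,g_k\bigr)\in\dot{s}_{\alpha_{k+1}}\,Bp_kB\subseteq Bs_{\alpha_{k+1}}B\cdot Bp_kB.
\]
By the standard multiplication rule for a simple reflection $s$ (a consequence of the $BN$-pair axioms; see \cite[chapter X.28]{Humphreys}), $BsB\cdot BvB=BsvB$ if $sv>v$ and $BsB\cdot BvB=BsvB\cup BvB$ if $sv<v$. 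Applying this with $s=s_{\alpha_{k+1}}$ and $v=p_k$, and using that $p_{k+1}$ is the unique element of $W$ with $g_{k+1}\in Bp_{k+1}B$, we conclude that $p_{k+1}=s_{\alpha_{k+1}}p_k$ when $s_{\alpha_{k+1}}p_k>p_k$, and $p_{k+1}\in\{p_k,\,s_{\alpha_{k+1}}p_k\}$ otherwise, which is exactly the assertion.

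This argument is essentially routine, and for this lemma alone it needs no induction: each step $k\mapsto k+1$ is governed directly by the displayed inclusion. The only points requiring care are bookkeeping ones — matching the flag-variety condition $f_k(z)wB\subseteq Bw'B$ with the group-element statement $g_k\in Bp_kB$ used above, checking the factorization $f_\alpha(z)=\dot{s}_\alpha u_\alpha(z)$ against the conventions of Sections~\ref{sec:var-over-G} and \ref{sec:bruhat-parametrizations}, and applying the cell product in the correct order (the version for $BsB\cdot BvB$ follows from that for $BvB\cdot BsB$ via $(BvB)^{-1}=Bv^{-1}B$). Since the two lemmas are proved together, in the write-up this computation will be interleaved with the companion lemma, whose finer coordinate description is what pins down which of the two cases occurs when $s_{\alpha_{k+1}}p_k<p_k$.
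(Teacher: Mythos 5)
Your proof is correct, and it takes a cleaner and more abstract route than the paper's. You deduce the dichotomy directly from the standard product rule for Bruhat double cosets against a simple reflection, $BsB\cdot BvB=BsvB$ if $sv>v$ and $BsB\cdot BvB=BsvB\cup BvB$ if $sv<v$, after observing the elementary factorization $f_\alpha(z)=\dot{s}_\alpha u_\alpha(z)$ with $u_\alpha(z)\in U\subset B$. This makes Lemma~\ref{valid-walk} standalone and induction-free. The paper instead proves Lemmas~\ref{valid-walk} and~\ref{key-iso} simultaneously by writing $f_{l-1}(z)$ in explicit Bruhat coordinates $up_{l-1}tv$ and then manipulating $\iota_\alpha$-matrices by hand; when $s_\alpha p_{l-1}>p_{l-1}$ it uses normality of $U_{s_\alpha}^+$ to land in $Bs_\alpha p_{l-1}B$, and when $s_\alpha p_{l-1}<p_{l-1}$ it introduces the auxiliary locus $Z=\{z=-z_0(x)\}$ to decide which of the two possible cells is hit. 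What the paper's longer computation buys is precisely the extra payload of Lemma~\ref{key-iso}: the cell coordinates $\C^{U_p}\times(\C^\times)^{S_p}$, the explicit formula for $g_l$ on the stay-coordinates, and the $T$-weights on each coordinate, none of which is visible from the abstract cell product rule. You correctly flag this at the end of your write-up; your argument is a valid replacement for the Lemma~\ref{valid-walk} part, but the coordinate analysis is still needed for its companion.
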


In each case, we say that we go up ($p_{k+1}>p_k$), go down ($p_{k+1}<p_k$), or stay ($p_{k+1}=p_k$). Let $U_p$, $D_p$, and $S_p$ be the positions where we go up, go down, or stay, respectively. A walk associated with $\beta$ is an element of $W^{l+1}$ satisfying the conditions of the previous lemma. Let $\mcW_{w,w'}(\beta)$ be the set of walks starting at $w$, $p_0=w$, and finishing at $w'$, $p_l=w'$. For each walk $p$, let $C_p=\{x\in \rho(\beta)_{w,w'}:p(x)=p\}$. We have a decomposition
\[\rho(\beta)_{w,w'}=\bigsqcup_{p\in \mcW_{w,w'}(\beta)} C_p.\]
It is $T$-equivariant. 

Choose a lift $\hat w\in \widehat{W}$ of $w$. We define a lift of each $p_k$ in $F(\Delta^+)$ inductively as follows. If we stay, $p_{k+1}=p_k$. If not, $p_{k+1}=\alpha_{k+1}p_k$. This induces functions $g_k:C_p\to T$ given by composition of $f_k$ with the inverse of the parametrization de $U_{p_k^{-1}}^-\times T\times U\simeq Bp_kB$, $(u,t,v)\mapsto up_k tv$, and with the projection onto $T$. We remark that if $w=w'=e$, $g_l$ is $p_l^{-1}g$.

\begin{lema}\label{key-iso}
    For any walk $p$, its associated cell $C_p$ is $T$-equivariantly isomorphic to $\C^{U_p}\times (\C^\times)^{S_p}$. Moreover, writing $a=(a_k)\in \C^{U_p}\times (\C^\times)^{S_p}$,
    \begin{enumerate}
        \item $g_l$ corresponds to 
        \[a\mapsto\prod_{k\in S_p} {}^{p_k^{-1}}i_{\alpha_k}\left(\begin{array}{cc}
            -a_k^{-1} & 0 \\
             0 & -a_k
        \end{array}\right) \]
        under the previous isomorphism, and
        \item the action of $t\in T$ is given by multiplying each coordinate $a_k$ by $\alpha_k(^{\pi_{k-1}}t)$.
    \end{enumerate}
\end{lema}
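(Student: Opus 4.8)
The plan is to prove Lemmas~\ref{valid-walk} and \ref{key-iso} simultaneously by induction on the length $l$ of $\beta$, following the inductive structure already set up by the recursive definition of the lifts $p_k$. The base case $l=0$ is trivial: $\rho(\beta)_{w,w'}$ is a point (or empty unless $w=w'$), and both statements are vacuous. For the inductive step, write $\beta = \alpha_l \beta'$ with $\beta' = \alpha_{l-1}\cdots\alpha_1$, so that $\rho(\beta) = \rho(\alpha_l) * \rho(\beta')$ and a point of $\rho(\beta)$ is a pair $(z_l, z')$ with $z' \in \rho(\beta')$. By induction the function $p$ restricted to the first $l-1$ coordinates already defines a walk $(p_0,\ldots,p_{l-1})$ with $C_{p'} \cong \C^{U_{p'}}\times(\C^\times)^{S_{p'}}$, and it remains to understand what happens when we left-multiply $f_{l-1}(z') w B \in B p_{l-1} B$ by $f_{\alpha_l}(z_l)$. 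So the whole argument reduces to a single-step $\SL_2$-type computation: analyzing $f_{\alpha}(z)\cdot B p B$ inside $G$ for a fixed simple root $\alpha$ and $p \in W$.

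That single step is where the real content lies, and I expect it to be the main obstacle. The key tool is the standard fact that $s_\alpha B p B \subset B s_\alpha p B \cup B p B$, with the refinement that if $s_\alpha p > p$ then $s_\alpha B p B \subset B s_\alpha p B$ (no $BpB$ term), which gives the ``if we go up we must go up'' clause of Lemma~\ref{valid-walk}; and when $s_\alpha p < p$ both components can occur, which gives the ``go down or stay'' dichotomy. Concretely I would use the rank-one reduction: pull back the Bruhat cell along $\iota_\alpha : \SL_2 \to G$ and reduce to the computation in $\SL_2$, where $f_\alpha(z) = \iota_\alpha\!\left(\begin{smallmatrix}0&1\\-1&-z\end{smallmatrix}\right)$ acts on $\SL_2/B_{\SL_2} = \P^1$. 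Parametrizing $Bp B$ by $U^-_{p^{-1}}\times T\times U$ via $(u,t,v)\mapsto u\,\hat p\, t v$, one computes $f_\alpha(z)\, u\, \hat p\, t v$ and reads off, as a function of $z$ and the coordinates of $z'$, whether the result lands in $B s_\alpha p B$ or in $BpB$. The locus where it stays in $BpB$ is cut out by one equation (a coordinate of the ``$U^-_{p^{-1}}$'' part of the relevant entry vanishing); generically it lands in $B s_\alpha p B$. This is exactly where the new coordinate $a_l$ enters: on the ``stay'' locus, after the computation, one is left with a residual $\C^\times$ worth of freedom (the nonvanishing $\SL_2$-parameter), matching the factor $(\C^\times)^{S_p}$; on the ``up'' or ``down'' locus one gains a $\C$ (the parameter $z_l$, or rather a sheared version of it, ranging freely), matching $\C^{U_p}$.

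For part~(1) of Lemma~\ref{key-iso}, the point is to track how the torus part $t$ in the parametrization $u\,\hat p\, t v$ changes under left multiplication by $f_\alpha(z)$. When we go up or down, the new $\hat p_{l}$ differs from $\hat p_{l-1}$ by $\alpha_l$ (in the chosen lifts), and the torus component is unchanged because $f_\alpha(z)$ contributes only to the unipotent parts in the new Bruhat chart—so no new factor appears in $g_l$. When we stay, $f_\alpha(z)$ acting on the $\SL_2$-line produces, in the Bruhat form relative to $\hat p_{l-1} = \hat p_l$, an extra diagonal $\SL_2$-element $\left(\begin{smallmatrix}-a_l^{-1}&0\\0&-a_l\end{smallmatrix}\right)$ conjugated by $p_l^{-1}$ (this is precisely the identity $\iota_\alpha\!\left(\begin{smallmatrix}0&1\\-1&0\end{smallmatrix}\right)^{-2} = \iota_\alpha\!\left(\begin{smallmatrix}-1&0\\0&-1\end{smallmatrix}\right)$ from the ``$\beta^*$'' lemma, generalized to nonzero parameter), which accumulates into the asserted product over $k \in S_p$. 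Part~(2) is then a bookkeeping computation: $t\in T$ acts on $z_k$ by the character $\alpha_k({}^{\pi_{k-1}}t)$ by the $T$-equivariance of $\rho(\beta)$ together with the $s_{\alpha_k}$-equivariance of each $f_{\alpha_k}$ established earlier, and one checks that this scaling is compatible with the coordinate changes made at each inductive step—i.e. that passing to the ``sheared'' coordinates $a_k$ does not disturb the weight, which follows because the shearing only adds terms involving lower-index coordinates whose weights combine correctly. The main delicacy throughout is keeping the conjugating Weyl elements $\pi_{k-1}$ and the chosen lifts consistent, so that the equivariance weights and the diagonal factors in $g_l$ come out exactly as stated rather than off by a Weyl translate; I would handle this by fixing all lifts via the section of $\widehat W \to W$ chosen just before the lemmas and never deviating from it.
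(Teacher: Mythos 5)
Your overall strategy matches the paper's exactly: joint induction with Lemma~\ref{valid-walk} on the length $l$, writing $f_{l-1}$ in the Bruhat form $u\,\hat p_{l-1}\, t\, v$ with $u\in U^-_{p_{l-1}^{-1}}$, and reducing the inductive step to a rank-one computation of $f_{\alpha_l}(z_l)\cdot u\,\hat p_{l-1}\,t\,v$. However, in the case $s_\alpha p_{l-1}<p_{l-1}$ your description of the stratification is reversed, and this produces two claims that contradict the statement you are trying to prove.

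First, you assert that the locus where the product stays in $Bp_{l-1}B$ is cut out by one equation, with the generic point landing in $Bs_\alpha p_{l-1}B$. It is the other way around. After the reduction one extracts a quantity $a=z_l+z_0(x)$ (where $z_0$ is the $\alpha$-component of $u$). When $a=0$ the product collapses into $Bs_\alpha p_{l-1}B$ (go down); on the open complement $a\neq 0$ one stays in $Bp_{l-1}B$, via the decomposition $\left(\begin{smallmatrix}-1&0\\a&-1\end{smallmatrix}\right)=\left(\begin{smallmatrix}-a^{-1}&0\\0&-a\end{smallmatrix}\right)\left(\begin{smallmatrix}1&-a\\0&1\end{smallmatrix}\right)\left(\begin{smallmatrix}0&1\\-1&a^{-1}\end{smallmatrix}\right)$. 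So the go-down locus is the hypersurface and the stay locus is generic. Note that your own next sentence (stay contributes a $\C^\times$ factor) is only consistent with the correct orientation, not with stay being codimension one.

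Second, you say that both up and down steps contribute a $\C$ factor, ``matching $\C^{U_p}$.'' Since $U_p$ indexes only the up steps, that cannot be right, and indeed a down step contributes nothing: on the locus $a=0$ the new coordinate $z_l$ is determined by the earlier ones, so $X_l\cong X_{l-1}$ and no new factor appears. Up contributes $\C$, stay contributes $\C^\times$ via $a$, down contributes nothing. Finally, for part (2), the cleaner argument---and the one the paper uses---is not a term-by-term check of how the shear affects weights, but simply that the stratification is $T$-invariant, so the hypersurface $\{a=0\}$ is $T$-invariant, which forces $z_0$ to scale by the same character $\alpha_l({}^{\pi_{l-1}}t)$ as $z_l$; hence $a$ inherits exactly that weight.
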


\begin{proof}[Proof of Lemmas \ref{valid-walk} and \ref{key-iso}]
    This is proven exactly as \cite[Propositions 5.4.1, 5.4.2, and 5.4.3]{Mellit}. We include a proof for completeness. Let $X_k\subset \C^k$ be the locally closed subset given by 
    \[f_j(z):=f_{\alpha_j}(z_j)\cdots f_{\alpha_1}(z_1)wB\in Bp_jB\]
    for each $0\leq j\leq k$. By its very definition, $X_l=C_p$. In addition, $X_k$ is the cell corresponding to the walk $p_0,\ldots,p_k$ associated with $\alpha_k\dots\alpha_1$. We will prove the statement by induction on the length of  $\beta$. For length zero, there is nothing to be done. We assume the result holds for $X_{l-1}$ and note there is a projection $X_{l}\to X_{l-1}$ given by forgetting $z_{l}$.

    We write $f_{l-1}:X_{l-1}\to Bp_{l-1}B$ as $up_{l-1}tv$ for $u: X_{l-1}\to U_{p_{l-1}^{-1}}^-$, $t:X_{l-1}\to T$, $v:X_{l-1}\to U$. To simplify the notation, we shorten $z=z_{l}$ and $\alpha=\alpha_{l}$. By its very definition
    \[f_{\alpha}(z)up_{l-1}tv=\alpha i_\alpha\left(\begin{matrix}
        1 & z \\ 
        0 & 1
    \end{matrix}\right)up_{l-1}tv\]

    The first case is when $s_\alpha p_{l-1}>p_{l-1}$. This means that $ p_{l-1}^{-1}(\alpha)\in \Phi^+$. Hence, $U_{p_{l-1}^{-1}}^-\subset U^+_{s_\alpha}$ and
    \[ s_{\alpha}i_\alpha\left(\begin{matrix}
        1 & z \\ 
        0 & 1
    \end{matrix}\right)u = \left(s_\alpha i_\alpha\left(\begin{matrix}
        1 & z \\ 
        0 & 1
    \end{matrix}\right)ui_\alpha\left(\begin{matrix}
        1 & z \\ 
        0 & 1
    \end{matrix}\right)^{-1}s_\alpha^{-1}\right) s_{\alpha}i_\alpha\left(\begin{matrix}
        1 & z \\ 
        0 & 1
    \end{matrix}\right)=u's_{\alpha}i_\alpha\left(\begin{matrix}
        1 & z \\ 
        0 & 1
    \end{matrix}\right) \]
    with $u':X_{l-1}\to U^+_{s_\alpha}$. Now, as $p_{l-1}^{-1}(\alpha)\in \Phi^+$, $^{p_{l-1}^{-1}}\iota_\alpha\left(\begin{smallmatrix}
        1 & z \\ 
        0 & 1
    \end{smallmatrix}\right)\in U$. Therefore,
    \[f_{\alpha}(z)up_{l-1}tv = u's_\alpha p_{l-1}v' \]
    for some $v':X_l\to U$. It follows that $f(z,x)$ lies in the Bruhat cell $Bs_\alpha p_{l-1}B$. Hence, $p_{l}=s_\alpha p_{l-1}$ and $X_{l}=X_{l-1}\times \C$. We note that the function to $T$ did not change. The action on $X_{l-1}$ did not change also and in $\C$ is given by convolution of $\rho(\alpha)$ with the rest of the factors, in other words, $\alpha_l(^{\pi_{l-1}}t)$. 

    The second case is the complement: $s_\alpha p_{l-1}<p_{l-1}$. We start by writing
    \[u(x)\alpha =\iota_\alpha\left(\begin{matrix}
        1 & z_0(x) \\
        0 & 1
    \end{matrix}\right)\alpha u'(x)\]
    for some $u':X_{l-1}\to U^+_{s_\alpha}$. Let 
    \[Z=\{(z,x)\in\C\times X_k:z=-z_0(x)\}\]
    and $p'=\alpha p_{l-1}$. We note that $p_{l-1}=\alpha \alpha^* p'$ in $G$. In $Z$, we have
    \begin{align*}
        f_{\alpha}(z)up_{l-1}tv &= \alpha \iota_\alpha\left(\begin{matrix}
        1 & -z_0(x) \\
        0 & 1
    \end{matrix}\right)\iota_\alpha\left(\begin{matrix}
        1 & z_0(x) \\
        0 & 1
    \end{matrix}\right)\alpha u' \alpha^* p'tv\\
    &=\left(\alpha^*\cdot u'\right)p'tv
    \end{align*}
    and so, in this case, it must be $p_{l}=p'$ and the function to $T$ does not change.

    Let us prove that in the complement of $Z$ it must be $p_{l}=p_{l-1}$. First, we note
    \[\alpha \iota_\alpha\left(\begin{matrix}
        1 & z \\
        0 & 1
    \end{matrix}\right)\iota_\alpha\left(\begin{matrix}
        1 & z_0(x) \\
        0 & 1
    \end{matrix}\right)\alpha= \iota_\alpha\left(\begin{array}{cc}
        -1 & 0 \\
        z +z_0(x) & -1
    \end{array}\right).\]
    Let $a(z,x)=z +z_0(x)$. It is a morphism $(X_{l-1}\times\C)\setminus Z\to \C^\times$. Now
    \[\left(\begin{array}{cc}
        -1 & 0 \\
        a & -1
    \end{array}\right)= \left(\begin{array}{cc}
        -a^{-1} & 0 \\
        0 & -a
    \end{array}\right)\left(\begin{array}{cc}
        1 & -a \\
        0 & 1
    \end{array}\right)\left(\begin{array}{cc}
        0 & 1 \\
        -1 & a^{-1}
    \end{array}\right)\]
    and, hence, 
    \[f_{\alpha}(z)up_{l-1}tv= \iota_\alpha\left(\begin{array}{cc}
        -a^{-1} & 0 \\
        0 & -a
    \end{array}\right)u''f_\alpha(-a^{-1})\alpha^*p'tv\]
    with
    \[u'':=\iota_{\alpha}\left(\begin{array}{cc}
        1 & -a \\
        0 & 1
    \end{array}\right) f_\alpha(-a^{-1})u'  f_\alpha(-a^{-1})^{-1}\in U\]
    as $u'\in U_{s_\alpha}^+$. Being $s_\alpha p'>p'$, $u''f_\alpha(-a^{-1})\alpha^* p' tv\in Up_{l-1}U$ and it follows that we must stay. The morphism to $T$ got multiplied by
    \[^{p_{l}^{-1}}\iota_\alpha\left(\begin{array}{cc}
        -a^{-1} & 0 \\
        0 & -a
    \end{array}\right).\]
    It remains to describe the $T$-action on $a$. Being the stratification $T$-invariant, $z_0(x)$ must move under the action as $z$. Thus, the same conclusion must hold for $a$.
\end{proof}

\begin{lema}
    Let $\beta\in F(\Delta^+)$. For any walk that starts at $e$, its associated cell $C_p$ is $B$-invariant. Moreover, for any $t\in T$, $C_p(t)$ is $U$-invariant.
\end{lema}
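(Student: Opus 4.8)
The plan is to follow, for each $0\le k\le l$, how the partial product $f_k(z)=f_{\alpha_k}(z_k)\cdots f_{\alpha_1}(z_1)$ is transformed by the $B$-action on $\rho(\beta)$, and to check that it is only multiplied on the left and on the right by elements of $B$; since $Bp_kB$ is a $(B,B)$-double coset this forces $p_k(b\cdot z)=p_k(z)$ for every $k$, hence $p(b\cdot z)=p(z)$ and $b\cdot z\in C_p$, so $C_p$ is $B$-invariant. To carry this out I would set $\gamma_k=\alpha_k\cdots\alpha_1$ and $\delta_k=\alpha_l\cdots\alpha_{k+1}$, use the decomposition $\rho(\beta)\simeq\rho(\delta_k)*\rho(\gamma_k)$ coming from associativity of convolution, and observe that the projection $\mathrm{pr}_k\colon\rho(\beta)\to\rho(\gamma_k)$ onto the second factor is $B$-equivariant (immediate from the formula for the $B$-action on a convolution) and that $f_k=f_{\gamma_k}\circ\mathrm{pr}_k$ --- this last point uses that the walk starts at $e$, i.e. $\hat w=e$, so $f_k(z)$ is a genuine product with nothing trailing on the right. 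Since $\rho(\gamma_k)$ is $s_{\gamma_k}$-twisted and $s_{\gamma_k}=\pi_k$, its structure map satisfies $f_{\gamma_k}(t\cdot x)={}^{\pi_k}t\,f_{\gamma_k}(x)\,t^{-1}$ and $f_{\gamma_k}(u\cdot x)=g_{\gamma_k}^+(u,x)\,f_{\gamma_k}(x)\,u^{-1}$, and combining these for $b=tu\in B$ gives
\[
f_k(b\cdot z)={}^{\pi_k}t\,g_{\gamma_k}^+\!\big(u,\mathrm{pr}_k(z)\big)\,f_k(z)\,u^{-1}t^{-1},
\]
with both outer factors in $B$, which is exactly what is needed.

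For the second assertion I would use that $C_p$ is $U$-invariant by the first part, so that it suffices to prove $g_l\colon C_p\to T$ is constant on $U$-orbits, since $C_p(t)=g_l^{-1}(t)$. Writing the Bruhat parametrization $f_l(z)=u_0\,\hat p_l\,t_0\,v_0$ with $u_0\in U^-_{p_l^{-1}}$, $v_0\in U$, and using that $\rho(\beta)$ is $s_\beta$-twisted with intertwining function $g_\beta^+$, one gets $f_l(u\cdot z)=\big(g_\beta^+(u,z)\,u_0\big)\,\hat p_l\,t_0\,v_0\,u^{-1}$. I would then decompose $g_\beta^+(u,z)\,u_0=c^-c^+$ along $U=U^-_{p_l^{-1}}U^+_{p_l^{-1}}$ and push $c^+$ past $\hat p_l$: since $p_l^{-1}$ carries the roots of $U^+_{p_l^{-1}}$ into $\Phi^+$, one has $\hat p_l^{-1}c^+\hat p_l\in U$, so $c^+\hat p_l=\hat p_l\,\tilde u$ with $\tilde u\in U$, and hence
\[
f_l(u\cdot z)=c^-\,\hat p_l\,t_0\,\big((t_0^{-1}\tilde u\,t_0)\,v_0\,u^{-1}\big)
\]
is again a Bruhat parametrization, with the same $T$-component $t_0$. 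Uniqueness of the parametrization then yields $g_l(u\cdot z)=g_l(z)$, so $g_l$ is $U$-invariant and therefore so is $C_p(t)$.

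The step I expect to be the only genuinely nonformal one is the reparametrization $c^+\hat p_l=\hat p_l\,\tilde u$, i.e. the inclusion $\hat p_l^{-1}U^+_{p_l^{-1}}\hat p_l\subseteq U$, which is the usual root combinatorics governing how $U^\pm_\pi$ behaves under conjugation by $\hat\pi$; together with the remark that the hypothesis that the walk starts at $e$ is precisely what makes the twisted $B$-variety identities for $\rho(\gamma_k)$ apply to $f_k$ verbatim (for a walk starting at $w\ne e$ the trailing $\hat w$ would obstruct the left--right-$B$-multiplication argument). Everything else is bookkeeping with the convolution formula for twisted $B$-varieties.
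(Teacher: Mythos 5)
Your proof is correct and follows the same route as the paper's: both hinge on the identity $f_k(u\cdot z)=g_{\gamma_k}^+(u,\cdot)\,f_k(z)\,u^{-1}$ with $g^+$ valued in $U$, so that each partial product is only multiplied on either side by elements of $B$, preserving the Bruhat cell and the $T$-component of the parametrization. You are simply more explicit than the paper in two places (the $B$-equivariance of the partial projections via the convolution structure, and the decomposition $g^+u_0=c^-c^+$ with $c^+\hat p_l\in\hat p_lU$ showing $t_0$ is unchanged), whereas the paper states these facts with less calculation and invokes the previously-observed $T$-equivariance of the stratification.
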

\begin{proof}
    We recall that 
    \[f_{\pi_k}(z)u^{-1} = g_{\pi_k}^+(u,z) f_{\pi_k}(u\cdot z)\]
    and that $g_{\pi_k}^+$ has image on $U$. Hence,if $f_{\pi_k}(z)\in Bp_kB$, $f_{\pi_k}(u\cdot z)\in Bp_kB$. This shows that $C_p$ is $U$-invariant. In addition, we note that the morphism to $T$ does not change under the $U$-action. 
\end{proof}

\begin{coro}
    Let $g,k$ be non-negative integers, $\mcC_1,\ldots,\mcC_{k-1}$ be nice conjugacy classes, $C_k\in T$, and $w,w'\in W$. Then $(\mcZ(H)^g\circ \mcZ(L_{\mcC_1})\circ \cdots \mcZ(L_{\mcC_{k-1}}))(\mcU(C_k))_{w,w'}$ has a $T$-invariant cell decomposition by varieties of the form $\C^i\times (\C^\times)^j$. If $w=e$, it is also $B$-invariant. 
\end{coro}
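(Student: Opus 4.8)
The plan is to bootstrap from the $B$-equivariant cell decomposition of Sections~\ref{sec:handle-operation}--\ref{sec:adding-punctures} and then apply the walk stratification of Section~\ref{sec:walk-stratification} to each cell, so that in the end everything reduces to Lemmas~\ref{valid-walk} and \ref{key-iso}.

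\emph{Step 1 (reduction to the structural part).} Applying the corollary at the end of Section~\ref{sec:adding-punctures} with $X$ the one-point variety over $C_k$, the space $(\mcZ(H)^g\circ \mcZ(L_{\mcC_1})\circ \cdots \mcZ(L_{\mcC_{k-1}}))(\mcU(C_k))$ acquires a $B$-equivariant cell decomposition whose cells are $\mcU(Y)$ with $Y=K_{\pi^{(g)}}*\cdots*K_{\pi^{(1)}}*L_{C_1,\pi_1}*\cdots*L_{C_{k-1},\pi_{k-1}}*C_k$, $C_i\in\mcC_i$ nice. Passing to the locally closed set $\{f(z)wB\subseteq Bw'B\}$ commutes with this decomposition, so it suffices to stratify each $\mcU(Y)_{w,w'}$. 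Since $\mcU(Y)=U*Y$ has structure map $(u,y)\mapsto uf_Y(y)$ and $U\subseteq B$ does not move Bruhat double cosets, one has $\mcU(Y)_{w,w'}=U\times Y_{w,w'}$ as varieties, with the projection to $Y_{w,w'}$ being $B$-equivariant and the $T$-action on the $U$-factor being by conjugation. Moreover, in the definitions of $K_{\pi_1,\pi_2}$ and $L_{C,\pi}$ the factors $U^+_{\pi}$, $U^+_\pi\cap N$ are dragged along without entering the map to $G$ and without mixing into the other coordinates under $B$, so $Y\cong\overline Y\times\A^{m}$ as varieties over $G$ with $\overline Y=\overline K_{\pi^{(g)}}*\cdots*\overline L_{C_{k-1},\pi_{k-1}}*C_k$ and the projection $Y\to\overline Y$ $B$-equivariant. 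Thus $\mcU(Y)_{w,w'}\cong U\times\A^m\times\overline Y_{w,w'}$, and the corollary follows once $\overline Y_{w,w'}$ is given a $T$-invariant (and, for $w=e$, $B$-invariant) decomposition into pieces $\C^i\times(\C^\times)^j$.

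\emph{Step 2 (walk stratification of the convolution).} Unwinding the definitions, $\overline Y$ is a convolution of three kinds of factors: copies of $\rho(\beta)$ for reduced words $\beta$ (the $\rho(\pi)$ and $\rho(\pi^t)$ inside the $K$'s and $L$'s), commutator varieties $\rho(\sigma,\tau)$, and one-point varieties over elements of $T$; the latter two map into $T\subseteq B$. Following \cite[\S5.4, \S7]{Mellit}, I would stratify $\overline Y_{w,w'}$ by processing the factors from right to left. A factor landing in $T$ forces a ``stay'' (left or right multiplication by $T$ does not change the Bruhat cell of the running partial product times $w$), a commutator factor additionally carrying along a $T\times T\cong(\C^\times)^{2\rk G}$. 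When a factor $\rho(\beta)$ is reached, on each already-produced cell of the tail the accumulated structure map lands in one Bruhat cell $BvB$, with coordinates $g^*=u_1\widehat v\,t\,u_2$ depending regularly on the tail variables; using $f_\beta(z)u_1=g_\beta^+(u_1^{-1},z)^{-1}f_\beta(u_1^{-1}\cdot z)$ together with $g_\beta^+\in U$ and $tu_2\in B$, the Bruhat cell of $f_\beta(z)g^*$ equals that of $f_\beta(z')\widehat v$ with $z'=u_1^{-1}\cdot z$, and Lemmas~\ref{valid-walk} and \ref{key-iso} apply with ``$w$'' $:=v$. As $z\mapsto u_1^{-1}\cdot z$ is a triangular, hence bi-regular, substitution, the new pieces remain products of affine spaces and tori. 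Iterating, $\overline Y_{w,w'}$ is decomposed by a global walk, and on the locus where the walk is a given $p$ the cell is, by Lemma~\ref{key-iso}, $T$-equivariantly a product of one $\C$ per up-step, one $\C^\times$ per stay-step, a $T\times T$ per commutator factor, and nothing per down-step or point factor; hence it is of the form $\C^i\times(\C^\times)^j$, and $T$-invariance of the decomposition follows from the $e$-twisted structure of $\overline Y$ and the $T$-equivariance of the walk lemmas. Feeding this back through Step~1 gives the $T$-invariant decomposition of $\mcU(Y)_{w,w'}$ and hence of the whole space.

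\emph{Step 3 ($w=e$ and the main difficulty).} For $w=e$, the final lemma of Section~\ref{sec:walk-stratification} shows that each cell of a single $\rho(\beta)_{e,w'}$ is $U$-invariant, via $f_\beta(u\cdot z)=g_\beta^+(u,z)^{-1}f_\beta(z)u^{-1}$. I would propagate this through the convolution by telescoping the intertwining functions: on $X*X'$ the $U$-element acting on $X$ is $\phi'_{x'}(u)$, which cancels the $u^{-1}$ picked up from $X'$, so every right partial product transforms as $P\mapsto aPu^{-1}$ with $a\in U$ and therefore keeps its Bruhat cell; with $T$-invariance this makes the cells of $\overline Y_{e,w'}$ $B$-invariant, and $B$-equivariance of $\mcU(Y)_{e,w'}\to\overline Y_{e,w'}$ transports this to the final space. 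The delicate part is the right-to-left bookkeeping of Step~2---keeping track of the Bruhat coordinates of the accumulated partial products, absorbing their unipotent parts into the $\rho$-variables, and checking that every reparametrization is compatible with the $T$- and (when $w=e$) $B$-actions; this is routine but lengthy, and amounts to Mellit's argument transplanted to an arbitrary connected reductive $G$.
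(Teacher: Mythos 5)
Your proposal is correct and follows essentially the same route as the paper's (one-sentence) proof: reduce to $\overline Y$ via the $B$-equivariant cell decomposition of Section~\ref{sec:adding-punctures} and the $B$-equivariant projection, then apply the walk stratification (Lemmas~\ref{valid-walk}, \ref{key-iso}, and the $B$-invariance lemma), with the one-point and commutator varieties contributing $(\C^\times)^j$ factors as ``stays''. The only cosmetic difference is that the paper first moves all $T$-varieties to the left using the twisted $T$-equivariance, so that the walk lemmas are invoked verbatim for a single $\rho(\gamma)$ with $\gamma\in F(\Delta^+)$, whereas you absorb the unipotent Bruhat coordinate $u_1$ of each partial product in place via the reparametrization $z'=u_1^{-1}\cdot z$---this is in effect re-running the inductive step of the proof of Lemma~\ref{key-iso} (note also a small typo: $f_\beta(u\cdot z)=g_\beta^+(u,z)f_\beta(z)u^{-1}$, without the inverse on $g_\beta^+$).
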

\begin{proof}
    We just apply the previous results after moving all varieties over $T$, which are of the form $(\C^\times)^j$ as varieties, to the left and ignoring them for the stratification.
\end{proof}

\section{Cupping}\label{sec:cupping}

In this section, we will compute $\mcZ(D^\dagger)$ for the varieties appearing in the previous sections. This amounts to escribe $g_l^{-1}(t)$ for a walk $p$ associated with $\beta\in F(\Delta^+,W^2)$. To start with, we note that $\img g_l \subset T^{ss}$ where $T^{ss} := T\cap G^{ss}$, the intersection between $T$ and the semisimple part of $G$. For each root $\alpha$, we denote $h_\alpha:= d\iota_\alpha\left(\begin{smallmatrix}
    1 & 0\\
    0 & -1
\end{smallmatrix}\right)\in \t$ its associated semisimple element.

\begin{lema}
    Let $\alpha$ be a root and $w\in W$. Then $\langle {}^wh_\alpha \rangle=\langle  h_{w\cdot \alpha}\rangle$ and $ws_\alpha w^{-1}=s_{w\cdot \alpha}$.
\end{lema}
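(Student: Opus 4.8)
The plan is to reduce both assertions to standard facts about the adjoint action of a Weyl group element on the Cartan subalgebra and on the root system, together with the uniqueness of the $\sl_2$-triple attached to a root.

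First I would fix a lift $\dot w\in N_G(T)$ of $w$ and study the automorphism $\operatorname{Ad}(\dot w)$ of $\g$. It preserves $\t$, acts on $\t$ through $w$, and permutes root spaces by $\operatorname{Ad}(\dot w)\g_\beta=\g_{w\cdot\beta}$. Apply it to the $\sl_2$-triple underlying $\iota_\alpha$, namely $e_\alpha=d\iota_\alpha\left(\begin{smallmatrix}0&1\\0&0\end{smallmatrix}\right)\in\g_\alpha$, $f_\alpha=d\iota_\alpha\left(\begin{smallmatrix}0&0\\1&0\end{smallmatrix}\right)\in\g_{-\alpha}$, and $h_\alpha=[e_\alpha,f_\alpha]$; one obtains an $\sl_2$-triple whose nilpotent parts lie in $\g_{w\cdot\alpha}$ and $\g_{-w\cdot\alpha}$. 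Since $[\g_{w\cdot\alpha},\g_{-w\cdot\alpha}]=\C h_{w\cdot\alpha}$ and the Cartan element of any $\sl_2$-triple with nilpotents in those two lines lies in this span, I conclude that $\operatorname{Ad}(\dot w)h_\alpha$ and $h_{w\cdot\alpha}$ span the same line of $\t$, which is the first claim. (Equivalently, one may invoke the root-datum identity $w(\alpha^\vee)=(w\cdot\alpha)^\vee$ together with the fact that $h_\alpha$ is the coroot $\alpha^\vee$.) Note that $\operatorname{Ad}(\dot w)h_\alpha$ does not depend on the choice of lift $\dot w$, since $T$ acts trivially on $\t$.

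For the second claim I would argue entirely inside the root system. Using $s_\alpha(\beta)=\beta-\langle\beta,\alpha^\vee\rangle\alpha$ and the $W$-invariance of the pairing (equivalently $w(\alpha^\vee)=(w\cdot\alpha)^\vee$), I compute, for any root $\beta'=w\cdot\beta$,
\[ (ws_\alpha w^{-1})(\beta') = w\bigl(\beta-\langle\beta,\alpha^\vee\rangle\alpha\bigr)=w\cdot\beta-\langle w\cdot\beta,(w\cdot\alpha)^\vee\rangle\,w\cdot\alpha=s_{w\cdot\alpha}(\beta'). \]
Since $w$ permutes the roots, which span the relevant space, $ws_\alpha w^{-1}$ and $s_{w\cdot\alpha}$ agree as linear maps, hence as elements of $W$. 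Alternatively, $ws_\alpha w^{-1}$ is an involution acting as $-1$ on $\C(w\cdot\alpha)$ and fixing $w(\alpha^{\perp})=(w\cdot\alpha)^{\perp}$ pointwise, which characterizes $s_{w\cdot\alpha}$.

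I do not expect a genuine obstacle here; the only point requiring care is the normalization issue in the first claim, namely that $h_{w\cdot\alpha}$ is only well defined up to sign because $\iota_{w\cdot\alpha}$ is (an automorphism of $\SL_2$ can negate $\left(\begin{smallmatrix}1&0\\0&-1\end{smallmatrix}\right)$), which is exactly why the statement is phrased with the spans $\langle\,\cdot\,\rangle$ rather than with equality of elements. Once one records that $\operatorname{Ad}(\dot w)$ carries the $\sl_2$-triple for $\alpha$ to one for $w\cdot\alpha$ with nilpotents in the correct root spaces, both claims follow immediately.
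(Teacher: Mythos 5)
Your proof is correct and takes essentially the same route as the paper's. The paper also deduces the first claim from the fact that $\operatorname{Ad}(\dot w)$ carries the root eigenspaces $\g_{\pm\alpha}$ to $\g_{\pm w\cdot\alpha}$ (citing the same kind of structural result you invoke), so that ${}^w h_\alpha$ is a scalar multiple of $h_{w\cdot\alpha}$, and then settles the second claim by observing that $ws_\alpha w^{-1}$ and $s_{w\cdot\alpha}$ are both reflections determined by that common line — which is precisely your alternative characterization at the end.
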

\begin{proof}
    We apply Theorem 5 of section 4.1.6 of \cite{OV}; $w$ maps the eigenspaces $g_\alpha$ and $g_{-\alpha}$ to $g_{w\cdot \alpha}$ and $g_{-w\cdot\alpha}$ respectively. Hence, $h_\alpha$ goes to a multiples of $h_{w\cdot \alpha}$. Thus ${}^wh_\alpha$ and $h_{w\cdot \alpha}$ generate the same subspace. It follows that $ws_\alpha w^{-1}$ and $s_{w\cdot \alpha}$ agree as both are symmetries.
\end{proof}

\begin{lema}
    Let $\beta\in F(\Delta^+)$ and $p$ be an associated walk. Then
    \begin{enumerate}
        \item $W(p):=\langle s_{\pi_k^{-1}\alpha_k}:k\in S_p\rangle =\langle s_{p_k^{-1}\alpha_k}:k\in S_p\rangle$,
        \item $p_k^{-1}\pi_k\in W(p)$ for all $k\in S_p$, and
        \item $T(p):=\langle ^{\pi_k^{-1}}h_k:k\in S_p\rangle_\Z=\langle ^{p_k^{-1}}h_k:k\in S_p\rangle_\Z$.
    \end{enumerate}
\end{lema}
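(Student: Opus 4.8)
The plan is to compare the walk $p=(p_0,\dots,p_l)$ with the ``straightened'' sequence $(\pi_0,\dots,\pi_l)$ by tracking the correction elements $v_k:=p_k^{-1}\pi_k\in W$; here $\pi_0=p_0=e$ since (as in the preceding lemmas) our walks start at $e$. The engine of the argument is the recursion satisfied by the $v_k$, which follows immediately from Lemma \ref{valid-walk}. At an ``up'' or ``down'' step one has $p_k=s_{\alpha_k}p_{k-1}$ and $\pi_k=s_{\alpha_k}\pi_{k-1}$, so the two factors $s_{\alpha_k}$ cancel and $v_k=v_{k-1}$. At a ``stay'' step $p_k=p_{k-1}$ while still $\pi_k=s_{\alpha_k}\pi_{k-1}$, so writing $v_k=p_{k-1}^{-1}s_{\alpha_k}\pi_{k-1}$ and inserting either $p_{k-1}p_{k-1}^{-1}$ or $\pi_{k-1}\pi_{k-1}^{-1}$, together with the conjugation formula $ws_\alpha w^{-1}=s_{w\cdot\alpha}$ and the identity $\pi_k^{-1}\alpha_k=-\pi_{k-1}^{-1}\alpha_k$, gives
\[v_k=s_{p_k^{-1}\alpha_k}\,v_{k-1}=v_{k-1}\,s_{\pi_k^{-1}\alpha_k}.\]
Peeling these factors off inductively from $v_0=e$ shows that every $v_k$ is a product of the reflections $s_{p_j^{-1}\alpha_j}$ ($j\in S_p$, $j\le k$) and also a product of the reflections $s_{\pi_j^{-1}\alpha_j}$ ($j\in S_p$, $j\le k$). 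In particular $v_k$ lies in $W_p:=\langle s_{p_j^{-1}\alpha_j}:j\in S_p\rangle$ and in $W_\pi:=\langle s_{\pi_j^{-1}\alpha_j}:j\in S_p\rangle$ for all $k$, which is exactly claim (2): $p_k^{-1}\pi_k=v_k\in W_p$.

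For claim (1), rewrite the displayed recursion (for $k\in S_p$) as $s_{\pi_k^{-1}\alpha_k}=v_{k-1}^{-1}s_{p_k^{-1}\alpha_k}v_{k-1}$ and $s_{p_k^{-1}\alpha_k}=v_{k-1}s_{\pi_k^{-1}\alpha_k}v_{k-1}^{-1}$. Since $v_{k-1}\in W_p$, the first identity places each generator of $W_\pi$ inside $W_p$, so $W_\pi\subseteq W_p$; since $v_{k-1}\in W_\pi$, the second gives $W_p\subseteq W_\pi$. Hence $W_p=W_\pi$, and this common group is $W(p)$.

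Claim (3) is the same pattern one level down, on the coroot lattice instead of on $W$. The key observation is that $W(p)$ stabilizes both $L_p:=\langle {}^{p_j^{-1}}h_{\alpha_j}:j\in S_p\rangle_\Z$ and $T(p)=\langle {}^{\pi_j^{-1}}h_{\alpha_j}:j\in S_p\rangle_\Z$: a reflection $s_\beta$ sends any coroot-lattice vector $h$ to $h-\beta(h)h_\beta$ with $\beta(h)\in\Z$, so it preserves any $\Z$-sublattice of the coroot lattice that contains $h_\beta$; and the coroot of $p_j^{-1}\alpha_j$ (resp. $\pi_j^{-1}\alpha_j$) is precisely ${}^{p_j^{-1}}h_{\alpha_j}$ (resp. ${}^{\pi_j^{-1}}h_{\alpha_j}$). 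Granting this, for $k\in S_p$ write $\pi_k=p_kv_k$; then ${}^{\pi_k^{-1}}h_{\alpha_k}={}^{v_k^{-1}}\!\left({}^{p_k^{-1}}h_{\alpha_k}\right)\in L_p$ because $v_k^{-1}\in W(p)$ and ${}^{p_k^{-1}}h_{\alpha_k}$ generates $L_p$, so $T(p)\subseteq L_p$; the symmetric computation with $W(p)$-stability of $T(p)$ gives $L_p\subseteq T(p)$, so they coincide.

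The only genuinely delicate point is the coroot identity used for (3): the earlier lemma asserts only that ${}^wh_\alpha$ is proportional to $h_{w\cdot\alpha}$, whereas here one needs ${}^wh_\alpha=h_{w\cdot\alpha}$ on the nose, equivalently that the $W$-action carries coroots to coroots as elements of the cocharacter lattice. This is standard, and I would either strengthen that lemma slightly or cite it; everything else is the bookkeeping above. A secondary caveat worth recording is that (2) and (3) really do use $p_0=e$ (to get $v_0=e$), matching the standing convention on walks; for a walk based at a general $w$ the statements would need to be conjugated by $w$.
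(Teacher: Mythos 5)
Your proof is correct and takes essentially the same approach as the paper's: the recursion $v_k = s_{p_k^{-1}\alpha_k}v_{k-1} = v_{k-1}s_{\pi_k^{-1}\alpha_k}$ at stay steps is a cleaner packaging of the paper's induction between consecutive stays (the paper works with $\pi_k\pi_{k'}^{-1}=s_{\alpha_k}p_kp_{k'}^{-1}$ for consecutive stays $k'<k$, which encodes the same information), and your treatment of part (3) via $W(p)$-invariance of the lattices together with part (2) matches the paper's argument exactly. Both of your caveats are apt: the paper's earlier lemma only asserts proportionality of ${}^wh_\alpha$ with $h_{w\cdot\alpha}$, though the normalization $\alpha(h_\alpha)=2$ forces the constant to be $1$; and the statement implicitly uses $p_0=e$ (so that $v_0=e$), which is also what makes the paper's base case $\pi_k=s_{\alpha_k}p_k$ at the first stay hold.
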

\begin{proof}
    We prove $1.$ y $2.$ simultaneously by induction on $k$. For the first stay, $\pi_k=s_{\alpha_k}p_k$ by its very definition. Hence,
    \[p_k^{-1}\pi_k=\underbrace{p_k^{-1}s_{\alpha_k}p_k}_{s_{p_k^{-1}\alpha_k}}=p_k^{-1}s_{\alpha_k}^3p_k=\underbrace{\pi_k^{-1}s_{\alpha_k}\pi_k}_{s_{\pi_k^{-1}\alpha_k}}.\]
    Let $k\in S_p$ and assume the result holds for the biggest $k'\in S_p$ less than $k$. We note that $\pi_k\pi_{k'}^{-1}=s_{\alpha_k}p_kp_{k'}^{-1}$. Therefore
    \begin{align*}
        s_{\pi_k^{-1}\alpha_k} = \pi_k^{-1}s_{\alpha_k}\pi_k &= \pi_{k'}^{-1} p_{k'}p_k^{-1}s_{\alpha_k} p_k p_{k'}^{-1}\pi_{k'} = \pi_{k'}^{-1} p_{k'}s_{p_k^{-1}\alpha_k} p_{k'}^{-1}\pi_{k'}
    \end{align*}
    and the first claim follows because $p_{k'}^{-1}\pi_{k'}$ belongs to the group generated by the previous elements. Finally,
    \begin{align*}
        p_{k}^{-1}\pi_{k} &= p_{k}^{-1}s_{\alpha_k} p_kp_{k'}^{-1}\pi_{k'} = s_{p_k^{-1}\alpha_k}p_{k'}^{-1}\pi_{k'}. 
    \end{align*}

    To prove $3.$, we note that $\langle ^{\pi_k^{-1}}h_k:k\in S_p\rangle$ and $\langle ^{p_k^{-1}}h_k:k\in S_p\rangle$ are invariant under the action of $W(p)$. Now, by $2.$, $^{p_k^{-1}}h_k=p_k^{-1}\pi_k\cdot ^{\pi_k^{-1}}h_k\in \langle ^{\pi_k^{-1}}h_k:k\in S_p\rangle$ and vice versa.
\end{proof}

\begin{Def}
    Let $\beta\in F(\Delta^+)$ and $p$ be an associated walk. We define $Q(p)\subset \t$ as the $\Z$-module generated by ${\pi_k}^{-1}\alpha_k$, $k\in S_p$. Let $Q=\langle \Phi \rangle_\Z$ be the lattice of roots. We define the center of $p$ as $Z(p):=(Q/Q(p))^*=\Hom{Q/Q(p)}{\C^\times}$ and its fundamental group as $\pi_1(p):=\t^{ss}(\Z)/T(p)$. Finally, let $I(p)$ be the image of $g_l:C_p\to T$ and $S(p)\subset T$ be the subgroup generated by all stabilizers of points of $C_p$. 
\end{Def}

\begin{thm}
    Let $\beta\in F(\Delta^+)$ and $p$ be an associated walk. Then:
    \begin{enumerate}
        \item the tangent algebra to the stabilizer of any point of $C_p$ is a subspace of $(T(p)\otimes \R)^\perp$,

        \item the tangent algebra to $I(p)$ is $T(p)\otimes \R$,
        
        \item $S(p)/Z(G)\simeq Z(p)$,

        \item $Z(p)$ is finite if and only if $C_p\to T^{ss}$ is surjective, and

        \item If $C_p\to T^{ss}$ is surjective, $\pi_1(p)$ is finite and \[g^{-1}_l(t)\simeq \C^{|U_p|}\times (\C^\times)^{|S_p|-\dim T^{ss}}\times \pi_1(p)\]
        for any $t\in T^{ss}$.
    \end{enumerate}
\end{thm}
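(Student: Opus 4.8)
The plan is to read everything off the explicit model $C_p\simeq\C^{U_p}\times(\C^\times)^{S_p}$ supplied by Lemma \ref{key-iso}, combined with the lemma identifying $W(p)$, $T(p)$ and $Q(p)$. Write $a=(a_k)$ for the coordinates. By Lemma \ref{key-iso}(2) the $k$-th coordinate is a $T$-weight vector of weight $\chi_k(t)=\alpha_k({}^{\pi_{k-1}}t)$, and since $\pi_k=s_{\alpha_k}\pi_{k-1}$ we have $\chi_k=-\,\pi_k^{-1}\!\cdot\!\alpha_k$ in $X^*(T)$; in particular $\ker\chi_k=\ker(\pi_k^{-1}\alpha_k)$ and $\langle\chi_k:k\in S_p\rangle_\Z=Q(p)$. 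By Lemma \ref{key-iso}(1), after the coordinate change $b_k=-a_k^{-1}$ the map $g_l$ becomes the morphism of tori $(\C^\times)^{S_p}\to T^{ss}$, $b\mapsto\prod_{k\in S_p}{}^{p_k^{-1}}\iota_{\alpha_k}\!\left(\begin{smallmatrix}b_k&0\\0&b_k^{-1}\end{smallmatrix}\right)$, that is, the homomorphism induced by the lattice map $\phi\colon\Z^{S_p}\to\t^{ss}(\Z)$, $e_k\mapsto{}^{p_k^{-1}}h_k$, whose image is $\img\phi=T(p)$. I would establish these three facts first; the five assertions then follow formally.

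For (1): the stabilizer of $a\in C_p$ is $\bigcap\{\ker\chi_k:a_k\neq0\}$, and since the $S_p$-coordinates never vanish this is contained in $\bigcap_{k\in S_p}\ker\chi_k$, whose Lie algebra is $\{h\in\t:\mu(h)=0\ \forall\mu\in Q(p)\}$. As the Killing form sends each coroot $h_\gamma$ to a nonzero multiple of $\gamma$, this space is exactly $(T(p)\otimes\R)^\perp$, proving (1). For (2): $I(p)=\img g_l$ is the (closed) subgroup of $T^{ss}$ generated by the one-parameter subgroups $b\mapsto{}^{p_k^{-1}}\iota_{\alpha_k}(\operatorname{diag}(b,b^{-1}))$, whose Lie algebras are the lines $\R\cdot{}^{p_k^{-1}}h_k$; hence $\operatorname{Lie}I(p)=\sum_{k\in S_p}\R\cdot{}^{p_k^{-1}}h_k=T(p)\otimes\R$.

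For (3) and (4): every stabilizer lies in $\bigcap_{k\in S_p}\ker\chi_k$, and equality is attained at any point whose $\C^{U_p}$-coordinates all vanish; therefore $S(p)=\bigcap_{k\in S_p}\ker\chi_k=\{t\in T:\mu(t)=1\ \forall\mu\in Q(p)\}$. Since $Z(G)=\bigcap_{\alpha\in\Phi}\ker\alpha$, evaluation identifies $T/Z(G)$ with $\Hom{Q}{\C^\times}$, and applying the exact functor $\Hom{-}{\C^\times}$ to $0\to Q(p)\to Q\to Q/Q(p)\to0$ identifies $S(p)/Z(G)$ with $\Hom{Q/Q(p)}{\C^\times}=Z(p)$; this is (3). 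For (4): $Z(p)$ is finite iff $Q/Q(p)$ is finite iff $\rk Q(p)=\rk Q=\dim T^{ss}$, which by the Killing duality above is equivalent to $\dim(T(p)\otimes\R)=\dim T^{ss}$, hence by (2) to $\dim I(p)=\dim T^{ss}$; since $I(p)$ is a closed subgroup of the connected torus $T^{ss}$ this happens iff $I(p)=T^{ss}$, i.e. iff $C_p\to T^{ss}$ is onto.

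Finally, for (5), assume $C_p\to T^{ss}$ is onto. By the equivalence just proved $T(p)$ is a full-rank sublattice of $\t^{ss}(\Z)$, so $\pi_1(p)=\t^{ss}(\Z)/T(p)$ is finite. Since $g_l$ does not involve the $\C^{U_p}$-coordinates, $g_l^{-1}(t)\simeq\C^{|U_p|}\times F_t$, where $F_t$ is the fibre over $t$ of the surjective homomorphism $(\C^\times)^{S_p}\to T^{ss}$ induced by $\phi$, a translate of its kernel $K$. To identify $K$ I would pass to character groups: from $1\to K\to(\C^\times)^{S_p}\to T^{ss}\to1$ one gets $X^*(K)\simeq\coker(\phi^\vee)$ with $\phi^\vee$ the transpose of $\phi$, and by Smith normal form this group has free rank $|S_p|-\dim T^{ss}$ and torsion subgroup $\coker(\phi)_{\mathrm{tors}}=\t^{ss}(\Z)/T(p)=\pi_1(p)$; hence $K\simeq(\C^\times)^{|S_p|-\dim T^{ss}}\times\widehat{\pi_1(p)}$ with $\widehat{\pi_1(p)}\simeq\pi_1(p)$, which together with the $\C^{|U_p|}$-factor gives the stated isomorphism. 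The only genuinely delicate step I expect is this lattice bookkeeping in (5) — tracking $\phi^\vee$ and checking that the component group of $K$ is precisely $\pi_1(p)$ rather than merely a finite group of the same order — together with extracting the weight $\chi_k$ and the formula for $g_l$ from Lemma \ref{key-iso} with the correct signs; parts (1)–(4) are then immediate from the explicit model of $C_p$ and the lemma relating $W(p)$, $T(p)$ and $Q(p)$.
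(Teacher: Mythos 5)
Your proof is correct and follows essentially the same route as the paper: both read everything off the explicit model $C_p\simeq\C^{U_p}\times(\C^\times)^{S_p}$ from Lemma~\ref{key-iso} and the lemma relating $W(p)$, $T(p)$ and $Q(p)$, using that $S(p)=\bigcap_{k\in S_p}\ker\chi_k$ and that $g_l$ is (after a coordinate change) a homomorphism of tori with cocharacter-lattice image $T(p)$. The only genuine stylistic divergence is in item (5): the paper invokes a general result from Onishchik--Vinberg identifying $\pi_0$ of the kernel of a surjective torus homomorphism with the cokernel of the induced map on $\pi_1$'s, whereas you re-derive the same fact by hand from Smith normal form applied to $\phi$ and its transpose; both are valid, and your version is slightly more self-contained at the cost of a bit more bookkeeping. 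Your treatment of (3) via applying the exact functor $\Hom(-,\C^\times)$ to $0\to Q(p)\to Q\to Q/Q(p)\to0$ is likewise an equivalent reformulation of the paper's computation $\mcE(Q(p)^*)/\mcE(Q^*)\simeq(Q/Q(p))^*$ via the exponential.
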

\begin{proof}
    We note that $\alpha_k(^{\pi_{k-1}}t)=1$ if and only if $\alpha_k(^{\pi_k}t)=1$. The first item is equivalent to
    \[\bigcap_{k\in S_p} {}^{\pi_k^{-1}}\ker \alpha_k\big|_{\t^{ss}}\subset T(p)^\perp\]
    which in turn is equivalent to
    \[\langle ^{\pi_k^{-1}}h_k:k\in S_p\rangle \supset T(p)\]
    which is true by the previous lemma. On the other hand, the second item is true if and only if 
    \[\langle ^{p_k^{-1}}h_k:k\in S_p\rangle=T(p)\]
    which is also true by the previous lemma.
    
    For $3.$ and $4.$, let $\mcE:\g \to G$ be the exponential $x\mapsto \exp(2\pi i x)$. We recall that $\mcE^{-1}(Z(G))=Q^*$ \cite[Theorem 7, 4.3.5]{OV}. Being $G$ connected, any $x\in T$ is of the form $\mcE(y)$ for some $y\in \t$. Now, $x\in S(p)$ if and only if $\mcE((\pi_k^{-1}\alpha_k)(y))=({\pi_k^{-1}}\alpha_k)(x)=1$ for all $k\in S_p$, in other words if $(\pi_k^{-1}\alpha_k)(y)\in \Z$ for all $k\in S_p$. Equivalently $y\in Q(p)^*$. Therefore 
    \[ S(p)/Z(G)\simeq \mcE(Q(p)^*)/\mcE(Q^*)\simeq Q(p)^*/Q^*\simeq (Q/Q(p))^*\]
    which is finite if and only if $Q(p)\otimes \R=(T(p)\otimes\R)^*=(\t^{ss})^*$.

   For $5.$, we note that by its very definition, $\coker((g_l)*:\pi_1(C_p)\to \pi_1(T^{ss})) = \t^{ss}(\Z)/T(p)=\pi_1(p)$ and it is finite as $T(p)\otimes\R=\t^{ss}$. Now, $g_l:\C^{U_p}\times (\C^\times)^{S_p}\to T$ is the composition of the projection onto the torus component, the multiplication by  $-1$, and a group homomorphism $g'$. It follows that $g_l^{-1}(t)\simeq \C^{U_p}\times \ker g'$ and that $\ker g'$ is a $|S_p|-\dim I(p)=|S_p|-\rk T(p)$ dimensional torus. Finally, by \cite[Theorem 4, 1.3.4]{OV}, $\pi_0(\ker g')\simeq \coker((g_l)*)=\pi_1(p)$.
\end{proof}

\begin{ex}\label{ex:bad-cell}
    It may happen that $Z(p)$ is finite but non-trivial. For example, let $G=\Sp_4(\C)$ for the bilinear form
    \[\left(\begin{array}{cccc}
        0 & 0 & 0 & 1 \\
        0 & 0 & 1 & 0 \\
        0 & -1 & 0 & 0 \\
        -1 & 0 & 0 & 0
    \end{array}\right)\]
    see \cite[Example 4.1.5.8]{OV}. Then $Z(G)= \{\pm\operatorname{Id}_4\}$ and $T=\{\operatorname{diag}(x,y,x^{-1},y^{-1}):x,y\in \C^*\}$ is a maximal torus. Let $\alpha_1=2E_{22}^*$ and $\alpha_2=E_{11}^*-E_{22}^*$. We consider the following walk:
    \begin{table}[H]
        \centering
        \begin{tabular}{c|cccccccc}
            $k$ & $1$ & $2$ & $3$ & $4$ & $5$ & $6$ & $7$ & $8$  \\
            \hline 
            root & $\alpha_1$ & $\alpha_1$ & $\alpha_1$ & $\alpha_2$ & $\alpha_1$ & $\alpha_1$ & $\alpha_1$ & $\alpha_2$ \\
            step & up & stay & down & up & up & stay & down & down \\
            $p_k$ & $s_{\alpha_1}$ & $s_{\alpha_1}$ & $\operatorname{Id}$ & $s_{\alpha_2}$ & $s_{\alpha_1}s_{\alpha_2}$ & $s_{\alpha_1}s_{\alpha_2}$ & $s_{\alpha_2}$ & $\operatorname{Id}$ \\
            $\pi_k$ & $s_{\alpha_1}$ & $\operatorname{Id}$ & $s_{\alpha_1}$ & $s_{\alpha_2}s_{\alpha_1}$ & $s_{\alpha_1}s_{\alpha_2}s_{\alpha_1}$ & $s_{\alpha_2}s_{\alpha_1}$ & $s_{\alpha_1}s_{\alpha_2}s_{\alpha_1}$ & $s_{\alpha_2}s_{\alpha_1}s_{\alpha_2}s_{\alpha_1}$
        \end{tabular}
        \caption{A walk with $I(p)=T$ and non-trivial center $Z(p)$ on $\Sp_4(\C)$.}
    \end{table}

    It has
    \[T(p):=\langle ^{s_{\alpha_1}}h_{\alpha_1}, ^{s_{\alpha_1}s_{\alpha_2}}h_{\alpha_1}\rangle =\langle \operatorname{diag}(0,1,0,-1), \operatorname{diag}(1,0,-1,0)\rangle\]
    and $I(p)=T$. Meanwhile,
    \[S(p)=\ker(E_{22}^{*2}) \cap \ker (E_{11}^{*2})\simeq (\Z/2\Z)^2.\]
    In this case, $\pi_1(p)$ is trivial.

    Similar counterexample can be built for types $B_n$, $C_n$, $F_4$ y $G_2$ to get interesting $Z(p)$ or $\pi_1(p)$. However, in Theorem \ref{thm:no-quotients-typeA-unipvar}, we will show that it is not the case for type $a$.
\end{ex}

Now, let us turn to general $\beta\in F(\Delta^+,W^2)$. Using $T$-equivariancy, we can move all varieties over $T$ to the left at the cost of twisting the action. For $\pi_1,\pi_2,w\in W$, we define  ${}^w\rho(\pi_1,\pi_2)$ as follows. Its underlying variety is $\rho(\pi_1,\pi_2)$. Its morphism to $G$ is the composition of the one of $\rho(\pi_1,\pi_2)$ followed by the conjugation by $w$. Its $T$-action is given by precomposing with $w^{-1}$ before acting. Let 
\[T(\pi_1,\pi_2,w):= w\cdot ( (\operatorname{id}_\t- \pi_1) (\t(\Z)) + (\operatorname{id}_\t - \pi_2)(\t(\Z)) )\]
and ${}^wT^{\langle \pi_1,\pi_2\rangle }$ be the image under $w$ of the fixed points under both $\pi_1$ and $\pi_2$.

\begin{lema}
    Let $\pi_1,\pi_2,w\in W$. Then \begin{enumerate}
        \item ${}^w\rho(\pi_1,\pi_2)$ is $w\pi_2\pi_1\pi_2^{-1}\pi_1^{-1}w^{-1}$-equivariant,
        \item the tangent algebra to the image of ${}^w\rho(\pi_1,\pi_2)\to T$ is $T(\pi_1,\pi_2,w)\otimes \R$, and
        \item the stabilizer of any point of ${}^w\rho(\pi_1,\pi_2)$ is ${}^wT^{\langle \pi_1,\pi_2\rangle}$ and is tangent algebra is $(T(\pi_1,\pi_2,w)\otimes\R)^\perp$.
    \end{enumerate}
\end{lema}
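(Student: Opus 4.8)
The plan is to treat (1) as a formal consequence of the equivariance already established for $\rho(\pi_1,\pi_2)$, and to deduce (2) and (3) from the untwisted case by transporting everything along the orthogonal automorphism $t\mapsto{}^wt$ of $T$. For (1), I would first record the general principle that if $(X,f)$ is a $v$-twisted $T$-variety over $G$, then the $w$-twist ${}^wX$ — with structure map $x\mapsto{}^wf(x)$ and with $t$ acting by the old action of ${}^{w^{-1}}t$ — is $wvw^{-1}$-equivariant; this drops out of the identity $f({}^{w^{-1}}t\cdot x)={}^v({}^{w^{-1}}t)\,f(x)\,({}^{w^{-1}}t)^{-1}$ upon conjugating by $w$. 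Feeding in that $\rho(\pi_1,\pi_2)$ is $\pi_2\pi_1\pi_2^{-1}\pi_1^{-1}$-equivariant then gives exactly (1).

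For (2) and (3) the key observation is that $f$ sends $\rho(\pi_1,\pi_2)=T\times T$ into the abelian group $T$, so it is a homomorphism of tori $\mu\colon T\times T\to T$: its image is a subtorus with Lie algebra $\operatorname{im}(d\mu)$, and the stabilizer of a point is a subgroup that does not depend on the point. Passing to ${}^w\rho(\pi_1,\pi_2)$ replaces $\mu$ by ${}^w\mu$ and the stabilizer by its $w$-conjugate, so it is enough to settle $w=e$ and then apply $t\mapsto{}^wt$ at the end. The workhorse for the $w=e$ computations will be the elementary fact that for a finite-order orthogonal transformation $\sigma$ of a Euclidean space $V$ one has $(\operatorname{id}-\sigma)(V)=(V^\sigma)^\perp$, whence $\sum_i(\operatorname{id}-g_i)(V)=\bigl(V^{\langle g_1,\dots,g_n\rangle}\bigr)^\perp$ for any finite set of such transformations; crucially, this sum depends only on the subgroup generated and is carried by conjugation onto the conjugate subgroup. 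I would fix a $W$-invariant inner product on $\t\otimes\R$ so that this applies to every element of $W$.

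For (2), rearranging $\mu(t_1,t_2)={}^{\pi_1}t_1\,{}^{\pi_1\pi_2}t_2\,{}^{\pi_1\pi_2}t_1^{-1}\,{}^{\pi_1\pi_2\pi_1^{-1}}t_2^{-1}$ inside the abelian group $T$ separates the $t_1$- and $t_2$-dependence, and differentiating identifies the tangent space of the image with
\[
(\operatorname{id}-\pi_1\pi_2\pi_1^{-1})(\t)+(\operatorname{id}-\pi_1\pi_2\pi_1^{-1}\pi_2^{-1}\pi_1^{-1})(\t).
\]
Conjugating the generating pair $\{\pi_1\pi_2\pi_1^{-1},\ \pi_1\pi_2\pi_1^{-1}\pi_2^{-1}\pi_1^{-1}\}$ by $\pi_1^{-1}$ turns it into $\{\pi_2,\ \pi_2\pi_1^{-1}\pi_2^{-1}\}$, which generates $\langle\pi_1,\pi_2\rangle$; hence the two subgroups generated coincide, and the fact above rewrites the sum as $(\operatorname{id}-\pi_1)(\t)+(\operatorname{id}-\pi_2)(\t)$. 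Applying the orthogonal map $w$ gives $T(\pi_1,\pi_2,w)\otimes\R$.

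For (3), I would plug into the $T$-action formula for $\rho(\pi_1,\pi_2)$ recorded earlier and observe that the $t_i$ cancel out of the fixed-point condition, which becomes ${}^at={}^bt={}^ct$ for $a=\pi_1^{-1}\pi_2\pi_1\pi_2^{-1}\pi_1^{-1}$, $b=\pi_2\pi_1\pi_2^{-1}\pi_1^{-1}$, $c=\pi_1\pi_2^{-1}\pi_1^{-1}$. The (purely Weyl-group) identities $ab^{-1}=\pi_1^{-1}$ and $cb^{-1}=\pi_2^{-1}$ reduce this to ${}^bt\in T^{\pi_1}\cap T^{\pi_2}=T^{\langle\pi_1,\pi_2\rangle}$; since $b^{-1}=\pi_1\pi_2\pi_1^{-1}\pi_2^{-1}$ lies in $\langle\pi_1,\pi_2\rangle$ it fixes $T^{\langle\pi_1,\pi_2\rangle}$ pointwise, so the stabilizer is $T^{\langle\pi_1,\pi_2\rangle}$ itself, hence ${}^wT^{\langle\pi_1,\pi_2\rangle}$ after twisting. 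Its tangent algebra is ${}^w(\t^{\pi_1}\cap\t^{\pi_2})$, and applying the orthogonality fact $\t^{\pi_1}\cap\t^{\pi_2}=\bigl((\operatorname{id}-\pi_1)(\t)+(\operatorname{id}-\pi_2)(\t)\bigr)^\perp$ followed by the orthogonal map $w$ turns this into $(T(\pi_1,\pi_2,w)\otimes\R)^\perp$. I expect the only real friction to be bookkeeping: keeping the $w$-twisting conventions aligned with the earlier lemmas, and checking the handful of Weyl-group identities ($ab^{-1}=\pi_1^{-1}$, $cb^{-1}=\pi_2^{-1}$, and the conjugation collapsing the two generating sets onto $\langle\pi_1,\pi_2\rangle$) — once those are in hand the geometry is forced, since $\sum_i(\operatorname{id}-g_i)(\t)$ only ever sees the subgroup generated.
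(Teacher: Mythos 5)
Your proposal is correct and follows essentially the same route as the paper. Both arguments turn on the same key facts: (1) is the general conjugation-of-twist observation combined with the earlier-established $\pi_2\pi_1\pi_2^{-1}\pi_1^{-1}$-equivariance; (2) and (3) are Lie-algebra computations using that for an orthogonal finite-order $\sigma$ one has $(\operatorname{id}-\sigma)(\t)=(\t^\sigma)^\perp$, which is precisely the inner-product calculation the paper performs at the end of item 3 and implicitly in item 2. Your packaging of this as ``$\sum_i(\operatorname{id}-g_i)(\t)$ depends only on $\langle g_1,\dots,g_n\rangle$'' makes the reduction to $\langle\pi_1,\pi_2\rangle$ slightly cleaner, and your choices of $a,b,c$ and of which element to conjugate by differ from the paper's (which uses $w\pi_1\pi_2\pi_1^{-1}$ and the generators $\{\pi_2^{-1}\pi_1\pi_2,\pi_2\}$), but both are valid bookkeeping of the same Weyl-group manipulation and land on the same answer, since all the conjugators lie in $\langle\pi_1,\pi_2\rangle$ and hence act trivially on $T^{\langle\pi_1,\pi_2\rangle}$.
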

\begin{proof}
    $1.$ is a direct calculation. For $2.$, we have from the very definition of ${}^w\rho(\pi_1,\pi_2)$ that the tangent algebra to its image is 
    \[ w\pi_1\pi_2\cdot (\img(\operatorname{id}_\t-\pi_1^{-1})+\img(\operatorname{id}_\t-\pi_1^{-1}))\]
    but $(\img(\operatorname{id}_\t-\pi_1^{-1})+\img(\operatorname{id}_\t-\pi_1^{-1}))$ is invariant under both $\pi_1$ and $\pi_2$.
    
    We recall for $3.$ how the $T$-action is given:
    \[t\cdot (t_1,t_2)=(^{\pi_1^{-1}\pi_2\pi_1\pi_2^{-1}\pi_1^{-1}w^{-1}}tt_1{}^{\pi_2\pi_1\pi_2^{-1}\pi_1^{-1}w^{-1}}t^{-1},^{\pi_1\pi_2^{-1}\pi_1^{-1}w^{-1}}tt_2{}^{\pi_2\pi_1\pi_2^{-1}\pi_1^{-1}w^{-1}}t^{-1}).\]
    Hence, all stabilizers agree with the image under $w\pi_1\pi_2\pi_1^{-1}$ of the fixed point set of
    \[\langle \pi_2^{-1}\pi_1\pi_2, \pi_2\rangle = \langle \pi_1,\pi_2\rangle.\]
    For the tangent algebra, we note that the tangent algebra to $T^{\langle \pi_1,\pi_2\rangle}$ is $\t^{\pi_1}\cap \t^{\pi_2}$ and that $v\in \t^{\pi_i}$ if and only if
    \begin{align*}
        \langle v-\pi_iv, v' \rangle =       \langle v,v'\rangle-\langle \pi_iv,v'\rangle =
        \langle v,v'\rangle-\langle v,\pi_i^{-1}v'\rangle =
        \langle v,v'-\pi_i^{-1}v'\rangle &= 0
    \end{align*}
    for any $v'\in \t$, in other words, $v\in (\img(\pi_1^{-1}-\operatorname{Id}))^\perp$.
\end{proof}

By convolution, we have an extension $F(\Delta^+,W^3)\to \TwBTVarG$ of $\rho$ such that any element of its image is $B$-equivariantly isomorphic to one coming from the subset $F(W^3)\times F(\Delta^+)$ given by juxtaposition. Let $\beta\in  F(W^3)\times F(\Delta^+)$ and $p$ be an associated walk. Write is $F(W^3)$-part as $(\pi_1^{(1)},\pi_2^{(1)},w_1)\cdots (\pi_1^{(m)},\pi_2^{(m)},w_m)$. We say that $\beta$ is well-twisted if, for any $k$, $w_k$ belongs to the subgroup generated by $\pi_j^{(i)}$, $i< k$, $j=1,2$. Examples of such $\beta$ are the ones coming from the handle operation.

\begin{lema}\label{lem:stab-conmutator-variety}
    Let $\beta=(\pi^{(1)},,w_1)\cdots (\pi^{(m)},w_m)\in F(W^3)$ be well-twisted. Then \begin{enumerate}
        \item the tangent algebra to the image of $\rho(\beta)\to T$ is $\bigoplus_{i=1}^m T(\pi_1,\pi_2,e)\otimes \R$, and
        \item the stabilizer of any point of $\rho(\beta)$ is $\bigcap_{i=1}^m T^{\langle \pi_1^{(i)},\pi_2^{(i)}\rangle}$.
    \end{enumerate}
\end{lema}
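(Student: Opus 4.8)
The plan is to argue by induction on the number $m$ of commutator factors, splitting off the rightmost one as $\rho(\beta)=\rho(\beta')*{}^{w_m}\rho(\pi_1^{(m)},\pi_2^{(m)})$, where $\beta'=(\pi_1^{(1)},\pi_2^{(1)},w_1)\cdots(\pi_1^{(m-1)},\pi_2^{(m-1)},w_{m-1})$ is again well-twisted, and invoking the single-factor computation of the previous lemma. For $k\le m$ write $\Gamma_k:=\langle \pi_1^{(i)},\pi_2^{(i)}\mid i\le k\rangle\le W$ and $\Gamma:=\Gamma_m$; well-twistedness says $w_k\in\Gamma_{k-1}$. Fixing a $W$-invariant inner product on $\t$ and using the identity $\img(\operatorname{id}-\pi)=(\t^\pi)^\perp$ (already used in the proof of the previous lemma), one has $\bigcap_{i=1}^m T^{\langle\pi_1^{(i)},\pi_2^{(i)}\rangle}=T^\Gamma$ and $\sum_{i=1}^m T(\pi_1^{(i)},\pi_2^{(i)},e)\otimes\R=(\bigcap_i\t^{\langle\pi_1^{(i)},\pi_2^{(i)}\rangle})^\perp=(\t^\Gamma)^\perp$. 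So it suffices to show that the stabiliser of every point is $T^\Gamma$ and that the tangent algebra to the image in $T$ is $(\t^\Gamma)^\perp$. (Note first that $\rho(\beta)$ does map into $T$: Weyl representatives normalise $T$, so every commutator variety and every ${}^w\rho$ lands in $T$, and convolution takes the map to $G$ to the product of the maps to $G$.)

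For part (1): since $T$ is abelian the image in $T$ of the convolution is the product of the images of the two factors, hence (these being connected subgroups) its tangent algebra is the sum of the two. By induction the first summand is $(\t^{\Gamma_{m-1}})^\perp$ and by the previous lemma the second is $T(\pi_1^{(m)},\pi_2^{(m)},w_m)\otimes\R=(\t^{\,{}^{w_m}\langle\pi_1^{(m)},\pi_2^{(m)}\rangle})^\perp$. Both lie in $(\t^\Gamma)^\perp$ since $\Gamma$ contains $\Gamma_{m-1}$ and ${}^{w_m}\langle\pi_1^{(m)},\pi_2^{(m)}\rangle$, and their sum equals $(\t^{\Gamma_{m-1}}\cap\t^{\,{}^{w_m}\langle\pi_1^{(m)},\pi_2^{(m)}\rangle})^\perp=(\t^{H})^\perp$ with $H=\langle\Gamma_{m-1},{}^{w_m}\langle\pi_1^{(m)},\pi_2^{(m)}\rangle\rangle$. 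Since $w_m\in\Gamma_{m-1}\le H$ and ${}^{w_m}\langle\pi_1^{(m)},\pi_2^{(m)}\rangle\le H$, conjugating the latter back by $w_m$ gives $\langle\pi_1^{(m)},\pi_2^{(m)}\rangle\le H$, so $H=\Gamma_m=\Gamma$, as needed.

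For part (2): the $T$-action on $\rho(\beta')*{}^{w_m}\rho(\pi_1^{(m)},\pi_2^{(m)})$ is the twisted one from Section \ref{sec:B-varieties} (all $U$-actions here being trivial), so $t$ acts on the $\rho(\beta')$-coordinate through ${}^{\sigma_m}t$, where $\sigma_m=w_m\,\pi_2^{(m)}\pi_1^{(m)}(\pi_2^{(m)})^{-1}(\pi_1^{(m)})^{-1}w_m^{-1}$ is the twist of the second factor, and on the second coordinate through $t$ itself. Hence $t$ stabilises a point iff ${}^{\sigma_m}t$ stabilises the $\rho(\beta')$-coordinate and $t$ stabilises the other; by induction the first condition reads ${}^{\sigma_m}t\in T^{\Gamma_{m-1}}$ and by the previous lemma the second reads $t\in{}^{w_m}T^{\langle\pi_1^{(m)},\pi_2^{(m)}\rangle}$ — in particular the stabiliser is independent of the point, equal to $T^{H'}$ with $H'=\langle{}^{\sigma_m^{-1}}\Gamma_{m-1},\ {}^{w_m}\langle\pi_1^{(m)},\pi_2^{(m)}\rangle\rangle$. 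Now $\sigma_m={}^{w_m}(\pi_2^{(m)}\pi_1^{(m)}(\pi_2^{(m)})^{-1}(\pi_1^{(m)})^{-1})\in{}^{w_m}\langle\pi_1^{(m)},\pi_2^{(m)}\rangle\le H'$, so conjugating ${}^{\sigma_m^{-1}}\Gamma_{m-1}\le H'$ by $\sigma_m\in H'$ gives $\Gamma_{m-1}\le H'$; then $w_m\in\Gamma_{m-1}\le H'$ forces $\langle\pi_1^{(m)},\pi_2^{(m)}\rangle\le H'$, whence $H'=\Gamma$ and the stabiliser is $T^\Gamma$.

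The main obstacle is the group-theoretic bookkeeping in the inductive step: one must verify that the conjugations by the twist elements $\sigma_m$ and $w_m$ introduced by the twisted convolution do not shrink the generated reflection subgroup, so that the image in $T$ and the point-stabiliser remain governed by the ``untwisted'' group $\Gamma=\langle\pi_j^{(i)}\rangle$. Well-twistedness ($w_k\in\Gamma_{k-1}$) is precisely the hypothesis that lets this elementary conjugation argument close up; without it the extra conjugates could genuinely change these fixed-point subgroups and the clean formulas would fail.
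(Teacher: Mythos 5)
Your proof is correct and follows essentially the same inductive strategy as the paper's: peel off the last commutator factor, apply the single-factor lemma to it and the inductive hypothesis to the rest, and use well-twistedness ($w_m\in\Gamma_{m-1}$) to absorb the conjugations by $\sigma_m$ and $w_m$ back into $\Gamma$. Your writeup is somewhat more explicit than the paper's — in part (1) it spells out why $\sum_i T(\pi_1^{(i)},\pi_2^{(i)},w_i)$ collapses to $(\t^\Gamma)^\perp$ via orthogonal complements, and in part (2) it verifies both containments of the stabilizer rather than only the forward one — but the core idea is identical.
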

\begin{proof}
    For the first item we just need to note that $T(\pi_1,\pi_2,e)\oplus V$ is $\langle \pi_1,\pi_2\rangle$-invariant for any subspace $V$ of $\t$.

    We prove $2.$ by induction on $m$. For $m=1$, there is nothing to be done. Assume the result is true for $m-1$. Let $t$ be a point in some stabilizer and $H=\langle \pi_1^{(i)},\pi_2^{(i)}:i<m\rangle $. Then \[{}^{w_m\pi_2^{(m)}\pi_1^{(m)}\left(\pi_2^{(m)}\right)^{-1}\left(\pi_1^{(m)}\right)^{-1}w_m^{-1}}t \in  T^H\]
    by inductive hypothesis. In addition, we know that ${}^{w_m^{-1}}t\in T^{\langle \pi_1^{(m)},\pi_2^{(m)}\rangle}$ by the previous lemma. Therefore, $t\in T^H$. Being $\beta$ well-twisted, $w_m\in H$ and therefore $t= {}^{w_m^{-1}}t\in T^{\langle \pi_1^{(m)},\pi_2^{(m)}\rangle}$.
\end{proof}

Let $\beta\in  F(W^3)\times F(\Delta^+)$ be well-twisted and $p$ be an associated walk. We define  
\[S_{\beta}(p):=S(p)\bigcap_{i=1}^m{}^{\pi_l^{-1}}T^{\langle \pi_1^{(i)},\pi_2^{(i)}\rangle},\]
\[Q_{\beta}(p):= Q(p)+\langle \pi_l ((\pi_j^{(k)})^{-1}-\operatorname{Id})(Q): k=1,\ldots,m;j=1,2\rangle_\R,\]
\[Z_{\beta}(p):=(Q/Q\cap Q_{\beta}(p))^*,\ \]
\[T_{\beta}(p):= T(p) + p_l^{-1}\cdot \left(T\left(\pi_1^{(m)},\pi_2^{(m)},e\right) + \dots + T\left(\pi_1^{(1)},\pi_2^{(1)},e\right)\right),\] and \[\pi_{1,\beta}(p):=\t^{ss}(\Z)/T_{\beta}(p).\]
Finally, let $I_\beta(p)$ be the image of $g_l:C_p\to T$.

\begin{thm}
    Let $\beta\in F(W^3)\times F(\Delta^+)$ be well-twisted and $p$ be an associated walk. Then:
    \begin{enumerate}
        \item the tangent algebra to the stabilizer of any point of $C_p$ is a subspace of $(T_\beta(p)\otimes \R)^\perp$,

        \item the tangent space to $I_\beta(p)$ is $T_\beta(p)\otimes \R$,
        
        \item $S_\beta(p)/Z(G)\simeq Z_\beta(p)$, 

        \item $Z_\beta(p)$ is finite if and only if $C_p\to T^{ss}$ is surjective, and

        \item if $C_p\to T^{ss}$ is surjective, $\pi_{1,\beta}(p)$ is finite and 
        \[g_l^{-1}(t)\simeq \C^{|U_p|}\times (\C^\times)^{2g+|S_p|-\dim T^{ss}}\times \pi_{1,\beta}(p)\]
        for any $t\in T^{ss}$, where $g$ is the number of letters of the $F(W^3)$-part of $\beta$.
    \end{enumerate}
\end{thm}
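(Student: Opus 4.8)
The plan is to deduce the statement from the already-proven $F(\Delta^+)$-case of the theorem together with Lemma~\ref{lem:stab-conmutator-variety}, using $T$-equivariancy to separate the commutator (torus-valued) varieties from the unipotent-walk part.

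First I would set up a product decomposition of $C_p$. Write the $F(W^3)$-part of $\beta$ as $(\pi_1^{(1)},\pi_2^{(1)},w_1)\cdots(\pi_1^{(m)},\pi_2^{(m)},w_m)$, so $m=g$, and let $\beta'\in F(\Delta^+)$ be the $F(\Delta^+)$-part. As in the proof of the last Corollary of Section~\ref{sec:walk-stratification}, I move all the commutator varieties to the left using $T$-equivariancy; since $\beta$ is well-twisted this identifies $\rho(\beta)$, $B$-equivariantly over $G$, with ${}^{w_1}\rho(\pi_1^{(1)},\pi_2^{(1)})*\cdots*{}^{w_m}\rho(\pi_1^{(m)},\pi_2^{(m)})*\rho(\beta')$, whose left-hand factor is the variety $\rho((\pi^{(1)},w_1)\cdots(\pi^{(m)},w_m))$ of Lemma~\ref{lem:stab-conmutator-variety}; call it $C_{\mathrm{comm}}$. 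Since $C_{\mathrm{comm}}$ maps into $T\subset B=BeB$, this left factor does not affect Bruhat positions, so a walk $p$ for $\beta$ is the same as a walk for $\beta'$ and $C_p\cong C_{\mathrm{comm}}\times C_p'$, where $C_p'$ denotes the walk cell of $\beta'$, which by the $F(\Delta^+)$-case is $T$-equivariantly $\C^{|U_p|}\times(\C^\times)^{|S_p|}$ with the explicit formulas for $g_l$ and the $T$-action recorded there.

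Next I combine the two descriptions. Parametrizing the terminal Bruhat cell $Bp_lB$ by $U^-_{p_l^{-1}}\times T\times U$, the element $f_{\mathrm{comm}}(x)\in T$ passes to the right of $p_l$ at the cost of conjugation by $p_l^{-1}$, so $g_l=({}^{p_l^{-1}}f_{\mathrm{comm}})\cdot g_l'$ with $g_l'$ the already-understood map on $C_p'$. Taking tangent algebras of images and invoking item~1 of Lemma~\ref{lem:stab-conmutator-variety} — whose image has tangent algebra $\sum_i T(\pi_1^{(i)},\pi_2^{(i)},e)\otimes\R$, the $w_i$ dropping out because each summand is $\langle\pi_1^{(i)},\pi_2^{(i)}\rangle$-invariant — together with item~(2) of the $F(\Delta^+)$-case, shows $\img g_l$ has tangent algebra $T_\beta(p)\otimes\R$, which is item~(2). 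For item~(1): the $T$-action on $C_{\mathrm{comm}}\times C_p'$ coming from the convolution is diagonal up to a fixed Weyl-conjugation of one factor, so a stabilizer is the intersection of the correspondingly conjugated component stabilizers; by item~2 of Lemma~\ref{lem:stab-conmutator-variety} (transported through the parametrization using $ws_\alpha w^{-1}=s_{w\cdot\alpha}$) and item~(1) of the $F(\Delta^+)$-case this intersection lies in $(p_l^{-1}\sum_iT(\pi_1^{(i)},\pi_2^{(i)},e)\otimes\R)^\perp\cap(T(p)\otimes\R)^\perp=(T_\beta(p)\otimes\R)^\perp$; that the resulting subgroup equals $S_\beta(p)$ uses $p_l^{-1}\pi_l\in W(p)$ and that $W(p)$ stabilizes the relevant lattices. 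Item~(3) then follows by rerunning the exponential argument of the $F(\Delta^+)$-case with $\mcE\colon\g\to G$, $\mcE^{-1}(Z(G))=Q^*$: an element $\mcE(y)$ lies in $S_\beta(p)$ iff $(\pi_k^{-1}\alpha_k)(y)\in\Z$ for $k\in S_p$ and $y$ annihilates each $\pi_l((\pi^{(i)}_j)^{-1}-\mathrm{id})(Q)$, i.e.\ iff $y$ is dual to $Q_\beta(p)$, whence $S_\beta(p)/Z(G)\simeq(Q/Q\cap Q_\beta(p))^*=Z_\beta(p)$.

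Finally, $Z_\beta(p)$ is finite iff $Q\cap Q_\beta(p)$ has full rank in $Q$, i.e.\ iff $Q_\beta(p)=\t^{ss}$, which, via the pairing between the $\alpha_k$-side lattices $Q(\cdot)$ and the $h_k$-side modules $T(\cdot)$ used throughout Section~\ref{sec:cupping}, is equivalent to $T_\beta(p)\otimes\R=\t^{ss}$, hence by item~(2) to surjectivity of $C_p\to T^{ss}$; that is item~(4). Under that hypothesis $g_l$ on the toral part $C_{\mathrm{comm}}\times(\C^\times)^{|S_p|}$ is, up to the sign automorphism, a surjective homomorphism of tori onto $T^{ss}$, so $g_l^{-1}(t)\cong\C^{|U_p|}\times\ker$ with $\ker$ a torus of the stated rank and $\pi_0(\ker)\simeq\coker(\pi_1(C_{\mathrm{comm}}\times(\C^\times)^{|S_p|})\to\pi_1(T^{ss}))=\t^{ss}(\Z)/T_\beta(p)=\pi_{1,\beta}(p)$, finite since $T_\beta(p)\otimes\R=\t^{ss}$; that is item~(5). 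I expect the main obstacle to be the bookkeeping of Weyl twists: that after moving the commutator varieties left the effective twists are exactly those forced by well-twistedness, that they may be discarded on images by $\langle\pi_1^{(i)},\pi_2^{(i)}\rangle$-invariance, and that $p_l$ versus $\pi_l$ is immaterial because they differ by an element of $W(p)$ fixing $Q(p)$ and $T(p)$ — together with matching the two dual lattice pictures used in item~(4).
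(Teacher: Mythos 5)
Your proposal follows essentially the same strategy as the paper's proof: reduce to the already-established $F(\Delta^+)$-case via the convolution factorization $C_p \cong C_{\mathrm{comm}}\times C_p'$, invoke Lemma~\ref{lem:stab-conmutator-variety} for the commutator factor, compute the tangent algebra of $\img g_l$ as a sum and of stabilizers as an intersection (modulo $W(p)$-invariance to reconcile the $p_l^{-1}$ versus $\pi_l^{-1}$ twists), and run the $\mcE^{-1}(Z(G))=Q^*$ exponential argument for item~(3) and the torus-kernel argument for items~(4)–(5). You merely spell out the factorization and the bookkeeping of Weyl twists more explicitly than the paper does; the content is the same.
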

\begin{proof}
    We have already proved it for $\beta\in F(\Delta^+)$. For the general case, we apply the previous lemma. The tangent algebra to $I_\beta(p)$ is
    \[T_\beta(p) = T(p) + p_l^{-1}\cdot \left(T\left(\pi_1^{(m)},\pi_2^{(m)},e\right) + \dots + T\left(\pi_1^{(1)},\pi_2^{(1)},e\right) \right)\]
    and any stabilizer is contained in
    \[ S(p) \bigcap_{i=1}^m{}^{\pi_l^{-1}}T^{\langle \pi_1^{(i)},\pi_2^{(i)}\rangle} \]
    whose tangent algebra is
    \[\pi_l^{-1}\cdot \left(T\left(\pi_1^{(m)},\pi_2^{(m)},e\right)^\perp \bigcap \dots \bigcap T\left(\pi_1^{(1)},\pi_2^{(1)},e\right)^\perp\right) \cap T(p)^\perp = T_\beta(p)^\perp.\]
    This shows $1.$ and $2.$ as $T(p)$ and $T_\beta(p)$ are $W(p)$-invariant and $\pi_l^{-1}p_l\in W(p)$.
    
    For $3.$, we shall check that $\mcE(x)\in {}^wT^{\pi}$ if and only if $x\in \langle w(\pi^{-1}-\operatorname{id})(\Phi)\rangle_\R^*$. The second condition means that $\alpha(\mcE(w\pi^{-1}x-wx))=0$ or any $\alpha\in Q\otimes\R$. Equivalently, that ${}^{w}({}^{\pi_1^{-1}}\mcE(x)\mcE(x)^{-1})$ is trivial.

    The proof of $4.$ and $5.$ are the same than for $\beta\in F(\Delta^+)$.     
\end{proof}

\begin{coro}
    Let $g,k$ be non-negative integers, $\mcC_1,\ldots,\mcC_{k-1}$ be nice conjugacy classes and $C_k\in T$. Then $(\mcZ(D^\dagger)\circ\mcZ(H)^g\circ \mcZ(L_{\mcC_1})\circ \cdots \mcZ(L_{\mcC_{k-1}}))(\mcU(C_k))$ has a $B$-equivariant cell decomposition by varieties of the form $\C^i\times (\C^\times)^j$. 
\end{coro}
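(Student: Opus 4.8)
The plan is to unwind $\mcZ(D^\dagger)$ as ``take the fibre over $e\in G$'' (recall from Section~\ref{sec:field-theory} that $\mcF(D^\dagger)$ is taking the fibre over the identity, and $\mcZ=\mcQ\circ\mcF$) and then read the answer off from the two preceding sections. Write $V:=(\mcZ(H)^g\circ \mcZ(L_{\mcC_1})\circ\cdots\circ\mcZ(L_{\mcC_{k-1}}))(\mcU(C_k))$ and let $h\colon V\to G$ be its structure map, so that $(\mcZ(D^\dagger)\circ\mcZ(H)^g\circ\cdots)(\mcU(C_k))=h^{-1}(e)$. Since $e$ lies in the big cell $B=BeB$ and ``$h(z)\cdot eB\subset BeB$'' is equivalent to ``$h(z)\in B$'', I would first record that $h^{-1}(e)\subset h^{-1}(B)=V_{e,e}$, so only the $w=w'=e$ part of $V$ matters. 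By the corollary at the end of Section~\ref{sec:walk-stratification}, after moving all torus-valued factors to the left, $V_{e,e}$ decomposes as a finite disjoint union --- indexed by the well-twisted words $\beta\in F(W^3)\times F(\Delta^+)$ produced by the handle and puncture operations and by walks $p\in\mcW_{e,e}(\beta)$ --- of cells of the form $U\times C_p$, up to multiplication by affine spaces (the $U^+_{\pi_i}$-factors of the $K$'s and the $U^+_\pi\cap N$-factors of the $L$'s, on which $h$ does not depend); here $C_p$ is the walk-cell inside $\rho(\beta)$, and by the $w=e$ clause every such cell is $B$-invariant.

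Next I would compute $h^{-1}(e)$ on one cell $U\times C_p$. By construction the restriction of $h$ is $(u,z)\mapsto u\,f_\beta(z)$, and for $z\in C_p$ the point $f_\beta(z)$ lies in $B=BeB$. Writing $f_\beta(z)=g_l(z)v(z)$ in Bruhat coordinates $B=TU$ --- so that $g_l\colon C_p\to T$ is precisely the map analysed in Section~\ref{sec:cupping} --- we have $u\,f_\beta(z)=g_l(z)\bigl(g_l(z)^{-1}u\,g_l(z)\,v(z)\bigr)$, whence $h(u,z)=e$ is equivalent to $g_l(z)=e$ together with $u=v(z)^{-1}$. Thus the $U$-coordinate is pinned down and $h^{-1}(e)\cap(U\times C_p)\cong g_l^{-1}(e)$; restoring the affine factors, the contribution of this cell to $h^{-1}(e)$ is $g_l^{-1}(e)\times\C^{c}$ for some $c\ge 0$.

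It then remains to check that $g_l^{-1}(e)$ is a finite disjoint union of copies of some $\C^i\times(\C^\times)^j$. If $e\notin I_\beta(p)$ it is empty. Otherwise I would use the last theorem of Section~\ref{sec:cupping}: $g_l$ factors as the projection of $C_p\cong\C^{|U_p|}\times(\C^\times)^{2g+|S_p|}$ onto its torus part, followed by multiplication by a fixed element of $T$ and a homomorphism of tori $g'$, so $g_l^{-1}(e)\cong\C^{|U_p|}\times\ker g'$; and $\ker g'$, being a closed subgroup of a torus, is a diagonalizable group, hence isomorphic to $(\C^\times)^{j}\times F$ with $F$ finite --- equal to $\pi_{1,\beta}(p)$ when $I_\beta(p)=T^{ss}$ (item $(5)$ of that theorem), and in general $F=\coker\!\bigl(\pi_1((\C^\times)^{2g+|S_p|})\to\pi_1(\operatorname{im}g')\bigr)$, still finite since $g'$ surjects onto its image. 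Hence $g_l^{-1}(e)$, and with it $g_l^{-1}(e)\times\C^c$, is a disjoint union of finitely many $\C^i\times(\C^\times)^j$. Collecting over all $\beta$ and all $p\in\mcW_{e,e}(\beta)$ gives the cell decomposition of $h^{-1}(e)$. Finally, $e\in G$ is fixed by the conjugation action of $B$, so $h^{-1}(e)$ and every intersection $h^{-1}(e)\cap(U\times C_p)$ is $B$-invariant; since $B$ is connected it preserves the finitely many connected components of $g_l^{-1}(e)\times\C^c$, which are exactly the cells listed, so the decomposition is $B$-equivariant.

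The only step I expect to be more than bookkeeping --- everything else being available from Sections~\ref{sec:adding-punctures}--\ref{sec:cupping} --- is the treatment of walk-cells with $I_\beta(p)\subsetneq T^{ss}$ but $e\in I_\beta(p)$: the cited theorem describes $g_l^{-1}(t)$ only under the surjectivity hypothesis, so I would need to re-run its short argument --- the factorization of $g_l$ through a torus homomorphism together with the structure theory of diagonalizable groups --- without that hypothesis, to conclude that $g_l^{-1}(e)$, when non-empty, still has the asserted form.
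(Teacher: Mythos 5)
Your proposal is correct and is essentially the route the paper has in mind (it states the corollary without a written proof): reduce to the $w=w'=e$ strata, invoke the walk decomposition from the end of Section \ref{sec:walk-stratification} (with the torus-valued one-point factors moved left and the $U^+$-type factors split off as an affine factor $\C^c$ on which $h$ does not depend), and identify $h^{-1}(e)\cap(U\times C_p\times\C^c)$ with $g_l^{-1}(e)\times\C^c$ by writing $f_\beta(z)=g_l(z)v(z)$ in $B=TU$ and noting that $u\,g_l(z)v(z)=e$ forces $g_l(z)=e$ and $u=v(z)^{-1}$. The caveat you flag at the end — that item $(5)$ of the cupping theorem is stated only when $C_p\to T^{ss}$ is surjective — is real, but you have already closed it in the body of your write-up: since $g_l$ factors through the projection onto the torus part, a coordinatewise sign change, and a homomorphism of tori $g'$, a non-empty fiber is a translate of the closed subgroup $\ker g'$ of a torus, i.e.\ of a diagonalizable group $(\C^\times)^j\times F$ with $F$ finite; splitting into connected components yields the claimed cells $\C^i\times(\C^\times)^j$, and this is exactly the extra observation the paper leaves implicit. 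Your check of $B$-equivariance — the walk cells with $w=e$ are $B$-invariant, $h$ is equivariant for the conjugation action so $h^{-1}(e)$ is $B$-invariant, and connectedness of $B$ prevents it from permuting the connected components — is also the correct argument.
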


\section{Quotient by \texorpdfstring{$T$}{T}}\label{sec:quotient-by-T}

\begin{lema}
    Let $\alpha:T\to T'$ be an algebraic group homomorphism between affine tori with kernel $F$ and $T$ connected. Let $X$ be a $T$-variety. Then $[(X\times T')/T]\simeq [X/F]\times (T'/T)$ where $T$ acts on $T'$ by $\alpha$.
\end{lema}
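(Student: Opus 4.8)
The plan is to construct the isomorphism directly as a map of quotient stacks by exhibiting mutually inverse morphisms, or — more efficiently — by presenting both sides as quotients of the same space by the same group and identifying them. The key observation is that the quotient $T'/T$ (via $\alpha$) is isomorphic to $T'/\img\alpha$, which is the cokernel torus, and that because $T$ is connected and $T'$ is a torus, the map $\alpha$ is a surjection onto its image with kernel $F$, so we have a short exact sequence $1\to F\to T\to \img\alpha\to 1$. The map $\alpha$ itself admits a set-theoretic (indeed, after passing to a suitable cover, a genuine) section, but we do not want to rely on that; instead we argue stackily.

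First I would reduce to the case where $\alpha$ is surjective: replacing $T'$ by $\img\alpha$ changes $(T'/T)$ into $(\img\alpha/T)\times(T'/\img\alpha)=(\img\alpha/T)\times(T'/\img\alpha)$ as the $T$-action on $T'$ is by translation through $\alpha$ and $T'\simeq \img\alpha\times(T'/\img\alpha)$ non-canonically — actually the cleanest route is to factor $[(X\times T')/T]$ by first splitting off the free part. So: consider the $T$-equivariant map $X\times T'\to T'/\img\alpha$; its fiber over the identity coset is $X\times\img\alpha$, and $T$ acts on this fiber with the translation action on $\img\alpha$ being transitive with stabilizer $F$. Hence $[(X\times\img\alpha)/T]\simeq[X/F]$, the isomorphism sending the class of $(x,1)$ to the class of $x$. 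Since $X\times T'\simeq (X\times \img\alpha)\times (T'/\img\alpha)$ $T$-equivariantly (with trivial $T$-action on the last factor), we conclude $[(X\times T')/T]\simeq [X/F]\times(T'/\img\alpha)$. It then remains to identify $T'/\img\alpha$ with $T'/T$, which is immediate from the definition of the $T$-action on $T'$ through $\alpha$.

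The concrete construction of the inverse in the step $[(X\times\img\alpha)/T]\simeq[X/F]$ is: given $[x]\in[X/F]$, send it to $[(x,1)]$; this is well-defined since $F\subset T$ acts trivially on the $\img\alpha$-coordinate $1$. Conversely $[(x,s)]\mapsto [t^{-1}\cdot x]$ for any $t\in T$ with $\alpha(t)=s$, well-defined up to $F$. That these are mutually inverse and functorial in $T$-schemes over the base is a routine check, which I would spell out at the level of functors of points: an $S$-point of $[(X\times T')/T]$ is a $T$-torsor $P\to S$ together with a $T$-equivariant map $P\to X\times T'$; composing with the projection to $T'$ and the quotient $T'\to T'/\img\alpha$ gives a map $S\to T'/\img\alpha$, and the fiber data is a $T$-equivariant map to $X\times\img\alpha$, which by descent along $T\to\img\alpha$ is the same as an $F$-torsor with equivariant map to $X$.

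The main obstacle I anticipate is bookkeeping the algebraic-geometry subtleties: one must make sure the splitting $T'\simeq \img\alpha\times(T'/\img\alpha)$ exists as algebraic groups, which holds because a surjection of tori splits (the category of tori over a field is equivalent to that of finitely generated free abelian groups with Galois action, and over $\C$ every short exact sequence of free abelian groups splits), and one must verify the quotient stack $[X/F]\times(T'/T)$ is formed correctly — here $F$ is a finite group scheme, so $[X/F]$ is a Deligne–Mumford (orbifold) quotient, and the product with the torus $T'/T$ is unproblematic. None of this is deep, but it is exactly the kind of argument where one should be careful that all the $T$-actions are the translation actions and not conjugation, and that "$T'/T$" in the statement means the quotient by the image of $\alpha$. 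I would therefore open the proof by fixing this notation explicitly before doing anything else.
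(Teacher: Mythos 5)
Your proof is correct and follows essentially the same route as the paper's: both split $T'\simeq\operatorname{img}\alpha\times(T'/\operatorname{img}\alpha)$ — you via the anti-equivalence between tori and lattices, the paper via Smith normal form on $d\alpha:\t(\Z)\to\t'(\Z)$, which amounts to the same lattice-theoretic fact — and both then identify $X\times\operatorname{img}\alpha$ with the $T$-variety induced from the $F$-variety $X$, so that its quotient by $T$ is $[X/F]$.
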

\begin{proof}
    We claim that $T'\simeq (T/F)\times (T'/T)$ as tori. To this end, we can assume that $F$ is trivial and $T'$ is connected. In this case, $\alpha$ induces an injective morphism $d\alpha:\t(\Z)\to\t'(\Z)$. Applying the Smith normal form, we obtain a basis $\Z$-basis where $d\alpha$ is diagonal. If one of its eigenvalues is not $\pm1$ or $0$, $\alpha$ would not be injective. Hence, in some coordinates, $\alpha$ looks like an inclusion $(\C^\times)^{n}\to (\C^\times)^{m}$, $(x_1,\ldots,x_n)\mapsto (x_1,\ldots,x_n,1,\ldots,1)$. The claim follows.  

    Going back to the statement, we have
    $X\times T'\simeq X\times (T/F)\times (T'/T)$
    and therefore the claim becomes 
    $[(X\times (T/F))/T] \simeq [X/F]$. Now, $X\times (T/F)$ is isomorphic as a $T$-variety to the induction from $F$ to $T$ of the $F$-action on $X$. 
\end{proof}

\begin{coro}
    Let $\beta\in F(W^3)\times F(\Delta^+)$ with $s_\beta = e$ and $g$ letters in its $F(W^3)$-part. Then, for generic $t\in T$, $[\rho(\beta)(t)/ (T/Z(G))]$ has a cell decomposition indexed by some walks $p\in \mcW_{e,e}(\beta)$ whose associated cells are $\pi_{1,\beta}(p)\times(\C^\times)^{2g\dim T+|S_p|-2\dim T^{ss}}\times [\C^{|U_p|}/Z_\beta(p)]$, where $Z_\beta(p)$ acts in a linear way on $\C^{|U_p|}$. 
\end{coro}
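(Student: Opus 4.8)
The plan is to combine the walk stratification of Section~\ref{sec:walk-stratification}, the last theorem of Section~\ref{sec:cupping}, and the preceding lemma, the only real content being to keep track of the $T$-action. Since $s_\beta=e$, the equivariancy lemma of Section~\ref{sec:var-over-G} applied with $w=e$ shows that the $T$-action on the target $T$ of $g=g_l$ is trivial, so $\rho(\beta)(t)=g^{-1}(t)$ is a $T$-invariant subvariety of $\rho(\beta)_0$ and the walk decomposition $\rho(\beta)_0=\bigsqcup_{p\in\mcW_{e,e}(\beta)}C_p$ restricts to a $T$-equivariant decomposition into the locally closed $T$-invariant pieces $g_l^{-1}(t)=C_p\cap g^{-1}(t)$. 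Since $\img g\subset T^{ss}$, the fibre is empty for $t\notin T^{ss}$, so we may assume $t\in T^{ss}$. For a walk $p$ with $C_p\to T^{ss}$ not surjective, the image $I_\beta(p)$ of $g_l$ is a constructible subset of $T^{ss}$ of dimension $\rk T_\beta(p)<\dim T^{ss}$ (items $(2)$ and $(4)$ of the cupping theorem), hence contained in a proper closed subvariety; excising the union of these finitely many subvarieties, ``generic $t$'' means a $t$ for which only the walks with $C_p\to T^{ss}$ surjective contribute (and then $t\in I_\beta(p)=T^{ss}$ automatically).

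Fix such a walk $p$. The cupping theorem gives $g_l^{-1}(t)\simeq\C^{|U_p|}\times\widetilde{T}\times\pi_{1,\beta}(p)$ with $\widetilde{T}$ a torus of dimension $2g\dim T+|S_p|-\dim T^{ss}$ ($g$ being the number of letters of the $F(W^3)$-part of $\beta$). The cell $C_p$ is built by convolution from the factors $\rho(\alpha)$ ($\alpha\in\Delta^+$) and the twisted commutator pieces ${}^{w_i}\rho(\pi_1^{(i)},\pi_2^{(i)})$, and on each of these the $T$-action is by scaling, resp.\ by the twisted torus translation of Sections~\ref{sec:var-over-G}--\ref{sec:bruhat-parametrizations}, on the torus coordinates and is linear on the remaining (``unipotent'') ones; hence --- the stratifying isomorphisms of Section~\ref{sec:walk-stratification} being $T$-equivariant --- the product above can be chosen $T$-equivariantly, with $T$ acting on $\widetilde{T}$ by translation through a homomorphism $\psi\colon T\to\widetilde{T}$, linearly on $\C^{|U_p|}$ through the characters ${}^{\pi_{k-1}^{-1}}\alpha_k$, $k\in U_p$ (which vanish on $Z(G)$), and trivially on $\pi_{1,\beta}(p)$ (as $T$ is connected). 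The kernel $F:=\ker\psi$ is the subgroup of $T$ acting trivially on all torus coordinates of $C_p$; reading off the action on the $S_p$-coordinates and on the commutator blocks and using $\pi_l=s_\beta=e$, this is exactly $S(p)\cap\bigcap_iT^{\langle\pi_1^{(i)},\pi_2^{(i)}\rangle}=S_\beta(p)$, so $F/Z(G)\simeq Z_\beta(p)$ by item $(3)$ of the theorem, and this is finite because $p$ is surjective (item $(4)$).

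Since $\img\psi$ is a connected subtorus of $\widetilde{T}$, the short exact sequence $1\to\img\psi\to\widetilde{T}\to\widetilde{T}/\img\psi\to1$ of tori splits, so $\widetilde{T}\simeq\img\psi\times T''$ with $T''$ a torus of dimension $\dim\widetilde{T}-\dim\img\psi$; and $\dim\img\psi=\dim(T/Z(G))-\dim(F/Z(G))=\dim T^{ss}$ since $F/Z(G)$ is finite, so $\dim T''=2g\dim T+|S_p|-2\dim T^{ss}$. Now apply the preceding lemma with $T/Z(G)$ in place of $T$ (legitimate because $Z(G)$ acts trivially), with $X=\C^{|U_p|}\times T''\times\pi_{1,\beta}(p)$ and the acted-on torus equal to $\img\psi$: the induced map $T/Z(G)\twoheadrightarrow\img\psi$ has kernel $F/Z(G)\simeq Z_\beta(p)$, whence
\[\bigl[\,(C_p\cap g^{-1}(t))\,\big/\,(T/Z(G))\,\bigr]\;\simeq\;\bigl[\C^{|U_p|}/Z_\beta(p)\bigr]\times T''\times\pi_{1,\beta}(p),\]
with $Z_\beta(p)$ acting on $\C^{|U_p|}$ linearly, as the restriction of the linear $T$-action. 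Taking the union over the finitely many surjective walks $p$ yields the asserted cell decomposition of $[\rho(\beta)(t)/(T/Z(G))]$.

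The step I expect to be the main obstacle is the identification $\ker\psi=S_\beta(p)$: this requires tracking how the stabilizers ${}^{w_i}T^{\langle\pi_1^{(i)},\pi_2^{(i)}\rangle}$ of the commutator blocks embed into $C_p$ through the chain of reparametrizations of Section~\ref{sec:cupping}, and, hand in hand with it, checking that the $T$-action on the torus directions of $g_l^{-1}(t)$ is genuinely a translation action after the cupping isomorphism rather than something more complicated. Once these are pinned down, the rest is a direct application of the two cited results together with the splitting of a short exact sequence of tori.
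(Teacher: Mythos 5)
Your proposal is correct and follows the same overall route the paper takes: restrict the walk decomposition of $\rho(\beta)_0$ to the fibre $g^{-1}(t)$, invoke genericity to keep only the walks for which $C_p\to T^{ss}$ is surjective, use the cupping theorem to identify $g_l^{-1}(t)$ as $\C^{|U_p|}\times\widetilde{T}\times\pi_{1,\beta}(p)$, and then divide by $T/Z(G)$ via the torus-quotient lemma. Your version is more explicit than the paper's two-sentence argument in several places: you correctly record the torus dimension as $2g\dim T+|S_p|-\dim T^{ss}$ (the paper's proof text drops the $2g\dim T$ summand when naming the torus $T'$ acts on, an evident slip), you make the action on $\pi_{1,\beta}(p)$ trivial explicitly via connectedness of $T$, and you pass from $S_\beta(p)$ to $Z_\beta(p)$ via item $(3)$ of the cupping theorem rather than silently asserting the kernel is $Z_\beta(p)$. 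The extra step of splitting $\widetilde{T}\simeq\operatorname{img}\psi\times T''$ before applying the lemma is redundant — the lemma already holds for a non-surjective $\alpha:T\to T'$ and its own proof performs exactly that splitting — but it does no harm. The one step you flag as a potential obstacle, $\ker\psi=S_\beta(p)$, is also left implicit in the paper; it follows because the $T$-action on the torus directions is by translation through $\psi$, so every point stabilizer is contained in $\ker\psi$ with equality on the locus where the $\C^{|U_p|}$-coordinates vanish, and $S_\beta(p)$ is by definition generated by all these stabilizers.
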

\begin{proof}
    We recall that $g= p_l g_l$. Hence, $\rho(\beta)(t)\cap C_p = g_l^{-1}(p_l^{-1}t)$. As $t$ is generic, $C_p\to T^{ss}$ must be surjective for $C_p\cap g_l^{-1}(p_l^{-1}t)$ to be non-empty. It follows that $Z_\beta(p)$ and $\pi_{1,\beta}(p)$ are finite and that $g^{-1}(t)\simeq \C^{|U_p|}\times (\C^\times)^{2g\dim T+|S_p|-\dim T^{ss}}\times\pi_{1,\beta}(p)$.
    
    The action of $T$ factorizes through $T'=T/Z(G)$, which is connected as $G$ is. Now, $T'$ acts on $(\C^\times)^{|S_p|-\dim T^{ss}}$ by a group homomorphism whose kernel is exactly $Z_\beta(p)$. Hence, the result follows from the previous lemma.    
\end{proof}

\begin{Def}\label{def:generic}
    More precisely, the genericity condition is the following. We define, for any subset $S\subset( \Phi\cup \{0\})^2$,
    \[\t_S:=\langle h_\alpha-h_\beta:(\alpha,\beta)\in S\rangle\]
    where $h_0:=0$. Let $\mcT_S\subset T$ be the union of all subtori whose tangent algebras are $\t_S$. We say that $t\in T$ is generic if $t\not\in \mcT_S$ for all $S$ unless $\mcT_S=T$. We note that there exist generic elements.    
\end{Def}

\begin{Def}
    We say that a tuple of semisimple conjugacy classes $\mcC_1,\ldots,\mcC_k$ of $G$ is generic if for some (any) maximal torus $T$ and $C_i\in \mcC_i\cap T$, and any $w_1,\ldots,w_k\in W$, ${}^{w_1}C_1\cdots {}^{w_k}C_k$ is generic in the sense of definition \ref{def:generic}.
\end{Def}

\begin{thm}\label{thm:cell-decomposition}
    Let $G$ be a connected and reductive Lie group, $g\geq 0$ and $k\geq 1$ be integers, and $\mcC_1,\ldots,\mcC_k$ be a generic tuple of semisimple conjugacy classes of $G$. Assume that $\mcC_1,\ldots, \mcC_{k-1}$ are nice. Then $\tBetti[G]$ has a cell decomposition by finite quotients of varieties of the form $(\C^\times)^{d-2i}\times \C^{i+r}$, where $d$ is the dimension of $\abbreviatedBetti[G]$ and $r$ is the rank of any maximal unipotent subgroup of $G$. 
\end{thm}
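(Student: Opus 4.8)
The plan is to assemble the theorem from the structural results established in the preceding sections, so the proof is essentially a bookkeeping argument that tracks dimensions through the decomposition $\tBetti[G] \simeq [(\mcZ(D^\dagger)\circ\mcZ(H)^g\circ\mcZ(L_{\mcC_1})\circ\cdots\circ\mcZ(L_{\mcC_{k-1}}))(\mcU(C_k))/(T/Z(G))]$, where $C_k \in \mcC_k \cap T$ is a fixed regular semisimple representative. First I would recall from the corollary at the end of Section \ref{sec:adding-punctures} that, after moving all the varieties over $T$ to the left, the input $(\mcZ(H)^g\circ\mcZ(L_{\mcC_1})\circ\cdots\circ\mcZ(L_{\mcC_{k-1}}))(\mcU(C_k))$ decomposes $B$-equivariantly into cells indexed by words $\beta \in F(W^3)\times F(\Delta^+)$ (with the $F(W^3)$-part having $g$ letters), each of the form $\mcU(K_{\pi^{(g)}}*\cdots*K_{\pi^{(1)}}*L_{C_1,\pi_1}*\cdots*L_{C_{k-1},\pi_{k-1}}*C_k)$, and that these $\beta$ are well-twisted. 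Applying $\mcZ(D^\dagger)$ (taking the fiber over the identity) amounts, via the walk stratification of Section \ref{sec:walk-stratification} and the reparametrization identity $g = p_l g_l$, to taking $g_l^{-1}(p_l^{-1}\cdot\mathbf{1})$ on each cell $C_p$, stratified further by walks $p \in \mcW_{e,e}(\beta)$.

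Next I would invoke the genericity hypothesis: since $\mcC_1,\ldots,\mcC_k$ is a generic tuple, the relevant products ${}^{w_1}C_1\cdots{}^{w_k}C_k$ avoid all the proper subtori $\mcT_S$, which is exactly the condition forcing $C_p \to T^{ss}$ to be surjective for every walk $p$ contributing a nonempty fiber. Then part (5) of the last theorem of Section \ref{sec:cupping} gives $g_l^{-1}(t) \simeq \C^{|U_p|}\times(\C^\times)^{2g+|S_p|-\dim T^{ss}}\times \pi_{1,\beta}(p)$ with $\pi_{1,\beta}(p)$ finite, and the corollary at the end of Section \ref{sec:quotient-by-T} shows that after the quotient by $T/Z(G)$ (which is connected) the cell becomes $\pi_{1,\beta}(p)\times(\C^\times)^{2g\dim T + |S_p|-2\dim T^{ss}}\times[\C^{|U_p|}/Z_\beta(p)]$, a finite quotient of $\C^{|U_p|}\times(\C^\times)^{2g\dim T + |S_p|-2\dim T^{ss}}$. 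It remains to identify the exponents. The dimension of this cell is $|U_p| + 2g\dim T + |S_p| - 2\dim T^{ss}$; one checks using Lemma \ref{key-iso} (which gives $\dim C_p = |U_p| + |S_p|$, so that $|D_p| = l - |U_p| - |S_p|$ balances the up/down/stay count of the walk) that the maximal cells have dimension $d = \dim\abbreviatedBetti[G]$, and in general $\C^{|U_p|}$ carries the ``$+r$'' of the Cartesian factor: writing $i := |D_p|$ for the number of down-steps in the walk on the commutator and puncture letters, the combinatorics force $|U_p| = i + r$ and $2g\dim T + |S_p| - 2\dim T^{ss} = d - 2i$, where $r = \tfrac12(\dim G - \rk G)$ is the rank of a maximal unipotent. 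I would verify these two identities by a direct count: $r$ equals the number of positive roots, each handle $K_{\pi_1,\pi_2}$ and each puncture $L_{C,\pi}$ contributes a fixed batch of $U_{\pi}^+$-type coordinates whose total across a maximal walk is $r$, and the up-steps on the $\rho(\beta)$-part cancel against the $U^+$-contributions except for this residual $r$; simultaneously each down-step trades one $\C^\times$ for one extra $\C$, producing the shift $d-2i$ versus $i+r$.

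The step I expect to be the main obstacle is precisely this dimension bookkeeping — pinning down that the exponents organize as $\C^{d-2i}\times(\C^\times)^{r+i}$ uniformly across all walks, rather than just on a per-cell basis. The subtlety is that $|U_p|$, $|S_p|$, and the walk data vary from cell to cell, and one must show the two combinations $|U_p|$ and $2g\dim T + |S_p| - 2\dim T^{ss}$ are governed by a single parameter $i$ (the number of ``down'' moves) plus the constant $r$; this requires carefully matching the lengths $l(\pi^{(j)}_1)+l(\pi^{(j)}_2)$ and $l(\pi_j)$ appearing in the convolution against the ranks of the various $U^+_\pi$ factors, and using that for a walk on the Bruhat graph the number of up-steps minus down-steps on each segment equals the length of its endpoint. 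Once that identity is in hand, the theorem follows by taking the disjoint union over all $\beta$ and all $p \in \mcW_{e,e}(\beta)$, noting that the total decomposition is $B$-equivariant (hence in particular a genuine cell decomposition of $\tBetti[G]$) by the lemmas of Sections \ref{sec:walk-stratification} and \ref{sec:B-varieties}, and that finiteness of $\pi_{1,\beta}(p)$ and $Z_\beta(p)$ is guaranteed by genericity. Finally, I would remark that the stated Theorem \ref{thm:decomposition} in the introduction is the special case in which one additionally records that the cell's preimage in $\tBetti[G]$ before the $T$-quotient is $\C^{d-2i}\times(\C^\times)^{r+i}$ itself, up to the finite quotient.
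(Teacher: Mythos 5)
Your outline follows the paper's strategy faithfully — you identify the right ingredients (the $B$-equivariant cell decomposition from Sections \ref{sec:handle-operation}--\ref{sec:adding-punctures}, the walk stratification of Section \ref{sec:walk-stratification}, part (5) of the last theorem of Section \ref{sec:cupping}, the quotient-by-$T$ corollary, and genericity via the Tits finiteness argument), and you correctly anticipate that the dimension bookkeeping is the crux. But the bookkeeping itself is wrong. You set $i := |D_p|$ and claim that ``the combinatorics force $|U_p| = i + r$.'' This cannot hold: for any $p\in\mcW_{e,e}(\beta)$ the walk starts and ends at $e$, each up-step raises length by one and each down-step lowers it by one, so $|U_p| = |D_p|$. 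Your identity would then give $r=0$, which is false for any $G$ that is not a torus.

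The paper's $i$ is not a walk statistic in your sense; it is defined by the explicit formula
\[
i=\tfrac{1}{2}\Bigl((2g-1)(\dim G -\dim T)+\sum_{j=1}^{k-1}(\dim G-\dim Z(C_j))\Bigr)-\tfrac{1}{2}|S_p|,
\]
a function of $|S_p|$ and fixed dimensional constants only. The subtlety you gloss over is that the total $\C$-count is $|U_p|$ \emph{plus} the rank of the fiber bundle contributed by the $U_{\pi_1^{(j)}}^+\times U_{\pi_2^{(j)}}^+$ factors of $K_{\pi_1^{(j)},\pi_2^{(j)}}$ and the $N_j\cap U_{\pi_j}^+$ factors of $L_{C_j,\pi_j}$, and this rank is
\[
\sum_{j=1}^g\bigl(\dim G-\dim T- l(\pi_1^{(j)})-l(\pi_2^{(j)})\bigr)+\sum_{j=1}^{k-1}\bigl(\tfrac12(\dim G-\dim Z(C_j))-l(\pi_j)\bigr).
\]
Since $|U_p|=L-\tfrac12|S_p|$ where $L$ is the total length of the $F(\Delta^+)$-part of $\beta$, the $l(\pi)$ terms in the rank cancel against those hidden in $|U_p|$, giving a total $\C$-count of $g(\dim G-\dim T)+\tfrac12\sum_{j=1}^{k-1}(\dim G-\dim Z(C_j))-\tfrac12|S_p|=r+i$. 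Your ``residual $r$'' heuristic gestures at this cancellation, but the specific form you posit gives the wrong answer, and the parallel verification that $2g\dim T+|S_p|-2\dim T^{ss}=d-2i$ (using $\dim Z(C_k)=\dim T$ and $\dim T-\dim T^{ss}=\dim Z(G)$) is missing entirely from your sketch.
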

\begin{proof}
    Fix a maximal torus $T$ and a Borel subgroup of $G$ and let $C_k\in \mcC_K\cap T$. We recall that 
    \[\tBetti[G] \simeq [(\mcZ(D^\dagger)\circ \mcC(H)^g\circ \mcZ(L_{\mcC_1})\circ \cdots \circ \mcZ(L_{\mcC_{k-1}}))(\mcU(C_k)) / (T/Z(G))]\]
    and that we can move the untwisting operation to the left, transforming the a handles and the punctured cylinders into a disjoint union of fiber bundles over $\rho(\beta)$ for well-twisted $\beta\in F(W^3)\times F(\Delta^+)$. Hence, applying the walk stratification and noticing that, for any $X\in \TwBTVarG$ and $t\in T$,
    \[ \{(u,x)\in\mcU(X):uf(x) =t \} = \{x\in X_0:g(x)=t\},\]
    we get a decomposition by finite quotients of varieties of the form $(\C^\times)^i\times \C^j$ as long as all torus elements appearing are generic. This last need follows from the definition of genericity as the subgroup $\mcT$ of $G$ generated by $\alpha\in F(\Delta^+)$ is known to be finite \cite{Tits}. 

    We must check the claimed relation between the multiplicities of $\C$ and $\C^\times$ on each cell. Pick $C_j\in \mcC_j\cap C_j$ nice and let $N_j$ be the unipotent radical of its associated parabolic. The $\beta\in F(W^3)\times F(\Delta^+)$ appearing are of the form
    \[(\pi_1^{(1)},\pi_2^{(1)},w_1)\dots (\pi_1^{(g)},\pi_2^{(g)},w_g)\sigma_{\pi_1^{(1)},\pi_2^{(1)}}\dots\sigma_{\pi_1^{(g)},\pi_2^{(g)}}\dots \sigma_{\pi_1}\sigma_{\pi_1^{-1}}\dots\sigma_{\pi_{k-1}}\sigma_{\pi_{k-1}^{-1}} \]
    for $((\pi_1^{(j)},\pi_2^{(j)},w_j),\pi_j)\in (W^3)^g\times W^{k-1}$. The fiber bundle over $\rho(\beta)$ has rank  
    \begin{align*}
        \sum_{j=1}^g &\left(\dim U_{\pi_1^{(j)}}^+   + \dim U_{\pi_2^{(j)}}^+ \right) + \sum_{j=1}^{k-1} \dim \left(N_j\cap U_{\pi_j}^+\right) = \\
        &= \sum_{j=1}^g\left(\dim G-\dim T- l\left(\pi_1^{(j)}\right)-l\left(\pi_2^{(j)}\right)\right)+\sum_{j=1}^{k-1}\left(\frac{1}{2}(\dim G-\dim Z(C_j))-l(\pi_i)\right).
    \end{align*}
    On other hand, every connected component of $C_p(t)$, for a walk $p$ associated with $\beta$, is of the form
    \[\C^{|U_p|}\times (\C^\times)^{2g\dim T+|S_p|-2\dim T^{ss}}\]
    Now $|S_p|=l(\ov{\beta})-2|U_p|$. Hence,
    \[|S_p|=2\left(\sum_{j=1}^{k-1} l(\pi_i)+\sum_{j=1}^g\left(l\left(\pi_1^{(j)}\right)+l\left(\pi_2^{(j)}\right)\right)-|U_p|\right)\]
    and, in particular, $|S_p|$ is even. Let 
    \[i=\frac{1}{2}\left((2g-1)(\dim G -\dim T)+\sum_{j=1}^{k-1}(\dim G-\dim Z(C_j))\right)-\frac{1}{2}|S_p|\]
    which is an integer number. We note that $\dim Z(C_k)=\dim T$. Therefore, the final cell has $r + i$ copies of $\C$ and 
    \[(2g-2)\dim G +\sum_{j=1}^{k}(\dim G-\dim Z(C_j))+2\dim Z(G)-2i\]
    copies of $\C^\times$ as wanted.    
\end{proof}

\begin{coro}
    Let $G$ be a connected and reductive Lie group, $g\geq 0$ and $k\geq 1$ be integers, and $\mcC_1,\ldots,\mcC_k$ be a generic tuple of semisimple conjugacy classes of $G$. Assume that $\mcC_1,\ldots, \mcC_{k-1}$ are nice. Then $\tBetti[G]$ is a Deligne-Mumford stack.
\end{coro}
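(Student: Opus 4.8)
The plan is to deduce Deligne--Mumfordness from the finiteness of automorphism groups, and to read that finiteness off Theorem~\ref{thm:cell-decomposition}. Write $\tBetti[G]=[V/H]$ with $H=T/Z(G)$ and $V$ the affine variety appearing in its definition; this is a quasi-separated algebraic stack of finite type over $\C$. For such a quotient stack, being Deligne--Mumford is equivalent to the diagonal being unramified, and pulling the diagonal back along $V\times V\to[V/H]\times[V/H]$ identifies it with the action map $H\times V\to V\times V$, whose fibre over $(v,v)$ is the stabilizer group scheme $H_v$. Hence $\tBetti[G]$ is Deligne--Mumford if and only if $H_v$ is unramified over $\kappa(v)$ for every $v\in V$; and over $\C$, an affine group scheme of finite type is unramified precisely when it is finite. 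So it suffices to show every stabilizer $H_v$ is finite.

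To do this I would invoke Theorem~\ref{thm:cell-decomposition}: the point $[v]\in\tBetti[G]$ lies in one of the cells, a locally closed substack $\mcZ\hookrightarrow\tBetti[G]$ isomorphic to a finite quotient stack $[(\C^\times)^{d-2i}\times\C^{i+r}/\Gamma]$ for some finite group $\Gamma$. A locally closed immersion of stacks is a monomorphism, so the automorphism group scheme of $[v]$ computed in $\mcZ$ agrees with the one computed in $\tBetti[G]$, namely $H_v$. In $\mcZ=[Y/\Gamma]$, however, the automorphism group scheme of any point is its scheme-theoretic stabilizer in $\Gamma$, a closed subgroup scheme of the finite group $\Gamma$, hence finite. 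Therefore $H_v$ is finite for every $v\in V$, and the previous paragraph finishes the argument.

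The step I would be most careful about is the interplay between the scheme-theoretic stabilizers and the unramifiedness criterion: Deligne--Mumfordness is detected not by the automorphism groups as abstract groups but by the automorphism group \emph{schemes}, so one genuinely needs that $H_v$, realized as a closed subgroup scheme of the finite group $\Gamma$, carries no infinitesimal structure (automatic over $\C$) and that this group scheme is exactly the one seen inside the cell --- which is where the monomorphism property of locally closed immersions is used. Everything else is formal once the cell decomposition of Theorem~\ref{thm:cell-decomposition} is available; one may alternatively phrase the conclusion by noting directly that the cells are Deligne--Mumford (finite quotient stacks), that they cover $\tBetti[G]$, and that the inertia stack $I_{\tBetti[G]}\to\tBetti[G]$ is therefore unramified on each stratum, hence unramified.
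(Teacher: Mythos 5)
Your argument is correct and is essentially the same as the paper's (implicit) one: the paper states the corollary with no written proof, treating it as an immediate consequence of Theorem~\ref{thm:cell-decomposition}, and the underlying reason is exactly what you spell out, namely that the cell decomposition exhibits $\tBetti[G]$ as covered by finite quotient stacks, so all automorphism group schemes are finite and the diagonal is unramified. The paper later uses precisely this fact in the proof of Corollary~\ref{cor:motivically-a-vector-bundle}, where it records that the stabilizers of the $T$-action are finite; your proposal supplies the standard deduction from there to the Deligne--Mumford property, and your closing alternative phrasing (the cells are DM, and they cover) is perhaps the cleanest way to match the brevity with which the paper states the result.
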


\begin{ex}
    Example \ref{ex:bad-cell} does not show up on any character stack. One a little more complicated does. We continue the notation of Example \ref{ex:bad-cell}. Weyl group's elements written in minimal form are:
    \[\operatorname{Id},s_{\alpha_1},s_{\alpha_2},s_{\alpha_2}s_{\alpha_1},s_{\alpha_1}s_{\alpha_2},s_{\alpha_2}s_{\alpha_1}s_{\alpha_2},s_{\alpha_1}s_{\alpha_2}s_{\alpha_1}\text{ and }s_{\alpha_2}s_{\alpha_1}s_{\alpha_2}s_{\alpha_1}.\]
    We take $g=0$, $k=5$, $\pi_1=\pi_4=s_{\alpha_1}$ and $\pi_2=\pi_3=s_{\alpha_1}s_{\alpha_2}s_{\alpha_1}$. The next walk is valid for its associated $\beta$:
    \begin{table}[H]
        \centering
        \begin{tabular}{c|cccccccc}
            $k$ & $1$ & $2$ & $3$ & $4$ & $5$ & $6$ & $7$ & $8$  \\
            \hline 
            root & $\alpha_1$ & $\alpha_1$ & $\alpha_1$ & $\alpha_2$ & $\alpha_1$ & $\alpha_1$ & $\alpha_2$ & $\alpha_1$ \\
            step & up & stay & down & up & up & stay & up & up \\
            $p_k$ & $s_{\alpha_1}$ & $s_{\alpha_1}$ & $\operatorname{Id}$ & $s_{\alpha_2}$ & $s_{\alpha_1}s_{\alpha_2}$ & $s_{\alpha_1}s_{\alpha_2}$ & $s_{\alpha_2}s_{\alpha_1}s_{\alpha_2}$ & $s_{\alpha_1}s_{\alpha_2}s_{\alpha_1}s_{\alpha_2}$ \\
            $\pi_k$ & $s_{\alpha_1}$ & $\operatorname{Id}$ & $s_{\alpha_1}$ & $s_{\alpha_2}s_{\alpha_1}$ & $s_{\alpha_1}s_{\alpha_2}s_{\alpha_1}$ & $s_{\alpha_2}s_{\alpha_1}$ & $s_{\alpha_1}$ & $\operatorname{Id}$
        \end{tabular}

        \medskip
        
        \begin{tabular}{c|cccccccc}
            $k$ & $9$ & $10$ & $11$ & $12$ & $13$ & $14$ & $15$ & $16$\\
            \hline 
            root & $\alpha_1$ & $\alpha_2$ & $\alpha_1$ & $\alpha_1$ & $\alpha_2$ & $\alpha_1$ & $\alpha_1$ & $\alpha_1$ \\
            step & down & down & stay & down & down & up & stay & down \\
            $p_k$ & $s_{\alpha_2}s_{\alpha_1}s_{\alpha_2}$ & $s_{\alpha_1}s_{\alpha_2}$ & $s_{\alpha_1}s_{\alpha_2}$ & $s_{\alpha_2}$ & $\operatorname{Id}$ & $s_{\alpha_1}$ & $s_{\alpha_1}$ & $\operatorname{Id}$ \\
            $\pi_k$ & $s_{\alpha_1}$ & $s_{\alpha_2}s_{\alpha_1}$ & $s_{\alpha_1}s_{\alpha_2}s_{\alpha_1}$ & $s_{\alpha_2}s_{\alpha_1}$ & $s_{\alpha_1}$ & $\operatorname{Id}$ & $s_{\alpha_1}$ & $\operatorname{Id}$ 
        \end{tabular}
        \caption{Another walk with $I(p)=T$ and non-trivial center $Z(p)$ on $\Sp_4(\C)$.}
    \end{table}
    For this walk, $\pi_1(p)$ is trivial and $Z(p)=(\Z/2\Z)^2$ as in Example \ref{ex:bad-cell}.
\end{ex}

\begin{thm}\label{thm:no-quotients-typeA-unipvar}
    Let $G$ be a reductive group of type $A$, $\beta\in F(W^3)\times F(\Delta^+)$ and $p$ an associated walk. If $C_p\to T^{ss}$ is surjective, $\pi_1(p)=\pi_1(G^{ss})$ and $Z(p)$ is trivial.
\end{thm}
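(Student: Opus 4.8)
The plan is to show that for type $A$ the lattice-theoretic obstructions $Z(p)$ and $\pi_1(p)/\pi_1(G^{ss})$ vanish because of the special combinatorics of root systems of type $A$ — specifically, that any subset of roots of the form $\{\pi_k^{-1}\alpha_k : k\in S_p\}$ arising from a walk spans, over $\Z$, a direct summand of the root lattice $Q$ (equivalently, the quotient $Q/Q(p)$ is torsion-free), and moreover that when $C_p\to T^{ss}$ is surjective this span is all of $Q$. Granting the first claim, $Z(p)=(Q/Q\cap Q(p))^*$ is torsion-free and finite, hence trivial; and the second claim forces $Q(p)=Q\otimes\R$ together with $Q(p)$ saturated, so $T(p)=\t^{ss}(\Z)$ and $\pi_1(p)=\t^{ss}(\Z)/T(p)$ reduces to $\pi_1(G^{ss})$ (the discrepancy between $\t^{ss}(\Z)$ for $G$ and for $G^{sc}$). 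For the twisted case $\beta\in F(W^3)\times F(\Delta^+)$ the same argument applies once we observe that $Q_\beta(p)$ only enlarges $Q(p)$ by images of $(\pi^{-1}-\operatorname{Id})$ on $Q$, and in type $A$ such sublattices are again saturated.

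The main work is the first claim, and I would prove it by induction along the walk following the structure of the proof of Lemma~\ref{key-iso}. At each "stay" step $k\in S_p$ one appends the root $\pi_k^{-1}\alpha_k = p_k^{-1}\pi_k\cdot(p_k^{-1}\alpha_k)$, and by the lemma preceding the main theorem in Section~\ref{sec:cupping}, $p_k^{-1}\pi_k\in W(p)$, so the new root lies in the $W(p)$-orbit of roots already controlled. The key structural input is the following fact about type $A$: if $R$ is a root subsystem of $A_{n-1}$ (a sub-root-system generated by a set of roots), then $\langle R\rangle_\Z$ is a direct summand of $Q(A_{n-1})$. This is because root subsystems of $A_{n-1}$ are products of $A$-type subsystems supported on disjoint blocks of coordinates in the standard realization $Q = \{x\in\Z^n : \sum x_i = 0\}$, and each such block contributes a saturated sublattice. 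So it suffices to show that $Q(p)$ is spanned by a root subsystem — i.e. is $W(p)$-stable — which is exactly statement (1) of the lemma cited above, combined with the identity $Q(p) = \langle W(p)\cdot\{p_k^{-1}\alpha_k\}\rangle_\Z$ that the same lemma furnishes. Then saturation of $Q(p)$, hence triviality of $Z(p)$, follows from the type-$A$ fact. Finally, surjectivity of $C_p\to T^{ss}$ gives (by item (4) of the theorem of Section~\ref{sec:cupping}) that $Z(p)$ is finite, i.e. $Q(p)$ has full rank; combined with saturation this gives $Q(p)=Q$, and then $T(p)=\langle {}^{p_k^{-1}}h_k\rangle_\Z$ is the full coweight-type lattice $\t^{ss}(\Z)$ of $G$, so $\pi_1(p)=\t^{ss}(\Z)/T(p)$ collapses to the intrinsic $\pi_1(G^{ss})$, which is $0$ when $G^{ss}$ is simply connected and in general the fixed finite group independent of $p$.

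I expect the main obstacle to be pinning down precisely the statement "every root subsystem of a type $A$ root lattice is saturated" in the form needed and verifying it is genuinely special to type $A$ — indeed Example~\ref{ex:bad-cell} in type $C_2$ shows it fails there, so the argument must really use that all roots of $A_{n-1}$ have the form $e_i-e_j$ and that no nontrivial $\Z$-combination with coefficients summing to zero can be a proper fraction of a root. A clean way to package this: realize $\t^{ss}$ for $G=\GL_n$ (or $\SL_n$) inside $\R^n$, note $Q(p)$ is spanned by vectors $e_i-e_j$ (any $W(p)$-orbit of a root $e_a-e_b$ consists of such vectors since $W(p)\subset S_n$ permutes coordinates), and observe that the lattice spanned by a set of differences $e_i-e_j$ is $\{x\in\Z^S : \sum_{i\in C} x_i = 0 \text{ for each connected component } C\}$ where $S$ is the support and the graph has an edge $\{i,j\}$ for each generator — manifestly a direct summand of $\Z^n$. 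Reducing the general $\beta$ case to this is then bookkeeping: $Q_\beta(p)$ adds vectors $\pi_l((\pi_j^{(k)})^{-1}-\operatorname{Id})(e_a-e_b)$, each again a $\Z$-combination of coordinate differences, so the enlarged lattice is still of the displayed saturated form.
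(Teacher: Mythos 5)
Your overall plan follows the paper's strategy: exploit the fact that in type $A$ any sublattice of $Q$ spanned by a subset of roots is of the ``block'' form $\{x\in\Z^n:\sum_{i\in C}x_i=0\text{ for each block }C\}$, hence saturated; then use surjectivity of $C_p\to T^{ss}$ (equivalently finiteness of $Z_\beta(p)$, hence full rank) to upgrade saturation to $Q(p)=Q$. Your closing graph computation is exactly the paper's argument, and the detour through root subsystems and $W(p)$-stability is harmless but unnecessary, since the saturation statement holds for an \emph{arbitrary} subset of roots in type $A$, not only closed subsystems.

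The genuine gap is in the twisted case $\beta\in F(W^3)\times F(\Delta^+)$. You write that ``$Q_\beta(p)$ only enlarges $Q(p)$ by images of $(\pi^{-1}-\operatorname{Id})$ on $Q$, and in type $A$ such sublattices are again saturated,'' and later that adding the vectors $\pi_l((\pi_j^{(k)})^{-1}-\operatorname{Id})(e_a-e_b)$ --- each a $\Z$-combination of coordinate differences --- keeps the lattice of the displayed block form. Both inferences fail: a $\Z$-combination of roots need not generate a block-form lattice, and $(\pi^{-1}-\operatorname{Id})(Q)$ is in general not saturated in $Q$. For $G=\SL_3$ and $\pi$ the $3$-cycle, one computes $(\pi^{-1}-\operatorname{Id})(\alpha_1)=-2\alpha_1-\alpha_2$ and $(\pi^{-1}-\operatorname{Id})(\alpha_2)=\alpha_1-\alpha_2$, so $(\pi^{-1}-\operatorname{Id})(Q)$ has index $3$ in $Q$. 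What saves the theorem is that the paper's definition of $Q_\beta(p)$ adds the \emph{$\R$-span} $\langle \pi_l((\pi_j^{(k)})^{-1}-\operatorname{Id})(Q)\rangle_\R$, not the $\Z$-span, and the missing ingredient --- the first half of the paper's proof --- is the lemma that for any $w\in W$ in type $A$ the subspace $(w-\operatorname{Id})(\t)$ is $\R$-spanned by the subset of roots $\{e_{w(i)}-e_i:w(i)\neq i\}$, established by an averaging trick. Once this is known, $Q\cap Q_\beta(p)$ contains $Q(p)+\langle S'\rangle_\Z$ for some root subset $S'$ spanning that subspace, the sum is again of block form, and surjectivity forces it to equal $Q$. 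Without this lemma the step from ``each generator is a $\Z$-combination of roots'' to ``the resulting lattice is of block form'' does not follow, and your proof of the twisted case is incomplete.
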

\begin{proof}
    Let us first show that for any $w\in W$, $(w-\operatorname{id})(\t)$ is generated by a subset of $\Phi$. We can assume that $G$ is simple. In this case, 
    \[\t=\{x\in \R^{n}:x_1+\ldots+x_n=0\},\] 
    \[\Phi=\{e_i-e_j:1\leq i\neq j\leq n\},\]
    and $W=S_n$ acts by permutation of the indices. 

    Let $1\leq i\leq n$ and $k$ be its order under $w$. Pick another index $j$. Then
    \begin{align*}
        e_{w(i)}-e_i&=\frac{1}{k}\sum_{l=1}^k(e_{w(i)}-e_{w^l(j)}-e_i+e_{w^{l-1}(j)})\\
        &= (w-\operatorname{Id})\left(\frac{1}{k}\sum_{l=1}^ke_i-e_{w^{l-1}(j)}\right)
    \end{align*}
    belongs to $(w-\operatorname{id})(\t)$. On the other hand,
    \begin{align*}
        (w-\operatorname{Id})(e_i-e_j)&= e_{w(i)}-e_i-(e_{w(j)}-j)
    \end{align*}
    and the claim follows.

    Going back to the statement, it is enough to show that a subset $S\subset \Phi$ generates $\langle \Phi\rangle_\Z$ if and only if it generates $\t$ over $\R$. Again, we can assume that $G$ is simple. In this case,
    \[\langle\Phi\rangle_\Z=\{x\in \Z^{n}:x_1+\ldots+x_n=0\}.\]
    Given a subset $S\subset \Phi$, we consider the graph with vertices labeled from $1$ to $n$ and an edge between $i$ and $j$ if and only if $\pm(e_i-e_j)\in S$. We note that $S$ generates $\t$ over $\R$ or $\langle \Phi\rangle_\Z$ over $\Z$ if and only if this graph is connected. 
\end{proof}

\begin{coro}
    Let $G$ be a connected and reductive Lie group of type $A$, $g\geq 0$ and $k\geq 1$ be integers, and $\mcC_1,\ldots,\mcC_k$ be a generic tuple of semisimple conjugacy classes of $G$. Then $\tBetti[G]$ has a cell decomposition by varieties of the form $(\C^\times)^{d-2i}\times \C^{i+r}$, where $d$ is the dimension of $\abbreviatedBetti[G]$ and $r$ is the rank of any maximal unipotent subgroup of $G$.
\end{coro}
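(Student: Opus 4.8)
The plan is to obtain this corollary as an essentially formal consequence of Theorem~\ref{thm:cell-decomposition} and Theorem~\ref{thm:no-quotients-typeA-unipvar}: the only genuinely new input is that in type $A$ there are no hypotheses left to impose and no finite quotients left to kill.

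First I would record the classical fact that in a group of type $A$ the connected centralizer of every semisimple element is a Levi subgroup. This reduces to $G=\GL_n$, where a semisimple element is conjugate to a block-diagonal matrix whose centralizer is a product of general linear groups, i.e.\ a standard Levi; the statement for an arbitrary type-$A$ group follows from the $\GL_n$ case. Hence every semisimple conjugacy class of $G$ is nice, so the hypothesis ``$\mcC_1,\dots,\mcC_{k-1}$ nice'' of Theorem~\ref{thm:cell-decomposition} holds automatically for any generic tuple. Applying that theorem, and unwinding its proof through the corollary preceding Definition~\ref{def:generic}, $\tBetti[G]$ acquires a $B$-equivariant cell decomposition whose cells are
\[ \pi_{1,\beta}(p)\times(\C^\times)^{d-2i}\times[\C^{i+r}/Z_\beta(p)], \]
indexed by well-twisted $\beta\in F(W^3)\times F(\Delta^+)$ with $s_\beta=e$ and associated walks $p$ for which $C_p\to T^{ss}$ is surjective (the non-surjective walks contribute empty cells, since the $\mcC_i$ form a generic tuple), with $Z_\beta(p)$ acting linearly on $\C^{i+r}$.

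It then remains to eliminate the factors $\pi_{1,\beta}(p)$ and $[\,\cdot/Z_\beta(p)]$ in type $A$. Since each $\pi_k^{-1}\alpha_k$ is a root, $Q(p)\subseteq Q$, and $Q(p)\subseteq Q_\beta(p)$ by definition, so $Q(p)\subseteq Q\cap Q_\beta(p)$; dualising the resulting surjection $Q/Q(p)\twoheadrightarrow Q/(Q\cap Q_\beta(p))$ into $\C^\times$ gives an embedding $Z_\beta(p)\hookrightarrow Z(p)$. By Theorem~\ref{thm:no-quotients-typeA-unipvar}, $Z(p)$ is trivial whenever $C_p\to T^{ss}$ is surjective, hence $Z_\beta(p)$ is trivial and $[\C^{i+r}/Z_\beta(p)]=\C^{i+r}$. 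Finally $\pi_{1,\beta}(p)$ is finite (again because $C_p\to T^{ss}$ is surjective), so each of the cells above is a disjoint union of $|\pi_{1,\beta}(p)|$ copies of $(\C^\times)^{d-2i}\times\C^{i+r}$; refining the decomposition by connected components, which remain $B$-invariant since $B$ is connected, yields the asserted cell decomposition.

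I do not expect a genuine obstacle here, as the substance is already contained in Theorems~\ref{thm:cell-decomposition} and~\ref{thm:no-quotients-typeA-unipvar}. The two points requiring care are that Theorem~\ref{thm:no-quotients-typeA-unipvar} is stated for the untwisted invariants $Z(p)$ and $\pi_1(p)$ whereas the decomposition involves the twisted ones $Z_\beta(p)$ and $\pi_{1,\beta}(p)$ — the gap being bridged by the monotonicity $Q_\beta(p)\supseteq Q(p)$ in the first case, and by the mere finiteness of $\pi_{1,\beta}(p)$ in the second, since passing to connected components is permitted in a cell decomposition — and the verification that centralizers of semisimple elements in type $A$ are Levi, which is what makes the niceness hypothesis vacuous.
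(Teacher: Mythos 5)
Your proof is correct and follows the same two-step route as the paper's one-sentence argument: observe that semisimple centralizers in type $A$ are Levi subgroups, so the niceness hypothesis of Theorem~\ref{thm:cell-decomposition} is automatic, and then invoke Theorem~\ref{thm:no-quotients-typeA-unipvar} to eliminate the finite quotients; your extra step of refining by connected components to dispose of the finite set $\pi_{1,\beta}(p)$ is something the paper leaves implicit and is worth spelling out.

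One remark concerns your bridge $Z_\beta(p)\hookrightarrow Z(p)$. The statement of Theorem~\ref{thm:no-quotients-typeA-unipvar} writes $Z(p)$ and $\pi_1(p)$, but its proof treats the subspaces $(w-\operatorname{id})(\t)$ coming from the $F(W^3)$-part precisely so as to establish that the \emph{twisted} invariants satisfy $Z_\beta(p)=1$ and $\pi_{1,\beta}(p)=\pi_1(G^{ss})$; the untwisted notation there is an abuse. If one read the theorem literally — with ``$C_p\to T^{ss}$ surjective'' meaning the full map (including the commutator contribution, as in the theorem preceding it) and $Z(p)=(Q/Q(p))^*$ the genuinely untwisted group — the implication would actually be false, since $Q(p)$ alone need not span $\t^{ss}$ when the surjectivity is carried by the commutator part, and then $Z(p)$ is infinite. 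Your monotonicity observation $Q(p)\subseteq Q\cap Q_\beta(p)$ is a correct inequality, but it runs the argument through the literal (false) reading rather than the intended one. Treat Theorem~\ref{thm:no-quotients-typeA-unipvar} as already asserting triviality of $Z_\beta(p)$ and the detour disappears, and your proof coincides with the paper's.
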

\begin{proof}
    By the previous lemma, we know that no non-trivial finite quotients are appearing in the stratification of Theorem \ref{thm:cell-decomposition}. Moreover, for type $A$, any centralizer of a semisimple element is a Levi subgroup.
\end{proof}

\section{Quotient by \texorpdfstring{$B$}{B}}\label{sec:bundle-structure}

In this section, we explore the relation between $\tBetti[G]$ and $\abbreviatedBetti[G]$. We fix as before a connected and reductive Lie group $G$, a Borel subgroup $B$ with unipotent radical $U$, and a maximal torus $T\subset B$. 

\begin{lema}
    Let $\mcC_1,\ldots, \mcC_k$ be conjugacy classes of $G$. Assume that $\mcC_k$ is regular and pick $C_k\in \mcC_k\cap T$. Then
    \[ \abbreviatedBetti[G] \simeq [(\mcZ(D^\dagger)\circ \mcC(H)^g\circ \mcZ(L_{\mcC_1})\circ \cdots \circ \mcZ(L_{\mcC_{k-1}}))(\mcU(C_k)) / (B/Z(G))]\]
    and there is a morphism $\tBetti[G]\to \abbreviatedBetti[G]$.
\end{lema}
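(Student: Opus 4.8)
The plan is to trace the identifications already set up in Section~1 and Section~\ref{sec:field-theory}. At the end of Section~\ref{sec:field-theory} it is shown that
\[
V \;:=\; \big(\mcZ(D^\dagger)\circ \mcZ(H)^g\circ \mcZ(L_{\mcC_1})\circ \cdots \circ \mcZ(L_{\mcC_{k-1}})\big)\big(\mcU(C_k)\big)
\]
is, as a $B$-variety over $G$, the variety $\{(x,y,z,u)\in G^{2g}\times\mcC_1\times\cdots\times\mcC_{k-1}\times U:[x_1,y_1]\cdots[x_g,y_g]z_1\cdots z_{k-1}uC_k=1\}$ equipped with the action $b\cdot(x,y,z,u)=(bxb^{-1},byb^{-1},bzb^{-1},\,buC_kb^{-1}C_k^{-1})$; writing $b=tv$ with $t\in T$, $v\in U$ one checks $buC_kb^{-1}C_k^{-1}=t\big(vu(C_kv^{-1}C_k^{-1})\big)t^{-1}\in U$, so this is well defined, and it visibly preserves the defining equation. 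Since $\tBetti[G]=[V/(T/Z(G))]$ by construction, the first assertion reduces to $\abbreviatedBetti[G]\simeq[V/(B/Z(G))]$, and the second is then immediate: for $b=t\in T$ the action above specializes, using $tC_kt^{-1}=C_k$, to simultaneous conjugation $t\cdot(x,y,z,u)=(txt^{-1},tyt^{-1},tzt^{-1},tut^{-1})$, i.e.\ exactly the $T$-action defining $\tBetti[G]$, so the inclusion $T/Z(G)\hookrightarrow B/Z(G)$ induces the desired morphism $[V/(T/Z(G))]\to[V/(B/Z(G))]$.

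For the identification $\abbreviatedBetti[G]\simeq[V/(B/Z(G))]$ I would consider the $B$-equivariant map $\mu\colon V\to G$, $(x,y,z,u)\mapsto uC_k$, for the conjugation action on the target. Because $\mcC_k$ is regular, $\mathrm{Ad}(C_k)-\mathrm{id}$ is invertible on $\mathrm{Lie}(U)$, so $v\mapsto vC_kv^{-1}C_k^{-1}$ is an automorphism of $U$ (reduce to the root spaces along the lower central series of $U$; this is the mechanism behind Steinberg cross-sections). Hence $\mu$ has image the single conjugation orbit $B\cdot C_k=UC_k\subseteq\mcC_k$, with point stabilizer $Z_B(C_k)=Z_G(C_k)\cap B=T$; similarly $\mcC_k=G\cdot C_k\cong G/T$. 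Taking fibres over $C_k$ therefore exhibits $V=B\times_T F$ and $\mcF(M)=G\times_T F$, where $\mcF(M)$ is the representation variety of Lemma~\ref{lem:field-theory} and
\[
F=\{(x,y,z)\in G^{2g}\times\mcC_1\times\cdots\times\mcC_{k-1}:[x_1,y_1]\cdots[x_g,y_g]z_1\cdots z_{k-1}C_k=1\}
\]
carries the simultaneous conjugation action of $T$. Since $Z(G)\subseteq T$ acts trivially, this upgrades to $V\cong(B/Z(G))\times_{T/Z(G)}F$ and $\mcF(M)\cong(G/Z(G))\times_{T/Z(G)}F$, and the standard equivalence $[(H\times_K X)/H]\simeq[X/K]$ gives
\[
[V/(B/Z(G))]\;\simeq\;[F/(T/Z(G))]\;\simeq\;[\mcF(M)/(G/Z(G))]\;=\;\abbreviatedBetti[G],
\]
the last equality being Lemma~\ref{lem:field-theory}, and the middle one being the reduction using regularity of $\mcC_k$ already recorded in Section~1.

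The bulk of this is bookkeeping: the two equivariance checks (for the displayed $B$-action on $V$ and for $\mu$) and the $Z(G)$-equivariant form of the descent lemma. The one genuinely substantive input is regularity of $\mcC_k$, which simultaneously gives $Z_G(C_k)=T$ and $B\cdot C_k=UC_k$ — i.e.\ that $UC_k$ is the homogeneous space $B/T$ — and this is exactly what makes $V$ and $\mcF(M)$ associated bundles over $\mcC_k\cong G/T$ with the same fibre $F$. Matching the $\mcZ$-calculus description of $V$ (built from the untwisting functor $\mcU$ followed by the handle and puncture convolutions) with the explicit conjugation-type action above is the only place where a careful computation is needed, but it has already been carried out at the end of Section~\ref{sec:field-theory}, so it is not really an obstacle for the present lemma.
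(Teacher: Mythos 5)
Your proof is correct, and it takes a genuinely different (and arguably cleaner) route from the paper's. The paper proves the identification $[V/(B/Z(G))]\simeq\abbreviatedBetti[G]$ by a sequence of explicit coordinate changes: it applies the inverse of the isomorphism $u\mapsto u^{-1}C_kuC_k^{-1}$ of $U$ (using regularity of $C_k$) to put the defining equation in the form $[x_1,y_1]\cdots z_{k-1}u^{-1}C_ku=1$, then conjugates $(x,y,z)$ by $u$, and observes that in the new coordinates the $U$-part of the $B$-action is a free right-translation on the $u$-coordinate; quotienting first by $U$ kills $u$, leaving the $T$-quotient of $F$, which is $\abbreviatedBetti[G]$. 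The morphism $\tBetti[G]\to\abbreviatedBetti[G]$ then appears as ``forget $u$'' in these coordinates. You instead package the same regularity input into the $B$-equivariant map $\mu\colon V\to UC_k$, identify $UC_k\cong B/T$ as a homogeneous space (and $\mcC_k\cong G/T$), and recognize both $V$ and $\mcF(M)$ as associated bundles with fibre $F$; the descent isomorphism $[(H\times_K X)/H]\simeq[X/K]$ then gives the statement uniformly for both $B$ and $G$, and the morphism $\tBetti[G]\to\abbreviatedBetti[G]$ is simply the one induced by $T/Z(G)\hookrightarrow B/Z(G)$. Both arguments hinge on the same fact — $\mathrm{Ad}(C_k)-\operatorname{id}$ invertible on $\operatorname{Lie}U$ — but your associated-bundle framing is more conceptual and also re-derives the Section 1 identification $\abbreviatedBetti[G]\simeq[F/(T/Z(G))]$ as a byproduct rather than taking it as given, whereas the paper's version makes the $U$-direction of the quotient explicit, which is useful for the bundle-structure lemma (Lemma \ref{lem:bundle-structure}) that follows.
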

\begin{proof}
    Being $C_k$ regular, $u\mapsto u^{-1}C_kuC_k^{-1}$ is an algebraic isomorphism. Applying its inverse, we get an isomorphism between $\tBetti[G]$ and 
    \[[\{(x,y,z)\in G^{2g}\times \mcC_1\times \dots\times \mcC_{k-1}\times U: [x_1,y_1]\cdots[x_g,y_g]z_1\cdots z_{k-1}u^{-1}C_ku=1 \}/ (T/Z(G))].\]
    We make the change of variables $(x,y,z,u)\mapsto (uxu^{-1},uyu^{-1},uzu^{-1},u)$ to get
    \[\tBetti[G]\simeq [\{(x,y,z)\in G^{2g}\times \mcC_1\times \dots\times \mcC_{k-1}\times U: [x_1,y_1]\cdots[x_g,y_g]z_1\cdots z_{k-1}C_k=1 \}/ (T/Z(G))]\]
    where the $T$-action on $U$ is still the conjugation action. From the previous description, we have ``the forget the $u$-variable'' morphism $\pi:\tBetti[G]\to \abbreviatedBetti[G]$. 

    Under the inverse of $u\mapsto u^{-1}C_kuC_k^{-1}$, the $B$-action changes to $(bxb^{-1},byb^{-1},bzb^{-1}, tub^{-1})$, where $t$ is the image of $b$ under $B\to B/U\simeq T$. After the conjugation by $u$, the action becomes $(txt^{-1},tyt^{-1},tzt^{-1},tub^{-1})$. Hence, if we first quotient by $U$, we kill the $u$ variable and we get 
    \[\{(x,y,z)\in G^{2g}\times \mcC: [x_1,y_1]\cdots [x_g,y_g]z_1\cdots z_{k-1}C_k=1\}\]
    with its $T$-action, whose quotient is $\abbreviatedBetti[G]$. 
\end{proof}

\begin{coro}
    Let $G$ be a connected and reductive Lie group, $g\geq 0$ and $k\geq 1$ be integers, and $\mcC_1,\ldots,\mcC_k$ be a generic tuple of semisimple conjugacy classes of $G$. Assume that $\mcC_1,\ldots, \mcC_{k-1}$ are nice and $\mcC_k$ is regular. Then $\abbreviatedBetti[G]$ is a Deligne-Mumford stack and the cell decomposition of $\tBetti[G]$ is a pullback under $\tBetti[G] \to \abbreviatedBetti[G]$.
\end{coro}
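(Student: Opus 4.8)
The plan is to deduce both assertions from the two descriptions of the character stack in the preceding lemma, combined with the $B$-equivariance of the cell decomposition obtained in Sections \ref{sec:bruhat-parametrizations}--\ref{sec:cupping}.

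First I would record what the preceding lemma really produces: after the change of variables carried out there, there are $T$-equivariant identifications $\tBetti[G]\simeq [(Y_0\times U)/(T/Z(G))]$ and $\abbreviatedBetti[G]\simeq [(Y_0\times U)/(B/Z(G))]\simeq [Y_0/(T/Z(G))]$, where $Y_0=\{(x,y,z)\in G^{2g}\times\mcC_1\times\cdots\times\mcC_{k-1}:[x_1,y_1]\cdots[x_g,y_g]z_1\cdots z_{k-1}C_k=1\}$, the torus $T$ acts by simultaneous conjugation on both factors, $U\subset B$ acts on $Y_0\times U$ by right translation on the $U$-factor, and the morphism $\tBetti[G]\to\abbreviatedBetti[G]$ is the one induced by $T\subset B$ (equivalently, by the $T$-equivariant projection $Y_0\times U\to Y_0$). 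Then I would observe that $\{e\}\hookrightarrow U$ is a closed, $T$-fixed point, so $Y_0\to Y_0\times U$, $y\mapsto(y,e)$, is a $T$-equivariant closed immersion; it induces a closed immersion of stacks $\abbreviatedBetti[G]=[Y_0/(T/Z(G))]\hookrightarrow[(Y_0\times U)/(T/Z(G))]=\tBetti[G]$ which splits $\tBetti[G]\to\abbreviatedBetti[G]$. Hence $\abbreviatedBetti[G]$ is isomorphic to a closed substack of $\tBetti[G]$, and since $\tBetti[G]$ is a Deligne--Mumford stack by the corollary following Theorem \ref{thm:cell-decomposition}, so is $\abbreviatedBetti[G]$.

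For the pullback statement, the key input is that the decomposition of $\tBetti[G]$ in Theorem \ref{thm:cell-decomposition} descends from a $B$-equivariant decomposition of $(\mcZ(D^\dagger)\circ\mcZ(H)^g\circ\mcZ(L_{\mcC_1})\circ\cdots\circ\mcZ(L_{\mcC_{k-1}}))(\mcU(C_k))\cong Y_0\times U$ into locally closed subvarieties, furnished by the last corollary of Section \ref{sec:cupping}; call the pieces $V_\lambda$, so that the cells of $\tBetti[G]$ are the $[V_\lambda/(T/Z(G))]$. Since the $U$-action on $Y_0\times U$ is by right translation on the $U$-factor, $B$-invariance of $V_\lambda$ forces $V_\lambda=W_\lambda\times U$ for a $T$-invariant locally closed $W_\lambda\subseteq Y_0$, whence $[V_\lambda/(B/Z(G))]\simeq[W_\lambda/(T/Z(G))]$, and these give a decomposition of $\abbreviatedBetti[G]$ into locally closed substacks. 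Finally, because each $V_\lambda$ is $B$-invariant, $[V_\lambda/(B/Z(G))]\hookrightarrow\abbreviatedBetti[G]$ is a locally closed immersion whose fibre product with $\tBetti[G]\to\abbreviatedBetti[G]$ is exactly $[V_\lambda/(T/Z(G))]$; thus the cell decomposition of $\tBetti[G]$ is the pullback of the one just described on $\abbreviatedBetti[G]$.

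There is essentially no structural obstacle here: the substance (the $B$-equivariance of the stratification, and the control over the bundle structure) is precisely what Sections \ref{sec:bruhat-parametrizations}--\ref{sec:bundle-structure} are for, and the present statement is a formal consequence of them. The only points demanding a little care are bookkeeping ones: that the change of variables of the preceding lemma genuinely puts the $B$-action in the displayed normal form (already verified there), and the standard fact that closed substacks of Deligne--Mumford stacks are Deligne--Mumford.
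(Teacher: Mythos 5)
Your proof is correct and follows essentially the same route as the paper: both rely on the $B$-equivariance of the cell decomposition of $(\mcZ(D^\dagger)\circ\mcZ(H)^g\circ\mcZ(L_{\mcC_1})\circ\cdots\circ\mcZ(L_{\mcC_{k-1}}))(\mcU(C_k))$ and the observation (from the preceding lemma) that the $U$-action is free on that space, giving a $U$-fiber bundle onto $Y_0$ whose $T$-quotient is $\abbreviatedBetti[G]$. Where you are more explicit than the paper is in the Deligne--Mumford claim: the paper leaves it implicit (it follows from the resulting decomposition of $\abbreviatedBetti[G]$ into finite quotients of affine cells), whereas you give a clean formal argument via the $T$-equivariant closed section $y\mapsto(y,e)$ exhibiting $\abbreviatedBetti[G]$ as a closed substack of the already-known DM stack $\tBetti[G]$; both are valid, and your version is a useful clarification rather than a different method.
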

\begin{proof}
    The cells of $(\mcZ(D^\dagger)\circ \mcC(H)^g\circ \mcZ(L_{\mcC_1})\circ \cdots \circ \mcZ(L_{\mcC_{k-1}}))(\mcU(C_k))$ are $B$-invariant. Being, the quotient by $U$ a $U$-fiber bundle, we get a $T$-invariant cell decomposition of 
    $(\mcZ(D^\dagger)\circ \mcC(H)^g\circ \mcZ(L_{\mcC_1})\circ \cdots \circ \mcZ(L_{\mcC_{k-1}}))(C_k)$.
    This induces the desired cell decomposition on $\abbreviatedBetti[G]$.
\end{proof}

\begin{lema}\label{lem:bundle-structure}
    If $\mcC$ is generic and $G$ has type $A$, $\tBetti[G]\to\abbreviatedBetti[G]$ is a locally trivial map in the Zarisky topology with fiber $U$.
\end{lema}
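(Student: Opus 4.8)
The plan is to reduce the statement to the specialness of connected solvable groups. By the previous lemma this is legitimate in type $A$: there every semisimple conjugacy class has a Levi centralizer, so $\mcC_1,\dots,\mcC_{k-1}$ are automatically nice and (together with the standing hypothesis that $\mcC_k$ is regular) the explicit presentation applies. Writing $Y := (\mcZ(D^\dagger)\circ\mcZ(H)^g\circ\mcZ(L_{\mcC_1})\circ\cdots\circ\mcZ(L_{\mcC_{k-1}}))(\mcU(C_k))$ for the resulting $B$-variety over $G$, and $\bar B := B/Z(G)$, $\bar T := T/Z(G)$, one has $\abbreviatedBetti[G]\simeq[Y/\bar B]$ and $\tBetti[G]\simeq[Y/\bar T]$, with the morphism $\tBetti[G]\to\abbreviatedBetti[G]$ induced by the inclusion $\bar T\hookrightarrow\bar B$. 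Since $B=U\rtimes T$ we get $\bar B/\bar T\cong B/T\cong U$, so it suffices to show that $[Y/\bar T]\to[Y/\bar B]$ is Zariski-locally trivial with fibre $U$.

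Next I would record the formal identification of this map as an associated bundle: the assignment $(y,b\bar T)\mapsto[b^{-1}y]$ descends to an isomorphism $Y\times^{\bar B}(\bar B/\bar T)\xrightarrow{\ \sim\ }[Y/\bar T]$ over $[Y/\bar B]$, so $[Y/\bar T]\to[Y/\bar B]$ is precisely the $(\bar B/\bar T)$-bundle associated with the tautological $\bar B$-torsor $Y\to[Y/\bar B]$. Consequently, for any scheme $S$ and any morphism $S\to\abbreviatedBetti[G]$, setting $P:=S\times_{\abbreviatedBetti[G]}Y$ (a $\bar B$-torsor over the \emph{scheme} $S$), the base change $S\times_{\abbreviatedBetti[G]}\tBetti[G]$ is $P\times^{\bar B}(\bar B/\bar T)\cong P/\bar T$, a fibration over $S$. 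Taking $S=Y$ with the atlas map $Y\to[Y/\bar B]$ one even obtains the trivial bundle $Y\times U\to Y$. Thus the entire question reduces to the assertion that every $\bar B$-torsor over a scheme is Zariski-locally trivial: over an open $V\subseteq S$ on which $P|_V\cong V\times\bar B$ one gets $(P/\bar T)|_V\cong V\times(\bar B/\bar T)\cong V\times U$.

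Finally I would invoke the classical fact that $\bar B=B/Z(G)$, being a connected solvable linear algebraic group over $\C$, is special in the sense of Serre: it admits a composition series with successive quotients isomorphic to $\mathbb{G}_a$ or $\C^\times$ (built from the derived/lower central series of $U$ together with the image of $T$), each of these is special, and an extension of special groups is special; hence every $\bar B$-torsor over a scheme is trivial Zariski-locally. Combined with the previous paragraph, this yields the lemma, with fibre $U$ as claimed.

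The only genuinely non-formal point — hence the main obstacle — is the passage from étale/smooth-local triviality (which is automatic and is already visible after base change along the atlas $Y\to[Y/\bar B]$) to Zariski-local triviality over the base; this is exactly what the specialness of $\bar B$ provides, and it is also the structural reason one expects the sharper statement that $\tBetti[G]\to\abbreviatedBetti[G]$ is in fact a vector bundle in type $A$. Everything else — matching the two presentations, the associated-bundle description, and identifying the fibre with $U$ — is bookkeeping resting on the earlier sections, notably the untwisting functor of Section \ref{sec:B-varieties} and the handle and puncture constructions of Sections \ref{sec:handle-operation} and \ref{sec:adding-punctures}.
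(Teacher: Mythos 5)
Your argument is correct, but it takes a genuinely different route from the paper's. The paper works downstairs: genericity makes the $T/Z(G)$-action on the representation variety $R$ free, Luna's slice theorem then upgrades $R\to\abbreviatedBetti[G]$ to a $T/Z(G)$-principal bundle (so in particular $\abbreviatedBetti[G]$ is an actual variety), and specialness of the \emph{torus} $T/Z(G)$ gives Zariski-local triviality, with $\tBetti[G]\cong R\times^{T/Z(G)}U$ the associated $U$-bundle. You instead work upstairs, on $Y$: you exploit the fact that the atlas $Y\to[Y/\bar B]$ is the tautological $\bar B$-torsor of the quotient stack, identify $\tBetti[G]=[Y/\bar T]$ as the associated $(\bar B/\bar T)$-bundle, and invoke specialness of the connected solvable group $\bar B$. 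This buys you something: you never need Luna or any freeness statement, since the atlas is a torsor for free, and the argument is more uniform (it would still produce the relative Zariski-local triviality even without the genericity hypothesis). What it costs you is the absolute statement: your reduction proves Zariski-local triviality after pullback along any scheme $S\to\abbreviatedBetti[G]$, whereas the paper's lemma is a statement over $\abbreviatedBetti[G]$ itself. That only makes sense because genericity (via Luna, or equivalently because the $\bar B$-action on $Y$ is free) guarantees $\abbreviatedBetti[G]$ is a variety and not merely a Deligne--Mumford stack, which is precisely where the hypothesis ``$\mcC$ is generic'' enters the lemma. You should add that observation — one line, pointing to the freeness of the $\bar T$-action on $R$ or the $\bar B$-action on $Y$ — to close the gap between the relative statement you prove and the absolute one being asserted; otherwise the proposal is sound.
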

\begin{proof}
    Being $\mcC$ is generic, then the action of $T/Z(G)$ on
    \[R=\{(x,y,z)\in G^{2g}\times \mcC: [x_1,y_1]\cdots [x_g,y_g]z_1\cdots z_{k-1}C_k=1\}\]
    is free. Hence, Luna's slice theorem \cite{Luna} ensures us that the quotient morphism $R\to \abbreviatedBetti[G]$ is a $T/Z(G)$-principal bundle. Moreover, being $T/Z(G)$ a torus, it is a special group and therefore $R\to \abbreviatedBetti[G]$ is locally trivial in the Zariski topology.  Thus, $\pi$ is a $U$-fiber bundle locally trivial in the Zariski topology.
\end{proof}

\part{Motivic Mirror Symmetry}

\section{Grothendieck ring of stacks}

We follow \cite{Ekedahl}. By a stack, we will mean an algebraic stack of finite type over $\C$ with affine stabilizers at every closed point. Let $K_0(\Stk_\C)$ be the Grothendieck ring of stacks and $q\in K_0(\Stk_\C)$ be the class of the affine line. 

\begin{lema}
    Let $H$ be a finite abelian group and $X$ be an $H$-variety. Then the classes of $[X/H]$ and $X/H$ agree on $K_0(\Stk_\C)$.
\end{lema}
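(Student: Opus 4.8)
The statement to prove is that for a finite abelian group $H$ acting on a variety $X$, the classes $[X/H]$ (the quotient stack) and $X/H$ (the coarse quotient variety/scheme) coincide in $K_0(\Stk_\C)$.

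The plan is to reduce to the case of a \emph{free} action by stratifying $X$ according to the stabilizer subgroup. For each subgroup $K \subseteq H$, let $X^{(K)} \subseteq X$ be the locally closed subset of points whose stabilizer is exactly $K$; since $H$ is finite this is a finite, $H$-invariant stratification, so by the scissor relations in $K_0(\Stk_\C)$ it suffices to prove the equality for each piece $[X^{(K)}/H]$ versus $X^{(K)}/H$. On $X^{(K)}$ the subgroup $K$ acts trivially, so $[X^{(K)}/H] \simeq [X^{(K)}/(H/K)] \times BK$; meanwhile the quotient variety is $X^{(K)}/H = X^{(K)}/(H/K)$ and the $H/K$-action here is free. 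Since $BK$ has class $1$ in $K_0(\Stk_\C)$ for $K$ finite (this is a standard fact; $[BG]=[G]^{-1}$ for linear $G$ and $[G]=1$ for $G$ finite, or directly $[B\mu_n]=1$ and induction over a composition series of the finite abelian $K$), I am reduced to showing $[Y/H'] = Y/H'$ for a \emph{free} action of a finite group $H'$ on a variety $Y$.

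For a free action of a finite group, the quotient stack $[Y/H']$ is already equivalent to the quotient scheme $Y/H'$: the action being free and $H'$ finite, the map $Y \to Y/H'$ is an \'etale (indeed finite \'etale) $H'$-torsor, so $[Y/H'] \simeq Y/H'$ as stacks, and in particular their classes in $K_0(\Stk_\C)$ agree. One should check that the quotient scheme exists in the required category — here $X$ (hence each stratum, hence $Y$) has a $G$-invariant open affine cover by hypothesis on $G$-varieties in this paper, and a finite group acting on an affine scheme always admits a quotient (take $\Spec$ of the ring of invariants), so one can glue these to obtain $Y/H'$ as a variety; this is exactly the setting in which $[Y/H']\simeq Y/H'$.

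The main obstacle is the bookkeeping in the stratification step: one must verify that the loci $X^{(K)}$ are genuinely locally closed and $H$-invariant and that the identification $[X^{(K)}/H] \simeq [X^{(K)}/(H/K)]\times BK$ is legitimate (the $K$-action being trivial means the inertia splits off a factor $BK$), together with the input $[BK]=1$ in $K_0(\Stk_\C)$ for finite $K$, which relies on $\C$ having enough roots of unity and the class of $B\mu_n$ being $1$. None of this is deep, but it is the only place where the finiteness and abelianness of $H$ (or at least solvability, to filter $K$) is really used; everything else is the formal fact that a free quotient stack by a finite group is a scheme.
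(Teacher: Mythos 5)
Your stratification by stabilizer and the reduction to a free action are sound (and correctly use both finiteness and abelianness of $H$, the latter so that the loci $X^{(K)}$ are $H$-invariant). The gap is the intermediate claim that a trivial $K$-action on $X^{(K)}$ gives a stack isomorphism $[X^{(K)}/H]\simeq [X^{(K)}/(H/K)]\times BK$. This is asserting that the $K$-gerbe $[X^{(K)}/H]\to[X^{(K)}/(H/K)]$ is trivial, which requires the extension $1\to K\to H\to H/K\to 1$ to split (or at least its class in $H^2$ to die after pullback along $X^{(K)}/(H/K)\to B(H/K)$). For $H=\Z/4$, $K=\Z/2$ and a base with nonvanishing $H^2_{\mathrm{et}}(-,\Z/2)$ this fails; and semisimple groups do have centers like $\Z/4$, so the non-split case matters here. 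Saying ``the inertia splits off a factor $BK$'' conflates the inertia stack (which does see $K$) with a product decomposition of the quotient stack itself.

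The $K_0$-level equality you actually need can be rescued, but only by a different argument: for instance, embed $H$ faithfully into a torus $(\C^\times)^m$, use Ekedahl's formula $[[Y/H]]=[Y\times^H(\C^\times)^m]\cdot[(\C^\times)^m]^{-1}$, observe that after first quotienting by $K$ (which acts trivially on $Y$ and with $(\C^\times)^m/K\simeq(\C^\times)^m$) the space $Y\times^H(\C^\times)^m$ is a $(\C^\times)^m$-torsor over $Y/(H/K)$, and then invoke that tori are special so that torsor's class factors. That is essentially the strategy the paper itself follows, except the paper embeds $H$ into $\GL_n$ rather than a torus, applies Ekedahl's formula $[[X/H]]=[\operatorname{Ind}_H^{\GL_n}X][\GL_n]^{-1}$, and computes $[(X\times\GL_n)/H]=[X/H][\GL_n]$ by citing earlier work; no gerbe-splitting is ever asserted. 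Your $[BK]=1$ input and the free-quotient step are fine, but you need to replace the product-of-stacks claim with a class-level computation along these lines.
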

\begin{proof}
    We choose a faithful linear representation $H\to \GL_n$. Then 
    \[[X/H] \simeq [\operatorname{Ind}_H^{\GL_n}(X)/\GL_n]\]
    and, by \cite{Ekedahl}, the class of $[X/H]$ is
    \[[\operatorname{Ind}_H^{\GL_n}(X)][\GL_n]^{-1}.\]
    Now, $\operatorname{Ind}_H^{\GL_n}(X)\simeq (X\times \GL_n)/H$ and we can compute its class using \cite[section 3.6]{JesseVogel}. Working as in \cite[subsection 3.2]{LdA} we obtain that 
    \[[(X\times \GL_n)/H]=[(X/H)] [\GL_n/H]=[(X/H)][\GL_n].\]
\end{proof}

\begin{coro}\label{cor:motivically-a-vector-bundle}
    Let $G$ be a connected and reductive Lie group, $g\geq 0$ and $k\geq 1$ be integers, and $\mcC_1,\ldots,\mcC_k$ be a generic tuple of semisimple conjugacy classes of $G$. Assume that $\mcC_1,\ldots, \mcC_{k-1}$ are nice and $\mcC_k$ is regular. Then \[[\abbreviatedBetti[G]] = [\tBetti[G]] q^{-\frac{1}{2}(\dim G- \rk G)}\]
    in $K_0(\Stk_\C)$.
\end{coro}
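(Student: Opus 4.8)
The plan is to make the morphism $\tBetti[G]\to\abbreviatedBetti[G]$ completely explicit and then recognise it, at the level of $K_0(\Stk_\C)$, as a rank $r$ affine bundle, where $r=\tfrac{1}{2}(\dim G-\rk G)=\dim U$. Writing $T':=T/Z(G)$ and
\[ R:=\{(x,y,z)\in G^{2g}\times\mcC_1\times\cdots\times\mcC_{k-1}:[x_1,y_1]\cdots[x_g,y_g]z_1\cdots z_{k-1}C_k=1\}, \]
the first lemma of Section \ref{sec:bundle-structure} (and its proof) gives $\abbreviatedBetti[G]\simeq[R/T']$ and $\tBetti[G]\simeq[(R\times U)/T']$, with $T'$ acting by simultaneous conjugation on both, the natural morphism being induced by the projection $R\times U\to R$. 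Since $q$ is invertible in $K_0(\Stk_\C)$ \cite{Ekedahl}, it suffices to prove $[\tBetti[G]]=q^{\,r}[\abbreviatedBetti[G]]$.

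First I would filter the $U$-factor. The unipotent group $U$ carries the standard filtration by $T$-stable normal subgroups $U=U_1\supset U_2\supset\cdots\supset U_{r+1}=\{1\}$ with $U_j/U_{j+1}\simeq\mathbb{G}_a$ and $T$ acting on each quotient through a single positive root $\chi_j$ (see \cite{Humphreys}). Hence $R\times U\to R$ is the composition of the $r$ successive projections $R\times(U/U_{j+1})\to R\times(U/U_j)$, each a $T'$-equivariant $\mathbb{A}^1$-torsor whose structure group $\mathbb{G}_a$ is linearised by $\chi_j$. Passing to $T'$-quotient stacks, $\tBetti[G]\to\abbreviatedBetti[G]$ becomes the composition of $r$ torsors, the $j$-th of which is a torsor under the line bundle $L_j$ on $[(R\times U/U_j)/T']$ obtained by descending the linearised $\mathbb{A}^1_{\chi_j}$.

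It then remains to check that a torsor $\mcT\to X$ under a line bundle $\mcL$ on an algebraic stack $X$ satisfies $[\mcT]=q\,[X]$ in $K_0(\Stk_\C)$; iterating this $r$ times gives $[\tBetti[G]]=q^{\,r}[\abbreviatedBetti[G]]$, and dividing by $q^{\,r}$ yields the stated identity. For the trivial torsor, i.e. $\mcT=\mathrm{tot}(\mcL)$, this is immediate: $\mathrm{tot}(\mcL)\setminus 0_X$ is a $\mathbb{G}_m$-torsor over $X$, $\mathbb{G}_m$ is special, so $[\mathrm{tot}(\mcL)\setminus 0_X]=(q-1)[X]$ and hence $[\mathrm{tot}(\mcL)]=(q-1)[X]+[X]=q[X]$. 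The general case reduces to this because over the affine varieties $R\times(U/U_j)$ every $\mathbb{A}^1$-torsor is trivial, and $T'$-equivariantly so, since a quasicoherent sheaf on an affine $T'$-variety has vanishing higher $T'$-equivariant cohomology; alternatively one embeds $\mcT$ as the complement of a section in the $\P^1$-bundle $\P(\mcL\oplus\mcO_X)$ and uses $[\P(\mcL\oplus\mcO_X)]=(q+1)[X]$, which again follows from $\mathbb{G}_m$ being special. One may instead argue cell by cell, combining Theorem \ref{thm:cell-decomposition} with the identity $[[Y/\Gamma]]=[Y/\Gamma]$ for finite abelian $\Gamma$ and the observation that a finite group acting diagonally on $\C^N$ has $[\C^N/\Gamma]=q^N$ (stratify the toric quotient into tori).

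The main obstacle is not the computation itself but the fact that, in contrast to the type $A$ situation of Lemma \ref{lem:bundle-structure}, the morphism $\tBetti[G]\to\abbreviatedBetti[G]$ need not be Zariski-locally trivial and is a map of stacks rather than of varieties. What makes the argument work is precisely that it is nevertheless an iterated torsor under line bundles — visible from the root filtration of $U$ — together with the good behaviour of $K_0(\Stk_\C)$: invertibility of $q$ and of $q-1$, and the splitting-off of a factor $q-1$ from $\mathbb{G}_m$-torsors.
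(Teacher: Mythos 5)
Your proof is correct, but it proceeds by a route genuinely different from the paper's. You identify $U$ (via the $T$-equivariant exponential $\exp:\mathfrak{u}\to U$) with a linear $T'$-representation, filter it into rank-one pieces and show that $\tBetti[G]\to\abbreviatedBetti[G]$ is an iterated line bundle over the Deligne--Mumford stack $\abbreviatedBetti[G]\simeq[R/T']$, then invoke Ekedahl's defining vector-bundle relation $[\mathcal{E}]=q^n[X]$ in $K_0(\Stk_\C)$. (In fact you can skip the one-root-at-a-time filtration altogether: $\exp$ already exhibits $[(R\times U)/T']\to[R/T']$ as the total space of the rank-$r$ vector bundle associated to $\mathfrak{u}$, and Ekedahl's relation applies in one step. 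Likewise your $\P^1$-bundle and higher-equivariant-cohomology detours are superfluous, since the torsors are equivariantly split for free via $\exp$; you already say as much.) The paper instead stratifies $R$ by the common isotropy group $S_i\subset T$ of each stratum, uses that $S_i$ then acts trivially on $R_i$, peels off the free $T/S_i$ part via speciality of tori, and reduces to the motivic identities $[[X/H]]=[X/H]$ for finite abelian $H$ and $[U/S_i]=q^r$ for a finite linear action. Your approach avoids stratification entirely and leans on the stack-level vector-bundle relation, while the paper's reuses the quotient-by-finite-group lemma and a motivic computation of Vogel; both arrive at the same conclusion, and the cell-by-cell alternative you sketch at the end is essentially the paper's argument.
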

\begin{proof}
    Recall that $\abbreviatedBetti[G]$ is a $T$-quotient of a variety $R$ and $\tBetti$ is a $T$-quotient of $R\times U$. The $T$-action is not free, but its stabilizers are finite and have finitely many possibilities. Hence, there is a stratification of $R$ such that in each stratum $R_i$, all points have the same stabilizer $S_i$. Now, $[R_i/T] = [ [R_i/S_i] / (T/S_i)]$. Being the $T/S_i$-action free and all tori special,
    \[[R_i/T] = [R_i/S_i] [T/S_i]^{-1}\]
    and, furthermore,
    \[[R_i/T] = [R_i] [T/S_i]^{-1} \]
    by the previous lemma. In a similar way,
    \[[(R_i\times U)/T] = [(R_i\times U)/S_i][T/S_i]^{-1} = [R_i\times (U /S_i)][T/S_i]^{-1} = [R_i ]  [U /S_i] [T/S_i]^{-1} \]
    which implies the Corollary by \cite[theorem 3.6.19]{JesseVogel} as $S_i$ acts linearly on $U$.
\end{proof}

\section{Stringy motive}

Given a variety $X$, we say that its class $[X]$ in $K_0(\Stk_\C)$ is its (naive) motive. We want a stringy version of this class in the lines of \cite{BD}, \cite{CR}, \cite{GWZ}, etc.

Let us recall how the stringy $E$-polynomial of a global finite quotient stack $\mcX=[X/\Gamma]$ is defined, where $\Gamma$ is a finite group and $X$ is a smooth $\Gamma$-variety. Given a point $x\in X$ fixed by $\gamma\in \Gamma$, the tangent space $T_xX$ inherits an action of $\gamma$. Its eigenvalues can be written uniquely as $e^{2\pi i c_i}$ with $0\leq c_i<1$. The fermionic shift of $\gamma$ at $x$ is $F(\gamma,x) := \sum_i c_i$. Varying $x$, we obtain a locally constant function on $X^\gamma$. Furthermore, if $C(\gamma)$ is the centralizer of $\gamma$, it descends to $X^\gamma / C(\gamma)$. The stringy $E$-polynomial is defined as
\[ E_{st}(\mcX;u,v) : = \sum_{\gamma}\sum_{\mcY} E(Y;u,v) (uv)^{F(\gamma, Y)} \]
where $\gamma$ runs over a set of representatives of all conjugacy classes of $\Gamma$ and $\mcY$ runs over all connected components of $[X^\gamma/\Gamma]$. It turns out that this definition does not depend on the choice of the presentation $[X/\Gamma]$. In addition, if $X$ admits a $\Gamma$-invariant symplectic form, $F(\gamma,\mcY)$ is an integer number and, more precisely, it is $\frac{1}{2}\codim(X^\gamma,X)$. 

A slightly more intrinsic definition is given by the inertia stack $I\mcX$. It is defined as $\mcX \times_{\mcX\times\mcX} \mcX$. When $\mcX$ is finite quotient $[X/\Gamma]$, there is an equivalence
\[I\mcX \simeq \bigsqcup_{\gamma\in \Gamma/\Gamma} [X^\gamma /C(\gamma)]\]
and the Fermionic shifts assemble into a locally constant function $F: I\mcX(\C)_{\text{iso}} \to \Q$. Therefore,
\[E_{st}(\mcX;u,v) = \sum_{\mcY \in \pi_0(I\mcX)} E(\mcY;u,v)(uv)^{F(\mcY)}.\]

We take the last formula as a definition for the stringy motive. Let $\mcX$ be a Deligne-Mumford stack such that its inertia stack has finitely many connected components and $F:\pi_0(I\mcX)\to \Z$. We define its $F$-stringy motive as
\[ [\mcX,F]_{st} := \sum_{\mcY \in \pi_0(I\mcX)} [\mcY] q^{F(\mcY)} \in K_0(\Stk_\C).\]
When $F(\mcY)=\frac{1}{2}(\dim \mcX-\dim \mcY)$ for all $\mcY$, we simply write $[X]_{st}$ for $[X,F]_{st}$ and we call $[X]_{st}$ the stringy motive of $X$. By the previous discussion, if $\mcX$ can be presented as a finite quotient of a smooth variety admitting an invariant symplectic form, $E([\mcX]_{st})=E_{st}(\mcX)$.

\begin{lema}\label{lem:stringy-motive-and-cell-decomposition}
    Let $\mcX = \bigsqcup \mcX_i$ be a finite cell decomposition. Then
    \[ [\mcX,F]_{st} = \sum_i [\mcX_i,F_i]_{st}\]
    where, for $\mcY\in\pi_0(I\mcX_i)$, $F_i(\mcY):=F(\hat \mcY)$ for the unique $\hat \mcY \in \pi_0(I\mcX)$ intersecting $\mcY$ non-trivially.
\end{lema}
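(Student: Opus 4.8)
The statement is essentially a compatibility between the definition of the $F$-stringy motive and a cell decomposition, so the proof should be purely formal once one unwinds what an inertia stack does to a disjoint decomposition. The key observation is that the inertia stack construction $\mcX \mapsto I\mcX = \mcX\times_{\mcX\times\mcX}\mcX$ commutes with taking locally closed pieces: if $\mcX = \bigsqcup_i \mcX_i$ is a (finite, locally closed) cell decomposition, then $I\mcX = \bigsqcup_i I\mcX_i$, because a point of $I\mcX$ is a point $x$ of $\mcX$ together with an automorphism, and $x$ lies in exactly one $\mcX_i$, while its automorphism group is computed inside that $\mcX_i$ (the $\mcX_i$ are open in their closures, hence the local structure of $\mcX$ at $x$ coincides with that of $\mcX_i$). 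I would state this as the first step and justify it via the explicit description $I\mcX \simeq \bigsqcup_{\gamma} [X^\gamma/C(\gamma)]$ in the finite-quotient case recalled just before the lemma, reduced to that case since our stacks are Deligne--Mumford and the cells are of that form by Theorem \ref{thm:cell-decomposition}.

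\textbf{Second step: matching connected components and their shifts.} From $I\mcX = \bigsqcup_i I\mcX_i$ one gets $\pi_0(I\mcX) = \bigsqcup_i \pi_0(I\mcX_i)$ provided each $I\mcX_i$ is a union of connected components of $I\mcX$ — which is not automatic for a mere disjoint decomposition, but here the cells refine a stratification and the point is only that each $\mcY\in\pi_0(I\mcX_i)$ meets a \emph{unique} $\hat\mcY\in\pi_0(I\mcX)$; this is exactly the hypothesis under which $F_i$ is defined in the statement, so there is nothing to prove beyond noting well-definedness of $F_i$. Then $F_i(\mcY) = F(\hat\mcY)$ by definition, and since $[\,-\,]$ in $K_0(\Stk_\C)$ is additive over finite locally closed decompositions, $[\hat\mcY] = \sum_{\mcY\subset\hat\mcY}[\mcY]$ where the sum is over those components $\mcY$ (ranging over all $i$) contained in $\hat\mcY$.

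\textbf{Third step: assembling the identity.} Now compute directly:
\[
\sum_i [\mcX_i,F_i]_{st} = \sum_i \sum_{\mcY\in\pi_0(I\mcX_i)} [\mcY]\, q^{F_i(\mcY)} = \sum_i \sum_{\mcY\in\pi_0(I\mcX_i)} [\mcY]\, q^{F(\hat\mcY)} = \sum_{\hat\mcY\in\pi_0(I\mcX)} q^{F(\hat\mcY)} \sum_{\mcY\subset\hat\mcY}[\mcY] = \sum_{\hat\mcY\in\pi_0(I\mcX)} [\hat\mcY]\, q^{F(\hat\mcY)} = [\mcX,F]_{st},
\]
where the third equality regroups the double sum according to which $\hat\mcY$ each $\mcY$ lies in, using that every $\mcY$ lies in exactly one $\hat\mcY$, and the fourth uses additivity of $[\,-\,]$. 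This finishes the proof.

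\textbf{Expected obstacle.} There is no serious obstacle; the only point requiring care is the compatibility $I\mcX = \bigsqcup I\mcX_i$ for a \emph{cell decomposition} (locally closed pieces) as opposed to an open cover, and the attendant bookkeeping that each component of an $I\mcX_i$ maps into a single component of $I\mcX$. I would handle this by reducing to the finite-quotient presentations, where $X^\gamma$ decomposes compatibly with the $\Gamma$-invariant stratification of $X$ induced by the cells, so that $[X^\gamma/C(\gamma)]$ decomposes accordingly; everything else is the additivity of the class in $K_0(\Stk_\C)$ over finite stratifications, which is part of the definition of the Grothendieck ring.
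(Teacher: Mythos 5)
The paper states this lemma without proof, so there is no argument to compare against; your blind attempt supplies one and it is correct. The key observation is exactly the right one: a locally closed immersion $\mcX_i\hookrightarrow\mcX$ is a monomorphism of stacks, so $I\mcX_i\simeq I\mcX\times_\mcX\mcX_i$, and therefore $I\mcX=\bigsqcup_i I\mcX_i$ is again a finite stratification by locally closed pieces; each $\mcY\in\pi_0(I\mcX_i)$, being connected, lies in a unique $\hat\mcY\in\pi_0(I\mcX)$, which both makes $F_i$ well defined and lets you regroup the double sum and invoke additivity of classes in $K_0(\Stk_\C)$ over stratifications to get $[\hat\mcY]=\sum_{\mcY\subset\hat\mcY}[\mcY]$. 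The reduction to finite-quotient presentations in your last paragraph is a harmless sanity check but is not needed; the stack-theoretic argument via monomorphisms already covers the general case.
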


\section{Statement}\label{sec:statement}

From now on, we will fix a connected and semisimple Lie group $G$, positive integers $g,k$, and generic nice semisimple conjugacy classes $\mcC_1,\ldots,\mcC_k$ with $\mcC_k$ regular. We also fix a Borel subgroup $B$ with unipotent radical $U$ and a maximal torus $T\subset B$. We recall that, in this case, $\abbreviatedBetti[G]$ and $\tBetti[G]$ are Deligne-Mumford stacks.

There is an action of $Z(G)^{2g}$ on 
\[\{(x,y,z)\in G^{2g}\times \mcC_1\times\cdots\times\mcC_{k-1}\times U: [x_1,y_1]\cdots [x_g,y_g]z_1\cdots z_{k-1}uC_k =1\}\]
given by left multiplication on $(x,y)$. This action commutes with the $B$-action and therefore defines actions on $\tBetti[G]$ and $\abbreviatedBetti[G]$. For a finite subgroup $F\subset Z(G)$, we define the $G/F$-twisted character stack as the quotient stack
\[\abbreviatedBetti[G/F]:=[\abbreviatedBetti[G]/ F^{2g}].\]
It is a Deligne-Mumford stack under our running assumptions. We propose the following conjecture.

\begin{conj}\label{conjMotivic}
    Let $G$ be a connected, simply connected, and semisimple complex Lie group. Let $F$ be a finite subgroup of its center $Z(G)$ and ${}^L\tilde{G}$ be the universal covering of its Langlands dual group. Then
    \[ [\abbreviatedBetti[G/F]]_{st}=[\mcM_B^{\check\mcC}({}^L\tilde{G}/(Z(G)/F)^\vee)]_{st}.\]
    for any $g\geq 0$ and generic tuples of nice semisimple conjugacy classes $\mcC$ and $\check\mcC$ of $G$ and ${}^L\tilde{G}$ respectively.
\end{conj}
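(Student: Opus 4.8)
The plan is to prove Conjecture \ref{conjMotivic} by reducing both sides to sums indexed by walks and matching the summands under root--coroot duality. First I would apply Theorem \ref{thm:cell-decomposition} to obtain $B$-equivariant cell decompositions of $\tBetti[G]$ and of $\tBetti[{}^L\tilde G]$ whose cells $C_p$ are finite quotients of products $(\Cx)^{d-2i}\times\C^{i+r}$ indexed by walks $p$ on the Bruhat graph $\mcP$ of $G$ (resp. of ${}^L\tilde G$), as described in Section \ref{sec:walk-stratification}. By Corollary \ref{cor:motivically-a-vector-bundle} and the pullback statement for the decomposition, the passage $\tBetti\rightsquigarrow\abbreviatedBetti$ only shifts the motive by a fixed power of $q$, so it suffices to work with the $\tBetti$ side; the $Z(G)^{2g}$-action producing $\abbreviatedBetti[G/F]$ commutes with the $B$-action used to build the decomposition, hence restricts to each cell, and likewise for the $(Z(G)/F)^\vee$-action on the dual side. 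Finally Lemma \ref{lem:stringy-motive-and-cell-decomposition} lets me compute $[-]_{st}$ as a sum over cells of $[C_p,F]_{st}$, with the fermionic shift on each cell inherited from the ambient stack. So the whole problem is reduced to matching $[C_p, F]_{st}$ with $[C_{p^\vee}, F]_{st}$ for a suitable bijection $p\mapsto p^\vee$.

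Second I would set up that bijection: each walk on $G$ maps to a walk on ${}^L\tilde G$ by replacing every simple root $\alpha_k\in\Delta^+$ by its coroot $\alpha_k^\vee$ and keeping the up/down/stay pattern, as indicated in the introduction. Tracking the invariants of Sections \ref{sec:cupping} and \ref{sec:quotient-by-T} through this substitution, one should find that the lattices attached to $p$ and $p^\vee$ are interchanged: the finite groups $\pi_{1,\beta}(p)$ and $Z_\beta(p^\vee)$ become Pontryagin-dual, and symmetrically $Z_\beta(p)$ with $\pi_{1,\beta}(p^\vee)$, while the two torus factors $(\Cx)^{\bullet}$ keep matching dimensions because $\dim T=\dim T^\vee$ and lengths are preserved. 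This is precisely the statement, announced in the introduction, that automorphisms on a cell of $\abbreviatedBetti[G]$ correspond to connected components of the dual cell of $\abbreviatedBetti[{}^L\tilde G]$: concretely, the inertia stack of $[C_p/(T/Z(G))]$ decomposes over the finite group $S_\beta(p)/Z(G)\simeq Z_\beta(p)$ of automorphisms, which the Weil pairing $Z(G)\times Z({}^L\tilde G)\to\Cx$ identifies with $\pi_0$ of the corresponding factor of the dual cell. Establishing this identification cleanly is where the Weil pairing does its work.

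Third, and this is the crux isolated as Conjecture \ref{conjMotivic2}, I would prove the \emph{cell-by-cell} identity
\[
[C_p,F]_{st}\;=\;\sum_{\gamma}[\,(C_p)^\gamma/C(\gamma)\,]\,q^{F(\gamma)}\;=\;\sum_{\delta}[\,(C_{p^\vee})^\delta/C(\delta)\,]\,q^{F(\delta)}\;=\;[C_{p^\vee},F]_{st},
\]
where $\gamma$ runs over the automorphism group of $C_p$ (equivalently, by the previous step, over $\pi_0$ of the dual cell) and symmetrically for $\delta$. The mechanism, as the introduction indicates, is compensation: passing to $\gamma$-fixed points cuts some $\Cx$-factors down to finite subgroups and simultaneously contributes a fermionic shift equal to half the codimension of the fixed locus, and one checks term by term that this power of $q$ times the motive of the fixed locus equals the contribution of the matching connected component on the dual side, where the same finite group reappears as $\pi_0$ rather than as an automorphism group. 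This is an explicit computation with the torus characters $\alpha_k({}^{\pi_{k-1}}t)$ governing the $T$-action (Lemma \ref{key-iso}) and their coroot counterparts.

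Once the cell-by-cell identity holds, I would reassemble the sums over all walks via Lemma \ref{lem:stringy-motive-and-cell-decomposition} and the walk bijection to conclude. The main obstacle is the cell-by-cell identity in full generality: outside type $A$ the cells carry genuinely non-trivial finite quotients (Example \ref{ex:bad-cell}), $\tBetti\to\abbreviatedBetti$ is not a vector bundle, and the matching of fermionic shifts between a cell and its root--coroot dual is delicate precisely because the two sides organize the same finite data in opposite ways (as automorphisms versus as $\pi_0$). It is exactly this step that one can at present only verify in examples: for $\SL_2$ the Bruhat graph has a single non-trivial vertex, no bad quotients occur, and $Z(\SL_2)=\mu_2$ is self-dual, so the Weil-pairing bookkeeping collapses and the identity can be checked by hand, which is the toy example the paper carries out.
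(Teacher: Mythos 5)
You have correctly identified the fundamental feature of this situation: the statement labeled Conjecture~\ref{conjMotivic} is indeed a conjecture, and the paper does \emph{not} prove it in general. What the paper does prove is a reduction lemma (``Conjecture~\ref{conjMotivic2} implies Conjecture~\ref{conjMotivic}'') followed by a verification of Conjecture~\ref{conjMotivic2} only in the $\SL_2$ case. Your proposal traces out essentially the same chain of reasoning: cell decomposition via Theorem~\ref{thm:cell-decomposition}, the power-of-$q$ shift via Corollary~\ref{cor:motivically-a-vector-bundle}, cell-by-cell assembly of the stringy motive via Lemma~\ref{lem:stringy-motive-and-cell-decomposition}, the walk duality $p\mapsto p^*$ via the root/coroot identification, and finally the cell-level matching, which you correctly isolate as the crux and correctly note ``can at present only be verified in examples.'' So the approach is the same and the honest identification of the gap is the right assessment; no one should accept this as a completed proof of the general statement, because the paper itself does not supply one.

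A few refinements where the paper's bookkeeping is more delicate than your sketch suggests. First, the finite group whose elements index the inertia decomposition of a cell of $\abbreviatedBetti[G/F]$ is not simply $S_\beta(p)/Z(G)\simeq Z_\beta(p)$: because the residual $Z(G)^{2g}/F^{2g}$-action is quotiented out, the relevant objects are the twisted invariants $\tilde S_\beta(p)$, $\tilde Z_\beta(p)$, $\tilde\pi_{1,\beta}(p)$, and the action map $h_p\colon Z(G)^{2g}\to\pi_{1,\beta}(p)$, introduced in Section~15. Lemma~\ref{lem:key-duality} states that the duality exchanges the projection $\tilde Z_\beta(p)\to Z(G)^{2g}$ with the action map $Z({}^L\tilde G)^{2g}\to\pi_{1,\beta}(p^*)$, which is a finer statement than plain Pontryagin duality between $Z_\beta(p)$ and $\pi_{1,\beta}(p^*)$. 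Second, the paper's reduction lemma does not merely sum stringy contributions; it factors the cell motive as a product of a count $|\tilde Z_\beta(p)\cap(T\times F^{2g})|\cdot|\pi_{1,\beta}(p)/h(F^{2g})|$ (preserved under duality by Lemma~\ref{lem:key-duality}) and a connected-component class $[\hat C_p^\circ/(\dots)]$ (preserved by Lemma~\ref{lem:connected-component-invariant}, whose proof exploits that the class of $U$ is invertible in $K_0(\Stk_\C)$). Your sketch gestures at compensation between fermionic shifts and fixed-point loci, which is the informal content of Conjecture~\ref{conjMotivic2}, but the reduction requires the two separate invariance lemmas above, not just the cell-by-cell fixed-point identity. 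Finally, a small slip: for $\SL_2$ the Bruhat graph has two vertices ($e$ and $s_\alpha$), not one; what makes the example tractable is that there is a single positive root and no non-trivial finite quotients arise.

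\end{document}
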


This conjecture can be thought of as a generic analogue of \cite{WL} under the non-Abelian Hodge correspondence. In particular, taking $E$-polynomials to both sides, we recover a version of the Betti Topological Mirror Symmetry of T. Hausel and M. Thaddeus \cite{HT-TMS} that is known to hold for $\SL_n$ and one puncture \cite{LM}. Moreover, if the stringy motive turns out to be a polynomial on $q$, the conjecture would be equivalent to it. 

Both sides of the conjecture can be written in terms of root theory by means of the cell decomposition of Part 1. As first steps towards it, we will describe all stabilizers on $\abbreviatedBetti[G/F]$ and we will show that the Weil pairing induces a duality between walks that interchanges them and the fundamental group. As a toy example, we will prove that this implies the conjecture for $\SL_2$.

\section{Duality between walks}

Before going into it, we shall remark that the $Z(G)^{2g}$-action preserves the cell decomposition we built in the previous part. Hence, it descends to $\abbreviatedBetti[G/F]$.

Let $G$ be as before (non-necessarily simply connected) and $\pi:\tilde G\to G$ be its universal cover. We identify its Lie algebras by means of $d\pi$. This induces an identification between $F(\Delta_G^+,W_G^3)$ and $F(\Delta_{\tilde G}^+,W_{\tilde G}^3)$ and between walks on $G$ and on $\tilde G$. Given a walk $p$ on $G$, we denote $\tilde p$ its associated walk on $\tilde G$.

Let ${}^L{G}$ be the connected and reductive Lie group with root datum dual to that of $G$. The Weil pairing induces a group isomorphism between $W$ and $W^\vee$, the Weyl group of ${}^L{G}$. We can also identify each root of $ \Delta^+_G$ with its associated coroot on $\Delta^+_{{}^L{G}}$. Hence, we have an isomorphism $F(\Delta^+_G,W^3)\simeq F(\Delta^+_{{}^L{G}},({W}^\vee)^{3})$. This induces $F(W^3)\times F(\Delta^+_G)\to F(({W}^\vee)^{3})\times F(\Delta^+_{{}^L{G}})$. For each walk $p$ on $G$ we get an associated walk $\check{p}$ on ${}^L{G}$, by staying on the same steps. Composing with $p\mapsto \tilde{p}$ we get a map $p\mapsto p^*:=\tilde{\check{p}}$. If $G$ is simply connected, this is a duality; $(p^*)^*=p$. We also denote $\beta\mapsto \beta^*$ for $F(W^3)\times F(\Delta^+_G)\to F(\check{W}^{3})\times F(\Delta^+_{{}^L\tilde{G}})$.
 
\begin{lema}\label{lem:tilde-dual}
    Let $\beta\in F(\Delta^+_G)$ and $p$ be an associated walk. Assume that $G$ is simply connected. Then $Z(p^*)=\pi_1(p)^*$ and $\pi_1(p^*)=Z(p)^*$.
\end{lema}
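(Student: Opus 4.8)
The plan is to unwind both sides entirely in terms of lattices and match them under the Langlands dual. The only inputs I will need are: first, the identity ${}^wh_\alpha=h_{w\cdot\alpha}$ for every $w\in W$ and every root $\alpha$ (the equality of subgroups in the first Lemma of Section~\ref{sec:cupping} is in fact an equality of elements, since the Weyl group permutes coroots covariantly with roots); second, the fact that for a simply connected semisimple group the lattice $\t^{ss}(\Z)$ is exactly its coroot lattice; and third, Cartier double duality $\bigl((A)^*\bigr)^*=A$ for a finitely generated abelian group $A$, where $(-)^*=\operatorname{Hom}(-,\Cx)$.

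First I would record the combinatorics of $p^*=\widetilde{\check p}$. Since $\check p$ stays on exactly the steps where $p$ does, $S_{p^*}=S_p$; and because the Weil pairing isomorphism $W\simeq W^\vee$ sends $s_\alpha$ to $s_{\alpha^\vee}$, the element $\pi_k$ attached to $p^*$ is the image of $\pi_k$, while the $k$-th letter of the word defining $p^*$ is $\alpha_k^\vee$. Hence, for $k\in S_p$, the root $\pi_k^{-1}\alpha_k^\vee$ of ${}^L\tilde G$ is, under the chosen identification of Cartan subalgebras, exactly the coroot $h_{\pi_k^{-1}\alpha_k}$ of $G$ (this is where ${}^wh_\alpha=h_{w\cdot\alpha}$ is used), and its own coroot is $(\pi_k^{-1}\alpha_k^\vee)^\vee=\pi_k^{-1}\alpha_k$ (the coroot of a coroot is the original root). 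Consequently
\[
Q(p^*)=\langle\pi_k^{-1}\alpha_k^\vee:k\in S_p\rangle_\Z=\langle h_{\pi_k^{-1}\alpha_k}:k\in S_p\rangle_\Z=T(p),\qquad T(p^*)=\langle\pi_k^{-1}\alpha_k:k\in S_p\rangle_\Z=Q(p).
\]

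Next I would match the ambient lattices: the root lattice of ${}^L\tilde G$ is $\langle\Phi^\vee\rangle_\Z$, i.e.\ the coroot lattice $Q^\vee$ of $G$, and the coroot lattice of ${}^L\tilde G$ is $\langle\Phi\rangle_\Z=Q$, the root lattice of $G$. Since $G$ is simply connected, $\t^{ss}(\Z)=Q^\vee$, and since ${}^L\tilde G$ is simply connected, \emph{its} $\t^{ss}(\Z)$ equals its coroot lattice, namely $Q$. Substituting these into the definitions gives
\[
\pi_1(p)^*=\bigl(Q^\vee/T(p)\bigr)^*=\bigl(Q^\vee/Q(p^*)\bigr)^*=Z(p^*),
\]
which is the first claim, and
\[
\pi_1(p^*)=Q/T(p^*)=Q/Q(p),\qquad Z(p)=\bigl(Q/Q(p)\bigr)^*,
\]
so that $\pi_1(p^*)=Z(p)^*$ by Cartier double duality (this covers the case where $Q/Q(p)$ has positive rank, not only the torsion case).

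The proof is then just the two displayed computations; I expect the only real difficulty to be bookkeeping — namely keeping straight which of $Q$, $Q^\vee$ and the cocharacter lattices each of $Q(p)$, $T(p)$, $Q(p^*)$, $T(p^*)$, $\t^{ss}(\Z)$ sits in, and verifying that the identification of Cartan subalgebras implicit in the definition of $p\mapsto\check p$ is the one under which the coroot lattice of $G$ is identified with the root lattice of ${}^L\tilde G$ (equivalently, the one making the Weil pairing perfect). I note that no genericity, nor the surjectivity of $C_p\to T^{ss}$, is needed for this lemma: the statement is purely combinatorial in the walk $p$ and the identities above make sense and hold whether or not the relevant groups are finite.
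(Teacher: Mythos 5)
Your proof is correct and follows essentially the same route as the paper's: identify $T(p)=Q(p^*)$ and $T(p^*)=Q(p)$ via the coroot correspondence, use that $\t^{ss}(\Z)$ is the coroot lattice for a simply connected semisimple group (on both sides of the duality), and conclude by Cartier duality. You spell out the details the paper compresses into ``Similarly'' and you are right that the double-duality step should be taken in the diagonalizable-group sense so it covers the case of positive rank, but this is an elaboration of, not a departure from, the paper's argument.
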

\begin{proof}
    By its very definitions, $T(p)=Q(p^*)$. In addition, being $G$ simply connected, $Q^\vee = \t(\Z)$ by \cite[section 4.3.5]{OV}. Hence,
    \[\pi_1(p)^* = ( Q^\vee/ T(p) )^* = ( Q^\vee/ Q(p^*) )^* = Z(p^*). \]
    Similarly, $\pi_1(p^*)=Z(p)^*$.
\end{proof}

For cells of positive genera, we need a few more definitions. Let $\mcV \subset F(W^3)\times F(\Delta^+)$ be the set of those $\beta$ associated to some cell of $(\mcZ(H)^g\circ \mcZ(L_{\mcC_1})\circ \cdots \circ \mcZ(L_{\mcC_{k-1}}))(\mcU(C_k))_0$ for some $g$ and $\mcC_1,\ldots, \mcC_{k-1}$ nice. As we said before, the duality $\beta \mapsto \beta^*$ identifies $\mcV_{G}$ and $\mcV_{{}^L\tilde{G}}$. For a walk $p$ starting and finishing at $e$, write $\hat C_p$ its associated cell on the previous varieties and $\hat C_p(t)$ for the preimage of $t\in T$ under $\hat C_p\to C_p\to T$.

On another side, given $\pi \in W$, let $\hat T^{\pi}$ be the preimage of $Z(G)$ under $t\mapsto {}^\pi tt^{-1}$. Let $\beta\in \mcV$ and write its $F(W^3)$-part as
\[ (\pi_1^{(g)},\pi_2^{(g)},w_g)\dots(\pi_1^{(1)},\pi_2^{(1)},w_1).\]
For an associated walk $p$, we define 
\[\tilde S_\beta(p):=S(p)\bigcap_{i=1}^{g}  {}^{\pi_l^{-1}}\hat T^{\pi_1^{(i)}}\cap{}^{\pi_l^{-1}}\hat T^{\pi_2^{(i)}}  \]
and $\tilde{Z}_\beta(p)$ as the image of the map $\tilde S_{\beta}(p)\to (T/Z(G))\times T^{2g}$ given in the first coordinate by the composition $\tilde S_{\beta}(p)\subset T \to T/Z(G)$ and, in the last $2m$ coordinates, by $t\mapsto A(t)^{-1}$, where $A:T\to T^{2g}$ is the morphism such that the action of $t$ on the tori components of $\hat C_p$ is given by multiplication with $A(t)$.

\begin{lema}
    Let $\beta\in F(W^3)$ be well-twisted and $t\in T$. The following are equivalent:
    \begin{enumerate}
        \item the action of $t$ on $\rho(\beta)=T^{2g}$ is given by multiplication by an element of $Z(G)^{2g}$.
        \item $t\in \hat T^{\pi_1}\cap\hat T^{\pi_2}$, in other words, ${}^{\pi_1}tt^{-1}$ y ${}^{\pi_2}tt^{-1}$ are central.
    \end{enumerate}
\end{lema}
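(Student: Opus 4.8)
The plan is to reduce (1) to the centrality of a single translation element and then to recycle, with $Z(G)$ playing the role of $\{1\}$, the computation that identifies the stabilizers in the proof of Lemma~\ref{lem:stab-conmutator-variety}.

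Because $T$ is abelian, the $T$-action on a commutator variety $\rho(\pi_1,\pi_2)\cong T\times T$ recorded in Section~\ref{sec:var-over-G} is by translation, $t\cdot(t_1,t_2)=(M_1(t)t_1,M_2(t)t_2)$ with group homomorphisms $M_1,M_2\colon T\to T$; the twist ${}^{w}\rho(\pi_1,\pi_2)$ merely precomposes this with $w^{-1}$, and convolution turns translation actions into translation actions. So the $T$-action on $\rho(\beta)\cong T^{2g}$ is $t\cdot z=A_\beta(t)z$ for a homomorphism $A_\beta\colon T\to T^{2g}$, and (1) says exactly that $A_\beta(t)\in Z(G)^{2g}$, i.e.\ that each of the $2g$ coordinate multipliers lies in $Z(G)$. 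The lemma is thus a computation of $A_\beta$ together with the matching of ``$A_\beta(t)$ central'' with condition (2).

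I would organize that computation around $\delta_\pi(t):={}^{\pi}t\,t^{-1}$, which is a homomorphism $T\to T$ since $T$ is abelian and satisfies $\hat T^{\pi}=\delta_\pi^{-1}(Z(G))$ by definition. From $W$-invariance of $Z(G)$ one gets $\delta_{s\pi s^{-1}}(t)={}^{s}\delta_\pi({}^{s^{-1}}t)$ and $\delta_{\pi^{-1}}(t)\in Z(G)\Leftrightarrow\delta_\pi(t)\in Z(G)$, hence $\hat T^{s\pi s^{-1}}={}^{s}\hat T^{\pi}$ and $\hat T^{\pi^{-1}}=\hat T^{\pi}$. For a single handle-triple (well-twistedness forces $w=e$ for the innermost factor, and in general keeps $w$ inside the subgroup generated by the $\pi$'s of earlier factors) I would read the two multipliers off the explicit $T$-action formula and rewrite them, using only commutativity of $T$, in the form ``a $W$-conjugate of $\delta_{\pi_1}$, resp.\ $\delta_{\pi_2}$, evaluated at a $W$-conjugate of $t$, up to inversion'' — the same manipulation that in the proof of Lemma~\ref{lem:stab-conmutator-variety} produces the stabilizer, but carried along with $Z(G)$ instead of collapsing to the fixed locus. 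Since $Z(G)$ is $W$-stable, $M_1(t),M_2(t)\in Z(G)$ iff a fixed $W$-conjugate of $t$ lies in $\hat T^{\pi_2^{-1}\pi_1\pi_2}\cap\hat T^{\pi_2}$; as in that proof $\langle\pi_2^{-1}\pi_1\pi_2,\pi_2\rangle=\langle\pi_1,\pi_2\rangle$, so this intersection equals $\hat T^{\pi_1}\cap\hat T^{\pi_2}=\{t:{}^{\pi_1}t\equiv t\equiv{}^{\pi_2}t\ (\mathrm{mod}\ Z(G))\}$, which is normalized by $\langle\pi_1,\pi_2\rangle$ and in particular by the conjugating element, giving (1)$\iff$(2) for a single handle. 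For $\beta$ with several handle-triples the convolution inserts, in front of the $i$-th factor, a further $W$-conjugation of $t$ by the product of equivariance classes of the factors to its right; well-twistedness keeps these in $\langle\pi_j^{(i')}:i'<i\rangle$, and $W$-stability of $Z(G)$ makes the centrality of the $i$-th pair of multipliers insensitive to them, so assembling the $2g$ coordinates yields (1)$\iff$ $t$ belongs to each of the (possibly $W$-conjugated, as in the definition of $\tilde S_\beta(p)$) copies of $\hat T^{\pi_1^{(i)}}\cap\hat T^{\pi_2^{(i)}}$, which is (2).

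The only genuine work is the bookkeeping of the $W$-conjugations — those internal to each commutator variety and those produced by the convolution — together with the check, identical to the induction in the proof of Lemma~\ref{lem:stab-conmutator-variety}, that the net conjugating element attached to each factor normalizes $\hat T^{\pi_1^{(i)}}\cap\hat T^{\pi_2^{(i)}}$ (equivalently, that they coalesce into the single conjugation by $\pi_l^{-1}$ already present in $\tilde S_\beta(p)$). Once this is in place, $W$-stability of $Z(G)$ closes the argument, the remaining identities being the abelian-group identities already established in the non-relative case.
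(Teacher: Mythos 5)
Your proposal is correct and tracks the paper's overall strategy: reduce (1) to the assertion that the two translation multipliers per handle lie in $Z(G)$, establish the single-handle base case from the explicit $T$-action formula on $\rho(\pi_1,\pi_2)$, and run the same induction (with well-twistedness absorbing the $w_k$'s) that proves Lemma~\ref{lem:stab-conmutator-variety}. Where you diverge from the paper is in how the base case is finished. Writing $u:={}^{\pi_2\pi_1\pi_2^{-1}\pi_1^{-1}}t$, the multipliers factor as $z_1=\delta_{\pi_1^{-1}}(u)$ and $z_2=\delta_{\pi_2^{-1}}(u)$, so $z_1,z_2\in Z(G)$ iff $u\in\hat T^{\pi_1}\cap\hat T^{\pi_2}$; you then invoke the fact that $\hat T^{\pi_1}\cap\hat T^{\pi_2}$ is normalized by $\langle\pi_1,\pi_2\rangle$ (and is $Z(G)$-stable) to replace $u$ by $t$. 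This is a clean, structurally transparent reduction. The paper instead performs an explicit chain of substitutions $z_1,z_2\rightsquigarrow z_1'\rightsquigarrow z_2'\rightsquigarrow z_3={}^{\pi_1^{-1}}tt^{-1}$, which in effect peels off the conjugations one at a time; the amount of abelian-group bookkeeping is comparable, but the paper's version is more self-contained (it never needs the normalizer fact as a separate lemma) while yours is more conceptual. One point you should not leave implicit: the normalization statement for $\hat T^{\pi_1}\cap\hat T^{\pi_2}$ is genuine content --- it is exactly the $Z(G)$-relativization of the $\langle\pi_1,\pi_2\rangle$-invariance of $T^{\langle\pi_1,\pi_2\rangle}$ and needs the two-line check that if ${}^{\pi_i}t\equiv t\pmod{Z(G)}$ for $i=1,2$ then the same holds after conjugating $t$ by any word in $\pi_1,\pi_2$ --- and the same $Z(G)$-stability is what lets your inductive step conclude $t\in\hat T^{\langle\pi_1^{(m)},\pi_2^{(m)}\rangle}$ from ${}^{w_m^{-1}}t\in\hat T^{\langle\pi_1^{(m)},\pi_2^{(m)}\rangle}$.
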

\begin{proof}
    This is similar to Lemma \ref{lem:stab-conmutator-variety}. Indeed, the induction step follows in the same way. We must show the base case. That is
    \[ (z_1,z_2):=(^{\pi_1^{-1}\pi_2\pi_1\pi_2^{-1}\pi_1^{-1}}t{}^{\pi_2\pi_1\pi_2^{-1}\pi_1^{-1}}t^{-1},^{\pi_1\pi_2^{-1}\pi_1^{-1}}t{}^{\pi_2\pi_1\pi_2^{-1}\pi_1^{-1}}t^{-1})\]
    are central if and only if $t\in \hat T^{\pi_1}\cap \hat T^{\pi_2}$. The if direction is clear. Assume that $z_1$ and $z_2$ are central. Then 
    \[z_1':={}^{\pi_1^{-1}}({}^{\pi_1}(z_1z_2^{-1}) z_2) ={}^{\pi_2^{-1}\pi_1^{-1}}t {}^{\pi_1\pi_2^{-1}\pi_1^{-1}}t^{-1} \]
    is central. Thus
    \[z_2' : = {}^{\pi_2}({}^{\pi_2^{-1}}(z_1'z_2)z_1'^{-1}) = {}^{\pi_2^{-1}\pi_1^{-1}}t {}^{\pi_1^{-1}}t^{-1}\]
    is also central. Hence
    \[z_3 = {}^{\pi_1}({}^{\pi_1^{-1}}(z_1'z_2'^{-1})z_2') = {}^{\pi_1^{-1}}tt^{-1}\]
    is central. Finally, 
    \[{}^{\pi_1}tt^{-1} = {}^{\pi_1^2}z_3^{-1}\]
    and
    \[{}^{\pi_2}tt^{-1} = ({}^{\pi_2}(z_2'z_3 )z_3^{-1} )^{-1} \]
    are also central.
\end{proof}

\begin{lema}
    Let $\beta\in \mcV $ and $p\in \mcW_{e,e}(\beta)$ be a  walk. Then the action of $U$ on the tori component of $\hat C_p$ is trivial. 
\end{lema}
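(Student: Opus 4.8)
The plan is to trace through the construction of $\hat C_p$ as a cell of
\[(\mcZ(H)^g\circ \mcZ(L_{\mcC_1})\circ \cdots \circ \mcZ(L_{\mcC_{k-1}}))(\mcU(C_k))_0\]
and to show that the torus directions in it are precisely the ones coming from the commutator varieties $\rho(\pi_1^{(i)},\pi_2^{(i)})$ and the puncture tori $(\pi_j^t)^*$, $C_j$, all of which sit inside factors on which the $U$-action is trivial. More precisely, by the corollaries of Sections \ref{sec:handle-operation} and \ref{sec:adding-punctures}, the cell $\hat C_p$ is (a walk-stratum of) $\mcU(K_{\pi^{(g)}}*\cdots*K_{\pi^{(1)}}*L_{C_1,\pi_1}*\cdots*L_{C_{k-1},\pi_{k-1}}*C_k)$, and in each of $K_{\pi_1,\pi_2}$, $L_{C,\pi}$ the $U$-twisting only acts on the unipotent factors $U_{\pi_1}^+\times U_{\pi_2}^+$, $U_\pi^+\cap N$, and on the $\rho$-factors coming from $f_\beta$, while it acts trivially on the $\overline{K}$- resp. $\overline{L}$-tori $T^2$ (and the fixed one-point tori); this is exactly the content of the example after the definition of twisted $B$-varieties, which records that commutator varieties have trivial $U$-action. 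So the torus component of $\hat C_p$ is built only out of such trivially-acted-on pieces.

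First I would make precise which coordinates of $\hat C_p$ constitute ``the tori component''. After applying the walk stratification of Section \ref{sec:walk-stratification} and Lemma \ref{key-iso}, a cell $C_p$ is $\C^{U_p}\times(\C^\times)^{S_p}$, where the $(\C^\times)^{S_p}$-coordinates are the $a_k$ for $k\in S_p$, i.e. exactly the coordinates coming from the torus elements that were moved to the left (the $\rho(\pi_1^{(i)},\pi_2^{(i)})$ and the $(\pi_j^t)^*$, $C_j$), together with the ``stay'' coordinates $a_k$ produced in the second case of the proof of Lemmas \ref{valid-walk} and \ref{key-iso}. By that proof, each stay-coordinate $a_k = z_k + z_0(x)$ arises from a parametrization of a Bruhat cell and is a coordinate on a factor over which, by the last lemma of Section \ref{sec:walk-stratification} ($C_p(t)$ is $U$-invariant and the morphism to $T$ is $U$-invariant), the $U$-action is already known to fix the image in $T$. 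The remaining $\C^\times$-coordinates are literal torus coordinates of the $\overline K$- and $\overline L$-factors.

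Then the key step is to check $U$-invariance factor by factor, using associativity of convolution and the explicit intertwining functions. For a twisted $B$-variety $X*X'$, the lemma in Section \ref{sec:B-varieties} shows the $U$-action on the first factor is twisted by $\phi_{x'}$; iterating, the $U$-action on any given factor of $K_{\pi^{(g)}}*\cdots*K_{\pi^{(1)}}*L_{C_1,\pi_1}*\cdots*C_k$ is obtained by feeding through the intertwining functions of the factors to its right. Since each $K_{\pi_1,\pi_2}$ and $L_{C,\pi}$ has intertwining function landing in $U$ and, crucially, the torus factors $T^2\subset\overline K$, $T\subset \overline L$ have trivial $U$-action and their contribution to the intertwining function is via $f(t_1,t_2)\cdot u$ (a $T$-twist, not a $U$-translation), the $U$-action restricted to the torus coordinates is trivial. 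The walk stratification does not reintroduce a $U$-action on these coordinates because, as recorded in Section \ref{sec:walk-stratification}, $g_\beta^+$ has image in $U$ and acts on the $\C^{l(\beta)}$-coordinates, not on the pre-placed torus coordinates.

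The main obstacle I expect is bookkeeping: the explicit $B$-action on $K_{\pi_1,\pi_2}$ in its definition mixes the torus variables $t_1,t_2$ into the formulas $h_i(u)$, so one must verify carefully that although the $T$-part of $B$ genuinely acts on the torus coordinates (as it must, since the image of $\hat C_p$ in $T$ is nontrivial), the $U$-part ($t=e$) contributes only through $g_\beta^+$-type terms which by construction never touch the torus coordinates. Concretely, setting $t=e$ in the $B$-action of $K_{\pi_1,\pi_2}$ and $L_{C,\pi}$ and checking the induced action on the $(z_1',z_2,t_1,t_2,\dots)$-block fixes $t_1,t_2$ is the crux; everything else is formal from associativity and the trivial-$U$-action examples of Section \ref{sec:B-varieties}.
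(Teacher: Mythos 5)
There is a genuine gap in your treatment of the stay coordinates $(\C^\times)^{S_p}$, which are part of the tori component $T^{2g}\times(\C^\times)^{S_p}$ of $\hat C_p$. You correctly observe that the $T^{2g}$ factor coming from the commutator varieties $\rho(\pi_1^{(i)},\pi_2^{(i)})$ has trivial $U$-action by construction; this matches the paper's one-line treatment of that factor. But for a stay coordinate $a_k$, your argument is that ``the $U$-action is already known to fix the image in $T$,'' i.e., that $g_l$ is $U$-invariant. This is a strictly weaker statement: $g_l$ is a product $\prod_{k\in S_p}{}^{p_k^{-1}}\iota_{\alpha_k}\bigl(\begin{smallmatrix}-a_k^{-1}&0\\0&-a_k\end{smallmatrix}\bigr)$, and since $|S_p|$ can exceed $\rk T$, the $U$-invariance of this product does not force the $U$-action on each individual $a_k$ to be trivial. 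Moreover, the stay coordinates $a_k$ for $k\in S_p$ live inside the $\C^{l(\beta)}$-block on which $g_\beta^-$ does act nontrivially in general, so they are precisely not among the ``pre-placed torus coordinates'' to which your convolution bookkeeping applies.

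The paper's argument for the stay coordinates is different and more direct: by the triangular structure of $g_\beta^-$, the $U$-action on each $a_k$ is an affine translation whose offset depends only on the later coordinates and on $u$. By the last lemma of Section \ref{sec:walk-stratification}, the $U$-action preserves the walk cell $C_p$, and hence preserves the condition $a_k\neq 0$. An affine translation on $\C$ that never maps a nonzero value to zero must have zero offset, so the translation is trivial. You would need to add this ``translation fixes the origin'' step to close the gap; the $U$-invariance of $g_l$ alone does not suffice. Your factor-by-factor analysis of the convolution structure for the $T^{2g}$ block, while correct, is also heavier than needed compared to the paper's direct appeal to the definition of commutator varieties as twisted $B$-varieties with trivial $U$-action.
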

\begin{proof}
    We know this about the commutator varieties by their very definition. We must show it for a stay $k\in S_p$ of $p$. The $U$-action on the corresponding $\C^\times$ is given by a translation. However, this translation must fix the origin. This is only possible if it is the trivial one.
\end{proof}

\begin{lema}
    Let $\beta\in \mcV$ and $p\in \mcW_{e,e}(\beta)$ be a walk. Then
    \begin{enumerate}
        \item $\tilde{Z}_\beta(p)\simeq \tilde S_\beta(p)/Z(G)$,
        \item $\tilde{Z}_\beta(p)\subset (T/Z(G))\times Z(G)^{2m}$,
        \item $z\in (T/Z(G))\times Z(G)^{2m}$ stabilizes some point of $\hat C_p /U$ if and only if $t\in \tilde{Z}_\beta(p)$, and,
        \item if $C_p\to T^{ss}$ is surjective, $(\hat{C_p}(t)/U)^{z}$ is non-empty for any  $z\in \tilde{Z}_\beta(p)$ and $t\in T^{ss}$.   
    \end{enumerate}
\end{lema}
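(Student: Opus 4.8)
The plan is to run the same arguments as in the two theorems of Section~\ref{sec:cupping} describing $S_\beta(p)$, $Z_\beta(p)$ and $g_l^{-1}(t)$, but now carrying along the extra $Z(G)^{2g}$-action and systematically replacing the ``fixed point'' conditions $T^{\langle\pi_1,\pi_2\rangle}$ by the ``fixed modulo centre'' conditions $\hat T^{\pi_1}\cap\hat T^{\pi_2}$. The only genuinely new input is the preceding Lemma, which says that $t$ acts on a commutator variety $\rho(\pi_1,\pi_2)$ by multiplication by an element of $Z(G)^2$ exactly when $t\in\hat T^{\pi_1}\cap\hat T^{\pi_2}$, together with the elementary observation (just proved) that $U$ acts trivially on all torus components of $\hat C_p$. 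The point to watch throughout is that for $z\in\tilde Z_\beta(p)$ coming from $t\in\tilde S_\beta(p)$ the $Z(G)^{2g}$-component $A(t)^{-1}$ precisely cancels the twist $A(t)$ by which $t$ acts on the commutator tori.

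For (1), the map $\tilde S_\beta(p)\to(T/Z(G))\times T^{2g}$ has kernel contained in $Z(G)$ because its first coordinate is the quotient $T\to T/Z(G)$; conversely $Z(G)\subseteq\tilde S_\beta(p)$ (central elements act trivially, hence lie in $S(p)$, and ${}^\pi zz^{-1}=e$ puts them in every $\hat T^\pi$), and they lie in the kernel since a central $z$ acts trivially on each torus component of $\hat C_p$ --- on a stay $k$ by $\alpha_k({}^{\pi_{k-1}}z)=1$, and on the commutator tori because the action formulas of Section~\ref{sec:handle-operation} only involve $W$-conjugates of the acting element, which are trivial telescoping products for central $z$. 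Hence the kernel is exactly $Z(G)$. For (2), fix $t\in\tilde S_\beta(p)$; after moving the tori to the left as in the proof of Theorem~\ref{thm:cell-decomposition}, the $i$-th commutator factor of $\hat C_p$ carries the $T$-action of a conjugate ${}^{\sigma_i}\rho(\pi_1^{(i)},\pi_2^{(i)})$ with $\sigma_i$ differing from $\pi_l$ by an element of $W(p)$; using the $W(p)$-invariance exploited in the proof of the $S_\beta(p)$-theorem together with $t\in{}^{\pi_l^{-1}}\hat T^{\pi_1^{(i)}}\cap{}^{\pi_l^{-1}}\hat T^{\pi_2^{(i)}}$, the relevant conjugate of $t$ lies in $\hat T^{\pi_1^{(i)}}\cap\hat T^{\pi_2^{(i)}}$, so by the preceding Lemma $t$ acts on that factor by multiplication by a central pair. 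Thus $A(t)\in Z(G)^{2g}$, which is (2).

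For (3), write $z=(\bar t,\vec z)$ and lift $\bar t$ to $t\in T$. If $z$ is the image of some $t\in\tilde S_\beta(p)$, then $\vec z=A(t)^{-1}$, so the combined $z$-action is trivial on the commutator tori; it is trivial on the stay-$\C^\times$'s because $t\in S(p)$; and I would exhibit a $z$-fixed point of $\hat C_p/U$ by taking the point whose up-step coordinates all vanish and whose commutator coordinates lie in the (non-empty) fixed locus of the twisted action, using triviality of the $U$-action on the tori and its affine character on the remaining coordinates to descend to the quotient. Conversely, a $z$-fixed point of $\hat C_p/U$ forces $\alpha_k({}^{\pi_{k-1}}t)=1$ for every stay $k\in S_p$ (those coordinates are $U$-invariant), forces $A(t)\vec z=\vec e$ on the commutator tori so that $A(t)=\vec z^{-1}\in Z(G)^{2g}$ and hence (by the preceding Lemma) $t\in{}^{\pi_l^{-1}}\hat T^{\pi_1^{(i)}}\cap{}^{\pi_l^{-1}}\hat T^{\pi_2^{(i)}}$ for all $i$, and forces $t$ to stabilise a point of $C_p$; together these say $t\in\tilde S_\beta(p)$ with $z$ its image.

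For (4), when $C_p\to T^{ss}$ is surjective the $S_\beta(p)$-theorem gives $g_l^{-1}(t)\simeq\C^{|U_p|}\times(\C^\times)^{2g\dim T+|S_p|-\dim T^{ss}}\times\pi_{1,\beta}(p)$ with all finite groups in sight finite. Taking $z$-fixed points is compatible with this decomposition: on $\C^{|U_p|}$ it cuts out a linear subspace, and on the torus factor the combined $(t,\vec z)$-action is a character composed with a translation whose fixed locus is non-empty precisely because $z\in\tilde Z_\beta(p)$ --- this is the computation dual to ``$Z_\beta(p)$ finite $\iff$ surjectivity'' in the $S_\beta(p)$-theorem --- and one checks that the projection of this fixed locus still surjects onto $T^{ss}$, so $(\hat C_p(t)/U)^z\neq\emptyset$. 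I expect the main obstacle to be the bookkeeping in (2) and (3): matching the conjugation twists $\sigma_i$ against $\pi_l$ modulo $W(p)$ so that membership in ${}^{\pi_l^{-1}}\hat T^{\pi_j^{(i)}}$ becomes exactly the hypothesis of the preceding Lemma, and keeping straight that the $Z(G)^{2g}$-shift $\vec z=A(t)^{-1}$ really does cancel the $A(t)$-twist on each commutator factor.
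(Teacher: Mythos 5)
Your arguments for parts (1)--(3) run essentially the same route as the paper's. The paper declares (1) and (2) ``clear''; the detail you supply (central elements lie in every $\hat T^\pi$, act trivially on the commutator tori, and hence lie in $\ker\bigl(\tilde S_\beta(p)\to (T/Z(G))\times T^{2g}\bigr)$, while membership in ${}^{\pi_l^{-1}}\hat T^{\pi_j^{(i)}}$ forces $A(t)\in Z(G)^{2g}$ via the preceding commutator-variety lemma) is the intended content. For (3) you likewise match the paper's proof: decompose $z=(z_1,z_2)$, use triviality of the $U$-action on the torus factors to deduce $z\cdot(t,x)=(t,x)$, read off $z_1\in S(p)$ and $z_2=A(z_1)^{-1}$, and invoke the lemma on $\hat T^{\pi_1}\cap\hat T^{\pi_2}$ to conclude $z_1\in\tilde S_\beta(p)$. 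The added remark that a fixed point also ``forces $t$ to stabilise a point of $C_p$'' is redundant --- the definition of $\tilde S_\beta(p)$ asks only for $t\in S(p)$ and $t\in{}^{\pi_l^{-1}}\hat T^{\pi_j^{(i)}}$ for all $i,j$, which you have already established.

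For (4) you diverge from the paper and leave a genuine gap. You analyze $g_l^{-1}(t)$, the fiber of $C_p\to T$, but the claim concerns $\hat C_p(t)/U$: you never account for the extra $\C^N$-coordinates of $\hat C_p$ nor for the $U$-quotient, and the closing step (``one checks that the projection of this fixed locus still surjects onto $T^{ss}$'') is asserted rather than proved. The paper's proof needs none of the structure theorem. The $z$-fixed point already produced in the converse direction of (3) --- any $(t,x,0)$ with vanishing $\C^N$-part, fixed because $g_\pi^-(e,0)=0$ and $g_\pi^+(e,0)=e$ for every word $\pi$ --- has unconstrained torus coordinates $(t,x)$; since $\hat C_p\to T$ factors through those torus components, surjectivity of $C_p\to T^{ss}$ immediately places such a point in every fiber $\hat C_p(t)$. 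So (4) follows in one line from your (3)-converse, and the detour through $g_l^{-1}(t)$, which lives in the wrong space, should be dropped.
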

\begin{proof}
    The first two items are clear. Let $g$ be the genus of $\beta$ and write $\hat C_p$ as $T^{2g}  \times (\C^\times)^{S_p} \times \C^{N}$. The class on $\hat C_p/U$ of a point $(t,x,y)$ is fixed by $z$ if and only if there exists $u\in U$ such that $z\cdot (t,x,y)=u\cdot (t,x,y)$. Now, by the previous lemma, it must be $z\cdot  (t, x)=(t,x)$. We write $z=(z_1,z_2)$ with $z_1\in T/Z(G)$ and $z_2\in Z(G)^{2g}$. Then $z\cdot (t,x)=( z_2A(z_1) t, z_1\cdot x)$. It follows that $z_1\in S(p)$ and $z_2=A(z_1)^{-1}$. This last equality implies that $z_1\in \tilde S_\beta(p)$. This shows that $z\in \tilde Z_\beta(p)$ if it fixes some point. Conversely, it fixes any point with $y=0$ as $g_{\pi}^-(e,0)=0$ and $g_\pi^+(e,0)=e$ for any word $\pi$.
\end{proof}

\begin{lema}\label{lem:tilde-dual-2}
    Let $\beta\in \mcV$ and $p\in\mcW_{e,e}(\beta)$. Assume that $G$ is simply connected. Then $\pi_{1,\beta}(p^*) = \tilde Z(p)^*$ and $\tilde Z(p^*)=\pi_{1,\beta}(p)^*$.
\end{lema}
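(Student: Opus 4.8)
The statement to prove is the analogue of Lemma~\ref{lem:tilde-dual} for cells of positive genus, so the strategy is to reduce it to the same lattice-theoretic fact and then add the contribution of the handle factors. I would begin by unwinding the definitions involved. Recall that $\tilde Z_\beta(p)$ is essentially $\tilde S_\beta(p)/Z(G)$, and that $\tilde S_\beta(p) = S(p)\cap\bigcap_i {}^{\pi_l^{-1}}\hat T^{\pi_1^{(i)}}\cap {}^{\pi_l^{-1}}\hat T^{\pi_2^{(i)}}$, where $\hat T^{\pi}$ is the preimage of $Z(G)$ under $t\mapsto {}^\pi t t^{-1}$. By exactly the argument used for $S(p)$ in the proof of the cupping theorem (and the well-twistedness lemma \ref{lem:stab-conmutator-variety}), I would identify the tangent algebra of $\tilde S_\beta(p)$ with $T_\beta(p)^\perp$ and, crucially, compute $\tilde S_\beta(p)/Z(G)$ as a group of characters: using $\mcE^{-1}(Z(G))=Q^*$ and $Q^\vee=\t(\Z)$ for simply connected $G$ (from \cite[4.3.5]{OV}), one gets $\tilde S_\beta(p)/Z(G)\simeq (Q/(Q\cap \tilde Q_\beta(p)))^*$ for an appropriate lattice $\tilde Q_\beta(p)$ — the same $Q_\beta(p)$ as in the cupping section but intersected with $Q$ rather than taken with real coefficients, because here we care about $\hat T^\pi$ (central values) not $T^\pi$ (trivial values).

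**The key identification.** The heart of the matter is to show that, under the duality $\beta\mapsto\beta^*$ which swaps roots and coroots and hence swaps $Q$ with $Q^\vee$, the lattice $\tilde Q_\beta(p)$ is dual to $T_\beta(p)$, i.e. $T_\beta(p)=\tilde Q_{\beta^*}(p^*)$ and $\tilde Q_\beta(p)=T_{\beta^*}(p^*)$. For the $F(\Delta^+)$-part this is precisely the content of Lemma~\ref{lem:tilde-dual}, where $T(p)=Q(p^*)$ by definition of the walks. For the handle part one must check that $p_l^{-1}\cdot(T(\pi_1^{(i)},\pi_2^{(i)},e))$ corresponds under coroot/root identification to $\langle \pi_l((\pi_j^{(i)})^{-1}-\operatorname{id})(Q)\rangle$ — this follows because $(\operatorname{id}-\pi)(\t(\Z))$ for a Weyl element $\pi$ is Poincaré-dual to the span of the corresponding reflection hyperplane data, and the Weil pairing identifies $\t(\Z)=Q^\vee$ with characters on $Q$. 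Once this duality of lattices is in place, we get $\pi_{1,\beta}(p^*)=\t^{ss}(\Z)/T_{\beta^*}(p^*) = Q^\vee/\tilde Q_\beta(p)$ and $\tilde Z_\beta(p)^*= (Q/(Q\cap\tilde Q_\beta(p)))$; modulo finiteness (which holds when $C_p\to T^{ss}$ is surjective, so $\tilde Q_\beta(p)$ has full rank) these are naturally dual to each other by the standard $\operatorname{Ext}/\operatorname{Hom}$-into-$\C^\times$ duality for finitely generated abelian groups, giving $\pi_{1,\beta}(p^*)=\tilde Z_\beta(p)^*$ and, symmetrically with $p$ and $p^*$ interchanged (using $(p^*)^*=p$), $\tilde Z_\beta(p^*)=\pi_{1,\beta}(p)^*$.

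**Main obstacle.** The routine part is bookkeeping with $W$-conjugates and the $p_l^{-1}$, $\pi_l^{-1}$ twists floating around the definitions of $\tilde S_\beta(p)$ and $T_\beta(p)$; since both $T_\beta(p)$ and $\tilde Q_\beta(p)$ are $W(p)$-invariant and $\pi_l^{-1}p_l\in W(p)$, these twists can be absorbed exactly as in the proofs of the cupping theorems. The genuine difficulty I anticipate is verifying that the handle contribution to $\tilde S_\beta(p)$ really is $\hat T^{\pi_1^{(i)}}\cap\hat T^{\pi_2^{(i)}}$ — that is, that demanding the $Z(G)^{2g}$-part of the action (not merely the $T$-part) be trivial on the torus components of $\hat C_p$ corresponds to the \emph{central}-value condition rather than the trivial-value condition. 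This is where the lemma immediately above (``the action of $t$ on $\rho(\beta)=T^{2g}$ is by an element of $Z(G)^{2g}$ iff $t\in\hat T^{\pi_1}\cap\hat T^{\pi_2}$'') is essential, and its inductive structure over the $g$ handle factors must be threaded carefully through the well-twistedness hypothesis, exactly paralleling Lemma~\ref{lem:stab-conmutator-variety}. Once that is granted, the passage from ``central-value conditions on $T$'' to ``the lattice $\tilde Q_\beta(p)=Q\cap Q_\beta(p)$'' is a direct computation with $\mcE$ and $Q^*$, and the rest is the abelian-group duality quoted above.
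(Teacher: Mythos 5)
Your proposal is correct and follows exactly the route the paper gestures at with ``Analogous proof to Lemma \ref{lem:tilde-dual}.'' You correctly reduce everything to the lattice identity $T_\beta(p)=\tilde Q_{\beta^*}(p^*)$ (the handle-enlarged analogue of $T(p)=Q(p^*)$), use $\mcE^{-1}(Z(G))=Q^*$ and $\t(\Z)=Q^\vee$ for simply connected $G$, and absorb the $\pi_l^{-1}$/$p_l^{-1}$ twists via $W(p)$-invariance as in the cupping theorem; moreover you appropriately lean on the lemma immediately preceding the statement to translate the $Z(G)^{2g}$-stabilizer condition into membership in $\hat T^{\pi_1^{(i)}}\cap\hat T^{\pi_2^{(i)}}$, which is the only genuinely new ingredient beyond the genus-zero case. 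One minor imprecision worth noting: since $p\in\mcW_{e,e}(\beta)$ forces $p_l=e$, the $p_l^{-1}$ twist in $T_\beta(p)$ is already trivial, so the heavy lifting on twists is only needed for $\pi_l$; and the finiteness hypothesis you invoke for the double-dual step is not actually needed, since $A\mapsto\Hom(A,\C^\times)$ with algebraic characters is a reflexive duality on the finitely generated abelian (respectively diagonalizable) groups appearing here, just as in the genus-zero lemma.
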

\begin{proof}
    Analogous proof to Lemma \ref{lem:tilde-dual}.
\end{proof}

As we have seen, the dual of the fundamental group is not $Z$ but rather an extension. This opens the question: What is the dual of $Z$?

\begin{Def}
    Let $\beta\in \mcV$. Write is $F(W^3)$-part as always. We define
    \[\tilde T_\beta(p)=T(p)+w_1(\pi_1^{(1)}-\operatorname{Id})(Q^*)+\ldots+w_m(\pi_2^{(m)}-\operatorname{Id})(Q^*),\]
    and
    \[\tilde\pi_{1,\beta}(p):=\t^{ss}(\Z)/\tilde T_\beta(p).\]
\end{Def}

\begin{lema}
     Let $\beta\in \mcV$, $p\in \mcW_{e,e}(\beta)$ be an associated walk with $C_p\to T^{ss}$ surjective, and $t\in T$. Then $\pi_0(\hat C_p(t) / ((T/Z(G))\times Z(G)^{2g}))$ is $\tilde\pi_{\beta,1}(p)$. 
\end{lema}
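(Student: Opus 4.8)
The plan is to compute $\pi_0(\hat C_p(t)/((T/Z(G))\times Z(G)^{2g}))$ directly, mirroring the earlier computations of $\pi_{1,\beta}(p)$ and $\tilde Z_\beta(p)$, but now tracking the full quotient by the enlarged group $(T/Z(G))\times Z(G)^{2g}$ instead of just $T/Z(G)$ or just $Z(G)^{2g}$. First I would write $\hat C_p(t)$ as $T^{2g}\times (\C^\times)^{|S_p|}\times \C^N$ fibered appropriately, and recall from the preceding lemmas that the $U$-action is trivial on the tori components and that $\hat C_p(t)\cap C_p = g_l^{-1}(p_l^{-1}t)$, which by the earlier theorem (item 5) is $\C^{|U_p|}\times(\C^\times)^{2g\dim T+|S_p|-\dim T^{ss}}\times\pi_{1,\beta}(p)$. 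Since the $\C^{|U_p|}$ factor is contractible and the $Z(G)^{2g}\times T/Z(G)$-action respects the fibration, $\pi_0$ is computed from the cokernel of the induced map on fundamental groups of tori.

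The key step is a computation in the category of tori and lattices. The action of $(t_1,t_2)\in (T/Z(G))\times Z(G)^{2g}$ on the torus part $T^{2g}\times(\C^\times)^{|S_p|}$ is: $t_1$ acts by the homomorphism $A$ on the first $2g$ factors and by the weights $\alpha_k\circ {}^{\pi_{k-1}}(-)$ on the stay coordinates, while $t_2$ acts by translation on the $T^{2g}$ part. So at the level of cocharacter lattices, one must quotient $\t^{ss}(\Z)\oplus \t(\Z)^{2g}$ (the cocharacters of the relevant torus, modulo the $\C^N$-directions) by the image of the combined action. Passing to the cokernel, the $Z(G)^{2g}$-translations kill the $\t(\Z)^{2g}/Z(G)^{2g}$-part, i.e. they enlarge $\t(\Z)^{2g}$ to $(Q^*)^{2g}$ worth of translations because $\mcE^{-1}(Z(G))=Q^*$ (from \cite[Theorem 7, 4.3.5]{OV}); combined with the $A$-action this contributes exactly the terms $w_i(\pi_j^{(i)}-\operatorname{Id})(Q^*)$ to the sublattice one quotients by. Together with the $T(p)$-part coming from the stays (as in the proof of the cupping theorem), this yields $\t^{ss}(\Z)/\tilde T_\beta(p)=\tilde\pi_{1,\beta}(p)$.

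Concretely I would argue: the $T/Z(G)$-action is free away from finite stabilizers, so $\pi_0(\hat C_p(t)/(T/Z(G)))$ surjects onto $\pi_0$ of the further $Z(G)^{2g}$-quotient, and the latter is the quotient of $\pi_{1,\beta}(p)=\t^{ss}(\Z)/T_\beta(p)$ by the image of $Z(G)^{2g}$ acting by deck transformations. That image, transported through the identifications used in the definition of $T_\beta(p)$, is precisely $\sum_i\big(w_i(\pi_1^{(i)}-\operatorname{Id})(Q^*)+w_i(\pi_2^{(i)}-\operatorname{Id})(Q^*)\big)/T_\beta(p)$, so the quotient is $\t^{ss}(\Z)/\tilde T_\beta(p)$. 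For this one needs the fact, analogous to the computation of stabilizers, that a central element $\zeta\in Z(G)^{2g}$ acts trivially on a component iff the corresponding cocharacter lies in $T(p)+\sum_i w_i(\pi_j^{(i)}-\operatorname{Id})(Q^*)$; this is the $Q^*$-analogue of Lemma \ref{lem:stab-conmutator-variety} combined with the surjectivity hypothesis $C_p\to T^{ss}$, which guarantees $\ker g'$ in the earlier notation has the expected dimension so that $\pi_0$ is exactly the cokernel of the lattice map and no extra components appear.

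The main obstacle I expect is bookkeeping the various conjugations by Weyl elements ($w_i$, $\pi_l$, $p_l$) correctly so that the sublattice one quotients by is genuinely $W(p)$-invariant and the identification $\pi_l^{-1}p_l\in W(p)$ lets one replace $\pi$-twists by $p$-twists, exactly as in the proof of the cupping theorem; and verifying that the translations by $Z(G)^{2g}$ interact with the $A$-action so as to produce the images $w_i(\pi_j^{(i)}-\operatorname{Id})(Q^*)$ rather than some larger or smaller lattice. Everything else is a routine adaptation of the arguments already given for $\pi_{1,\beta}(p)$ and $\tilde Z_\beta(p)$.
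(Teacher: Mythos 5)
Your proposal follows the same route as the paper: write $\hat C_p$ as $T^{2g}\times(\C^\times)^{S_p}\times\C^N$, reduce $\pi_0(\hat C_p(t))$ to the component group of $\ker g'$, and compute the image of the $Z(G)^{2g}$-translation action in $\pi_{1,\beta}(p)=\t^{ss}(\Z)/T_\beta(p)$ at the lattice level via $\mcE^{-1}(Z(G))=Q^*$, arriving at $\tilde T_\beta(p)/T_\beta(p)$ and hence at $\tilde\pi_{1,\beta}(p)$. One small wrinkle: the reason the $T/Z(G)$-quotient does not change $\pi_0$ is that $T/Z(G)$ is connected (so each component is preserved), not because the action is almost free as you wrote — but the conclusion you draw is the right one, and the rest of the argument matches the paper's.
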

\begin{proof}
    First of all, we note that $\pi_0(\hat C_p(t) / ((T/Z(G))\times Z(G)^{2g})) = \pi_0(\hat C_p(t))/  Z(G)^{2g}$ as $T$ is connected. We write $\hat C_p$ as $T^{2g}\times (\C^\times)^{S_p}\times \C^N$. We recall that the map to $T$ was a composition of a translation, the projection onto $T^{2g}\times (\C^\times)^{S_p}$, and a group homomorphism $g':T^{2g}\times (\C^\times)^{S_p}\to T$, so that $\pi_0(\hat C_p(t))\simeq \pi_0(\ker g')\simeq \ker g' /(\ker g')^\circ$, where $(\ker g')^\circ$ is the connected component of the identity on $\ker g'$. These maps are $(T/Z(G))\times Z(G)^{2g}$-equivariant.  

    We recall that the action of $z\in Z(G)^{2g}$ is given by left translation on $T^{2g}$. Hence, it acts trivially on $\pi_0(\hat C_p(t))$ if and only if $(z,1)\in (\ker g')^\circ$. Now, $(\ker g')^\circ$ agree with the exponential of its tangent algebra while $\mcE^{-1}(Z(G))=Q^*$. 
    It follows that the group homomorphism $Z(G)^{2g} \to \ker g'/(\ker g')^\circ$, $z\mapsto z\cdot (\ker g')^\circ$, has image
    \begin{align*}
        (Z(G)^{2g}\times\{1\}) / ((Z(G)^{2g}\times\{1\})\cap (\ker g')^\circ ) &= \mcE( ((Q^*)^{2g}\times \Z^{S_p})/(((Q^*)^{2g}\times \Z^{S_p})\cap \ker(dg')))\\
        &\simeq \mcE(dg' ((Q^*)^{2g}\times \Z^{S_p}))\\
        &\simeq \mcE(\tilde T_\beta(p)).
    \end{align*} 
    Hence, under $\pi_0(\hat C_p(t))\simeq \pi_{1,\beta}(p) = \t^{ss}(\Z)/T_\beta(p)$, the image corresponds to $\tilde T_\beta(p)/T_\beta(p)$. This shows that the quotient is $\tilde \pi_{1,\beta}(p) = \t^{ss}(p)/\tilde T_\beta(p)$.
\end{proof}

\begin{lema}
    Let $\beta\in \mcV$ and $p\in \mcW_{e,e}(\beta)$. Assume that $G$ is simply connected. Then $\tilde\pi_{1,\beta}(p^*)=Z_\beta(p)^*$ and $Z_\beta(p^*)=\tilde\pi_{1,\beta}(p)^*$.
\end{lema}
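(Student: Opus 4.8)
The plan is to run the argument in parallel with Lemmas \ref{lem:tilde-dual} and \ref{lem:tilde-dual-2}, reducing both equalities to a single identity of sublattices of the root lattice of $G$. Throughout I keep the Killing form identification $\t\simeq\t^*$ in force, so that all the lattices in sight live in $\t$; recall that Langlands duality then interchanges the root and coroot lattices and is the identity on $W$, that $d\pi$ identifies $\t_G$ with $\t_{\tilde G}$, and that for a simply connected group $H$ one has $\t^{ss}(\Z)_H = Q^\vee_H$ and $\mcE^{-1}(Z(H)) = Q^*_H$ by \cite[4.3.5]{OV}. Consequently, for simply connected $G$, on ${}^L\tilde G$ one gets $\t^{ss}(\Z)_{{}^L\tilde G} = Q_G$, the root lattice of ${}^L\tilde G$ is $Q^\vee_G$, and $Q^*_{{}^L\tilde G} = (Q^\vee_G)^*$, the weight lattice of $G$. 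Since for a finitely generated abelian group $A$ the character group of the diagonalizable group $A^*=\operatorname{Hom}(A,\C^\times)$ is again $A$, the first assertion $\tilde\pi_{1,\beta}(p^*)=Z_\beta(p)^*$ is equivalent to
\[ Q_G\cap Q_\beta(p) = \tilde T_\beta(p^*) \]
as sublattices of $Q_G$; because $G$ is simply connected, $p\mapsto p^*$ is involutive, so applying this identity with $(G,p)$ replaced by $({}^L\tilde G,p^*)$ yields the second assertion as well. So it suffices to establish the boxed equality.

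To do so I would split both sides. Write the $F(W^3)$-part of $\beta$ as $(\pi_1^{(1)},\pi_2^{(1)},w_1)\cdots(\pi_1^{(m)},\pi_2^{(m)},w_m)$ and put $\pi_l=s_\beta$. By definition $Q_\beta(p)=Q(p)+V$, where $V=\pi_l\big(\sum_{k,j}\operatorname{im}_\R(\operatorname{Id}-\pi_j^{(k)})\big)$ is a linear subspace, so the modular law gives $Q_G\cap Q_\beta(p)=Q(p)+(Q_G\cap V)$; on the other hand $\tilde T_\beta(p^*)=T(p^*)+\sum_{k,j}\check w_k(\check\pi_j^{(k)}-\operatorname{Id})(Q^*_{{}^L\tilde G})$. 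The ``root part'' is handled exactly as in the first line of the proof of Lemma \ref{lem:tilde-dual}: the lemma of Section \ref{sec:cupping} ($\langle{}^wh_\alpha\rangle=\langle h_{w\cdot\alpha}\rangle$, $ws_\alpha w^{-1}=s_{w\cdot\alpha}$) gives $T(p^*)=Q(p)$. For the ``handle/puncture part'' the claim is that $Q_G\cap V$ agrees, modulo $Q(p)$, with $\sum_{k,j}\check w_k(\check\pi_j^{(k)}-\operatorname{Id})(Q^*_{{}^L\tilde G})$. At the level of real spans this is precisely the computation already performed for $T_\beta(p)$ and $T(\pi_1,\pi_2,w)$ in Lemma \ref{lem:stab-conmutator-variety} and the theorem following it: the subspace $\sum_{k,j}\operatorname{im}_\R(\operatorname{Id}-\pi_j^{(k)})$ is $\langle\pi_j^{(k)}\rangle$-invariant, hence (as $\beta$ is well-twisted) invariant under each $w_k$; combined with $\pi_l^{-1}p_l\in W(p)$, left translation by Weyl elements and the replacement $(\pi-\operatorname{Id})\leftrightarrow(\pi^{-1}-\operatorname{Id})$ do not change the span. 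To upgrade this to an equality of lattices I would invoke the coinvariant-lattice fact: for every $\pi\in W$ one has $(\operatorname{Id}-\pi)(P_G)=Q_G\cap\operatorname{im}_\R(\operatorname{Id}-\pi)$, and more generally a finite sum of such images is saturated in $Q_G$. Since $\pi_l$ preserves $Q_G$, this identifies $Q_G\cap V$ with $\pi_l\big(\sum_{k,j}(\operatorname{Id}-\pi_j^{(k)})(Q^*_{{}^L\tilde G})\big)$, and exactly as over $\R$ one checks that replacing $\pi_l$ by the individual $w_k$ costs only an element of $Q(p)$, which finishes the boxed equality.

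The main obstacle is this last integral refinement. Everything up to the real-span comparison is a verbatim repetition of the arguments already given for $T_\beta(p)$; what is genuinely new is controlling the $\Z$-structure, and the only extra input needed for that is the coinvariant-lattice identity $(\operatorname{Id}-\pi)(P_G)=Q_G\cap\operatorname{im}_\R(\operatorname{Id}-\pi)$ together with its evident sum version. I would either cite this or prove it by reducing to the case of a simple reflection $s_\alpha$, where $(\operatorname{Id}-s_\alpha)(P_G)=\Z\alpha$ and $\alpha$ is a primitive vector of $Q_G$, and then bootstrapping to a general $\pi$ through a reduced expression, using that the relevant images are invariant under the reflections involved.
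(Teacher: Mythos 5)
The reduction you carry out is the natural way to flesh out the paper's one-line ``analogous proof'': after $\t^{ss}_{{}^L\tilde G}(\Z)=Q_G$, $Q^*_{{}^L\tilde G}=P_G$ and the modular law, the first assertion does come down to $Q_G\cap Q_\beta(p)=\tilde T_{\beta^*}(p^*)$, and for the $F(\Delta^+)$-part $T(p^*)=Q(p)$ is indeed exactly the argument of Lemma~\ref{lem:tilde-dual}. Your diagnosis that the only new content lives in the integral structure of the $F(W^3)$-part is also correct. But the coinvariant-lattice identity you propose to close this gap, namely $(\operatorname{Id}-\pi)(P_G)=Q_G\cap\operatorname{im}_\R(\operatorname{Id}-\pi)$ for all $\pi\in W$, is false. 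In $G=\Sp_4$ (type $C_2$, $Q_G=\{(a,b)\in\Z^2:a+b\in 2\Z\}$, $P_G=\Z^2$), take $\pi=w_0=-\operatorname{Id}=s_{2e_1}s_{2e_2}$; then $(\operatorname{Id}-\pi)(P_G)=2\Z^2$, while $Q_G\cap\operatorname{im}_\R(\operatorname{Id}-\pi)=Q_G$, and $[Q_G:2\Z^2]=2$. Your proposed reduction to a reduced expression cannot rescue this: the images $(\operatorname{Id}-s_{2e_1})(P_G)=2\Z e_1$ and $(\operatorname{Id}-s_{2e_2})(P_G)=2\Z e_2$ already fail to span $Q_G$ over $\Z$, so the problem appears at the level of two commuting reflections, not only after composition.

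This is not a cosmetic gap. Propagating the failure through your own boxed equality produces a concrete discrepancy in the lemma as stated. Take the genus-$1$ cell in $\Sp_4$ with $F(W^3)$-part $(s_{2e_1},s_{2e_2},e)$ and the walk of length $8$ over $\alpha_1\alpha_2\alpha_1\alpha_2\alpha_1\alpha_2\alpha_1\alpha_2$ that climbs to $w_0$ in four ups and returns to $e$ in four downs, so $S_p=\emptyset$ and $Q(p)=T(p)=0$. Then $Q_\beta(p)=\R^2$, $Q_G\cap Q_\beta(p)=Q_G$, and $Z_\beta(p)=0$; but $\tilde T_{\beta^*}(p^*)=(\operatorname{Id}-s_{2e_1})(P_G)+(\operatorname{Id}-s_{2e_2})(P_G)=2\Z^2$ and $\tilde\pi_{1,\beta^*}(p^*)=Q_G/2\Z^2\cong\Z/2\Z$, so $\tilde\pi_{1,\beta^*}(p^*)\neq Z_\beta(p)^*$ --- while the companion equality $Z_{\beta^*}(p^*)=\tilde\pi_{1,\beta}(p)^*$ (both trivial here) does hold, contradicting the claimed involutivity of $p\mapsto p^*$. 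So either the statement needs an extra hypothesis excluding this cell, or the definition of $Q_\beta(p)$ (the $\langle\cdot\rangle_\R$, which is precisely what makes the root part and the handle part interact non-integrally) needs to be revisited. Since the paper's own proof is only the phrase ``Analogous proof to Lemma~\ref{lem:tilde-dual},'' I cannot tell whether the author's intended argument contains the same integral subtlety or quietly sidesteps it; but as written your argument does not establish the lemma, and the missing step is not a citable fact.
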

\begin{proof}
    Analogous proof to Lemma \ref{lem:tilde-dual}.
\end{proof}

\section{Refined statement}

Using the duality, we can formulate a more precise version of Conjecture \ref{conjMotivic}. For $\beta\in\mcV$, a walk $p\in\mcW_{e,e}(\beta)$ with $C_p\to T^{ss}$ surjective, and $\gamma\in \tilde Z_\beta(p)$, let 
\[ \hat C_p(\gamma) := \bigcup_{u\in U} \hat C_p(1)^{u\gamma}\]
or, equivalently, the preimage of $(\hat C_p(1)/U)^\gamma$ under the $U$-quotient.

\begin{conj}\label{conjMotivic2}
     Let $G$ be a connected, simply connected, and semisimple complex Lie group. Let $\beta\in \mcV$ with genus $g$ and $p\in \mcW_{e,e}(\beta)$ with $C_p\to T$ surjective. Then
    \begin{align*}
        [\hat C_p(\gamma)] q^{F(\gamma)} &= [\hat C_{p}(1) ] 
    \end{align*}
    for any $\gamma \in \tilde Z_\beta(p)\cap ((T/Z(G))\times F^{2g})$, where $F(\gamma)$ is the fermionic shift on $\abbreviatedBetti[G/F]$.
\end{conj}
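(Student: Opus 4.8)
\emph{Plan.} Conjecture \ref{conjMotivic2} is an identity of classes between the $\gamma$-twisted sector $\hat C_p(\gamma)$ and the untwisted fibre $\hat C_p(1)$ of a single cell, so I would prove it by unwinding both sides through the product description of $\hat C_p$ produced in Sections \ref{sec:walk-stratification} and \ref{sec:cupping}. Since $G$ is semisimple and $\mcC$ generic, the unipotent group $U$ (being torsion free) acts freely on $\hat C_p(1)$, so $\hat C_p(1)\to \hat C_p(1)/U$ and, by the very definition of $\hat C_p(\gamma)$, its restriction $\hat C_p(\gamma)\to (\hat C_p(1)/U)^\gamma$ are Zariski-locally trivial $U$-torsors; as $U$ is special this gives $[\hat C_p(1)]=q^{\dim U}[\hat C_p(1)/U]$ and $[\hat C_p(\gamma)]=q^{\dim U}[(\hat C_p(1)/U)^\gamma]$, so the conjectured identity becomes
\[
\big[(\hat C_p(1)/U)^\gamma\big]\,q^{F(\gamma)}=\big[\hat C_p(1)/U\big]\qquad\text{in }K_0(\Stk_\C).
\]
By the cupping theorem, Lemma \ref{key-iso} and the explicit handle and puncture varieties, $\hat C_p(1)/U$ splits as a product of a \emph{multiplicative part} $T^{2g}\times(\Cx)^{S_p}$, an \emph{additive part} $\mcA$ collecting the up-coordinates $\C^{U_p}$ and the Bruhat factors $U_\pi^{+}$ of the handle and puncture operations, and a \emph{discrete part} $\pi_{1,\beta}(p)$; the residual action of $\gamma=(z_1,z_2)$ respects this splitting, acting by root characters on the multiplicative part, linearly on $\mcA$, and by a translation on the discrete part.

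\emph{Triviality of $\gamma$ away from the additive directions.} The crux of the reduction is that $\gamma$ acts trivially on $T^{2g}\times(\Cx)^{S_p}$ and on $\pi_{1,\beta}(p)$, which I would extract from the structural lemmas on $\tilde Z_\beta(p)$. Writing $\gamma\in\tilde Z_\beta(p)$ as the image of $\hat z_1\in\tilde S_\beta(p)=S(p)\cap\bigcap_i{}^{\pi_l^{-1}}\hat T^{\pi_1^{(i)}}\cap{}^{\pi_l^{-1}}\hat T^{\pi_2^{(i)}}$ with $z_2=A(\hat z_1)^{-1}$, the inclusion $\hat z_1\in S(p)$ says precisely $\alpha_k({}^{\pi_{k-1}}\hat z_1)=1$ for every stay $k\in S_p$, so $\hat z_1$ fixes $(\Cx)^{S_p}$ pointwise by Lemma \ref{key-iso}; the inclusions $\hat z_1\in{}^{\pi_l^{-1}}\hat T^{\pi_j^{(i)}}$ force the twist of $\hat z_1$ on each $T$-factor of the $g$ handle commutator varieties to be central, and $z_2=A(\hat z_1)^{-1}$ cancels it, so $\gamma$ fixes $T^{2g}$ pointwise (recall $Z(G)\subset T$ acts trivially by conjugation); finally the identification $\pi_0(\hat C_p(t)/((T/Z(G))\times Z(G)^{2g}))=\tilde\pi_{1,\beta}(p)$ shows the translation of $z_2$ on $\pi_{1,\beta}(p)$ is trivial exactly when $\hat C_p(\gamma)\neq\emptyset$, which for $\gamma\in\tilde Z_\beta(p)$ is guaranteed by the non-emptiness lemma. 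Hence $(\hat C_p(1)/U)^\gamma$ is the product of the \emph{full} multiplicative and discrete parts with the fixed linear subspace $\mcA^\gamma\subset\mcA$, and the display above reduces to the numerical statement
\[
F(\gamma)=\dim_\C\mcA-\dim_\C\mcA^\gamma,
\]
the number of additive Bruhat directions of the cell on which $z_1$ acts with a nontrivial eigenvalue.

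\emph{The fermionic-shift identity.} It remains to identify $F(\gamma)$ with $\dim\mcA-\dim\mcA^\gamma$. After the change of variables of Section \ref{sec:bundle-structure}, $\hat C_p(1)$ sits inside $\hat R''\times U$, where $\hat R''$ is the level set of the product map at $C_k^{-1}$ underlying $\abbreviatedBetti[G]$; the element $\gamma$ has finite order and preserves the reduced symplectic form, so $F(\gamma)$ is computed from the $z_1$-eigenvalues on the tangent space of $\hat R''$. The plan is to show, using the Bruhat parametrizations of Sections \ref{sec:bruhat-parametrizations}--\ref{sec:cupping}, that the $\gamma$-weights of the normal bundle of the cell inside $\hat R''$ are conjugate to those of the additive tangent directions of the cell — each eigenvalue $e^{2\pi i c}$ of the latter matched by $e^{2\pi i(1-c)}$ of the former — and that the null ($T$-orbit) directions cancel between numerator and denominator, so that $\tfrac12$ of the ambient codimension, together with the sum $\sum_j c_j$ defining $F(\gamma)$, collapses to $\dim\mcA-\dim\mcA^\gamma$.

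\emph{Main obstacle and a model case.} I expect this last bookkeeping to be the hard part: one must keep precise track of which tangent directions of $\hat R''$ are normal to the cell, of their $\gamma$-weights along the walk, and of the cancellation of the gauge directions coming from the regular puncture and from the $U$-torsor $\tBetti[G]\to\abbreviatedBetti[G]$ of Corollary \ref{cor:motivically-a-vector-bundle}; the interplay of the quasi-Hamiltonian geometry of the character stack with the walk combinatorics is exactly what is not automatic. As a sanity check and a template for the general argument I would first run everything for $G=\SL_2$, where $W=\Z/2\Z$, the relevant walks are short, $S(p)$ and $\hat T^{\pi}$ are immediate, and $Z(G)=\Z/2\Z$; this yields Conjecture \ref{conjMotivic2} for $\SL_2$, and, combined with the walk duality, also Conjecture \ref{conjMotivic} in that case.
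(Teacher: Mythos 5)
The statement you are asked to prove is \emph{not} proven in the paper in full generality: Conjecture~\ref{conjMotivic2} is stated as a conjecture and is only verified in the final section for $G=\SL_2$. Your proposal is openly a plan, not a proof, and you correctly identify where the plan stops being a proof; so the right comparison is between your reduction scheme plus your promised $\SL_2$ check on the one hand, and the paper's direct $\SL_2$ computation on the other.

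Your preliminary reductions are sound and do line up with the way the objects are set up in Sections~\ref{sec:walk-stratification}--\ref{sec:quotient-by-T}. In particular, the observations that $\gamma=(z_1,z_2)$ acts trivially on $(\C^\times)^{S_p}$ because $z_1\in S(p)$ kills the stay-characters, that $z_2=A(z_1)^{-1}$ cancels the translation on the $T^{2g}$ factor, and that nonemptiness forces triviality of the translation on $\pi_0$ are all consistent with the lemmas preceding the conjecture. What is not finished — and you say so yourself — is the identity $F(\gamma)=\dim_\C\mcA-\dim_\C\mcA^\gamma$, and this is precisely the substance of the conjecture; as stated, the paper itself leaves this step open in general. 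So your proposal does not prove Conjecture~\ref{conjMotivic2}; it reformulates it, and then promises to verify the model case.

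Two more specific remarks. First, the claimed product decomposition of $\hat C_p(1)/U$ into multiplicative, additive and discrete factors is too clean: even in $\SL_2$ the $U$-quotient couples the additive coordinates through the translations $g_\beta^-$ and the intertwiners $g_\beta^+$, so the fixed locus is cut out by inhomogeneous linear conditions of the form $(1-t_0^2)u_i=f_i(u')$ rather than simply $\mcA^\gamma$; the dimension count comes out right after that bookkeeping, but the splitting should be stated as a relation of classes, not a literal product. Second, for the $\SL_2$ model case, what the paper actually does is the direct computation: it rules out walks with stays (where $\tilde Z_\beta(p)$ is trivial), shows $\tilde Z_\beta(p)=\Z/2\Z$ otherwise, identifies the non-tori part as $\C^{2g+k-1}$ with $t_0$ acting by $t_0^2$, solves the linear equations after the $U$-quotient, and lands on $[\hat C_p(t_0)]\,q^{2g+k-2}=[\hat C_p(1)]$. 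Your plan, once unwound, would recover exactly this; but you do not carry it out, so the proposal as written stops at a reformulation of the conjecture plus a promissory note for $\SL_2$.
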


\begin{lema}\label{lem:duality-preserves-genericity}
    Let $\beta\in \mcV$ and $p\in\mcW_{e,e}(\beta)$. Assume that $C_p\to T^{ss}$ is surjective. Then $C_{p^*}\to (T^\vee)^{ss}$ is also surjective. 
\end{lema}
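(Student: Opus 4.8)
The plan is to reduce the statement about $C_{p^*} \to (T^\vee)^{ss}$ to a statement purely about lattices and the tangent-space descriptions we already have. By item (4) of the main theorem on walks (the one computing $Z(p)$, $\pi_1(p)$, etc.), surjectivity of $C_p \to T^{ss}$ is equivalent to finiteness of $Z_\beta(p)$, which in turn is equivalent to the condition $Q_\beta(p) \otimes \R = (\t^{ss})^*$, i.e.\ to $T_\beta(p) \otimes \R = \t^{ss}$. So the claim becomes: if $T_\beta(p)\otimes\R = \t^{ss}$, then $T_{\beta^*}(p^*)\otimes\R = (\t^\vee)^{ss}$. First I would unwind the definitions: under the identification of Lie algebras via $d\pi$ and the Weil pairing, each root $\alpha_k$ of $G$ gets identified with its coroot, the walk $p^*$ stays on exactly the same steps as $p$, and the Weyl group elements $\pi_k$, $p_k$, $w_j$, $\pi_1^{(j)}$, $\pi_2^{(j)}$ are carried to their images under $W \simeq W^\vee$.

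The key step is then to track what happens to each of the three kinds of generators of $T_\beta(p)$ under duality. The torus-image tangent space decomposes (using Lemma \ref{lem:stab-conmutator-variety} and the general twisted case) as $T(p) = \langle {}^{\pi_k^{-1}} h_{\alpha_k} : k \in S_p\rangle_\R$ together with the commutator-variety contributions of the form $p_l^{-1} w_j (\pi_j^{(i)} - \operatorname{Id})(\t)$. Under the identification $\t \simeq \t^\vee$, $h_{\alpha_k}$ corresponds to the coroot ${}^\vee\!\alpha_k$, and the Weyl-equivariance statements ($w s_\alpha w^{-1} = s_{w\cdot\alpha}$, $\langle {}^w h_\alpha\rangle = \langle h_{w\cdot\alpha}\rangle$, dualized) tell us these span exactly the analogous subspace for $p^*$. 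Since all the operators $\pi_j^{(i)}-\operatorname{Id}$ and the conjugations $w_j$ are just $W$-linear maps on $\t \otimes \R$, and $W$ acts on $\t\otimes\R$ and $\t^\vee\otimes\R$ ``the same way'' under the Weil pairing identification (the pairing is $W$-invariant), each generating subspace of $T_\beta(p) \otimes \R$ maps isomorphically onto the corresponding generating subspace of $T_{\beta^*}(p^*)\otimes\R$. The only subtlety is the semisimple-part constraint: $\t^{ss}$ and $(\t^\vee)^{ss}$ are both the span of (co)roots, which are carried to each other, and the pairing is perfect on these, so $T_\beta(p)\otimes\R$ fills up $\t^{ss}$ if and only if its dual fills up $(\t^\vee)^{ss}$.

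Concretely the cleanest route is: over $\R$, $T_\beta(p)\otimes\R$ is a $W(p)$-invariant subspace of $\t^{ss}$ generated by (co)roots and images of root-lattice elements under $W$-linear endomorphisms; its orthogonal complement inside $\t^{ss}$ equals the common stabilizer tangent space $(T_\beta(p)\otimes\R)^\perp$ computed in part (1)–(2) of the theorem. Duality exchanges $\t^{ss}$ with $(\t^\vee)^{ss}$ compatibly with the perfect pairing, so $(T_\beta(p)\otimes\R)^\perp = 0$ if and only if $(T_{\beta^*}(p^*)\otimes\R)^\perp = 0$, i.e.\ $C_p \to T^{ss}$ surjective $\iff$ $C_{p^*}\to (T^\vee)^{ss}$ surjective. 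I expect the main obstacle to be bookkeeping: making sure the various twists by $p_l^{-1}$, $w_j$, and the signs ${}^{\pi_k^{-1}}h_k$ versus ${}^{p_k^{-1}}h_k$ are genuinely compatible with the duality $p\mapsto p^*$ — but this is exactly the content of the preceding lemmas on $W(p)$-invariance and $\pi_l^{-1}p_l \in W(p)$, together with Lemma \ref{lem:tilde-dual}'s observation that $T(p) = Q(p^*)$ (now used with its twisted analogue), so no genuinely new input is needed, only careful assembly.
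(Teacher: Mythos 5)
Your proof is correct, but it takes a genuinely different route from the one in the paper. The paper's proof is a three-line chain of implications that leans entirely on the already-established finite-group duality: surjectivity of $C_p\to T^{ss}$ forces $\pi_{1,\beta}(p)$ to be finite (item 5 of the theorem), then Lemma \ref{lem:tilde-dual-2} gives $\tilde Z_\beta(p^*)=\pi_{1,\beta}(p)^*$, hence finite, and since $Z_\beta(p^*)\subset\tilde Z_\beta(p^*)$ (because $S_\beta(p^*)\subset\tilde S_\beta(p^*)$) one concludes $Z_\beta(p^*)$ is finite, which by item 4 of the theorem gives surjectivity of $C_{p^*}\to(T^\vee)^{ss}$. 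You instead bypass the Pontryagin-dual bookkeeping and the intermediate groups $\tilde Z_\beta$, $\tilde S_\beta$ altogether and argue at the level of real tangent spaces: surjectivity $\Leftrightarrow$ $T_\beta(p)\otimes\R=\t^{ss}$, and under the Killing-form identification $\t\simeq\t^*\simeq\check\t$ the real spans $T_\beta(p)\otimes\R$ and $T_{\beta^*}(p^*)\otimes\R$ coincide (the exact lattice statement $T(p)=Q(p^*)$ from Lemma \ref{lem:tilde-dual} collapses over $\R$ to $T(p)\otimes\R=T(p^*)\otimes\R$, and the commutator contributions $(\operatorname{id}-\pi_j^{(i)})(\t^{ss})$ are manifestly the same on both sides). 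What each approach buys: the paper's version is shorter and recycles Lemma \ref{lem:tilde-dual-2}, which it needs later anyway for Conjecture \ref{conjMotivic2}; your version is more self-contained and perhaps more transparent about why the statement is true (it is really a statement about real spans of roots being invariant under the root--coroot identification), at the cost of slightly more careful bookkeeping of the generators of $T_\beta(p)$. One small imprecision worth flagging: in the final paragraph you say the pairing being perfect ``so $T_\beta(p)\otimes\R$ fills up $\t^{ss}$ if and only if its dual fills up $(\t^\vee)^{ss}$'' --- a perfect pairing by itself does not transport a subspace of one side to a subspace of the other; what you actually need, and did establish in the preceding paragraph, is that $T_\beta(p)\otimes\R$ \emph{corresponds} to $T_{\beta^*}(p^*)\otimes\R$ under the Killing-form identification, so the ``perfect pairing'' sentence should be replaced by the observation that the two conditions are literally the same condition after that identification.
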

\begin{proof}
    By hypothesis, $\pi_{1,\beta}(p)$ is finite. Hence, $\tilde Z_\beta(p^*)$ is so. Thus, $Z_\beta(p^*)$ is finite and $C_{p^*}\to (T^\vee)^{ss}$ is surjective.
\end{proof}

\begin{lema}\label{lem:key-duality}
    Let $\beta\in \mcV$ with genus $g$ and $p\in \mcW_{e,e}(\beta)$. Assume that $G$ is simply connected. The dual of the projection $\tilde Z_\beta(p)\to Z(G)^{2g}$ is the action map $Z({}^L\tilde G)^{2g}\to \pi_{1,\beta}(p^*)$.
\end{lema}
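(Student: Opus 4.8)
### Proof strategy for Lemma \ref{lem:key-duality}

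\textbf{The plan} is to track both sides of the claimed duality through the explicit descriptions we already have for $\tilde Z_\beta(p)$ and $\pi_{1,\beta}(p^*)$, using the exponential map $\mcE:\t\to T$ and its known kernel $\mcE^{-1}(Z(G))=Q^*$ together with the identification $Q^\vee=\t(\Z)$ that holds because $G$ is simply connected. First I would recall the concrete description of $\tilde Z_\beta(p)$ from the lemma two above: it is the image of $\tilde S_\beta(p)\to (T/Z(G))\times T^{2g}$, where the last $2g$ coordinates send $t\mapsto A(t)^{-1}$ and $A:T\to T^{2g}$ encodes the $T$-action on the torus factors of $\hat C_p$. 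Post-composing with the projection $\tilde Z_\beta(p)\to Z(G)^{2g}$ simply remembers $A(t)^{-1}$, so the map we must dualize is, up to the sign inversion, the composition $\tilde S_\beta(p)\hookrightarrow T\xrightarrow{A} T^{2g}$ restricted to land in $Z(G)^{2g}$. On the other hand, $\pi_{1,\beta}(p^*)=\t^{ss}(\Z)/T_\beta(p^*)$, and the action map $Z({}^L\tilde G)^{2g}\to\pi_{1,\beta}(p^*)$ is exactly the map $z\mapsto z\cdot(\ker g')^\circ$ analyzed in the proof of the previous lemma, whose image was computed to be $\mcE(\tilde T_\beta(p^*))/\mcE(T_\beta(p^*))$ via $dg'$ applied to $(Q^*)^{2g}\times\Z^{S_{p^*}}$.

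\textbf{The key steps}, in order, would be: (i) write $A$ in coordinates using Lemma \ref{key-iso}(2) — the action of $t$ on the $k$-th torus coordinate of a commutator factor is multiplication by the relevant product of characters $\alpha_j({}^{\pi_{j-1}}t)$ — so that $dA:\t\to\t^{2g}$ becomes a block of the maps $(\pi-\operatorname{Id})$ appearing in the definitions of $T(\pi_1^{(i)},\pi_2^{(i)},e)$ and $\tilde T_\beta(p)$; (ii) identify the pairing: the Weil pairing gives $\langle -,-\rangle:\t\times\t^\vee\to\C$ under which $\Delta^+_G$ pairs with $\Delta^+_{{}^LG}$ and $W$ with $W^\vee$, and under which $(\pi-\operatorname{Id})^\vee=(\pi^{-1}-\operatorname{Id})$ on the dual side; (iii) show that the adjoint of $\tilde S_\beta(p)\hookrightarrow T\xrightarrow{A}T^{2g}$ composed with restriction to $Z(G)^{2g}$, after applying $\mcE^{-1}$ and using $Q^*\leftrightarrow Q^\vee=\t(\Z)$, is precisely $z\mapsto dg'(z)\bmod T_\beta(p^*)$ — i.e. the two maps are $\mcE$ of adjoint linear maps between the relevant lattices; (iv) conclude by Pontryagin/$\Z$-module duality, noting that all groups in sight are finite by Lemma \ref{lem:tilde-dual-2} and Lemma \ref{lem:duality-preserves-genericity}, so the (honest, finite) dual of the image equals the coimage of the adjoint.

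\textbf{The main obstacle} I expect is step (iii): making the bookkeeping between the ``$\tilde S$'' side and the ``$\tilde T$'' side precise. On one side we have a condition cut out inside $T$ by the subgroups $\hat T^{\pi}$ (preimages of $Z(G)$ under $t\mapsto{}^\pi tt^{-1}$) intersected with $S(p)$, and on the other a quotient of $\t^{ss}(\Z)$ by a lattice built from $(\pi-\operatorname{Id})(Q^*)$; matching these requires carefully transposing ``preimage of $Z(G)$'' into ``$\operatorname{Hom}$ into $Q^*$'' and checking that the $w_i$-twists and the $\pi_l^{-1}$-conjugations (which are $W(p)$-invariances, hence harmless by the remark that $\pi_l^{-1}p_l\in W(p)$ and $T(p)$, $T_\beta(p)$ are $W(p)$-invariant) line up correctly under the duality $\beta\mapsto\beta^*$. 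I would handle this exactly as in Lemma \ref{lem:tilde-dual} and Lemma \ref{lem:tilde-dual-2}: reduce to the base case $g=1$ (a single commutator factor) via the well-twistedness induction already used in Lemma \ref{lem:stab-conmutator-variety}, verify the base case by the identity $\mcE(x)\in{}^wT^{\pi}\iff x\in\langle w(\pi^{-1}-\operatorname{Id})(\Phi)\rangle_\R^*$ established in the proof of the cupping theorem, and then assemble.
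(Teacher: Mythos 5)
Your strategy is correct and is essentially the same as the paper's, just unpacked: the paper's proof is the single sentence ``One must follow the identifications on Lemma \ref{lem:tilde-dual-2},'' and your plan carries those identifications out explicitly (using $\mcE^{-1}(Z(G))=Q^*$, $\t(\Z)=Q^\vee$ for $G$ simply connected, the lattice equality $T(p)=Q(p^*)$, and the adjoint relation $(\pi-\operatorname{Id})^\dagger=(\pi^{-1}-\operatorname{Id})$ under the Weyl-invariant pairing) to verify that the projection $\tilde Z_\beta(p)\to Z(G)^{2g}$ and the action map are adjoint linear maps on the relevant lattices. Your observation that one must reconcile the various twists ($w_i$ in $\tilde T_\beta$, $\pi_l$ in $Q_\beta$, $p_l^{-1}$ in $T_\beta$) via the $W(p)$-invariance of $T(p)$ and $T_\beta(p)$ together with $\pi_l^{-1}p_l\in W(p)$ is exactly the bookkeeping the paper leaves implicit, and the reduction along the blocks of the $F(W^3)$-part is the natural way to organize it.
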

\begin{proof}
    One must follow the identifications on Lemma \ref{lem:tilde-dual-2}.
\end{proof}

For $\beta\in\mcV$ and a walk $p\in\mcW_{e,e}(\beta)$ with $C_p\to T^{ss}$ surjective, let $\hat C_p^\circ$ be a connected component of $\hat C_p(1)$ and $h_p:Z(G)^{2g}\to \pi_{1,\beta}(p)=\pi_0(\hat C_p(1))$ be the action map.

\begin{lema}\label{lem:connected-component-invariant}
    Let $\beta\in \mcV$ with genus $g$ and $p\in \mcW_{e,e}(\beta)$ with $C_p\to T$ surjective. Then
    \begin{align*}
        [\hat C_{p}^\circ / ((B/Z(G))\times (F^{2g}\cap \ker h_p))] &= [\hat C_{p^*}^\circ / ((B_{{}^L\tilde G}/Z(G)^\vee)\times (((Z(G)/F)^\vee)^{2g}\cap \ker h_{p^*}))]
    \end{align*}
    for any subgroup $F\subset Z(G)$.
\end{lema}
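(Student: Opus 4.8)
The plan is to compute both sides of the claimed identity explicitly in $K_0(\Stk_\C)$ and to see that they coincide once one checks that the relevant numerical invariants are preserved by the duality $p\mapsto p^*$. The starting point is the cell decomposition of Part 1: for $G$ semisimple and $C_p\to T$ surjective, $\hat C_p(1)$ is, forgetting all group actions, a disjoint union of $|\pi_{1,\beta}(p)|$ copies of $\C^{N}\times(\C^\times)^{M}$, where $N$ equals $|U_p|$ plus the rank of the $\prod U^+$-bundle produced by the handle and puncture operations (so $N$ records the $\C^{\dim U}$ contributed by the untwisting functor $\mcU$, plus $\sum_j \dim U_{\pi_1^{(j)}}^+ + \dim U_{\pi_2^{(j)}}^+$ and $\sum_j \dim(N_j\cap U_{\pi_j}^+)$), and $M = 2g\rk G + |S_p| - 2\rk G$, using $\dim T = \dim T^{ss} = \rk G$. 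In particular a connected component $\hat C_p^\circ$ is isomorphic as a variety to $\C^{N}\times(\C^\times)^{M}$. The first thing I would then verify is that $N$ and $M$ are invariant under $p\mapsto p^*$: since $p^*$ goes up, stays, and goes down at exactly the same steps as $p$, we have $|U_p| = |U_{p^*}|$ and $|S_p| = |S_{p^*}|$; the bundle ranks are built out of $\dim U_\pi^+ = \dim U - l(\pi)$ and $\dim(N_j\cap U_{\pi_j}^+) = \tfrac12(\dim G - \dim Z(C_j)) - l(\pi_j)$, all preserved because $l(\pi) = l(\pi^\vee)$, $\dim U = \dim U^\vee$, $\dim G = \dim{}^L\tilde G$, $\rk G = \rk {}^L\tilde G$, and because the dual nice classes $\check\mcC_j$ are chosen with $\dim Z(\check C_j) = \dim Z(C_j)$. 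Hence $N = N^*$, $M = M^*$ and $\hat C_p^\circ\cong \hat C_{p^*}^\circ$ as varieties.

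The second step is to reduce the two stack quotients to this bare variety. On the left the acting group is $(B/Z(G))\times(F^{2g}\cap\ker h_p)$. By the lemmas of the previous section, the $U$-part of $B$ acts trivially on all torus factors of $\hat C_p$ and, containing only the trivial finite subgroup, acts freely on the single cell $\hat C_p^\circ$; the torus $T/Z(G)$ acts on $(\C^\times)^{M}$ through a character homomorphism and linearly on $\C^{N}$; and $F^{2g}\cap\ker h_p\subset Z(G)^{2g}$, acting by left translation on the commutator-torus coordinates, acts on $(\C^\times)^{M}$ through a character homomorphism and trivially on $\C^{N}$ (central left translations only affect the torus coordinates of the Bruhat parametrizations). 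Since $F^{2g}\cap\ker h_p$ commutes with everything, I would quotient by it first: by the finite-abelian-quotient lemma and the fact that a torus modulo a finite subgroup is a torus of the same rank, this leaves the motive unchanged. Quotienting next by $U$ (free, $U$ special) multiplies the class by $q^{-\dim U}$, and finally quotienting by $T/Z(G)$, whose residual stabilizers are finite subgroups of the torus, multiplies by $(q-1)^{-\rk G}$, via the stratify-by-stabilizer argument of Corollary \ref{cor:motivically-a-vector-bundle} together with \cite[Theorem 3.6.19]{JesseVogel}. The result is
\[
\bigl[\hat C_p^\circ/((B/Z(G))\times(F^{2g}\cap\ker h_p))\bigr] = q^{N-\dim U}(q-1)^{M-\rk G}.
\]

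The third step is to run the identical computation on the right: $((Z(G)/F)^\vee)^{2g}\cap\ker h_{p^*}$ again acts by a character homomorphism on the torus factors and trivially on $\C^{N^*}$, while $B_{{}^L\tilde G}/Z(G)^\vee$ is solvable of dimension $\dim B^\vee = \dim B$, so the same reductions give $q^{N^*-\dim U^\vee}(q-1)^{M^*-\rk {}^L\tilde G}$. Combining with the invariances $N = N^*$, $M = M^*$, $\dim U = \dim U^\vee$, $\rk G = \rk {}^L\tilde G$ from the first step, the two classes agree, which is the assertion. Note that this argument makes the equality manifestly independent of $F$, consistent with the $F\leftrightarrow Z(G)/F$ symmetry of the statement.

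The main obstacle is the bookkeeping underlying the first step: one must write down the precise value of $N$ and $M$ for a completely general cell $\hat C_p$, carefully threading through $\mcU$, the handle bundles $U_{\pi_1}^+\times U_{\pi_2}^+$, the puncture bundles $N_j\cap U_{\pi_j}^+$, and the fiber $g_l^{-1}(1)$ of the walk stratification, and then check summand by summand that $p\mapsto p^*$ — replace every root by its coroot, every Weyl element by the corresponding element of $W^\vee$, keep every step — preserves each contribution. A secondary technical point is confirming that $F^{2g}\cap\ker h_p$ and its dual act by character and trivial actions on the canonical coordinates of $\hat C_p^\circ$, so that the finite-abelian-quotient lemma applies without changing the motive; this uses the explicit description of the $Z(G)^{2g}$-action as left translation on the commutator-torus coordinates together with the definition of $h_p$.
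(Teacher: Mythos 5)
Your proof takes essentially the same route as the paper: both sides are computed explicitly as a power of $q$ times a power of $q-1$, reducing the $B$-quotient to a $T$-quotient (costing an invertible factor $[U]$), absorbing the finite abelian group by the finite-quotient lemma plus linearity, and then observing that every exponent appearing is invariant under $p\mapsto p^*$ since it is built from $|U_p|$, $|S_p|$, $\dim U$, $\rk G$, $\dim G$, the lengths $l(\pi)$, and the $\dim Z(C_j)$ — all of which are preserved by the root/coroot duality. The paper streamlines the final step by quoting the already-established dimension count from Theorem~\ref{thm:cell-decomposition} rather than re-deriving $N$ and $M$ from scratch, but the substance is identical. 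One bookkeeping slip to flag: for the torus factor you wrote $M = 2g\rk G + |S_p| - 2\rk G$, which is the dimension \emph{after} quotienting by $T/Z(G)$; for $\hat C_p^\circ$ itself the correct count from the proof of Theorem~\ref{thm:cell-decomposition} (with the paper's typographical ``$2g$'' read as ``$2g\dim T$'') is $M = 2g\rk G + |S_p| - \rk G$. Since the discrepancy is the duality-invariant quantity $\rk G$, this does not affect the final conclusion, but the explicit class you wrote down is off by a factor of $(q-1)^{\rk G}$.
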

\begin{proof}
     As in Corollary \ref{cor:motivically-a-vector-bundle} we get
    \[ [\hat C_{p}^\circ / ((B/Z(G))\times (F^{2g}\cap \ker h))] \cdot [U] = [\hat C_{p}^\circ / ((T/Z(G))\times (F^{2g}\cap \ker h))]. \]
    Furthermore, the action of $\ker h$ is linear. Therefore, again by \cite[Theorem 3.6.19]{JesseVogel}, 
    \[ [\hat C_p^\circ/((T/Z(G))\times (F^{2g}\cap \ker h))] \cdot [U]= [\hat C_p^\circ/(T/Z(G))].\]
    But the right-hand side is 
    \[(q-1)^{i} q^{\frac{1}{2}(\dim \abbreviatedBetti[G]+\dim G-\dim T-i)}\] where $i = (2g-2)\dim T + |S_p|$ which is invariant under the duality. Finally, we note that the class of $U$ is invertible.
\end{proof}

\begin{lema}\label{lem:16.4}
    Let $\beta\in \mcV$ with genus $g$ and $p\in \mcW_{e,e}(\beta)$ with $C_p\to T$ surjective. Then
    \begin{align*}
        [(\hat C_p(1)/U)^\gamma / ((T/Z(G))\times F^{2g})]  = \frac{|\pi_{1,\beta}(p)|}{|h(F^{2g})|}[\hat C_p(\gamma)] [B]^{-1}
    \end{align*}
    for any subgroup $F\subset Z(G)$ and $\gamma\in \tilde Z_\beta(p)\cap ((T/Z(G))\times F^{2g})$.
\end{lema}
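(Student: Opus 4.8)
The plan is to unwind the left-hand side by the same sequence of quotient manipulations used in Corollary \ref{cor:motivically-a-vector-bundle} and Lemma \ref{lem:connected-component-invariant}, keeping careful track of the finite group $h(F^{2g})$ acting on $\pi_0$. First I would recall that $\hat C_p(1)/U$ is a smooth variety with an action of $(T/Z(G))\times F^{2g}$, and that $\gamma\in\tilde Z_\beta(p)\cap((T/Z(G))\times F^{2g})$, so that by the preceding lemma $(\hat C_p(1)/U)^\gamma$ is nonempty and is exactly $\hat C_p(\gamma)/U$ by definition of $\hat C_p(\gamma)$. Thus
\[
[(\hat C_p(1)/U)^\gamma/((T/Z(G))\times F^{2g})] = [(\hat C_p(\gamma)/U)/((T/Z(G))\times F^{2g})].
\]
Now I would split the $F^{2g}$-action into the part acting on connected components, governed by $h$, and the part acting within each component. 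Concretely, the image $h(F^{2g})\subset\pi_{1,\beta}(p)$ permutes the $|\pi_{1,\beta}(p)|$ connected components of $\hat C_p(1)$ (equivalently of $\hat C_p(\gamma)$, since both have the same $\pi_0$ as a torsor); since the action of $T/Z(G)$ is transitive on no components (the $\pi_0$ is computed after the $T$-quotient in the proof of the $\tilde\pi_{1,\beta}$ lemma, but here we are before the genericity collapse) — more precisely $T$ is connected so it fixes each component, and $F^{2g}$ acts through $h$ — the quotient groupoid decomposes as a disjoint union over $h(F^{2g})$-orbits of components.

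Next I would use that $\pi_0(\hat C_p(\gamma))$ is a torsor under $\pi_{1,\beta}(p)$, so $h(F^{2g})$ acts freely on it; hence there are $|\pi_{1,\beta}(p)|/|h(F^{2g})|$ orbits, each of size $|h(F^{2g})|$, and each orbit contributes the class of a single component modulo its residual stabilizer. The residual stabilizer of a component $\hat C_p^\circ$ inside $F^{2g}$ is precisely $F^{2g}\cap\ker h$, which by the previous lemmas acts linearly on $\hat C_p^\circ$. Therefore
\[
[(\hat C_p(\gamma)/U)/((T/Z(G))\times F^{2g})] = \frac{|\pi_{1,\beta}(p)|}{|h(F^{2g})|}\bigl[\hat C_p(\gamma)^\circ/((T/Z(G))\times(F^{2g}\cap\ker h))\bigr][U]^{-1},
\]
where $\hat C_p(\gamma)^\circ$ is a single connected component. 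Now applying the argument of Corollary \ref{cor:motivically-a-vector-bundle} (untwisting the $U$-bundle and using that $(F^{2g}\cap\ker h)$ acts linearly so that $[-/(F^{2g}\cap\ker h)]$ agrees with the honest quotient by \cite[Theorem 3.6.19]{JesseVogel}) converts the $(T/Z(G))$-quotient back into a $(B/Z(G))$-quotient and reintroduces the factor $[U]$; summing the classes of all $h(F^{2g})$-translates of $\hat C_p(\gamma)^\circ$ recovers $[\hat C_p(\gamma)]$ divided by $[B]=[T/Z(G)][U][Z(G)]$ — here I would be careful that the $(T/Z(G))$-action on each component is free (genericity, since $C_p\to T^{ss}$ is surjective kills the stabilizers up to $\ker h$) so that $[\hat C_p(\gamma)^\circ/(T/Z(G))]=[\hat C_p(\gamma)^\circ][T/Z(G)]^{-1}$ and the $|h(F^{2g})|$ translates assemble to $[\hat C_p(\gamma)][B]^{-1}$ after accounting for the $|\pi_{1,\beta}(p)|/|h(F^{2g})|$ prefactor.

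The main obstacle I expect is the bookkeeping at the level of connected components: making precise that $\pi_0(\hat C_p(\gamma))$ is a torsor under $\pi_{1,\beta}(p)$ on which $h(F^{2g})$ acts freely, and that the residual stabilizer computation is uniform across components. This requires combining the torsor description of $\pi_0(\hat C_p(t))\simeq\ker g'/(\ker g')^\circ$ from the proof of the $\tilde\pi_{1,\beta}$ lemma with the explicit form of the $Z(G)^{2g}$-action by left translation, and checking that passing to $\gamma$-fixed points does not change $\pi_0$ — which follows because $\gamma$ acts on $\hat C_p(1)/U$ with nonempty fixed locus containing the points with vanishing $\C$-coordinates, and these meet every component. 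Once that is settled, the rest is the same special-group and linear-action formalism already deployed above.
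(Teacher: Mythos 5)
Your overall strategy — reduce to $\hat C_p(\gamma)/U$, decompose by the $h(F^{2g})$-orbit structure on $\pi_0$, and then apply the special-group and finite-abelian-quotient formalism of Corollary~\ref{cor:motivically-a-vector-bundle} to each orbit — is exactly what the paper's one-line proof (``This follows as Corollary~\ref{cor:motivically-a-vector-bundle} as all connected components are isomorphic.'') is hinting at, and most of your steps are sound.

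However, your final reassembly does not add up numerically. You correctly reduce to
\[
[(\hat C_p(\gamma)/U)/((T/Z(G))\times F^{2g})] = \frac{|\pi_{1,\beta}(p)|}{|h(F^{2g})|}\bigl[\hat C_p(\gamma)^\circ/((T/Z(G))\times(F^{2g}\cap\ker h))\bigr][U]^{-1},
\]
and each bracket on the right evaluates to $[\hat C_p(\gamma)^\circ][B]^{-1}$ by the special-group and linear-action arguments you cite. But the target identity requires this orbit-sum to equal $\frac{|\pi_{1,\beta}(p)|}{|h(F^{2g})|}[\hat C_p(\gamma)][B]^{-1}$, and since $\hat C_p(\gamma)$ has $|\pi_{1,\beta}(p)|$ mutually isomorphic connected components (as you yourself argue: the $\gamma$-fixed locus meets every component via the $y=0$ slice), one has $[\hat C_p(\gamma)] = |\pi_{1,\beta}(p)|\cdot[\hat C_p(\gamma)^\circ]$, not $|h(F^{2g})|\cdot[\hat C_p(\gamma)^\circ]$. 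Your sentence ``the $|h(F^{2g})|$ translates assemble to $[\hat C_p(\gamma)][B]^{-1}$'' conflates the number of translates in one $h$-orbit with the total number of components; as written, the two sides of your chain differ by a factor of $|\pi_{1,\beta}(p)|$. This is precisely the bookkeeping at the level of $\pi_0$ that you flag as the main obstacle, and it is not resolved in the write-up: you need to either track where the extra $|\pi_{1,\beta}(p)|$ is absorbed, or confirm with the statement's intended normalization of $[\hat C_p(\gamma)]$ versus $[\hat C_p(\gamma)^\circ]$.

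A smaller slip: you write $[B]=[T/Z(G)][U][Z(G)]$; since $Z(G)$ is finite one has $[T/Z(G)]=[T]$ in $K_0(\Stk_\C)$ and hence $[B]=[T][U]=[T/Z(G)][U]$, with no additional $[Z(G)]=|Z(G)|$ factor.
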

\begin{proof}
    This follows as Corollary \ref{cor:motivically-a-vector-bundle} as all connected components are isomorphic.
\end{proof}

\begin{lema}
    Conjecture \ref{conjMotivic2} implies Conjecture \ref{conjMotivic}.
\end{lema}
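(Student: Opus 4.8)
The plan is to unwind both sides of Conjecture \ref{conjMotivic} in terms of the cell decomposition of Part 1 and reduce the equality of stringy motives to the cell-by-cell statement of Conjecture \ref{conjMotivic2}. First I would expand the left-hand side. Using the $B$-equivariant cell decomposition of $(\mcZ(D^\dagger)\circ\mcZ(H)^g\circ\mcZ(L_{\mcC_1})\circ\cdots\circ\mcZ(L_{\mcC_{k-1}}))(\mcU(C_k))$, together with Lemma \ref{lem:stringy-motive-and-cell-decomposition} applied to $\abbreviatedBetti[G/F]=[\abbreviatedBetti[G]/F^{2g}]$, one gets
\[
[\abbreviatedBetti[G/F]]_{st}=\sum_{\beta\in\mcV}\ \sum_{p\in\mcW_{e,e}(\beta)}\ \sum_{\gamma}\ [\text{(inertia component of the cell indexed by }(\beta,p,\gamma))]\,q^{F(\gamma)},
\]
where $\gamma$ runs over a set of representatives of $\tilde Z_\beta(p)\cap((T/Z(G))\times F^{2g})$ modulo the $F^{2g}$-action on $\pi_0$. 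Concretely, the inertia components attached to a cell $\hat C_p$ are indexed by such $\gamma$, with underlying stack $[(\hat C_p(1)/U)^\gamma/((T/Z(G))\times F^{2g})]$ up to identifying connected components, and by Lemma \ref{lem:16.4} this class equals $\tfrac{|\pi_{1,\beta}(p)|}{|h_p(F^{2g})|}[\hat C_p(\gamma)][B]^{-1}$. Only cells with $C_p\to T^{ss}$ surjective contribute, by genericity of $\mcC$.

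Next I would do the same expansion for the right-hand side $[\mcM_B^{\check\mcC}({}^L\tilde G/(Z(G)/F)^\vee)]_{st}$, where the relevant finite subgroup of $Z({}^L\tilde G)$ is $(Z(G)/F)^\vee$. The duality $p\mapsto p^*$ of the previous section gives a bijection between the cells of $G$ and those of ${}^L\tilde G$, and by Lemma \ref{lem:duality-preserves-genericity} it preserves the surjectivity condition, so the index sets match. The heart of the matter is then to match, cell by cell, the contribution of $(\beta,p)$ on the $G$-side with that of $(\beta^*,p^*)$ on the ${}^L\tilde G$-side. Here I would use: Lemma \ref{lem:connected-component-invariant}, which gives the equality of the "base" classes $[\hat C_p^\circ/((B/Z(G))\times(F^{2g}\cap\ker h_p))]$ and its dual after multiplying by the invertible class $[U]$; Lemma \ref{lem:key-duality}, which identifies the projection $\tilde Z_\beta(p)\to Z(G)^{2g}$ with the dual of the action map $Z({}^L\tilde G)^{2g}\to\pi_{1,\beta}(p^*)$, so that the set of $\gamma$'s summed over on one side is Pontryagin-dual to the set of connected components summed over on the other; and Conjecture \ref{conjMotivic2}, which gives $[\hat C_p(\gamma)]q^{F(\gamma)}=[\hat C_p(1)]$ for each individual $\gamma$. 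Combining these, the sum over $\gamma$ of $\tfrac{|\pi_{1,\beta}(p)|}{|h_p(F^{2g})|}[\hat C_p(\gamma)]q^{F(\gamma)}[B]^{-1}$ collapses — using $\sum_\gamma 1 = |\tilde Z_\beta(p)\cap((T/Z(G))\times F^{2g})|/|\text{(orbits)}|$ and the duality count — to something symmetric under $p\leftrightarrow p^*$.

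The bookkeeping obstacle, which is where I expect the real work to be, is the counting identity relating (number of inertia sectors $\gamma$ on the $G$-side) $\times$ (size of a connected component orbit) to the dual quantities on the ${}^L\tilde G$-side. This is a statement about finite abelian groups: one has an exact sequence relating $F^{2g}$, $\ker h_p$, $h_p(F^{2g})\subset\pi_{1,\beta}(p)$, and $\tilde Z_\beta(p)$, and its Pontryagin dual must be identified with the corresponding sequence for $p^*$ via Lemmas \ref{lem:tilde-dual}, \ref{lem:tilde-dual-2}, \ref{lem:key-duality}. The cleanest route is probably to verify that both sides of Conjecture \ref{conjMotivic} equal
\[
\sum_{[\beta,p]}\ \frac{|\pi_{1,\beta}(p)|}{|F|^{2g}}\ \Bigl(\sum_{\gamma}[\hat C_p(\gamma)]\,q^{F(\gamma)}\Bigr)[B]^{-1},
\]
show via Conjecture \ref{conjMotivic2} that the inner sum is $|\tilde Z_\beta(p)\cap((T/Z(G))\times F^{2g})|\cdot[\hat C_p(1)]$, and then check that the resulting scalar $\tfrac{|\pi_{1,\beta}(p)|\,|\tilde Z_\beta(p)\cap((T/Z(G))\times F^{2g})|}{|F|^{2g}}\,[\hat C_p(1)][B]^{-1}$ is literally invariant under $p\mapsto p^*$, $G\mapsto {}^L\tilde G$, $F\mapsto(Z(G)/F)^\vee$, using that $[\hat C_p(1)][B]^{-1}[U]$ is a monomial in $q$ depending only on duality-invariant data (as in the proof of Lemma \ref{lem:connected-component-invariant}) and that the scalar is exactly the order of a group whose dual is its ${}^L\tilde G$-counterpart. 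Once the finite-group duality is pinned down, the rest is assembling the displayed sums.
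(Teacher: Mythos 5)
Your overall strategy is the paper's: reduce via Lemmas \ref{lem:duality-preserves-genericity} and \ref{lem:stringy-motive-and-cell-decomposition} to a cell-by-cell equality of stringy motives, compute the inertia contributions with Lemma \ref{lem:16.4}, collapse them with Conjecture \ref{conjMotivic2}, and then compare the resulting scalar and base class under duality using Lemmas \ref{lem:key-duality} and \ref{lem:connected-component-invariant}. However, the concrete formula you propose to verify is off, and that error would prevent the duality check from going through. Lemma \ref{lem:16.4} gives the coefficient $\tfrac{|\pi_{1,\beta}(p)|}{|h_p(F^{2g})|}$ in front of $[\hat C_p(\gamma)][B]^{-1}$, not $\tfrac{|\pi_{1,\beta}(p)|}{|F|^{2g}}$; these agree only when $F^{2g}$ acts freely on $\pi_0(\hat C_p(1))$, i.e.\ $F^{2g}\cap\ker h_p = 1$. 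Your displayed scalar $\tfrac{|\pi_{1,\beta}(p)|\,|\tilde Z_\beta(p)\cap((T/Z(G))\times F^{2g})|}{|F|^{2g}}$ is thus too small by $|F^{2g}\cap\ker h_p|$, and the quantity $\frac{|\pi_{1,\beta}(p)|^2\,|\tilde Z_\beta(p)\cap\cdots|}{|F|^{2g}}$ (after expanding $[\hat C_p(1)]$ as $|\pi_{1,\beta}(p)|$ copies of $[\hat C_p^\circ]$) is not a duality-invariant count, so the final step you flag as the ``real work'' cannot be completed as stated.

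The fix the paper employs is to group the $|\pi_{1,\beta}(p)|$ connected components into $|\pi_{1,\beta}(p)/h_p(F^{2g})|$ orbits, writing the cell contribution as
\[
\bigl|\tilde Z_\beta(p)\cap(T\times F^{2g})\bigr|\cdot\bigl|\pi_{1,\beta}(p)/h_p(F^{2g})\bigr|\cdot\bigl[\hat C_p^\circ/((B/Z(G))\times(F^{2g}\cap\ker h_p))\bigr].
\]
The scalar $|\tilde Z_\beta(p)\cap(T\times F^{2g})|\cdot|\pi_{1,\beta}(p)/h_p(F^{2g})|$ is exactly what Lemma \ref{lem:key-duality} (identifying the projection $\tilde Z_\beta(p)\to Z(G)^{2g}$ as dual to the action map $Z({}^L\tilde G)^{2g}\to\pi_{1,\beta^*}(p^*)$) renders duality-invariant, and the remaining factor is shown to be duality-invariant in Lemma \ref{lem:connected-component-invariant}, because it equals a monomial $(q-1)^iq^j$ (divided by $[U]$) in duality-invariant exponents, independently of $F$. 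Trying instead to normalize by $|F|^{2g}$ and keep $[\hat C_p(1)]$ as the base misplaces the finite-group factor that must sit inside the quotient $[\hat C_p^\circ/((B/Z(G))\times(F^{2g}\cap\ker h_p))]$, which is why your bookkeeping does not close.
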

\begin{proof}
    By Lemmas \ref{lem:duality-preserves-genericity} and \ref{lem:stringy-motive-and-cell-decomposition}, it suffices to show that
    \[\left[\hat C_p(1)/(B\times F^{2g}), F_{\abbreviatedBetti[G/F]}\right]_{st} = \left[\hat C_{p^*}(1)/\left(B_{{}^L\tilde G}\times \left((Z(G)/F)^\vee\right)^{2g}\right), F_{\abbreviatedBetti[{}^L\tilde G/ (Z(G)/F)]}\right]_{st}\]
    for any $p$ with $C_p\to T$ surjective and $\beta\in \mcV$. Now, if Conjecture \ref{conjMotivic2} is true, the left hand side is 
    \[ |\tilde Z_\beta(p)\cap (T\times F^{2g})| \cdot [\hat C_{p}(1) / ((B/Z(G))\times F^{2g})]\]
    by Lemma \ref{lem:16.4}. We can rewrite this expression as
    \[ |\tilde Z_\beta(p)\cap (T\times F^{2g})| \cdot |\pi_{1,\beta}(p) / h(F^{2g})| \cdot [\hat C_{p}^\circ / ((B/Z(G))\times (F^{2g}\cap \ker h))]. \]
    There is also a similar formula for the dual group. Now, by Lemma \ref{lem:key-duality}, the product 
    \[|\tilde Z_\beta(p)\cap (T\times F^{2g})| \cdot |\pi_{1,\beta}(p) / h(F^{2g})|\]
    is preserved under duality. While the other factor is invariant by Lemma \ref{lem:connected-component-invariant}.
\end{proof}



\section{Proof for \texorpdfstring{$SL_2$}{SL2}}

We finish this part proving Conjecture \ref{conjMotivic2} (1), and hence Conjecture \ref{conjMotivic}, for $\SL_2$. In this case, there is a unique positive root $\alpha$ and $W$ has two elements: the identity and $s:=s_\alpha$. We note that if $F$ is trivial, $\tilde Z_\beta(p)\cap ((T/Z(G))\times F^{2g})$ is $Z_\beta(p)$ which is trivial. So there is nothing to be proven in this case. The other case is $F=Z(G)=\Z/2\Z$.

Let us fix a genus $g$ and a walk $p$. If we stay at some point, $Q(p)=\langle \alpha\rangle$, $Z(p)$ is trivial and $S(p)=Z(G)$. Hence, no matter which is $\beta$, $\tilde Z_\beta(p)$ is trivial and there is nothing to be proven in this case. Let us assume now that we never stay. In this case, $S(p)=T$. If in the given $\beta$, all $\pi_i^{(j)}$ are trivial, the cell would not be generic, and we could discard it. We assume that some $\pi_i^{(j)}$ is not trivial; it must be $s$. Then $\tilde Z_\beta(p)=\Z/2\Z$. Let $t_0$ be a lifting to $\tilde S_\beta(p)$ of the non-trivial element of $\tilde Z_\beta(p)$. It must not be central and, therefore, not vanish at $\alpha$. O another side, we note that the non-tori part of $\hat C_p$ is $\C^{2g+k-1}$ as each $\pi_i^{(j)}=s$ gives a going up and a going down on $p$, meanwhile $U_{\pi_i^{(j)}}^+$ is non-trivial if and only if $\pi_i^{(j)}$ is the identity. Moreover, the action of $t_0$ is given by multiplication by $t_0^2\neq 1$ on each coordinate. For the action of $U$, we note that on the first going up, it is given by a translation. In addition, as $g^+_\alpha(U,z) = \{e\}$ for any $z$,  the action is trivial on the following coordinates. In the previous ones, it is also a translation. Hence, a point $u\in U^{2g+k}$ is fixed by $t_0$ on $\hat C_p/U$ if and only if there is a solution to 
\[(1-t_0^2)u_i = f_i(u')\]
for certain fixed functions $f_i:U\to U$ that does not depend on $u$. The first going up determines the value of  $u'$ and therefore all the others $u_i$.  Thus
\[[\hat C_p(t_0) ]\cdot q^{2g+k-2} = [\hat C_p(1)] \]
and the conjecture follows.  

\printbibliography

\end{document}